\definecolor{dark-green}{RGB}{0,136,0}
\newcommand{\bluecomment}[1]{\textcolor{blue}{#1}}
\newcommand{\keyterm}[1]{\textbf{#1}}
\newcommand{\Q}{\mathbb{Q}}
\newcommand{\N}{\mathbb{N}}
\newcommand{\R}{\mathbb{R}}
\newcommand{\NatZero}{{\mathbb{N}^0}}
\newcommand{\bff}{\mathbf{f}}
\newcommand{\bfl}{\mathbf{\ell}}
\newcommand{\bfu}{\mathbf{u}}
\newcommand{\bfx}{\mathbf{x}}
\newcommand{\eps}{\varepsilon}
\newcommand{\A}{\mathcal{A}}
\newcommand{\Tri}{\triangle}
\newcommand{\lang}{\mathcal{L}}
\newcommand{\rev}{R}
\newcommand{\udesub}[1]{\mathbf{u}^{({#1})}}
\newcommand{\emptyword}{\eps}
\newcommand{\makenest}[3]{\newcommand{#1}[1]{\ensuremath{\left#2##1\right#3}}}
\makenest{\bracket}[]
\makenest{\clop}[)
\makenest{\opcl}(]
\newtheorem{proposition}{Proposition}
\newtheorem{definition}{Definition}
\newtheorem{lemma}{Lemma}
\newtheorem{corollary}{Corollary}
\newtheorem{conjecture}{Conjecture}
\newtheorem{remark}{Remark}
\newtheorem{theorem}{Theorem}
\author{Thomas Garrity\footnote{Department of Mathematics and Statistics, Williams College, Williamstown, MA 01267, USA. Email: tgarrity@williams.edu} \and Otto Vaughn Osterman\footnote{Department of Mathematics, University of Maryland, College Park, MD 20742, USA. Email: ovo42@umd.edu} }
\title{On the Factor Complexity 
Associated with a Family of Multidimensional Continued Fraction Algorithms}
\date{\today}
\begin{document}

\maketitle

\begin{abstract} 
    We study the complexity of $S$-adic sequences corresponding to a family of 216 multidimensional continued fractions maps, called Triangle Partition maps (TRIP maps), with an emphasis on those with low upper bounds on complexity. Our main result is to prove that the complexity of $S$-adic sequences corresponding to the triangle map (called the $(e,e,e)$-TRIP map in this paper) has upper bound at most $3n$. Our second main result is to prove an upper bound of $2n+1$ on complexity for another TRIP map. We discuss a dynamical phenomenon, which we call ``hidden $\R^2$ behavior,'' that occurs in this map and its relationship to complexity. Combining this with previously known results and a list of counter-examples, we provide a complete list of the TRIP maps which have upper bounds on complexity of at most $3n$, except for one remaining case for which we conjecture such an upper bound to hold.
\end{abstract}

\section{Introduction} \label{SecIntro}

\subsection{Overview}

There is a remarkable correspondence between infinite words with  factor complexity $n+1$ (the number of subwords of length $n$ is exactly $n+1$) and continued fraction expansions of irrational numbers. (See Coven and Hedlund \cite{Coven-Hedlund} and also the work of Morse and Hedlund from the early 1940s \cite{Morse-Hedlund1, Morse-Hedlund2}. For a good expository introduction, see Chapter 19 in Goodson's textbook \cite{Goodson}.)
Given an irrational number $ a \in (0,1) \setminus \Q $, we can produce an infinite word on two letters with letter frequencies $(a,1-a)$ by associating the continued fraction expansion to a sequence of substitutions (for background on this see Chapter 6 in \cite{Fogg} or  in \cite{Thuswaldner}).
It is well-known that such  infinite words  are  \keyterm{Sturmian} (meaning that their factor compexity is  $n+1$, the lowest possible complexity for aperiodic infinite binary strings, as seen in the above references).

Over the years, there have been many generalizations of continued fractions, often called multidimensional continued fractions. Such generalizations often arise from the desire to find best simultaneous Diophantine approximations or to solve the Hermite problem, which asks for methods of writing a real number $\alpha$ as a sequence of integers so that the sequence is eventually periodic if and only if $\alpha$ is a cubic irrational. Just as continued fractions can be interpreted as a dynamical system of the Gauss map on the interval, multidimensional continued fractions can be interpreted as dynamical systems on a higher-dimensional simplex.

For higher-dimensional continued fractions, there is a straightforward technique mimicking the classical case which produces sequences on larger alphabets encoding orbits of the dynamical system. It is natural to wonder about the typical complexities of these sequences, whether there is any general framework for understanding them, and whether the correspondence between Sturmian words and words of low complexity can be generalized.

Arnoux-Rauzy words were among the first such sequences studied; these have factor complexity $2n+1$ and correspond to the application of the Arnoux-Rauzy multidimensional continued fraction algorithm in a set of Lebesgue measure zero, the so-called Rauzy Gasket \cite{AR}. Additionally, in \cite{ARP2} it was shown that generically the algorithm produces sequences of complexity less than or equal to $ \frac{5}{2} n + 1 $. Later, Cassaigne, Labb\'{e} and Leroy \cite{Cassaigne} showed that Cassaigne's multidimensional continued fraction algorithm produces words of factor complexity $2n+1$ almost everywhere. Given these two examples, it might be reasonable to wonder what the ``generic" complexity is for continued fraction algorithms. 

All of these words belong to a more general class of words called \keyterm{$S$-adic sequences}, which are derived via a finite family of substitutions. It is known that any uniformly recurrent infinite word with at most linear factor complexity is $S$-adic \cite{Ferenczi, Leroy1}. However, the converse is not true in general \cite{Fogg}. The \keyterm{$S$-adic conjecture} asks what additional conditions can be placed on $S$-adic sequences to ensure at most linear complexity.   (For remarkable progress on this conjecture, see the work of Espinoza \cite{Espinoza}.)

There is a framework (the triangle partition family, or TRIP family for short) that captures almost all known multidimensional continued fraction algorithms. This framework is generated by 216 different multidimensional continued fraction algorithms, indexed by a triple of permutations in $S_3\times S_3\times S_3$. Though \cite{Cassaigne} did not use the rhetoric of TRIP maps, the Cassaigne algorithm is in this list (it is the TRIP map $(e,23,23)$) \cite{SMALLTRIP}.

Our goal is to understand the complexity for all of the triangle partition maps with an emphasis on those with low upper bounds. Our main technical result is that the triangle map (called the $(e,e,e)$-TRIP map in this rhetoric) has linear complexity bounded between $2n+1$ and $3n$. While other work on this subject has often focused on other properties of such sequences such as balance (as seen for example in \cite{Cassaigne, Cassaigne2}), we do not address these topics here.  Also, while the Cassaigne map was created in large part to find a multi-dimensional continued fraction with low linear complexity, the triangle map has independent interest, as reviewed in the beginning of Section \ref{sec:e-e-e}.

We review the basics of multidimensional continued fractions in Section \ref{TRIP Maps} and the basics of $S$-adic sequences and factor complexity in Section \ref{$S$-adic sequences and Complexity}.
We discuss the link between multidimensional continued fractions and $S$-adic sequences in Section \ref{Correspondence}. In Section \ref{sec:pftech}, we review the common techniques used to determine factor complexity of $S$-adic sequences, which we make use of throughout the rest of the paper.

The new work begins in Section \ref{sec:e-e-e}. This will be the longest and most technical section of this paper, as we give the full proof that the linear complexity for the $S$-adic sequences corresponding to the $(e,e,e)$-TRIP map has lower bound $2n+1$ and upper bound $3n$.

In Section \ref{More general TRIP maps} we return to a discussion of more general TRIP maps and reduce the number of different TRIP maps and classes of $S$-adic sequences to consider from 216 to 21 through equivalence relations.

In Section \ref{Statements on Complexity for all TRIP maps}, we give an overview of each of these 21 classes, specifying those for which are known or for which we conjecture to have an upper bound on complexity of at most $3n$ and providing counterexamples for the others. Three of them are what we call \keyterm{degenerate TRIP maps} as they can be reduced to maps in $\R^2$, and their corresponding $S$-adic sequences are Sturmian sequences. We find through computational experiments that two others are candidates for an upper bound of $2n+1$ and two are candidates for an upper bound of $3n$ (one of which is the triangle map).

In Section \ref{sec:e-13-e}, we discuss a new phenomenon for some multidimensional continued fraction algorithms, which we call \keyterm{hidden $\R^2$ maps}, whereby they can be reduced to continued fraction maps in two dimensions on a positively invariant subset, with respect to Lebesgue measure, of $\triangle$ but not on all of $\triangle$. We discuss the $(e,13,e)$ TRIP map as an example of this and show how the complexity of its corresponding $S$-adic sequences relate to this phenomenon, with a full proof that the complexity has upper bound $2n+1$.

In Section \ref{sec:e-23-e}, we explore the other candidate for an upper bound of $3n$, the $(e,23,e)$ TRIP map, and give an outline of a possible proof for this upper bound similar to that in Section \ref{sec:e-e-e}.

We close with some natural questions in Section \ref{sec:conclusion}.

\subsection{Terse Summary}

In Section \ref{TRIP Maps}, we define the $216$ different $\R^3$ Triangle Partition Maps, which provide a framework to understand almost all known multidimensional continued fraction algorithms \cite{SMALLTRIP, Cubic}.  For example, the Cassaigne algorithm is the $(e,23,23)$ map, the M\"{o}nkemeyer map is the $(e, 132, 23) $ map and the Triangle map is $(e,e,e)$, though none of the papers which defined this maps used this rhetoric.

The two  papers \cite{SMALLTRIP, Cubic} were not concerned with questions of factor complexity at all.   It appears that all $216$ Triangle Partition Maps could be relevant for number-theoretic reasons.    But if we are only concerned with questions of factor complexity, we show in Section \ref{More general TRIP maps} that we can reduce our study to just $21$ different maps, using what we call conjugacy and twinning.  (All of the equivalence, for all $216$ Triangle Partition Maps, are in the appendix.)  The main goal of this paper is in studying the factor complexity of the Triangle Map $(e,e,e)$ in Section \ref{sec:e-e-e}.

Below is a summary of our results on factor complexity:

$$\begin{array}{c|c|c}
\mbox{Type} & \mbox{Theorem/Conjecture} & \mbox{Bound/Conjectured Bound} \\
\hline

(e,e,e) & \mbox{Theorem} \; \ref{Main theorem for $eee$}   & 2n+1 \leq p_{\lang}(n) \leq 3n \\
\hline
\begin{array}{c}(e,12,e)  \\
(e,12,13) \\ (e, 132, e) \end{array}
& \mbox{Theorem}\;  \ref{th:degenerate}  & p_{\lang}(n) = \begin{cases} n+1, & n \neq 1 \\ 3, & n = 1. \end{cases}   \\
\hline
(e,13,e) &  \mbox{Theorem}\;  \ref{conj:e-13-e}  & \begin{array}{l} \mbox{One of the following is true:}\\

    p_{\lang}(n) = 2n+1   \\
p_{\lang}(n) = \min\{2n+1, n+c\} \; \mbox{ for some $ c \in \N$} \\
      p_{\lang}(n) = \min\{2n+1, n+c_1, c_2\} \;  \mbox{ for some $ c_1,c_2 \in \N $}
       
    \end{array}
     \\

\hline
(e,23,e) & \mbox{Conjecture}\;  \ref{conj:e-23-e}& p_{\lang}(n) \leq 3n \\
\hline
\begin{array}{c} (e,123,e)   \\ (e,e,12) \\   (e,12,12) \\ (e,13,12)   \\ (e,23,12)  \\   (e,123,12) \\ (e,e,13)  \\ (e,23,13) \\ (e, 123, 13) \\  (e,e,23)  \\ (e,123,23)\\ (e,e,123)  \\ (e,123,123) \\ (e,e,132) \end{array}&  \mbox{Theorem} \; \ref{th:large-complexity} & p_{\lang}(n) >3n \; \mbox{for some $n\geq 1$}  \\
\hline

   (e,23,23)  &  \mbox{Theorem} \;  \ref{Cassaigne} &  p_{\lang}(n) = 2n+1  \\
        
\end{array}$$

\subsection{Acknowledgements}

We would like to thank Mary Stelow for invaluable contributions to this work.  Also, the discover of hidden $\R^2$ TRIP maps, discussed in Section \ref{sec:e-13-e-dyn}, was done in collaboration with  D. Alvarez, A. Bradford, D. Dong, K. Herbst, A. Koltun-Fromm, B. Mintz, and M. Stelow \cite{Hidden}, for which we thank them. We would also like to thank L. Pedersen for useful comments.  Finally, we would like to thank the referee for an amazing number of useful comments.

This work was completed at the SMALL 2019 REU program at Williams College, supported by NSF grant DMS1659037.

This material is based upon work supported by the National Science Foundation Graduate Research
Fellowship Program under Grant No. DGE 1840340.

\section{Triangle Partition Maps} \label{TRIP Maps}

\subsection{$\R^3$ TRIP maps}\label{R3 TRIP Maps}

We will now briefly introduce the family of Triangle Partition Maps, or TRIP maps for short.  As this is background, much of this section is similar to certain  sections in \cite{stern-trip, GarrityT05, SMALLTRIP, Cubic, Triangle, Karpenkov, Schweiger05}.  For a general background on multi-dimensional continued fractions and an overview of why they are important, see Schweiger \cite{contfrac} or Karpenkov \cite{Karpenkov}.

We will concentrate in this paper on $\R^3$ TRIP maps.   (The traditional continued fraction map is a type of $\R^2$ TRIP map, which we describe in Subsection \ref{R2TRIP Maps}.)
The set of $\R^3$ TRIP maps is an attempt to put  all $3$-dimensional continued fraction algorithms into a structured family. 

We will start by defining the $\R^3(e,e,e)$ TRIP map, which is usually called the triangle map. This map is of independent interest in dynamics, as seen in the recent work  of Berth\'{e},  Steiner and Thuswaldner \cite{Berthe-Steiner-Thuswaldner}, of  Bonanno, Del Vigna and Munday \cite{Bonanno-Del Vigna-Munday}, of Bonanno and Del Vigna \cite{Bonanno-Del Vigna}, of  Fougeron and  Skripchenko \cite{Fougeron-Skripchenko} and of Ito \cite{Ito}.
     
   Fix two matrices
     \begin{equation}
    F_0 = \begin{bmatrix} 0 & 0 & 1 \\ 1 & 0 & 0 \\ 0 & 1 & 1 \end{bmatrix}, \qquad F_1 = \begin{bmatrix} 1 & 0 & 1 \\ 0 & 1 & 0 \\ 0 & 0 & 1 \end{bmatrix},
\end{equation}
and let $C$ be a cone in $\R^3$ spanned by three linearly independent vectors $v_1, v_2, v_3$,
$$C = \{a_1 v_1 + a_2 v_2 + a_3 v_3 : a_1, a_2, a_3 \geq 0\}.$$
We  will often identify this cone $C$ with the $3\times 3$ matrix $V=(v_1, v_2, v_3)$, treating the three vectors $v_1, v_2, v_3$  as column vectors in $\R^3.$ 
We start with subdividing the cone $C$ into two subcones: $C_0$ being the cone spanned by the column vectors $v_2, v_3, v_1 + v_3$ and $C_1$ the cone spanned by $v_1, v_2, v_1+v_3.$
Note that 
$$(v_2, v_3, v_1+v_3) = V F_0, \qquad (v_1, v_2, v_1+v_3) = V F_1.$$

The $\R^3(e,e,e)$ TRIP map $ T(e,e,e): C - (C_0 \cap C_1) \rightarrow C $
is the map
\begin{equation*}   
    T(e,e,e) \bfx = \begin{cases} T_0(e,e,e) \bfx, & \bfx \in C_0 \\
    T_1(e,e,e) \bfx, & \bfx \in C_1, \end{cases}  
\end{equation*}
where
\begin{equation*}
    T_0(e,e,e)  =  VF_0^{-1}V^{-1}, \qquad T_1(e,e,e) =  VF_1^{-1}V^{-1}.
\end{equation*}
Note that $T(e,e,e)$ is defined everywhere except on a set of measure zero.

Given the cone $C$, the map $T(e,e,e)$ depends on a few choices.  First, we had to choose an ordering on the vectors $v_1, v_2, v_3$.   Reordering these vectors will give us a new map.  We also had to choose how to order the vectors $v_2, v_3, v_1+v_3$ for $C_0$ and the vectors $v_1, v_2, v_1+v_3$ for $C_1$
As there are $6=|S_3|$ different orderings for each of these three collections of vectors, we have $ 6^3 = 216 $ different maps.

Here are the details. Let us write each permutation  $\sigma \in S_3$ as 
a $ 3 \times 3 $ matrix,
$$\begin{array}{ccc}
    e = \left(\begin{array}{ccc}
        1&0&0\\
        0&1&0\\
        0&0&1
    \end{array}\right), &
    (12) = \left(\begin{array}{ccc}
        0&1&0\\
        1&0&0\\
        0&0&1
    \end{array}\right), &
    (13) =\left( \begin{array}{ccc}
        0&0&1\\
        0&1&0\\
        1&0&0
    \end{array}\right), \\
    (23) =  \left(  \begin{array}{ccc}
        1&0&0\\
        0&0&1\\
        0&1&0
    \end{array}\right), &
    (123) =   \left(   \begin{array}{ccc}
        0&0&1\\
        1&0&0\\
        0&1&0
    \end{array}\right), &
    (132) =   \left(    \begin{array}{ccc}
        0&1&0\\
        0&0&1\\
        1&0&0
    \end{array}\right). 
\end{array}$$
For $\sigma, \tau_0, \tau_1\in S_3$, set
\begin{eqnarray*} 
    F_0(\sigma, \tau_0, \tau_1) &=&  \sigma F_0 \tau_0, \\
    F_1(\sigma, \tau_0, \tau_1) &=&  \sigma F_1 \tau_0, \\
   C_0 (\sigma, \tau_0, \tau_1) &=& V F_0(\sigma, \tau_0, \tau_1), \\
    C_1 (\sigma, \tau_0, \tau_1) &=&V F_1(\sigma, \tau_0, \tau_1),
    \end{eqnarray*}

Then, the $\R^3 (\sigma, \tau_0, \tau_1) $ TRIP map 
$$T(\sigma, \tau_0, \tau_1): C - ( C_0 (\sigma, \tau_0, \tau_1) \cap  C_1 (\sigma, \tau_0, \tau_1) ) \rightarrow C$$
is
\begin{equation*}    
    T(\sigma, \tau_0, \tau_1) \bfx = \begin{cases} T_0(\sigma, \tau_0, \tau_1) \bfx, & \bfx \in  C_0(\sigma, \tau_0, \tau_1)  \\
    T_1(\sigma, \tau_0, \tau_1) \bfx, & \bfx \in  C_1(\sigma, \tau_0, \tau_1), \end{cases}
\end{equation*}
where now
\begin{equation*}
    T_0(\sigma, \tau_0, \tau_1) = V(F_0(\sigma, \tau_0, \tau_1))^{-1}V^{-1} , \qquad T_1(\sigma, \tau_0, \tau_1) = V(F_1(\sigma, \tau_0, \tau_1))^{-1}V^{-1}.
\end{equation*}
Again, this map is defined everywhere except on a set of measure zero.

We now have a list of 216 maps. For ease of notation, we often omit parentheses when referring to specific permutations to define TRIP maps, substitutions, or Farey matrices, and often omit the $T$. For example, we write $(e,13,23)$ instead of $T(e,(13),(23))$.

As mentioned before, $(e,e,e)$ is the triangle map. The Cassaigne map \cite{Cassaigne} can be shown to be $ (e,23,23). $ The classical M\"{o}nkemeyer map \cite{contfrac, PantiG00} was shown in the three dimensional case to be  $(e,132,23)$ in \cite{SMALLTRIP}. This list, via composition, can be used to create a family of multi-dimensional continued fractions, which are called combination TRIP maps.  As seen in \cite{SMALLTRIP}, (almost) all known multidimensional continued fraction algorithms are combination TRIP maps.
 
For almost every point $ \bfx \in C $ and a map $T = T(\sigma,\tau_0,\tau_1)$, we can associate the following sequences:

\begin{definition} \label{defFareySeq}
    The \keyterm{Farey Sequence} associated to the point $\bfx$ and the map $T$ is the unique sequence $\{i_n\}_{n \geq 0}$, where each $i_n$ is either $0$ or $1$, such that if $T^n(x) \in C_i$ then $i_n = i$.
      If the Farey Sequence $\set{i_n}_{n \geq 0}$ contains $0$ infinitely many times, then we associate a \keyterm{Gauss Sequence} $\{k_n\}_{n \geq 0}$, where $\{i_n\}=1^{k_0}01^{k_1}01^{k_2}0\hdots$.
\end{definition}

We will use $ V = I $, corresponding to the standard basis in $\R^3$. (Often, $V$ is chosen to be 
$ \begin{bmatrix} 1 & 1 & 1 \\ 0 & 1 & 1 \\ 0 & 0 & 1 \end{bmatrix}, $ which is a more natural choice if we would  be studying TRIP maps as giving methods of division, which we are not doing here.)

All TRIP maps on the $3$-dimensional cone $C$ naturally project to maps on a $2$-dimensional triangle $\triangle$ in projective space. Setting $ V = I $, we take
$$ \triangle := C \cap \set{x+y+z=1} = \set{(x,y,z) \in \R^3: x \geq 0, \; y \geq 0, \; z \geq 0, \; x+y+z = 1}. $$
We subdivide $\triangle$ into two subtriangles
$$ \triangle_0(\sigma,\tau_0,\tau_1) = C_0(\sigma,\tau_0,\tau_1) \cap \triangle, \qquad \triangle_1(\sigma,\tau_0,\tau_1) = C_1(\sigma,\tau_0,\tau_1) \cap \triangle, $$
and more generally,
$$ \triangle_{i_0 \hdots i_{m-1}}(\sigma, \tau_0, \tau_1) = C_{i_0 \hdots i_{m-1}}(\sigma, \tau_0, \tau_1) \cap \triangle, $$
where we define 
\begin{equation*}
    C_{i_0 i_1 \hdots i_{m-1}}(\sigma,\tau_0,\tau_1) = V F_{i_0}(\sigma,\tau_0,\tau_1) F_{i_1}(\sigma,\tau_0,\tau_1) \hdots F_{i_{m-1}}(\sigma,\tau_0,\tau_1).
\end{equation*}
The TRIP map
$$ T(\sigma,\tau_0,\tau_1): \triangle - \parens{\triangle_0(\sigma, \tau_0, \tau_1) \cap \triangle_1(\sigma, \tau_0, \tau_1)} \rightarrow \triangle $$
is then
\begin{equation*}    
    T(\sigma, \tau_0, \tau_1) \bfx = \begin{cases} c T_0(\sigma, \tau_0, \tau_1) \bfx, & \bfx \in  C_0(\sigma, \tau_0, \tau_1)  \\
    c T_1(\sigma, \tau_0, \tau_1) \bfx, & \bfx \in  C_1(\sigma, \tau_0, \tau_1), \end{cases}
\end{equation*}
where the scalar multiple $ c = c(\sigma,\tau_0,\tau_1,\bfx) $ is chosen so that the point lands in $\triangle$.

Consider, for example, the TRIP map $T(e,e,e)$. We have
\begin{equation*}
    \triangle_0(e,e,e) = \set{(x,y,z) \in \triangle: x \leq z}, \qquad \triangle_1(e,e,e) = \set{(x,y,z) \in \triangle: x \geq z},
\end{equation*}
and
\begin{equation*}
    T(e,e,e)(x,y,z) = \begin{cases}
        \parens{\frac{y}{y+z}, \frac{z-x}{y+z}, \frac{x}{y+z}}, & (x,y,z) \in \triangle_0 \\
        \parens{\frac{x-z}{x+y}, \frac{y}{x+y}, \frac{z}{x+y}}, & (x,y,z) \in \triangle_1.
    \end{cases}
\end{equation*}

\begin{center}
\begin{tikzpicture}[scale=5]
\draw(0,0)--(1,0);
\draw(0,0)--(1/2,1/2);
\draw(1,0)--(1/2,1/2);
\draw[dashed] (1/4,1/4)--(1,0);
\node[] at (1/2,1/4){$\triangle_0$};
\node[] at (1/4,1/8){$\triangle_1$};

\node[below left]at(0,0){$(1,0,0)$};
\node[below right]at(1,0){$(0,1,0)$};

\node[above left] at (1/2,1/2){$(0,0,1)$};
\node[ left]at(1/4,1/4){$(1/2,0, 1/2)$};
\end{tikzpicture}
\end{center}

There is also the fast-multiplicative Gauss version, where we set
$$\triangle_k^G(\sigma, \tau_0, \tau_1)  = V F_1^k(\sigma, \tau_0, \tau_1) F_0(\sigma, \tau_0, \tau_1)  \cap \{(x,y,z) : x+y+z=1\}.$$
For the $(e,e,e)$ case, again when $V=I$, we have 
$$ F_1^k(e,e,e) F_0(e,e,e) =  \begin{bmatrix} 0 & k & k+1 \\ 1 & 0 & 0 \\ 0 & 1 & 1 \end{bmatrix}, $$
and $\triangle_k^G(e,e,e) $ will be the triangle with vertices
\begin{equation*}
    (0, \; 1, \; 0), \qquad \parens{\frac{k}{k+1}, \; 0, \; \frac{1}{k+1}}, \qquad \parens{\frac{k+1}{k+2}, \; 0, \; \frac{1}{k+2}}.
\end{equation*}

\begin{center}
\begin{tikzpicture}[scale=5]
\draw(0,0)--(1,0);
\draw(0,0)--(1/2,1/2);
\draw(1,0)--(1/2,1/2);
\draw[dashed] (1/4,1/4)--(1,0);
\draw[dashed] (1/6,1/6)--(1,0);
\draw[dashed] (1/8,1/8)--(1,0);
\draw[dashed] (1/10,1/10)--(1,0);
\draw[dashed] (1/12,1/12)--(1,0);

\node[] at (1/2,1/4){$\triangle_0^G$};
\node[] at (2/5,1/6){$\triangle_1^G$};

\node[below left]at(0,0){$(1,0,0)$};
\node[below right]at(1,0){$(0,1,0)$};

\node[above left] at (1/2,1/2){$(0,0,1)$};
\node[ left]at(1/4,1/4){$(1/2,0, 1/2)$};
\node[ left]at(1/6,1/6){$(2/3,0, 1/3)$};
\end{tikzpicture}
\end{center}

The Gauss map $T^G(\sigma, \tau_0, \tau_1) $ is the map on $\triangle$ defined to map each vector $\bfx$ in the interior of a subtriangle $\triangle_k^G(\sigma, \tau_0, \tau_1) $ to a constant multiple of
$$ V(VF_1^k(\sigma, \tau_0, \tau_1) F_0(\sigma, \tau_0, \tau_1) )^{-1} \bfx, $$
where the constant is chosen so that it lands in $\triangle$.  
In particular, for $(x,y,z) \in \triangle_k^G(e,e,e)$,
$$ T^G(e,e,e)(x,y,z) = \parens{\frac{y}{y+z}, \frac{(k+1)z - x}{y+z}, \frac{x-kz}{y+z}}. $$

\subsection{$\R^2$ TRIP Maps}\label{R2TRIP Maps}

The $\R^3$ TRIP maps  have  two-dimensional analogs.
Fix two matrices
     \begin{equation}
    F_0 = \begin{bmatrix}  0 & 1 \\ 1 & 1 \end{bmatrix}, \qquad F_1 = \begin{bmatrix} 1 & 1  \\ 0  & 1  \end{bmatrix},
\end{equation}
and let $C$ be the cone in $\R^2$ spanned by the two linearly independent vectors 
 \begin{equation}
    v_0 = \begin{bmatrix}  0  \\  1  \end{bmatrix}, \qquad v_1 = \begin{bmatrix} 1 \\ 1  \end{bmatrix}.
\end{equation}
Write the two permutations  $\sigma \in S_2$ as 
  \begin{equation}
    e = \begin{bmatrix}  1 & 0 \\  0 & 1 \end{bmatrix}, \qquad (12) = \begin{bmatrix} 0 & 1  \\ 1  & 0  \end{bmatrix}.
\end{equation}
For $\sigma, \tau_0, \tau_1\in S_2$, set
\begin{eqnarray*}
    F_0(\sigma, \tau_0, \tau_1) &=&  \sigma F_0 \tau_0, \\
    F_1(\sigma, \tau_0, \tau_1) &=&  \sigma F_1 \tau_0, \\
    C_0 (\sigma, \tau_0, \tau_1) &=& V F_0(\sigma, \tau_0, \tau_1), \\
    C_1 (\sigma, \tau_0, \tau_1) &=&V F_1(\sigma, \tau_0, \tau_1).
\end{eqnarray*}
Then, the $\R^2$ TRIP map 
$$T(\sigma, \tau_0, \tau_1): C - ( C_0 (\sigma, \tau_0, \tau_1) \cap  C_1 (\sigma, \tau_0, \tau_1) ) \rightarrow C$$
is
\begin{equation*}
    T(\sigma, \tau_0, \tau_1) \bfx = \begin{cases} T_0(\sigma, \tau_0, \tau_1) \bfx, \qquad \bfx \in  \Delta_0(\sigma, \tau_0, \tau_1)  \\
    T_1(\sigma, \tau_0, \tau_1) \bfx, \qquad \bfx \in  \Delta_1(\sigma, \tau_0, \tau_1), \end{cases}  
\end{equation*}
where
\begin{equation*}
    T_0(\sigma, \tau_0, \tau_1) = V(F_0(\sigma, \tau_0, \tau_1))^{-1}V^{-1}, \qquad T_1(\sigma, \tau_0, \tau_1) = V(F_1(\sigma, \tau_0, \tau_1))^{-1}V^{-1}.
\end{equation*}
This yields a family of $8$ maps. The map $T(e,e,e)$ is the traditional Farey map, meaning that the fast version is the traditional Gauss map. The map $T(e,12,e)$ corresponds to the backward continued fraction algorithm (these are also called Hirzebruch-Jung continued fractions),  which is when the plus signs in the tradiational continued fraction expansion are replaced by negative signs.  The other six maps are new.

\section{$S$-adic Words and Complexity} \label{$S$-adic sequences and Complexity}

%{This section is mostly the same as Sections 3.1 and 3.2 in the previous draft. I added Definition \ref{def:language}, the definition of a proper prefix and suffix, and the two paragraphs at the end of this section. I combined Section 3.4 in the previous draft with Section 5.}

We consider sequences or \keyterm{infinite words} $ \bfu = u_0 u_1 u_2 \hdots $, where each $u_i$ is from some \keyterm{alphabet} $\A$. 
A \keyterm{finite word} over $\A$ is a sequence of finitely many characters $ w = w_0 w_1 \hdots w_{n-1} $ for some positive interger $ n $. Such $n$ is called the \keyterm{length} of $w$, denoted $ \abs{w} $. We denote the empty word, or the word of length $0$, by $\emptyword$.
We denote the set of all finite words over $\A$ by $ \A^* $ and the set of all infinite words by $ \A^\N $.
For finite words $ v = v_0 v_1 \cdots v_{n-1} $ and $ w = w_0 w_1 \cdots w_{m-1} $ we define the \keyterm{concatenation} of $v$ and $w$ to be $ vw := v_0 v_1 \cdots v_{n-1} w_0 w_1 \cdots w_{m-1} $.
Concatenation of finite words is associative but not commutative.

The following definitions are adopted from Berth\'{e} and Labb\'{e} \cite{ARP}, Berth\'{e} and Rigo  \cite{Berthe-Rigo}, and Fogg \cite{Fogg}:

\begin{definition}
    A \keyterm{factor} or \keyterm{subword} of an infinite word $\bfu$ is a finite word $w$ that appears exactly as some sequence of consecutive characters in $\bfu$. Specifically, this means that if $ \bfu = a_0 a_1 a_2 \cdots $ and $ \abs{w} = n $, then $w$ is a factor if for some non-negative integer $ m $,  we have $ w = a_m a_{m+1} \cdots a_{m+n-1} $.
    Factors of a finite word are defined similarly.
\end{definition}

\begin{definition} \label{def:language}
    A \keyterm{language} is a subset $ \mathcal{L} \subseteq \A^* $ that is factorially closed, that is, if $ w \in \mathcal{L} $ and $v$ is a factor of $w$, then $ v \in \mathcal{L} $.
    We say that a language $\mathcal{L} $ is \keyterm{left extendable} (respectively \keyterm{right extendable}) if for all $ w \in \mathcal{L} $, there is some letter  $ a \in \A $ such that $ aw \in \mathcal{L} $ (respectively $ wa \in \mathcal{L} $).
    If $\mathcal{L}$ is both left extendable and right extendable, then we say $\mathcal{L}$ is \keyterm{extendable}.
\end{definition}

Note that our definition of a language and extendability differs slightly from that in Berth\'{e} and Rigo \cite{Berthe-Rigo}.

\begin{definition} \label{def:comp-language}
    The \keyterm{factor complexity} of a language $\mathcal{L}$ is the function $p_\mathcal{L}$ that maps each positive integer $n$ to the number of distinct words of length $n$ contained in $\mathcal{L}$.
\end{definition}
While there many different types of complexity, overwhelmingly in this paper we when we say ``complexity,'' we will mean factor complexity.

\begin{definition}
    A \keyterm{prefix} of a finite or infinite word $w$ is a word $a$ that appears at the beginning of $w$, that is, $ w = av $ for some finite or infinite word $v$.
    Similarly, a \keyterm{suffix} of a finite word $w$ is a word $b$ that appears at the end of $w$, that is, $ w = vb $ for some finite word $v$.
    We call a prefix or suffix of a word $w$ that is not equal to $w$ a \keyterm{proper prefix} or \keyterm{proper suffix}, respectively.
\end{definition}

\begin{definition}
    An infinite word $\bfu$ is \keyterm{recurrent} if all of its factors occur infinitely often in $\bfu$.
    If for any factor $w$ of $\bfu$ there exists $N$ such that every subword of length $N$ in $\bfu$ contains $w$ as a factor, then $\bfu$ is \keyterm{uniformly recurrent}. 
\end{definition}

\begin{definition}
    The \keyterm{language} of an infinite word $\bfu$, denoted $\mathcal{L}(\bfu)$, is the set of all factors of $\bfu$.
    The \keyterm{complexity function} of $\bfu$, denoted $p_\bfu$, $p$, $p_{\mathcal{L}(\bfu)}$, or even $p_{\mathcal{L}}$,  is simply the complexity of $\mathcal{L}(\bfu)$, so that $p_{\bfu}(n)$ is the number of distinct factors of $\bfu$ of length $n$.
\end{definition}

For example, for the word $\bfu = 123123123\ldots$, the set of factors of length $3$ is $\set{123,231,312}$, so $ p_\bfu(3) = 3 $.

A \keyterm{substitution} is a function $ \sigma: \A \rightarrow \A^* $. A substitution can be extended to an operation on finite words through the operation of concatenation,
\begin{equation}
    \sigma(u_0 u_1 u_2 \hdots u_{n-1}) = \sigma(u_0) \sigma(u_1) \sigma(u_2) \hdots\sigma(u_{n-1})
\end{equation}
and  to infinite words
\begin{equation}
    \sigma(u_0 u_1 u_2 \hdots) = \sigma(u_0) \sigma(u_1) \sigma(u_2) \hdots,
\end{equation}

We give the following definition, as in Definition 1.4.1 of \cite{Thuswaldner}:
\begin{definition} \label{defSadic}
    Let $S$ be a finite set of substitutions on $\A$. An infinite word $ \bfu \in \A^\N $ is \keyterm{$S$-adic} if there is a sequence $ \set{\udesub{m}}_{m=0}^\infty $ of infinite words and a sequence $ \set{\sigma_m}_{m=0}^\infty \subseteq S $ of substitutions such that $ \udesub{0} = \bfu $ and for all $m$, $ \udesub{m} = \sigma_m(\udesub{m+1}) $.
    In this case, $\set{\sigma_m}$ is the \keyterm{coding sequence} of $\bfu$.
\end{definition}

The classical case of $S$-adic words are the Sturmian words, which are infinite words on two letters with complexity $n+1$.  Let $ \mathcal{A} = \set{1,2} $ and $ S = \set{S_0,S_1} $ with:
\begin{itemize}
    \item $ S_0: \quad 1 \mapsto 1, \quad 2 \mapsto 21 $,
    \item $ S_1: \quad 1 \mapsto 12, \quad 2 \mapsto 2 $.
\end{itemize}
It is known that an infinite word on two letters  is Sturmian if and only if it has the above coding sequence containing $S_0$ and $S_1$ infinitely often. See Chapter 2 in Lothaire \cite{Lothaire}, Chapter 6 in  Fogg \cite{Fogg} or Chapter 9 in Allouche and Shallit \cite{Allouche-Shallit}.

The $S$-adic conjecture asks to what extent this can be generalized to a relationship between $S$-adic words and words of at most linear complexity.
In one direction, it was proven by Ferenczi \cite{Ferenczi} that any uniformly recurrent word $\bfu$ with linear complexity has the same language as an $S$-adic word. Leroy \cite{Leroy1} strengthened this result and showed that $\bfu$ itself is $S$-adic.
However, there exists $S$-adic words with only two different substitutions that have superlinear complexity (see, for example, \cite{Fogg}). (For recent work on the converse, see Espinoza \cite{Espinoza}.)

As mentioned in the introduction, recent work has explored classes of $S$-adic words on three letters with low complexity. For example, classes of such infinite words with upper bounds $2n+1$ were found by Arnoux and Rauzy \cite{AR} and by Cassaigne, Labb\'{e}, and Leroy \cite{Cassaigne}, and a class of such infinite words $\bfu$ with $ 2n+1 \leq p_{\mathcal{L}}(n) \leq \frac{5}{2}n $ was found by Berth\'{e} and  Labb\'{e} \cite{ARP}.

\section{Correspondence between Multi-dimensional Continued Fractions and $S$-adic sequences} \label{Correspondence}

%{Sections 4.1 and 4.2 are mostly the same as in the previous draft. I added Section 4.3 to explain and justify using the perspective of languages and complexity defined from coding sequences directly rather than from $S$-adic words. I also removed some content from Section 4.2 as it occurs in the new Section 4.3.}

Connections between three-dimensional continued fraction algorithms and classes of $S$-adic sequences have been explored by many others in recent years. For an excellent general background, see Fogg \cite{Fogg}, Berth\'{e} and Delecroix \cite{Berthe-Delecroix} and Thuswaldner's recent survey  \cite{Thuswaldner}. We will be following, to some extent, chapter 3 of \cite{Thuswaldner}. In this section, we present a way to associate a TRIP map $T(\sigma,\tau_0,\tau_1)$ with a set $ S(\sigma,\tau_0,\tau_1) = \set{S_0(\sigma,\tau_0,\tau_1), S_1(\sigma,\tau_0,\tau_1)} $ of two substitutions.

For the rest of the paper, we restrict our alphabet to $ \A = \set{1,2,3} $.

\subsection{Abelianizations} \label{sec:abelianization}

We start with seeing how any substitution can be associated to a matrix via abelianization. While all of this works for any finite alphabet, we will restrict attention to the three letter alphabet $ \A = \set{1,2,3} $ for ease of notation.

\begin{definition} \label{def:abel-word}
    The \keyterm{abelianization} of a finite word $w$ is defined as
    \begin{equation}
        \bfl(w) = \begin{bmatrix} \abs{w}_1 \\ \abs{w}_2 \\ \abs{w}_3 \end{bmatrix},
    \end{equation}
    and the \keyterm{frequency vector} is $ \frac{1}{\abs{w}} \bfl(w) $, where $\abs{w}_i$ is the number of times the character $i$ appears in $w$.
    We define the frequency vector of an infinite word $\bfu$ to be $ \bff(\bfu) = \parens{f_1(\bfu), f_2(\bfu), f_3(\bfu)} $ with
    \begin{equation}
        f_i(\bfu) = \lim_{n \rightarrow \infty} \frac{\abs{u_0 u_1 u_2 \hdots u_{n-1}}_i}{n},
    \end{equation}
    provided that this limit exists.
\end{definition}

\begin{definition}
    The abelianization of a substitution $\sigma$ is a $ 3 \times 3 $ matrix $\ell(\sigma)$ defined by
    \begin{equation}
        \ell(\sigma) = \set{a_{ij}}, \qquad a_{ij} = \abs{\sigma(j)}_i, 
    \end{equation}
   where $ i,j \in \{1,2,3\}.$
\end{definition}

For example, the abelianization of the substitution
$$\sigma: \quad 1\rightarrow 2, \quad 2\rightarrow 3, \quad 3 \rightarrow 13$$
is the matrix
$$ \ell(\sigma) = \begin{bmatrix} 0&0&1 \\ 1&0 & 0 \\ 0&1& 1 \end{bmatrix} .$$
Note that two distinct substitutions can have the same abelianization, as seen for the substitution
$$ \widehat{\sigma}: \quad 1 \mapsto 2, \quad 2 \mapsto 3, \quad 3 \mapsto 31, $$
we have $ \ell(\sigma) = \ell(\widehat{\sigma}) $.
An important property is that under abelianization, the action of a substitution becomes matrix multiplication,
\begin{equation} \label{abelprod}
    \ell(\sigma(w)) = \ell(\sigma) \ell(w).
\end{equation}
Note that this means $ \ell(\sigma^2) =  \ell(\sigma)^2.$

While two different substitutions can have the same matrix, we can always go from a $3\times 3$ matrix with non-negative entries to a substitution. For example, given any matrix 
$$M= \begin{bmatrix} a&b&c \\ d&e & f \\ g&h& k \end{bmatrix}, $$
the substitution
$$ \sigma_M: \quad 1 \mapsto 1^a2^d 3^g, \quad 2 \mapsto 1^b2^e3^h, \quad 3 \mapsto 1^c2^f3^k $$
will have abelianization $M$.

\subsection{Substitutions and TRIP Maps} \label{sec:sub-cont-frac}

We aim to define substitutions $S_0(\sigma,\tau_0,\tau_1)$ and $S_1(\sigma,\tau_0,\tau_1)$ whose abelianizations are $F_0(\sigma,\tau_0,\tau_1)$ and $F_1(\sigma,\tau_0,\tau_1)$, respectively.
To do so, we first define the substitutions of the TRIP map $T(e,e,e)$ as
\begin{itemize}
    \item $ S_0(e,e,e): \quad 1 \mapsto 2, \quad 2 \mapsto 3, \quad 3 \mapsto 13 $,
    \item $ S_1(e,e,e): \quad 1 \mapsto 1, \quad 2 \mapsto 2, \quad 3 \mapsto 13 $.
\end{itemize}
Then, for each of the other 215 TRIP maps, we define
\begin{align*}
    S_i (\sigma, \tau_0, \tau_1) = \sigma \circ S_i(e,e,e ) \circ \tau_i
\end{align*}
for $ i \in \set{0,1} $. Here we consider the permutations $\sigma$ and $\tau_i$ on $\A$ as substitutions which map each letter to a word of length $1$.
We define the $ S(\sigma, \tau_0, \tau_1) $-adic sequences using Definition \ref{defSadic} with $ S = \set{S_0(\sigma, \tau_0, \tau_1), S_1(\sigma, \tau_0, \tau_1)} $.
When the choice of the permutations is clear from context, we will often refer to the substitutions $S_i(\sigma,\tau_0,\tau_1)$ simply by $S_i$.

We will routinely refer to the substitutions $ S_k^G := S_1^k \circ S_0 $ as ``Gauss substitutions.''
Any coding sequence over $\set{S_0,S_1}$ containing $S_0$ infinitely often can be expressed as a coding sequence over $ \set{S_k^G}_{k=0}^\infty $ simply by replacing each group of $ k \geq 0 $ occurrences of $S_1$ followed by an $S_0$ by $S_k^G$.

We can associate points in $\triangle$ with $S(\sigma,\tau_0,\tau_1)$-adic sequences $\bfu$ according to their frequency ratios, and the Farey sequences of such points to the coding sequence of $\bfu$. The following proposition (which is a special case of Theorem 5.7 in Berth\'{e} and Vincent Delecroix \cite{Berthe-Delecroix}, which provides a proof)
 determines when this is possible:

\begin{proposition} \label{freqvector}
    For any $ \sigma, \tau_0, \tau_1 \in S_3 $, let $F_0$ and $F_1$ be the corresponding Farey matrices, and $S_0$ and $S_1$ the corresponding substitutions. Let $ (i_0, i_1, i_2, \cdots) $ be the Farey sequence of some point $ (x,y,z) \in \triangle $. If
    \begin{equation} \label{infintconverge}
        \bigcap_{m=0}^\infty \triangle_{i_0,i_1,i_2,\cdots,i_{m-1}}(\sigma,\tau_0,\tau_1) = \set{(x,y,z)},
    \end{equation}
    then any $S(\sigma,\tau_0,\tau_1)$-adic sequence $\bfu$ with coding sequence $ (S_{i_0}, S_{i_1}, S_{i_2}, \cdots) $ will have frequency vector $(x,y,z)$.
\end{proposition}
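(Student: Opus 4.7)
The plan is to exploit the fact that, by the construction of Section \ref{sec:sub-cont-frac}, the abelianization $\ell(S_i(\sigma,\tau_0,\tau_1))$ equals the Farey matrix $F_i(\sigma,\tau_0,\tau_1)$. Hence, for any finite composition $\Sigma_m := S_{i_0} \circ \cdots \circ S_{i_{m-1}}$, identity \eqref{abelprod} gives $\ell(\Sigma_m) = A_m := F_{i_0} F_{i_1} \cdots F_{i_{m-1}}$, whose $j$-th column is the letter-count vector of $\Sigma_m(j)$. Normalizing each column by its $L^1$-norm returns precisely the three vertices of the subtriangle $\triangle_{i_0,\ldots,i_{m-1}}(\sigma,\tau_0,\tau_1)$. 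The proof then reduces to showing that the letter-frequency vector of each long enough prefix of $\bfu$ is well-approximated by a convex combination of these vertices.

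Concretely, for each $n$ and each level $m$ I would decompose the length-$n$ prefix $P_n$ of $\bfu$ hierarchically using $\udesub{0} = \Sigma_m(\udesub{m})$. A short induction on $m$ yields
\[
P_n \;=\; \Sigma_m\!\parens{w_n^{(m)}} \, t_n^{(m)},
\]
where $w_n^{(m)}$ is a prefix of $\udesub{m}$ and $t_n^{(m)}$ is a ``tail'' whose length is bounded by a constant $K_m$ depending only on $m$ and the image lengths of $S_{i_0},\ldots,S_{i_{m-1}}$, but not on $n$. Abelianizing and using \eqref{abelprod},
\[
\bfl(P_n) \;=\; A_m \, \bfl\!\parens{w_n^{(m)}} + \bfl\!\parens{t_n^{(m)}},
\]
so after dividing by $n$ the second summand contributes $O(K_m/n)$, while the first is a non-negative combination of columns of $A_m$. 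Rescaling the first summand by its $L^1$-norm turns it into a convex combination of the vertices of $\triangle_{i_0,\ldots,i_{m-1}}$, hence it lies in that closed triangle.

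The vectors $\bfl(P_n)/n$ all lie in the compact simplex $\triangle$, so they have accumulation points. By the preceding step, for \emph{every} fixed $m$ every accumulation point must lie in the closed triangle $\triangle_{i_0,\ldots,i_{m-1}}(\sigma,\tau_0,\tau_1)$, and hypothesis \eqref{infintconverge} then forces every accumulation point to equal $(x,y,z)$. Consequently, the limit exists and equals $(x,y,z)$, which is the desired frequency vector. The main obstacle I anticipate is controlling the tail $t_n^{(m)}$: a naive expansion can produce a tail whose length inflates with $m$, but for each fixed $m$ one secures a uniform-in-$n$ bound by recursively writing it as $\Sigma_{m-1}(r_n^{(m-1)}) \cdots S_{i_0}(r_n^{(1)}) \, r_n^{(0)}$, where each $r_n^{(j)}$ is a proper prefix of some $S_{i_j}(a)$ and is therefore shorter than the maximum image length of $S_{i_j}$. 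Once this bound is in hand, dividing by $n \to \infty$ discards the tail and the convexity-plus-nested-compacts argument closes the proof.
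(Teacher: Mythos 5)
Your proposal is correct and follows essentially the same route as the paper's proof: decompose the length-$n$ prefix as a substituted prefix of $\udesub{m}$ followed by a tail of length bounded independently of $n$, observe that the abelianized substituted part normalizes to a point of the closed subtriangle $\triangle_{i_0,\ldots,i_{m-1}}(\sigma,\tau_0,\tau_1)$, and let the nested intersection hypothesis force convergence to $(x,y,z)$. The only difference is cosmetic — you phrase the conclusion via compactness and accumulation points where the paper runs an explicit $\eps/2+\eps/2$ estimate.
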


We give the following definition of primitivity from page 29 of Thuswaldner \cite{Thuswaldner}:

\begin{definition}
    An $S$-adic word $\bfu$ with coding sequence $\set{\sigma_m}_{m=0}^\infty$ is \keyterm{primitive} if for all $ m \in \NatZero $, there exists $ n > m $ such that $ M_m M_{m+1} M_{m+2} \cdots M_{n-1} $ has all entries strictly positive, where each $M_i$ is the abelianization matrix of the substitution $\sigma_i$.
\end{definition}

Lemma 1.5.10 of \cite{Thuswaldner} implies, in particular, that condition (\ref{infintconverge}) is true for primitive $S$-adic words that are also recurrent. 

Our ultimate goal is to find bounds on the complexity of $S$-adic sequences associated with various TRIP maps. Specifically, for any $ \sigma, \tau_0, \tau_1 \in S_3 $, we would like to find an upper bound for $p$ for any $F(\sigma,\tau_0,\tau_1)$-adic sequences. We are particularly interested in cases with low upper bounds such as $2n+1$ or $3n$.

\begin{remark}
    Our definition of $S_i$ substitutions can differ from what one would get by taking the $F_i(\sigma, \tau_0, \tau_1)$ matrix as defined in \cite{Garrity-Mccdonald} and defining an associated substitution to this matrix as was done in the previous subsection. For example, we have
    $$ S_0(13,e,e): \quad 1 \mapsto 2, \quad 2 \mapsto 1, \quad 3 \mapsto 31, $$
    but the substitution we would derive from the matrix $F_0(13,e,e)$ in the previous subsection is
    $$ 1 \mapsto 2, \quad 2 \mapsto 1, \quad 3 \mapsto 13. $$
    The definitions given above allow for equivalence relations between various $S$-adic sequences that correspond to certain equivalence relations between various TRIP maps which lead to the same complexities, as we will see in Section \ref{More general TRIP maps}.
    There is, however, one source of freedom that we do have in choosing the substitutions from the Farey matrices, which we will explain in Section \ref{Statements on  Complexity for all TRIP maps}.
\end{remark}

\subsection{Removing the Requirement of Primitivity}

In most cases throughout the literature, complexity corresponding to multidimensional continued fraction maps is viewed through the lens of infinite $S$-adic words. However, it is possible to define a language directly from a coding sequence without a particular $S$-adic word using the following definition from \cite{Thuswaldner}:

\begin{definition} \label{def:lang-coding-seq}
    The \keyterm{language associated with} a coding sequence $ \mathbf{\sigma} = (\sigma_0, \sigma_1, \sigma_2, \hdots) $ is the smallest possible language (by inclusion) containing $ (\sigma_0 \circ \sigma_1 \circ \hdots \circ \sigma_{n-1})(a) $ for all $ n \in \N $ and $ a \in \mathcal{A} $.
    Equivalently,
    \begin{equation}
        \lang(\sigma) = \set{w: w \text{ is a factor of } (\sigma_0 \circ \sigma_1 \circ \hdots \circ \sigma_{n-1})(c) \text{ for some } n \in \N, \; c \in \A}. 
    \end{equation}
\end{definition}

If the coding sequence $ \sigma = \set{\sigma_n}_{n=0}^\infty $ is primitive, as the vast majority of cases occurring in the literature are, then it makes no difference whether we consider the complexity of $S$-adic words with coding sequence $\sigma$ or the complexity of the language $L(\sigma)$ as defined in Definition \ref{def:lang-coding-seq}. 
Indeed, assuming primitivity, all $S$-adic words $\bfu$ with coding sequence $\sigma$ satisfy $ \lang(\bfu) = \lang(\sigma) $, and such an $S$-adic word always exists as one can be found by taking a limit point of the sequence
$$ u_n := \parens{\sigma_0 \circ \sigma_1 \circ \hdots \circ \sigma_{n-1}}(a) $$
for any $ a \in \A $.

However, many of the $S$-adic languages that we consider in this paper are not primitive, so the distinction between these two perspectives is substantial.
For example, consider the substitutions associated to the $T(e,13,e)$ TRIP map,
\begin{itemize}
    \item $ S_0: \quad 1 \mapsto 13, \quad 2 \mapsto 3, \quad 3 \mapsto 2 $,
    \item $ S_1: \quad 1 \mapsto 1, \quad 2 \mapsto 2, \quad 3 \mapsto 13 $.
\end{itemize}
The $S$-adic word $ \bfu = 1323232323\hdots $ with coding sequence $ (S_0, S_0, S_0, \hdots) $ has complexity $n+2$, but binary sequence of $2$s and $3$s is also $S$-adic with the same coding sequence. Such words have complexity up to $2^n$, but they do not properly reflect the substitutions as each character is its own ``$S$-adic component,'' meaning it can be de-substituted an arbitrarily large number of times independently of the other characters.

We believe a reasonable condition to impose on $S$-adic words is that for all $ n \in \N $, there is a single character $ c \in \mathcal{A} $ and $ m \in \N $ such that $ u_0 u_1 u_2 \hdots u_{n-1} $ is a prefix of $ \parens{\sigma_0 \circ \sigma_1 \circ \hdots \circ \sigma_{m-1}}(c) $. We call such sequences \keyterm{singly-generated}.
It can be shown that the only singly-generated $S$-adic sequence with this coding sequence is $ 1323232323\hdots $, so in a sense the true upper bound is $n+2$.

Now consider the substitutions associated with the TRIP map $T(13,12,132)$,
\begin{itemize}
    \item $ S_0: \quad 1 \mapsto 1, \quad 2 \mapsto 2, \quad 3 \mapsto 31 $,
    \item $ S_1: \quad 1 \mapsto 31, \quad 2 \mapsto 3, \quad 3 \mapsto 2 $.
\end{itemize}
We would expect the complexity properties of $S(13,12,132)$-adic words with coding sequence $ (S_1, S_1, S_1, \hdots) $ to be the same as those of $S(e,13,e)$-adic words with coding sequence $ (S_0, S_0, S_0, \hdots) $, as the corresponding substitutions are exactly the same with the order of characters reversed (see Section \ref{More general TRIP maps} for a more detailed discussion). However, an infinite singly-generated $S$-adic word with this coding sequence does not exist, as can be seen in 
$$1 \xmapsto{S_1} 31 \xmapsto{S_1} 231 \xmapsto{S_1} 3231 \xmapsto{S_1}\cdots,$$
giving us that the first term quickly starts to oscillate between $3$ and $2$.

If we consider instead the complexity of the languages of coding sequences as defined in Definition \ref{def:lang-coding-seq} as opposed to the complexity of $S$-adic sequences, we have $n+2$ for both cases and these issues are resolved.
For this reason, we believe considering complexity from the perspective of languages defined directly from coding sequences is more natural for our work.
Our complexity bounds immediately imply the same upper bounds on complexity for singly-generated $S$-adic words via the following:

\begin{proposition}\label{prop on lang}
    For any singly-generated $S$-adic word $\bfu$ with coding sequence $\sigma$, $ \lang(\bfu) \subseteq \lang(\sigma) $, with equality if $\sigma$ is primitive.
\end{proposition}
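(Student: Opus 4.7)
The plan is to handle the two assertions separately: the general inclusion follows directly from unpacking the singly-generated hypothesis, while the reverse inclusion under primitivity requires combining primitivity with a growth argument on the prefix depth.

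For the inclusion $\lang(\bfu) \subseteq \lang(\sigma)$, I would take any $w \in \lang(\bfu)$ of length $\ell$ and let $n$ be large enough that $w$ occurs as a factor of $u_0 u_1 \cdots u_{n-1}$. By the singly-generated hypothesis applied to this prefix length, there exist $c \in \A$ and $m \in \N$ such that $u_0 u_1 \cdots u_{n-1}$ is a prefix of $(\sigma_0 \circ \sigma_1 \circ \cdots \circ \sigma_{m-1})(c)$. Since factors are transitive (a factor of a factor is a factor), $w$ is a factor of $(\sigma_0 \circ \cdots \circ \sigma_{m-1})(c)$, and hence $w \in \lang(\sigma)$ by Definition \ref{def:lang-coding-seq}.

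For the reverse inclusion under primitivity, given $w \in \lang(\sigma)$, there exist $c_0 \in \A$ and $n \in \N$ with $w$ a factor of $(\sigma_0 \circ \cdots \circ \sigma_{n-1})(c_0)$. Applying primitivity at index $n$, I can find $N > n$ such that $M_n M_{n+1} \cdots M_{N-1}$ has all strictly positive entries, which means $(\sigma_n \circ \cdots \circ \sigma_{N-1})(a)$ contains $c_0$ for every $a \in \A$. Applying $(\sigma_0 \circ \cdots \circ \sigma_{n-1})$ to this and using that substitution preserves the factor relation, the word $W_a := (\sigma_0 \circ \cdots \circ \sigma_{N-1})(a)$ contains $w$ as a factor for every $a \in \A$. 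Set $L := \max_{a \in \A} |W_a|$, which is a finite quantity depending only on $N$ and the substitutions in $S$.

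The final step is to locate an actual occurrence of $w$ inside $\bfu$. For each prefix length $k$, the singly-generated hypothesis furnishes $c_k \in \A$ and $m_k \in \N$ with $u_0 \cdots u_{k-1}$ a prefix of $(\sigma_0 \circ \cdots \circ \sigma_{m_k - 1})(c_k)$. Because $S$ is finite, there is a uniform bound $M$ on $|\sigma(a)|$ for $\sigma \in S$ and $a \in \A$, so $k \leq |(\sigma_0 \circ \cdots \circ \sigma_{m_k - 1})(c_k)| \leq M^{m_k}$, which forces $m_k \to \infty$. Choose $k \geq L$ with $m_k > N$, and factor
\[
(\sigma_0 \circ \cdots \circ \sigma_{m_k - 1})(c_k) = (\sigma_0 \circ \cdots \circ \sigma_{N-1})\bigl((\sigma_N \circ \cdots \circ \sigma_{m_k - 1})(c_k)\bigr).
\]
If $a^\ast$ denotes the first letter of the non-empty word $(\sigma_N \circ \cdots \circ \sigma_{m_k - 1})(c_k)$, then $W_{a^\ast}$ is a prefix of $(\sigma_0 \circ \cdots \circ \sigma_{m_k - 1})(c_k)$, and since $k \geq L \geq |W_{a^\ast}|$, it is also a prefix of $u_0 \cdots u_{k-1}$. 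Because $W_{a^\ast}$ contains $w$ as a factor, so does $u_0 \cdots u_{k-1}$, giving $w \in \lang(\bfu)$. The main obstacle is the bookkeeping in this last step: isolating a clean condition (here $k \geq L$ together with $m_k > N$) which guarantees that a prefix of $\bfu$ is long enough to cover one full $W_a$-block produced by the primitive window, and verifying that the singly-generated hypothesis forces $m_k$ to tend to infinity in the first place.
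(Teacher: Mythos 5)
Your proof is correct: the forward inclusion is indeed immediate from the singly-generated hypothesis, and your primitivity argument (using the positive window $M_n\cdots M_{N-1}$ to force $w$ into every $W_a$, then showing $m_k\to\infty$ so that a sufficiently long prefix of $\bfu$ covers a full block $W_{a^\ast}$) is sound. The paper states this proposition without proof, so there is nothing to compare against; your argument is the standard one and fills the gap correctly.
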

Certainly $ \lang(\bfu) \subseteq \lang(\sigma) $ is always the case.  The assumption of primitivity of $\sigma$, which means that eventually all the entries in the corresponding matrices are positive, will imply that no matter what is our initial ``seed'' for  $\bfu$, all terms will eventually appear.

%In fact, the condition for equality can be relaxed to assume only that $ \A \subseteq \mathcal{L}\parens{\bfu^{(m)}} $ for all $m$.

\section{Proof Techniques for Complexity of $S$-Adic Sequences} \label{sec:pftech}

Our goal is to find  bounds for the complexity functions for various $S$-adic sequences. For example, in Theorem \ref{Main theorem for $eee$}, we will see that an $S$-adic sequence corresponding to the TRIP map $T(e,e,e)$ (also called the triangle map) will satisfy the bound $ p(n) \leq 3n $.
In order to find such bounds, we will need to relate the factors of length $n+1$ or $n+2$ to factors of length $n$.

The tools described in this section are the primary methods of determining complexities of $S$-adic words (see Cassaigne and Nicolas's  ``Factor Complexity''  in \cite{Berthe-Rigo} and also \cite{AR,ARP,Berthe-Rigo,Leroy}). However, we reformulate them to describe the complexity of languages in general rather than the language of a specific infinite word.

\begin{definition}
    For $ w \in \mathcal{L} $, we define the \keyterm{left extension set} of $w$ to be
    $$ E^-(w) := \set{a \in \set{1,2,3}: aw \in \mathcal{L}}, $$
    and the \keyterm{right extension set} of $w$ to be
    $$ E^+(w) := \set{b \in \set{1,2,3}: wb \in \mathcal{L}}. $$
    Finally, we set
    $$ E(w) := \set{(a,b) \in \set{1,2,3} \times \set{1,2,3}: awb \in \lang(\bfu)}. $$
    We say $w$ is \keyterm{left special} if $ |E^-(w)| \geq 2 $, \keyterm{right special} if $ |E^+(w)| \geq 2 $, and \keyterm{bispecial} if it is both left special and right special.
\end{definition}

Theorem 4.5.4 in Cassaigne and Nicolas's ``Factor Complexity'' in  \cite{Berthe-Rigo} states

\begin{theorem} \label{prop:l-r-complex}
   Letting  $\lang_n$ denotes the set of words of length $n$ in any language $\lang$, we have 
    \begin{enumerate}
\item $p_\lang(n+1) - p_\lang(n) = \sum_{w \in \lang_n} \parens{ |E^-(w)| - 1} $
\item $ p_\lang(n+1) - p_\lang(n)= \sum_{w \in \lang_n} \parens{ |E^+(w)| - 1}$ 
\item $p_\lang(n+2) - 2p_\lang(n+1) + p_\lang(n) = \sum_{w \in \lang_n} m(w),$
where 
$$m(w) := |E(w)| - |E^-(w)| - |E^+(w)| + 1.$$
\end{enumerate}

\end{theorem}

(The first two formulas  follows directly from the fact that there is a bijection from $\lang_{n+1}$ to pairs $(a,w)$ with $ w \in \lang_n $ and $ a \in E^-(w) $, and similarly for $E^+(w)$.
We may restrict the summations in the above theorem to be only over the left (respectively right) special factors and the factors which have no left (respectively right) extensions since all other terms are zero. The third follows from the first two  and the fact that there is a bijection from $\lang_{n+2}$ to triples $(a,w,b)$ with $ w \in \lang_n $ and $ (a,b) \in E(w) $.)

We may then compute complexity by the following formulas:

\begin{corollary} \label{cor:bi-complex}
    For any language $\lang$,
    \begin{equation} \label{bi-complex}
        p_\lang(n+1) - p_\lang(n) = \abs{\lang_1} - 1 + \sum_{\substack{w \in \lang \\ \abs{w} \leq n-1}} m(w).
    \end{equation}
    In particular,
    \begin{equation} \label{bi-complex-2}
        p_\lang(n+1) - p_\lang(n) \leq \abs{\lang_1} - 1 + \sum_{\substack{w \in \lang \\ \abs{w} \leq n-1 \\ w \text{ bispecial}}} m(w),
    \end{equation}
    where equality holds if $\lang$ is extendable.
\end{corollary}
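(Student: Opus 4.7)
The plan is to derive the first identity by telescoping the second-difference formula of Proposition~\ref{prop:bi-complex}, and then to establish the inequality by a short case analysis of $m(w)$ on non-bispecial factors.

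First, set $d(n) := p_\lang(n+1) - p_\lang(n)$, so that Proposition~\ref{prop:bi-complex} rewrites as $d(n+1) - d(n) = \sum_{w \in \lang_n} m(w)$. Telescoping from $n = 0$ to $n = N - 1$ gives
\[
    d(N) \;=\; d(0) \;+\; \sum_{\substack{w \in \lang \\ \abs{w} \leq N - 1}} m(w).
\]
Since $p_\lang(0) = 1$ (the empty word) and $p_\lang(1) = \abs{\lang_1}$, we have $d(0) = \abs{\lang_1} - 1$, which gives exactly~\eqref{bi-complex}.

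For the inequality~\eqref{bi-complex-2}, the key claim is that $m(w) \leq 0$ for every non-bispecial $w \in \lang$ that still admits at least one extension on each side (the relevant case for all languages considered later). I would split into subcases on the sizes of $E^L(w)$ and $E^R(w)$. If $\abs{E^R(w)} = 1$ with unique right extension $b$, then any pair $(a,c) \in E(w)$ forces $c = b$, so $\abs{E(w)} \leq \abs{E^L(w)}$ and $m(w) = \abs{E(w)} - \abs{E^L(w)} - 1 + 1 \leq 0$; the case $\abs{E^L(w)} = 1$ is symmetric. If $\abs{E^R(w)} = 0$, then also $\abs{E(w)} = 0$ and $m(w) = 1 - \abs{E^L(w)}$, which is non-positive as soon as $w$ extends to the left, and symmetrically when $\abs{E^L(w)} = 0$. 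Substituting into~\eqref{bi-complex} and dropping these non-positive non-bispecial terms yields~\eqref{bi-complex-2}.

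For the equality clause, assume $\lang$ is extendable. Then every factor has at least one left and at least one right extension, so the ``empty $E^L$ or $E^R$'' subcases above do not occur. For a non-bispecial $w$ with, say, unique right extension $b$, extendability applied to each $aw$ (for $a \in E^L(w)$) produces some $c$ with $awc \in \lang$; but $wc \in \lang$ forces $c = b$. Hence $\abs{E(w)} = \abs{E^L(w)}$ and $m(w) = 0$, so only bispecial terms survive and~\eqref{bi-complex-2} becomes an equality. The main delicate point in the argument is precisely this sign analysis of $m(w)$ at factors that are special on only one side; the telescoping step itself is purely formal.
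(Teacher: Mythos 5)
Your proof is correct and follows essentially the route the paper intends: telescope the second-difference identity of Proposition~\ref{prop:bi-complex} down to the base case $p_\lang(1)-p_\lang(0)=\abs{\lang_1}-1$, then discard the non-bispecial terms after checking their sign. You are in fact slightly more careful than the paper's accompanying remark (which asserts $m(w)=0$ for non-bispecial factors with extensions on both sides, when in general one only gets $m(w)\leq 0$ without extendability); the one configuration neither you nor the paper addresses is a factor with $E^L(w)=E^R(w)=\emptyset$, which has $m(w)=1$ and cannot occur in the extendable languages used later.
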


Adopting the convention of Cassaigne and Nicolas from \cite{Berthe-Rigo}, we call a bispecial factor $w$ \keyterm{neutral} if $ m(w) = 0 $.

\section{Factor Complexity for $(e,e,e)$ TRIP map: The Triangle Map} \label{sec:e-e-e}

We are now done with the preliminaries. In this section, we prove the main technical result of this paper:

\begin{theorem}\label{Main theorem for $eee$}
    The complexity function of any $S(e,e,e)$-adic language $\mathcal{L}$ with rationally independent frequency vectors satisfies $ 2n+1 \leq p_{\mathcal{L}}(n) \leq 3n $ for all $ n \geq 1 $.
\end{theorem}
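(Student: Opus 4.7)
The plan is to apply the bispecial complexity formula of Corollary~\ref{cor:bi-complex}. Since $p(1) = 3$ (all three letters appear in $\lang$ because every sufficiently deep composition $S_{i_0} \circ \cdots \circ S_{i_k}$ applied to any letter produces a word containing all of $\A$), it suffices to control bispecial factors and their multiplicities $m(w)$ via
\[
p(n+1) - p(n) = 2 + \sum_{\substack{w \in \lang, \; |w| \leq n-1 \\ w \text{ bispecial}}} m(w).
\]
For the lower bound $p(n) \geq 2n+1$, I would argue that rational independence of the frequency vector prevents $\lang$ from being (in any essential sense) a coding of a binary Sturmian language, forcing at least two left-special factors of every length. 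This gives $p(n+1) - p(n) \geq 2$, which combined with $p(1) = 3$ yields the bound by induction.

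For the upper bound $p(n) \leq 3n$, the target is to prove that the cumulative sum of $m(w)$ over bispecial factors of length at most $n-1$ is always at most $1$, so that $p(n+1) - p(n) \leq 3$ pointwise. To this end I would use the desubstitution machinery of Cassaigne, Leroy, and Berth\'e--Labb\'e: every sufficiently long bispecial factor $w \in \lang$ admits an essentially unique decomposition $w = s \cdot S_{i_0}(v) \cdot p$, where $v$ is a bispecial factor of the once-desubstituted language, $s$ is a suffix of some $S_{i_0}(a)$, and $p$ is a proper prefix of some $S_{i_0}(b)$. One then checks how the extension sets $E^L$, $E^R$, and $E$ transform under this correspondence, computing $m(w)$ from $m(v)$ up to contributions from the finitely many ``boundary'' bispecials, notably the letter $1$, which is bispecial because both $S_0$ and $S_1$ send $3 \mapsto 13$ and $1$ can therefore be both preceded and followed by several different letters. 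An inductive argument along the Gauss substitutions $S_k^G := S_1^k \circ S_0$ then governs the running cumulative sum.

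The main obstacle is showing that the contributions from substitution-boundary bispecials never push the cumulative sum above $1$. Unlike the Cassaigne map $T(e,23,23)$, where all bispecials turn out to be neutral and the complexity is exactly $2n+1$, the $(e,e,e)$ map produces bispecials with $m(w)$ equal to $+1$ or $-1$, so one must exhibit a careful cancellation or pairing to keep all partial sums inside $\{0,1\}$. Because the Gauss exponents $k_0, k_1, k_2, \ldots$ are arbitrary, the set of newly created bispecials at each desubstitution step depends on the coding, so the inductive invariant must classify bispecials by their extension patterns and track how these patterns propagate under any admissible sequence of Gauss substitutions. Identifying the correct invariant and verifying it case by case is where the technical weight of the proof lies.
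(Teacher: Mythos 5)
Your plan follows the same route as the paper's proof: pass to the Gauss substitutions $S_k^G=S_1^k\circ S_0$, desubstitute each bispecial factor $w$ to an antecedent $v$ in the once-desubstituted language, track how $E^L$, $E^R$, $E$ transform, and show that the running sum of $m(w)$ over bispecial factors of length at most $n-1$ stays in $\{0,1\}$. You have correctly located where the difficulty lies, but the proposal stops exactly there, and the two pieces you defer are the substance of the proof. First, the pairing itself: the paper computes explicit extension diagrams for all age-$0$ and age-$1$ bispecials (Propositions \ref{age0} and \ref{age one}), shows every non-neutral bispecial word is a descendant of $1^{k_0}3$ or $1^{k_0}32^{k_1}$ (Corollary \ref{cor:eee-originate}), and proves that non-unique extended images arise only when the antecedent has three right extensions, in which case exactly one of the six possible extension diagrams (Proposition \ref{prop:eee-3-right}) yields a non-neutral pair with $m(w^+)=1$, $m(w^-)=-1$ (Proposition \ref{prop:eee-non-unique}). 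Second, and crucially, the pairing alone does not control the partial sums: one must also prove the interlacing of lengths $|w_m^+|\leq|w_m^-|<|w_{m+1}^+|$ (Proposition \ref{eeeseqsbounding}), which the paper establishes by a separate induction on the abelianizations of the paired words. Without that ordering, a word with $m=+1$ could enter the sum before its cancelling partner, or two $+1$ words could accumulate, pushing a partial sum to $2$ (breaking the upper bound) or letting it drop to $-1$ (breaking the lower bound). A small factual slip in your sketch: the letter $1$ is bispecial only when $k_0\geq 1$; when $k_0=0$ the only allowable two-letter word beginning with $1$ is $13$, so $E^R(1)=\set{3}$. The age-$0$ boundary bispecials are $\emptyword$ and $1^j$ for $1\leq j\leq k_0$, and which of them are non-neutral depends on $k_0$ and $k_1$.

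Your lower-bound argument is not a proof as stated: ``rational independence rules out a binary Sturmian coding'' does not by itself force two left-special factors of every length. In the paper both bounds fall out of the same statement: once the partial sum of $m(w)$ is shown to lie in $\{0,1\}$, Corollary \ref{cor:bi-complex} (with $|\lang_1|=3$ and extendability of $\lang$, which hold here) gives $p(n+1)-p(n)\in\set{2,3}$, and hence $2n+1\leq p(n)\leq 3n$ simultaneously. So the cleaner move is to drop the separate Sturmian argument and extract the lower bound from the nonnegativity of the same partial sums you need for the upper bound.
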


The $(e,e,e)$ TRIP map has independent interest.  It is usually called the triangle map and was originally created in \cite{Triangle, GarrityT05} for number theoretic reasons, namely in an attempt to solve the Hermite Problem. Messaoudi,  Nogueira, and  Schweiger, in the beginning of  \cite{SchweigerF08} 
where they showed that the fast map is ergodic, discuss why this map is interesting dynamically.  Copying from \cite{Baalbaki-Garrity}, 
we know that further dynamical properties were discovered by  Berth\'e, Steiner and Thuswaldner \cite{Berthe-Steiner-Thuswaldner}  and by Fougeron and  Skripchenko \cite{Fougeron-Skripchenko}.   Bonanno, Del Vigna and  Munday \cite{Bonanno-Del Vigna-Munday} and Bonanno and Del Vigna \cite{Bonanno-Del Vigna} recently used the $\R^3$ slow  triangle map to develop a tree structure of rational pairs in the plane.   In a recent preprint, Ito \cite{Ito} showed that the fast  map is self-dual (in section three of that paper).  

The upper bound is what is new.  The lower bound stems from quite general principles, as shown by R. Tijdeman \cite{Tijdeman}.  Also see the more recent work of Andrieu and Vivion \cite{Andrieu-Vivion}

Finally, the $(e,e,e)$ map (the triangle map) and its higher dimensional analog seems to be the only multidimensional continued fraction algorithms that can be  both used to study partition numbers and also that respect Young conjugation for partitions, as is discussed in the recent papers \cite{BCDGI, Baalbaki-Garrity}

\subsection{Preliminaries on $S$-adic Languages for the Triangle map} \label{sec:e-e-e1}

We begin by noting the following:

\begin{lemma}
    The Farey sequence associated to any point in $\triangle$ with rationally independent entries in the TRIP map $T(e,e,e)$ has infinitely many $0$s.
\end{lemma}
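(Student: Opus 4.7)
My plan is to prove the contrapositive: if the Farey sequence of a point $(x,y,z) \in \triangle$ contains the symbol $0$ only finitely often, then the entries of $(x,y,z)$ must satisfy a nontrivial $\Q$-linear relation.

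First I will make the action of $T_1(e,e,e)$ explicit in homogeneous coordinates. With $V = I$, this map is the projectivization of
$$ F_1^{-1} = \begin{bmatrix} 1 & 0 & -1 \\ 0 & 1 & 0 \\ 0 & 0 & 1 \end{bmatrix}, $$
so it sends $(a,b,c) \mapsto (a-c,\,b,\,c)$ before renormalization. Because $F_1^{-1} \in GL_3(\Z)$, the action on homogeneous coordinate triples preserves $\Q$-linear independence, and a direct induction gives
$$ T_1^k(x_0,y_0,z_0) = (x_0 - k z_0,\, y_0,\, z_0) $$
in homogeneous coordinates.

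Now suppose, toward a contradiction, that $(x,y,z) \in \triangle$ has rationally independent entries but its Farey sequence contains only finitely many $0$s. Choose $N$ so that $T(e,e,e)^n(x,y,z) \in \triangle_1(e,e,e)$ for every $n \geq N$, and let $(x_N, y_N, z_N)$ be a homogeneous representative of $T^N(x,y,z)$. By the preservation of rational independence, $(x_N, y_N, z_N)$ admits no nontrivial $\Q$-linear relation, so in particular $z_N \neq 0$, and after choosing a positive scaling we may assume $z_N > 0$ (and then $x_N > 0$ since the point lies in $\triangle_1$). Using the description $\triangle_1(e,e,e) = \set{(x,y,z)\in\triangle : x \geq z}$, membership at step $N+k$ translates to $x_N - k z_N \geq z_N$, i.e.\ $k+1 \leq x_N/z_N$. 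But $x_N/z_N$ is a fixed finite positive real, so this fails for all sufficiently large $k$, delivering the contradiction.

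The argument is mostly bookkeeping once the matrix form of $T_1$ is written down, so I foresee no real obstacle. The only step that benefits from a brief check is the preservation of rational independence along the orbit, which is automatic because $F_1$ is unimodular; as a side benefit, the same unimodularity ensures the orbit never hits the measure-zero boundary $\set{x=z}$ where $T$ is undefined, since such a hit would itself be a nontrivial integer relation among $x,y,z$.
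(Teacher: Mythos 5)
Your argument is correct and is essentially the paper's proof viewed from the dual side: where the paper pushes the cone forward by $F_1^k$ (so its intersection over all $k$ is the rational plane spanned by two integer vectors $v_1,v_2$), you pull the point back by $T_1^k$ and observe that the membership inequality $x_N \ge (k+1)z_N$ must fail unless $z_N=0$, which is the same rational relation. Both hinge on the explicit form of $F_1^k$ and on the unimodularity of the Farey matrices (note only that the first $N$ steps may also involve $F_0^{-1}$, which is likewise in $GL_3(\Z)$, so rational independence is still preserved), so no further changes are needed.
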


\begin{proof}
    The proof is relatively straightforward.  As discussed in Section \ref{TRIP Maps}, we start with the cone spanned by the basis vectors $e_1,e_2,e_3$.  As we apply various $F_i(e,e,e)$, we are shrinking the initial cone, but the edges are images of the $e_1,e_2,e_3$ by the product of the $F_i(e,e,e)$.  As each of these $F_i(e,e,e)$ are in the special linear group, the cone will be spanned by vectors with integer entries.  This means that after a finite number of applications of the  $F_i(e,e,e)$, we have a cone $\triangle = (v_1,v_2, v_3)$, where each $v_i$ has integer entries. Now let us suppose we simply keep applying the map $F_1(e,e,e)$ over and over again.  Then we get
    $F_1^k(e,e,e) (v_1, v_2, v_3) = (v_1, v_2, kv_1+v_3).$  The only points that will be in  $F_1^k(e,e,e) (v_1, v_2, v_3) $ for all $k$ are those points in the plane spanned by $v_1$ and $v_2.$  But any point in this plane will have entries that are rationally related, as desired.
\end{proof}

This allows us to transform the coding sequence $ \set{S_{i_m}}_{m=0}^\infty $ into a Gauss coding sequence $ \set{G_{k_m}}_{m=0}^\infty $, where
\begin{equation} \label{eee-Gauss}
    G_k := S_1^k \circ S_0: \quad 1 \mapsto 2, \quad 2 \mapsto 1^k 3, \quad 3 \mapsto 1^{k+1}3.
\end{equation}
(The adjective ``Gauss'' is being used in analog of the difference between the traditional Farey map and the traditional Gauss map for continued fractions; hence we are using Gauss to refer to concatenating sequences of the $F_1$ map.)

We have the following proposition:
\begin{proposition}

    Any Gauss coding sequence is primitive.
\end{proposition}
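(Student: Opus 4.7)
The plan is to exhibit an explicit uniform window size: I will show that for every choice of nonnegative integers $a,b,c,d$, the product $M_a M_b M_c M_d$ is strictly positive, where $M_k$ denotes the abelianization of the Gauss substitution $G_k$. Applied to consecutive Gauss matrices, this gives the definition of primitivity with $n = m+4$ for every starting index $m$.

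First I would read off from (\ref{eee-Gauss}) the abelianization
\[ M_k = \begin{pmatrix} 0 & k & k+1 \\ 1 & 0 & 0 \\ 0 & 1 & 1 \end{pmatrix}. \]
Since $M_k$ itself has several zero entries, I would check how many factors are needed. A direct calculation gives
\[ M_a M_b M_c = \begin{pmatrix} a+1 & ac+a+1 & ac+2a+1 \\ b & b+1 & b+1 \\ 1 & c+1 & c+2 \end{pmatrix}, \]
whose $(2,1)$ entry vanishes when $b = 0$, so three factors are not always enough.

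The main step is then to multiply by one further $M_d$. Because the first column of $M_d$ is $(0,1,0)^T$, the new $(2,1)$ entry equals the $(2,2)$ entry of $M_a M_b M_c$, namely $b+1 \geq 1$, which cures the one problematic spot. An entry-by-entry inspection shows that each of the remaining eight positions of $M_a M_b M_c M_d$ receives a strictly positive contribution: the bottom row and rightmost column of $M_a M_b M_c$ already have entries $\geq 1$, and right-multiplication by the nonnegative $M_d$ (whose second and third columns each contain a $1$) distributes these positivities to every coordinate. Hence $M_a M_b M_c M_d$ is strictly positive for every $a,b,c,d \geq 0$, proving the proposition.

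The only real subtlety is noticing that three matrices do not suffice in general — one needs a four-step window rather than the three-step window one might first guess — because an interior Gauss index equal to $0$ can delay the appearance of positivity by one step. Apart from this observation, the argument is a short matrix calculation that is uniform in the Gauss indices and independent of the starting position $m$.
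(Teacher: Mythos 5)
Your proof is correct. The paper states this proposition without any proof, so there is no argument to compare against; your explicit computation is the natural way to verify it. I checked the key facts: with $M_k=\begin{pmatrix} 0 & k & k+1 \\ 1 & 0 & 0 \\ 0 & 1 & 1 \end{pmatrix}$ one indeed gets $M_aM_bM_c=\begin{pmatrix} a+1 & ac+a+1 & ac+2a+1 \\ b & b+1 & b+1 \\ 1 & c+1 & c+2 \end{pmatrix}$, whose only possibly-zero entry is the $(2,1)$ entry $b$ (so three factors fail exactly when the middle Gauss index is $0$, e.g.\ $M_0^3$ has a zero), and since the columns of $M_d$ are $(0,1,0)^T$, $(d,0,1)^T$, $(d+1,0,1)^T$, the columns of $M_aM_bM_cM_d$ are the second column of $M_aM_bM_c$ and two nonnegative combinations each containing the (everywhere positive) third column, hence strictly positive for all $a,b,c,d\geq 0$. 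This gives primitivity with the uniform window $n=m+4$, which is exactly what Definition of primitivity in the paper requires.
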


\begin{proof}The matrix whose abelianization gives us $G_k$ is 
$$\left(
\begin{array}{ccc}
 0 & k & k+1 \\
 1 & 0 & 0 \\
 0 & 1 & 1 \\
\end{array}
\right)$$
Composing three Gauss maps, say $G_{k_0}, G_{k_1}$ and $G_{k_2}$ will yield the product of the three matrices
$$\left(\begin{array}{ccc}
 0 & k_0 & k_0+1 \\
 1 & 0 & 0 \\
 0 & 1 & 1 \\
\end{array}
\right)     \left(
\begin{array}{ccc}
 0 & k_1 & k_1+1 \\
 1 & 0 & 0 \\
 0 & 1 & 1 \\
\end{array}
\right) \left(\begin{array}{ccc}
 0 & k_2 & k_2+1 \\
 1 & 0 & 0 \\
 0 & 1 & 1 \\
\end{array} 
\right)$$
which is 
$$\left(
\begin{array}{ccc}
 k_0+1 &k_0 k_2+ k_0 +1 & k_0 (k_2+1)+k_0+1 \\
k_1 & k_1+1 & k_1+1 \\
 1 & k_2+1 & k_2+2 \\
\end{array}
\right).$$
As this matrix has all non-zero entries, we are done.  
\end{proof}
From Proposition \ref{prop on lang} and the above proposition, we have
\begin{corollary}
    Any $S$-adic word $\bfu$ with coding sequence $\sigma$ and rationally independent frequency vectors has $L(\bfu) = L(\sigma)$.
\end{corollary}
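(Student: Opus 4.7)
The plan is to establish the proposition first; once primitivity is in hand, the corollary falls out of the preceding lemma together with the observation (recorded earlier in the ``Removing the Requirement of Primitivity'' subsection) that $L(\bfu) = L(\sigma)$ for any $S$-adic word with primitive coding sequence.

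For the proposition, I would begin by reading off from (\ref{eee-Gauss}) the abelianization
$$M_k \;:=\; \ell(G_k) \;=\; \begin{bmatrix} 0 & k & k+1 \\ 1 & 0 & 0 \\ 0 & 1 & 1 \end{bmatrix}.$$
The key observation is the entry-wise inequality $M_k \geq M_0$ for every $k \geq 0$, since the two matrices agree outside positions $(1,2)$ and $(1,3)$, where $M_k$ is at least as large. Because all the $M_k$ are non-negative, this lifts to products: for any Gauss coding sequence $\{G_{k_m}\}$ and any starting index $m$,
$$M_{k_m}\, M_{k_{m+1}}\, M_{k_{m+2}}\, M_{k_{m+3}} \;\geq\; M_0^4 \;=\; \begin{bmatrix} 1 & 1 & 2 \\ 1 & 1 & 1 \\ 1 & 2 & 3 \end{bmatrix}$$
entry-wise, where the right-hand equality is a direct calculation. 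The right-hand side is strictly positive, so choosing $n = m+4$ in the definition of primitivity settles the proposition.

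For the corollary, I would apply the lemma just above the proposition: rationally independent frequency coordinates force the Farey sequence to contain $0$ infinitely often, which lets me regroup the original coding sequence $\{S_{i_m}\}$ into an infinite Gauss coding sequence $\{G_{k_m}\}$. The proposition then delivers primitivity of this Gauss reparameterization, and the earlier remark on primitive coding sequences immediately yields $L(\bfu) = L(\sigma)$.

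I do not expect a substantive obstacle. The one cosmetic point is that primitivity as defined in the paper refers to the original $\{S_{i_m}\}$ coding sequence, not its Gauss coarsening; however, since $\ell(S_1)$ is upper triangular with strictly positive diagonal, left-multiplying a strictly positive non-negative matrix by any power of $\ell(S_1)$ still yields a strictly positive matrix. This handles the case in which the starting index $m$ lands in the middle of an $S_1$-run and confirms primitivity in the original indexing.
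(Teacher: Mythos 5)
Your proposal is correct and follows the same route the paper intends: the paper states both the primitivity proposition and this corollary without proof, and your argument (the entrywise bound $M_k \geq M_0$, the verification that $M_0^4$ is strictly positive, the reduction to the Gauss reparameterization via the lemma on infinitely many $0$s, and the appeal to the earlier remark that primitivity forces $\lang(\bfu) = \lang(\codseq)$) is exactly the computation being left to the reader. Your closing observation about mid-run indices of $S_1$ is a genuine point the paper glosses over, and your fix via $\ell(S_1)^j \geq I$ entrywise handles it correctly.
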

Thus, for the rest of this section, we will compute the complexity of such a sequence $\sigma$ by computing the complexity of a corresponding $S$-adic word $\bfu$.

We define a sequence of de-substituted languages via preimages, $ \set{\lang^{(n)}}_{n=0}^\infty $, by $ \lang^{(0)} = \lang $ and $ \lang^{(n+1)} = G_{k_n}^{-1}\parens{\lang^{(n)}} $. Equivalently, $\lang^{(n)}$ is the language of the coding sequence $ \set{G_{k_n}, G_{k_{n+1}}, G_{k_{nS+2}}, \hdots} $.

To control the linear complexity of $\bfu$, as we saw in Section \ref{sec:pftech}, we need to understand the non-neutral bispecial factors of $\bfu$. These are the finite subwords $w$ with both  the left extension set $E^L(w)$
and the right extension set $E^R(w)$ having at least two elements and with
$0\neq m(w) = |E(w)|- |E^L(w)| - |E^R(w)| + 1.$

To prove Theorem \ref{Main theorem for $eee$}, we show that for all $n$, the summation in (\ref{bi-complex-2}) is either $0$ or $1$. To do so, it suffices to find an explicit sequence of bispecial factors
$$ \set{w_0^+, w_0^-, w_1^+, w_1^-, w_2^+, w_2^-, \hdots} $$
such that:
\begin{enumerate}[(a)]
    \item Every non-neutral bispecial element of $\lang$ is in this sequence (although it might also contain neutral bispecial factors)
    \item The lengths of the factors satisfy
    $$ |w_0^+| \leq |w_0^-| < |w_1^+| \leq | w_1^-| < |w_2^+| \leq |w_2^-| < \hdots. $$
    \item For all $n$, $ 0 \leq m(w_n^+) \leq 1 $.
    \item For all $n$, $ m(w_n^+) = -m(w_n^-) $.
\end{enumerate}

To do this, we need to consider not just the bispecial elements of $\lang$ but also the bispecial elements of $\lang^{(n)}$ for all non-negative integers $n$.
Let $\text{Bispecial}(\mathcal{L})$ denote the set of bispecial factors of $\mathcal{L}$. We will construct a map
$$ A: \{ \text{Bispecial}\parens{\lang^{(n)}} \setminus 1^j \} \rightarrow \text{Bispecial}\parens{\lang^{(n+1)}} $$
satisfying the following properties:
\begin{enumerate}
    \item  Lengths decrease, that is, for all $ w \in \{ \text{Bispecial}\parens{\lang^{(m)}} \setminus 1^j \}$,
    $$ |A(w)| \leq |w|. $$
    \item For all $ w \in 
    \{ \text{Bispecial}\parens{\lang^{(n)}} \setminus 1^j \} $, there is some $N$ such that $A^N(w)$ is either the empty word $\emptyword$ or a word of the form $1^j$. We call $N$ the \keyterm{age} of $w$.
\end{enumerate}

Note that our definition of the term \textit{age} is analogous to that in \cite{ARP}.
We will show that for all $N$, $\lang$ has at most two non-neutral bispecial words of age $N$. If there is only one such word, we label it $ w_N = w_N^+ = w_N^- $, which we will show is neutral.
If there are two such words, we label one of them $ w_N^+ $ and the other $ w_N^- $ and show that properties (c) and (d) above are satisfied.
All of this will take work, though almost all of the individual steps are fairly straightforward calculations. The difficulty lies in the amount of such calculations that are required.

\subsection{General Techniques for Relating $\lang$ to $\lang^{(m)}$ and Allowed Words of length $2$} \label{sec:eee-length2}

At the forefront of our analysis is relating elements of $\lang$ to elements of $\lang^{(1)}$. In other words, given some finite word $w$, we want to find some finite word $\hat{v}$ such that $ w \in \lang $ if and only if $ \hat{v} \in \lang^{(1)} $. Recall as discussed in subsection \ref{sec:e-e-e1} that we are given a Gauss coding sequence $G_{k_0}= S_1^{k_0}\circ S_0,  G_{k_1} = S_1^{k_1}\circ S_0, \ldots,$ that $\lang = \lang^{(0)}$ is our initial language and $\lang^{(1)} = G_{k_0}^{-1}\left(\lang^{(0)}\right) $.

It follows from Definition \ref{def:lang-coding-seq} that for $ \abs{w} \geq 2 $, $ w \in \lang $ if and only if there is some $ v \in \lang^{(1)} $ such that $w$ is a factor of $G_{k_0}(v)$.
We want to find $v$ by ``de-substituting'' $w$, which requires decomposing $w$ into substitutions of single characters.
To do so, notice that in view of the Gauss substitutions (\ref{eee-Gauss}), the last character of $G_{k_0}(c)$ for any $ c \in \set{1,2,3} $ is either a $2$ or a $3$, while all characters before the last character, if they exist, are $1$s. Thus, we simply insert a break after every $2$ or $3$ and de-substitute each block to uniquely write $ w = aG_{k_0}(v)b $ where $a$ is a proper suffix of $G_{k_0}(c)$ for some single character $c$ and $b$ is a proper prefix of $G_{k_0}(d)$ for some single character $d$.
Below are some examples:

\begin{itemize}
    \item If $ k_0 = 2 $ and $ w = 132111311131132111 $, then we may write
    $$ w = 13 \; 2 \; 1113 \; 1113 \; 113 \; 2 \; 111 = 13 G_{k_0}(13321) 111, $$
    so $ a = 13 $, $ v = 13321 $, and $ b = 111 $. The word $a$ can be a suffix of either $G_{k_0}(2)$ or $G_{k_0}(3)$, while $b$ must be a prefix of $G_{k_0}(3)$, so we know that $ w \in \lang $ implies $ 2v3 \in \lang^{(1)} $ or $ 3v3 \in \lang^{(1)} $. Conversely, since $w$ is a factor of both $G_{k_0}(2v1)$ and $G_{k_0}(3v1)$, if $ 2v3 \in \lang^{(1)} $ or $ 3v3 \in \lang^{(1)} $, then $ w \in \lang $.
    \item If $ k_0 = 1 $ and $ w = 22113131 $, then we may write
    $$ w = 2 \; 2 \; 113 \; 13 \; 1 = G_{k_0}(1132) 1, $$
    so $ a = \emptyword $, $ v = 1132 $, and $ b = 1 $. The word $b$ can be a prefix of either $G_{k_0}(2)$ or $G_{k_0}(3)$, so $ w \in \lang $ if and only if $ v2 \in \lang^{(1)} $ or $ v3 \in \lang^{(1)} $.
\end{itemize}

\begin{remark}
    Notice that this technique is specific to our particular TRIP substitution. In general, arbitrary choices of substitution will require individual analysis to determine how to, if possible, develop conditions on $\lang^{(1)}$ for when a given word is in $\lang$.
\end{remark}

As an application of these observations, we now determine the set of allowed factors of length $2$ in $\lang$ for different choices of $k_0$ and $k_1$. We know that $\emptyword$ as well as all single characters occur in any $\lang^{(m)}$, but this is certainly not true for words of length $2$, and in fact the set of length $2$ words in $\lang$ depend on the coding sequence. This is the content of the following proposition.

The two-letter words in $\lang$ are also the extension set for  $\emptyword$. The two letter words that occur are said to be \textbf{allowed} and those that do not occur are said to be \textbf{forbidden}.

\begin{proposition} \label{two words} 
    The set of allowed two letter words is determined by $k_0$ and $k_1$ as follows:
    $$\begin{array}{c|c|c} 
    k_0, k_1& \mbox{Allowed $2$-words} &  \mbox{Forbidden $2$-words}\\
    \hline \hline
    k_0=0, k_1=0 & 13,21,31,32,33 & 11,12,22,23 \\
    k_0=0, k_1>0 & 13,21,22, 32,33 & 11,12,31,23 \\
    k_0>0, k_1=0 & 11, 13,21,31,32 &12,22,23, 33 \\
    k_0>0, k_1>0 & 11, 13,21,22, 31,32 & 12,23,33 \\
    \end{array}$$
\end{proposition}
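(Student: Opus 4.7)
The approach is to exploit the block decomposition introduced in Section~\ref{sec:eee-length2}: any sufficiently long factor of $\mathcal{L}$ can be written uniquely as $a\, G_{k_0}(v)\, b$ with $v \in \mathcal{L}^{(1)}$, so every $2$-letter factor of $\mathcal{L}$ is either (i) an \emph{internal} factor sitting inside a single block $G_{k_0}(c)$ for some letter $c$ that occurs in $\mathcal{L}^{(1)}$, or (ii) a \emph{junction} factor given by the last letter of $G_{k_0}(c)$ followed by the first letter of $G_{k_0}(d)$ for some $cd \in \mathcal{L}^{(1)}$. The two contributions are handled separately, and information about $\mathcal{L}^{(1)}$ is supplied by applying the proposition recursively with parameters $(k_1, k_2)$ in place of $(k_0, k_1)$.

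First I would enumerate the internal factors directly from $G_{k_0}(1) = 2$, $G_{k_0}(2) = 1^{k_0} 3$, $G_{k_0}(3) = 1^{k_0+1} 3$: they are $\{13\}$ when $k_0 = 0$ and $\{11, 13\}$ when $k_0 \geq 1$, and each is realized since every letter of $\mathcal{A}$ appears in $\mathcal{L}^{(1)}$. Next I would tabulate the $3\times 3$ junction table whose $(c,d)$-entry is the last letter of $G_{k_0}(c)$ concatenated with the first letter of $G_{k_0}(d)$, noting that the table differs in the two sub-cases $k_0 = 0$ and $k_0 \geq 1$. Inspection of this table yields the useful invariant that no pair $(c,d)$ produces $12$ as a junction factor, while $23$ can arise only from $(1,2)$ and only when $k_0 = 0$; combined with the absence of $12, 23$ as internal factors, an easy induction gives $12, 23 \notin \mathcal{L}$ for any coding sequence, pruning the recursive step.

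To conclude, I would carry out the eight-case verification indexed by $(k_0, k_1, k_2) \in \{0, {>}0\}^3$. In each sub-case the allowable $2$-letter words of $\mathcal{L}^{(1)}$ are supplied by the proposition applied with parameters $(k_1, k_2)$, from which one reads off the junction-factor contribution and unions with the internal contribution; both occurrence and non-occurrence assertions must be checked, using primitivity of the Gauss coding sequence to guarantee arbitrarily long words in $\mathcal{L}$ and $\mathcal{L}^{(1)}$ so that the decomposition is nontrivial. The main obstacle is the bookkeeping at this last step: the candidate set a priori depends on $k_2$, and one must verify that this dependence collapses in each of the four $(k_0, k_1)$ cases. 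The cancellation happens because the pairs $(c,d)$ whose membership in $\mathcal{L}^{(1)}$ depends on $k_2$ (namely $11, 22, 31, 33$) route through junction factors already produced by $k_2$-independent pairs ($13, 21, 32$), so toggling $k_2$ introduces neither new factors nor losses. Once these cancellations are verified, the union matches the claimed row of the table in each of the four $(k_0, k_1)$ cases.
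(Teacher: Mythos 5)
Your decomposition of each $2$-letter factor into either an internal factor of a single block $G_{k_0}(c)$ or a junction factor at a block boundary is, in substance, the same de-substitution argument the paper uses (there it is carried out word by word: $22$ de-substitutes to $11$, $31$ to one of $22,23,32,33$, and so on), and your internal and junction tables are correct. I also checked the eight sub-cases indexed by $(k_0,k_1,k_2)$ and the $k_2$-dependence does collapse exactly as you claim. The one step that does not stand as written is ``the allowable $2$-letter words of $\mathcal{L}^{(1)}$ are supplied by the proposition applied with parameters $(k_1,k_2)$.'' The languages $\mathcal{L},\mathcal{L}^{(1)},\mathcal{L}^{(2)},\dots$ are all of the same form, so there is no decreasing parameter and no base case: invoking the proposition for $\mathcal{L}^{(1)}$ while proving it for $\mathcal{L}$ is circular as stated.

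The gap is repairable, largely with facts you already have, but the repair must be made explicit. For the positive assertions you cannot cite the proposition for $\mathcal{L}^{(1)}$; each allowable word must instead be exhibited inside an explicit finite composition. The paper does this by realizing $13,21,32$ as factors of $(G_{k_0}\circ G_{k_1}\circ G_{k_2})(3)=(1^{k_0}3)^{k_2+1}2^{k_1+1}1^{k_0+1}3$; the remaining allowable words then follow from one or two junction steps whose input pairs are among these always-present words. For the negative assertions you need a well-founded induction on the number $n$ of composed substitutions, showing that no $2$-letter factor of $(\sigma_0\circ\cdots\circ\sigma_{n-1})(c)$ lies outside the claimed set. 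Your observation that $12$ is never internal and never a junction, and that $23$ can only be the junction of the forbidden pair $(1,2)$, is precisely the inductive step for those two words; the remaining forbidden words ($11$ when $k_0=0$, $33$ when $k_0>0$, $22$ when $k_1=0$, $31$ when $k_0=0$ and $k_1>0$) bottom out after at most two junction steps in facts about $\mathcal{L}^{(1)}$ that are determined directly by $k_1$ alone, together with the global induction for $12,23$. With that restructuring your argument is complete and coincides with the paper's.
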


This implies that the empty word $\emptyword$ is always bispecial, but it is non-neutral only if both $ k_0 \geq 1 $ and $ k_1 \geq 1 $ with $ m(\emptyword) = 1 $.

In the rhetoric of \cite{Cassaigne}, we can also describe this result in terms of extension diagrams. These are $ 3 \times 3 $ tables where the rows represent left extensions and the columns represent right extensions. By Proposition \ref{two words}, the extension diagrams of $\eps$ are:

\bigskip
\begin{center}
 $$ \begin{array}{cc||cc}  
  k_0 = 0, k_1 = 0 &
        \begin{tabular}{c|ccc}
            $\emptyword$ & 1 & 2 & 3 \\ \hline
            1 & & & $\times$ \\
            2 & $\times$ & & \\
            3 & $\times$ & $\times$ & $\times$
        \end{tabular}&
     k_0 = 0, k_1 \geq 1 &
        \begin{tabular}{c|ccc}
            $\emptyword$ & 1 & 2 & 3 \\ \hline
            1 & & & $\times$ \\
            2 & $\times$ & $\times$ & \\
            3 & & $\times$ & $\times$
        \end{tabular} \\
     k_0 \geq 1, k_1 = 0 &
        \begin{tabular}{c|ccc}
                $\emptyword$ & 1 & 2 & 3 \\ \hline
                1 & $\times$ & & $\times$ \\
                2 & $\times$ & & \\
                3 & $\times$ & $\times$ &
        \end{tabular}&
     k_0 \geq 1, k_1 \geq 1 &
        \begin{tabular}{c|ccc}
            $\emptyword$ & 1 & 2 & 3 \\ \hline
            1 & $\times$ & & $\times$ \\
            2 & $\times$ & $\times$ & \\
            3 & $\times$ & $\times$ &
        \end{tabular}.
        \end{array}$$
        \end{center}
\bigskip

\begin{proof}[Proof of Proposition \ref{two words}]
    As with almost all of the results in this section, the proof is not deep but is instead simply keeping track of the notation and the maps.
    
    First, in all four  cases we are claiming that the words $12$ and $23$ are forbidden. In (\ref{eee-Gauss}), we see that any occurrence of a $1$ must be followed by a $1$ or by a $3$, but not a $2$, so $12$ is always forbidden. 
    Also, by the above procedure, we may uniquely de-substitute $23$ into $12$ and conclude that $ 23 \in \lang $ if and only if $ 12 \in \lang^{(1)} $, so $23$ must also be forbidden.
    To see that $13$, $21$, and $32$ are always allowed, simply notice that all are factors of $ \parens{G_{k_0} \circ G_{k_1} \circ G_{k_2}}(3) = \parens{1^{k_0}3}^{k_2+1}2^{k_1+1}1^{k_0+1}3 $.
    
    The four remaining cases, $11$, $22$, $31$, and $33$, depend on the particular values of $k_0$ and $k_1$.
    
    We start with $11$. For any finite word $v$, every occurrence of the character $1$ in $G_{k_0}(v)$ must occur in groups of either $k_0$ or $k_0+1$ $1$s, arising from a substitution of the character $2$ or $3$, respectively. Furthermore, these groups of $1$s must be either at the beginning of $G_{k_0}(v)$ or be immediately preceeded by the last character of $G_{k_0}(c)$ for some $c$, which must be either $2$ or $3$. Hence, if $ k_0 = 0 $, then $11$ cannot be a factor of $G_{k_0}(v)$ and is therefore forbidden, but if $ k_0 \geq 1 $, then $11$ is a factor of $G_{k_0}(3)$, so $11$ is allowed.

    By the above procedure, $22$ can be uniquely de-substituted into $11$, so $ 22 \in \lang $ if and only if $ 11 \in \lang^{(1)} $. Hence, $22$ is allowed if and only if $11$ is allowed in $\lang^{(1)}$, or if $ k_1 \geq 1 $.

    Now consider $33$. If $ k_0 \geq 1 $, then any occurrence of a $3$ in $G_{k_0}(v)$ for some finite word $v$ must be immediately preceded by a $1$, so $33$ is forbidden. If $ k_0 = 0 $, then since $33$ is a subword of $ G_{k_0}(32) = 133 $ and $32$ is allowed in $\lang^{(1)}$, $33$ is allowed in $\lang$.

    Finally, we consider the word $31$. By the above procedure, we have that $ 31 \in \lang $ if and only if $\lang^{(1)}$ contains $22$, $23$, $32$, or $33$. We see that $31$ is a subword of $ G_{k_0}(22) = 1^{k_0}31^{k_0}3 $ and $ G_{k_0}(32) = 1^{k_0+1}31^{k_0}3 $ if and only if $ k_0 \geq 1 $, while $31$ is always a subword of $ G_{k_0}(33) = 1^{k_0+1}31^{k_0+1}3 $ (we need not check $G_{k_0}(23)$ since $23$ is forbidden in $\lang^{(1)}$).
    Therefore, it holds that $ 31 \in \lang $ if and only if it is true that both $ v \in \lang^{(1)} $ and $31$ is a factor of $G_{k_0}(v)$ for some $ v \in \set{22,32,33} $. By the cases completed above, we find that this is true for all cases except that where $ k_0 = 0 $ and $ k_1 \geq 1 $.
\end{proof}

\subsection{Antecedents} \label{sec:eee-ante}

In this section, we formalize our previous observations relating $ w \in \lang $ to $ \hat{v} \in \lang^{(1)} $.
Recall in the previous section, for every $ w \in \lang $, we were able to identify words $\hat{v}$ for which $w$ can be de-substituted to. In this section, we establish a canonical method of doing this. 

As an illustration of the ambiguity that may occur, suppose we have $ k_0 = 0 $ and $$ w = 3 \; 3 \; 2 \; 13 = 3 G_{k_0}(213), $$
so $ a = 3 $, $ v = 213 $, and $ b = \emptyword $. The word $a$ can be a suffix of either $G_{k_0}(2)$ or $G_{k_0}(3)$, so $ w \in \lang $ if and only if $ 2v \in \lang^{(1)} $ or $ 3v \in \lang^{(1)} $.
Note that we could have chosen $ a = \emptyword $ and $ v = 2213 $ instead, but the conclusion would no longer necessarily hold. Indeed, $w$ appears as a factor of $G_{k_0}(3213)$, so $ 3213 \in \lang^{(1)} $ would imply $ w \in \lang $ even if $ 2213 \notin \lang^{(1)} $. For this reason, we always take $ a = 1^{k_0}3 $ whenever possible.
With this restriction, the choice of $ a \in \set{\emptyword,3,13,1^23,\hdots,1^{k_0}3} $, $ b \in \set{\emptyword,1,1^2,\hdots,1^{k_0+1}} $, and $v$ will always be uniquely determined by $w$. We call $v$ the \keyterm{antecedant} of $w$, analogous to the terminology in \cite{ARP}. We summarize these results below:

\begin{proposition} \label{prop:eee-ante}
    Consider any finite word $ w = a G_{k_0}(v) b $ for some $ v \in \A^* $, $ a \in \set{\emptyword,3,13,1^23,\hdots,1^{k_0}3} $, and $ b \in \set{\emptyword,1,1^2,\hdots,1^{k_0+1}} $. Assume that $w$ caontains the character $2$ or $3$ at least once and that $ a = 1^{k_0}3 $ if $1^{k_0}3$ is a prefix of $w$. Then:
    \begin{itemize}
        \item If $ a = \emptyword $ and $ b = \emptyword $, then $ w \in \lang $ if and only if $ v \in \lang^{(1)} $.
        \item If $ a = \emptyword $ and $ b \in \set{1,1^2,\hdots,1^{k_0}} $, then $ w \in \lang $ if and only if $ v2 \in \lang^{(1)} $ or $ v3 \in \lang^{(1)} $.
        \item If $ a = \emptyword $ and $ b = 1^{k_0+1} $, then $ w \in \lang $ if and only if $ v3 \in \lang^{(1)} $.
        \item If $ a \neq \emptyword $ and $ b = \emptyword $, then $ w \in \lang $ if and only if $ 2v \in \lang^{(1)} $ or $ 3v \in \lang^{(1)} $.
        \item If $ a \neq \emptyword $ and $ b \in \set{1,1^2,\hdots,1^{k_0}} $, then $ w \in \lang $ if and only if $ 2v2 \in \lang^{(1)} $, $ 2v3 \in \lang^{(1)} $, $ 3v2 \in \lang^{(1)} $, or $ 3v3 \in \lang^{(1)} $.
        \item If $ a \neq \emptyword $ and $ b = 1^{k_0+1} $, then $ w \in \lang $ if and only if $ 2v3 \in \lang^{(1)} $ or $ 3v3 \in \lang^{(1)} $.
    \end{itemize}
\end{proposition}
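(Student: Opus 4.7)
The plan is to reduce the proposition to a structural statement about how $w$ must sit inside $G_{k_0}(w')$ for some $w' \in \lang^{(1)}$ and then resolve the six cases uniformly by a pattern match on the possible flanking characters of $v$. The starting point is the equivalence
$$w \in \lang \iff w \text{ is a factor of } G_{k_0}(w') \text{ for some } w' \in \lang^{(1)},$$
which follows directly from Definition~\ref{def:lang-coding-seq} applied to the coding sequences of $\lang$ and $\lang^{(1)}$.

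The critical structural observation, already implicit in Section~\ref{sec:eee-length2}, is that within $G_{k_0}(w')$ the substitution boundaries can be read off unambiguously from the positions of the characters $2$ and $3$: each block $G_{k_0}(c)$ ends in a $2$ or $3$ and consists only of $1$s otherwise, so inserting a break after every $2$ and every $3$ recovers the substitution structure. Consequently, any factor of $G_{k_0}(w')$ containing at least one $2$ or $3$ admits a parse as a (possibly empty) proper suffix of $G_{k_0}(c)$, followed by $G_{k_0}(v)$ for a unique middle block $v$, followed by a (possibly empty) proper prefix of $G_{k_0}(d)$, where $c$ and $d$ are the characters of $w'$ flanking $v$ (either may be absent if the factor begins or ends at an endpoint of $G_{k_0}(w')$). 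The restriction $a \in \set{\emptyword,3,13,\ldots,1^{k_0}3}$ reflects that the proper suffixes of $G_{k_0}(c)$ are either empty or end in $3$, forcing $c \in \set{2,3}$ whenever $a \neq \emptyword$; and the restriction $b \in \set{\emptyword,1,\ldots,1^{k_0+1}}$ reflects that proper prefixes of $G_{k_0}(d)$ consist only of $1$s, with $d \in \set{2,3}$ if $\abs{b} \leq k_0$ and $d = 3$ forced if $\abs{b} = k_0+1$ (since $1^{k_0+1}$ is a prefix of $G_{k_0}(3)$ but not of $G_{k_0}(2) = 1^{k_0}3$). The side condition that $a = 1^{k_0}3$ whenever $1^{k_0}3$ is a prefix of $w$ forces uniqueness of the parse by resolving the one place ambiguity could arise: such a prefix could otherwise be absorbed either into $a$ with $c \in \set{2,3}$, or into a leading $G_{k_0}(2) = 1^{k_0}3$ of the middle block.

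The six bulleted equivalences then follow by combining the independent constraints on $c$ and $d$. When $a = \emptyword$ no character $c$ is required, so the condition reduces to whether $v$ (augmented by the $d$ demanded by $b$) lies in $\lang^{(1)}$; factorial closure of $\lang^{(1)}$ from Definition~\ref{def:language} lets us drop any surplus $c$ and state the condition purely in terms of $v$. When $a \neq \emptyword$ the disjunction over $c \in \set{2,3}$ appears explicitly, and similarly for $d$ on the right. The main obstacle and only subtle step is establishing uniqueness of the parse under the stated side condition; once this is secured, each of the six cases is a brief verification that the compatible pairs $(c,d)$ match the listed disjunction on $\lang^{(1)}$, with the ``if'' direction witnessed by taking $w' = cvd$ (or the appropriate truncation) and checking that $w$ indeed appears as a factor of $G_{k_0}(w')$.
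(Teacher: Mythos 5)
Your proposal is correct and follows essentially the same route as the paper: the paper presents Proposition \ref{prop:eee-ante} as a summary of the de-substitution procedure developed in Sections \ref{sec:eee-length2} and \ref{sec:eee-ante} (insert a break after every $2$ or $3$, read off the unique parse into a proper suffix of $G_{k_0}(c)$, full blocks, and a proper prefix of $G_{k_0}(d)$, with the $a=1^{k_0}3$ convention resolving the single ambiguous case), which is precisely the argument you formalize. The only cosmetic remark is that your ``ambiguity'' discussion should make explicit that the proposition's disjunction over $c\in\{2,3\}$ already absorbs both readings of a leading $1^{k_0}3$, which is why the convention costs nothing.
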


We make frequent use of Proposition \ref{prop:eee-ante} throughout.

\subsection{Relating Extensions to Extensions of Antecedents} \label{sec:eee-extensions}

In this section, we aim to relate the extension set of an element $ w \in \lang $ to that of its antecedent element $ v \in \lang^{(1)} $.
We are only concerned with the bispecial elements of $\lang$, so we begin with the following:

\begin{proposition} \label{prop:bispecial-a-b}
    Let $ w \in \lang $ contain at least one $2$ or $3$, and $a$, $v$, and $b$ be as in Proposition \ref{prop:eee-ante}. If $w$ is left special, then $ a \in \set{\emptyword, 1^{k_0}3} $, and if $w$ is right special, then $ b \in \set{\emptyword, 1^{k_0}} $.
\end{proposition}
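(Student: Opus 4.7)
The plan is to prove both implications by contrapositive. The allowable values of $a$ are $\set{\emptyword, 3, 13, \ldots, 1^{k_0}3}$, so the cases to exclude for left-special $w$ are $a = 1^j 3$ with $0 \leq j \leq k_0-1$; the cases to exclude for right-special $w$ are $b = 1^j$ with $1 \leq j \leq k_0-1$, together with $b = 1^{k_0+1}$. Throughout I assume $k_0 \geq 1$, since if $k_0 = 0$ the excluded sets are empty and the statement is trivial. The key structural observation about any image $G_{k_0}(v')$ is this: each $3$ occurs only as the last character of a block $G_{k_0}(2) = 1^{k_0}3$ or $G_{k_0}(3) = 1^{k_0+1}3$, and each $2$ is itself a block $G_{k_0}(1) = 2$. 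Consequently, inside $G_{k_0}(v')$ every $3$ is immediately preceded by a $1$-run of length exactly $k_0$ or $k_0+1$, every $1$-run has length exactly $k_0$ or $k_0+1$, and every $2$ is preceded, when not at position $1$, only by a $2$ or a $3$.

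For the left case, fix $a = 1^j3$ with $0 \leq j \leq k_0-1$ and suppose $cw$ is realized as a factor of some $G_{k_0}(v')$ with $c$ placed at position $p$. The leading $3$ of $w$ sits at position $p+j+1$, and since it is not at the very first position of the image, it must end a block of one of the two forms above. Both block types place position $p$ inside the preceding $1$-run: indeed, the block's $1$-run covers positions $p+j+1-k_0$ through $p+j$ (for $G_{k_0}(2)$) or $p+j-k_0$ through $p+j$ (for $G_{k_0}(3)$), and since $j+1 \leq k_0$ we have $p+j+1-k_0 \leq p \leq p+j$ in either case. Hence the character at position $p$ must be $1$, forcing $c=1$; so $w$ has at most one left extension and is not left-special.

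For the right case, the argument is symmetric. If $b = 1^j$ with $1 \leq j \leq k_0-1$, then $w$ ends as $\cdots X 1^j$ where $X \in \set{2,3}$ is the last character of $G_{k_0}(v)$. Extending by $c = 3$ would make the new trailing $3$ close a block whose preceding $1$-run has length exactly $j$ in the image (bounded on the other side by the $X$ already in $w$), contradicting $j < k_0$; extending by $c = 2$ places a $2$ immediately after a $1$, forbidden by the structural observation; only $c = 1$ survives. If instead $b = 1^{k_0+1}$, then $c = 1$ would produce a $1$-run of length $k_0+2$ (impossible, since runs in $G_{k_0}(v')$ have length at most $k_0+1$), $c = 2$ is forbidden as above, and only $c = 3$ survives, completing $G_{k_0}(3)$. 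In every excluded case $w$ has a unique extension on the relevant side and so is not special there. The main obstacle is simply careful bookkeeping: aligning the prospective extension $cw$ or $wc$ with the block structure of a parent image and verifying which block-positions each character must occupy. Once the structural observation is in hand, the casework is direct.
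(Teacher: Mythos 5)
Your argument is correct and is essentially the paper's proof: both rest on the observation that in $G_{k_0}(v')$ every run of $1$s has length exactly $k_0$ or $k_0+1$ and is immediately followed by a $3$, and both then read off that $a=1^j3$ with $j<k_0$ forces $E^L(w)=\set{1}$, that $b=1^j$ with $1\leq j<k_0$ forces $E^R(w)=\set{1}$, and that $b=1^{k_0+1}$ forces $E^R(w)=\set{3}$. One small correction: your opening claim that the $k_0=0$ case is vacuous is not quite right, since for $k_0=0$ the value $b=1^{k_0+1}=1$ still lies outside $\set{\emptyword,1^{k_0}}=\set{\emptyword}$ and must be excluded; fortunately your argument for $b=1^{k_0+1}$ (a trailing run of length $k_0+2$ is impossible and $12$ is forbidden, so only the extension $3$ survives) applies verbatim when $k_0=0$, so no real repair is needed beyond dropping that sentence.
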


\begin{proof}
    All $1$s in $G_{k_0}(v)$ must occur in groups of $k_0$ or $k_0+1$, and all such groups of $1$s must be immediately followed by a $3$. The following are immediate consequences of this:
    \begin{itemize}
        \item If $ a = 1^j 3 $ for $ 0 \leq j < k_0 $, then $ E^{-}(w) = \set{1} $.
        \item If $ b = 1^j $ for $ 1 \leq j < k_0 $, then $ E^{+}(w) = \set{1} $, since the last character in $G_{k_0}(v)$ is not a $1$.
        \item If $ b = 1^{k_0+1} $, then $ E^{+}(w) = \set{3} $.
    \end{itemize}
    In any of the above cases, $w$ cannot be bispecial. The only cases that remain are $ a \in \set{\emptyword, 1^{k_0}3} $ and $ b \in \set{\emptyword, 1^{k_0}} $.
\end{proof}

We now aim to determine $E(w)$ for a bispecial $w$ only by $a$, $b$, and $E(v)$ (here by $E(w)$ we mean the extension set in $\lang$ and by $E(v)$ we mean the extension set in $\lang^{(1)}$). To do so, for any $ a \in \set{\emptyword, 1^{k_0}3} $, we define the left extension function $ \alpha_a^L: \set{1,2,3} \rightarrow \set{1,2,3,\emptyword} $, and for any $ b \in \set{\emptyword, 1^{k_0}} $, the right extension function $ \alpha_{b,k_0}^R: \set{1,2,3} \rightarrow \set{1,2,3,\emptyword} $, depending on $k_0$, as follows:
$$ \begin{array}{c|c|c}
   i &  \alpha_a^L(i),a = \emptyword & \alpha_a^L(i), a = 1^{k_0}3 \\ \hline
    1 & 2 & \mbox{undefined} \\
    2 & 3 & 3 \\
    3 & 3 & 1
\end{array} \qquad \begin{array}{c|c|c|c}
   i & \begin{array}{c} \alpha_{b,k_0}^R(i)\\ k_0 = 0\\ b = \emptyword \end{array} &\begin{array}{c} \alpha_{b,k_0}^R(i)\\ k_0 \geq 1\\ b = \emptyword \end{array}   &  \begin{array}{c} \alpha_{b,k_0}^R(i)\\ k_0 \geq 1\\ b = 1^{k_0} \end{array}  \\ \hline
    1 & 2 & 2 & \mbox{undefined} \\
    2 & 3 & 1 & 3 \\
    3 & 1 & 1 & 1
\end{array} $$
Note that  functions $\alpha_{1^{k_0}3}$ and  $\alpha_{1^{k_0},k_0}$ (for $ k_0 \geq 1 $)  are undefined at $1$.

We interpret $\alpha_a^L$ as a map from left extensions of the antecedent factor $v$ to left extensions of $w$, and $\alpha_{b,k_0}^R$ as a map from right extensions of $v$ to right extensions of $w$. We now show that the extension set $E(w)$ is precisely the image of $E(v)$ under the map $ \alpha_a^L \times \alpha_{b,k_0}^R $:

\begin{proposition} \label{prop:extension}
    Let $ a \in \set{\emptyword, 1^{k_0}3} $, $ v \in \lang^{(1)} $, and $ b \in \set{\emptyword, 1^{k_0}} $ be such that $ w := aG_{k_0}(v)b $ contains at least one $2$ or $3$. Then,
    \begin{equation} \label{e-e-e-extension}
        E(w) = \set{\parens{\alpha_a^L(c), \alpha_{b,k_0}^R(d)}: (c,d) \in E(v), \, \alpha^L(c) \neq \emptyword, \, \alpha^R(c) \neq \emptyword}.
    \end{equation}
\end{proposition}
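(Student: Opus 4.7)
The plan is to establish the claimed equality via the canonical antecedent decomposition developed in Section \ref{sec:eee-ante}. The key observation is that the problem decouples into independent left and right analyses: for $(x, y) \in \set{1,2,3}^2$, the condition $xwy \in \lang$ reduces via canonical de-substitution to a single membership condition of the form $cv'd \in \lang^{(1)}$, where $c$ and $d$ are determined (respectively) by $x$ together with $a$, and by $y$ together with $b$. It therefore suffices to verify the maps $\alpha_a^L$ and $\alpha_{b, k_0}^R$ on each case of $a$ and $b$ separately, then combine.

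For the left side, I would fix $a$ and enumerate $x \in \set{1,2,3}$. When $a = \emptyword$, so $w = G_{k_0}(v)b$, a left extension $x$ of $w$ must equal the last character of $G_{k_0}(c)$ for some $c \in E^L(v)$, and reading these off (namely $2, 3, 3$ for $c = 1, 2, 3$ respectively) yields $\alpha_\emptyword^L$ exactly. When $a = 1^{k_0}3 = G_{k_0}(2)$, so $w = G_{k_0}(2v)b$, I would handle each $x$ individually. For $x = 1$, the concatenation $xw = 1 \cdot 1^{k_0}3 \cdot G_{k_0}(v)b = G_{k_0}(3v)b$, so $xw \in \lang$ iff $3 \in E^L(v)$, giving $\alpha_a^L(3) = 1$. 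For $x = 2$, $xw = G_{k_0}(12v)b$ requires $12 \in \lang^{(1)}$, but $12$ is always forbidden by Proposition \ref{two words}; hence $x = 2$ is never a left extension when $a = 1^{k_0}3$, recorded as $\alpha_a^L(1) = \emptyword$. For $x = 3$, the canonical decomposition of $xw = 3 \cdot G_{k_0}(2v) b$ has left boundary $a' = 3$ and interior $v' = 2v$, so by Proposition \ref{prop:eee-ante}, $xw \in \lang$ iff $c' \cdot (2v) \in \lang^{(1)}$ for some $c' \in \set{2, 3}$; since $12 \notin \lang^{(1)}$, extendability of $\lang^{(1)}$ reduces this disjunction to $2v \in \lang^{(1)}$, i.e., $2 \in E^L(v)$, giving $\alpha_a^L(2) = 3$. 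A parallel case analysis on the right side computes $\alpha_{b, k_0}^R$, with three subcases depending on whether $k_0 = 0$ (so the first character of $G_{k_0}(d)$ is $2, 3, 1$ for $d = 1, 2, 3$) or $k_0 \geq 1$ (first characters $2, 1, 1$), and on whether $b = \emptyword$ or $b = 1^{k_0}$.

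To conclude, I would combine the two directions to obtain the stated equality. The inclusion $\supseteq$ follows because any $(c, d) \in E(v)$ with $\alpha_a^L(c), \alpha_{b,k_0}^R(d) \neq \emptyword$ yields $cvd \in \lang^{(1)}$, whose image $G_{k_0}(cvd)$ contains $\alpha_a^L(c) \cdot w \cdot \alpha_{b, k_0}^R(d)$ as a factor by direct inspection of the boundary substitutions. The inclusion $\subseteq$ follows by applying the left- and right-side case analyses simultaneously to any $(x, y) \in E(w)$: this produces a unique $(c, d) \in \set{1,2,3}^2$ with $\alpha_a^L(c) = x$ and $\alpha_{b, k_0}^R(d) = y$, and the canonical de-substitution of $xwy$ certifies $(c, d) \in E(v)$. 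The main obstacle is the subcase $(a, x) = (1^{k_0}3, 3)$ and its right-side analogue $(b, y) = (1^{k_0}, 3)$, where naive de-substitution produces a disjunction of two conditions on $\lang^{(1)}$ rather than a single one; collapsing this disjunction to a clean statement about $E^L(v)$ or $E^R(v)$ requires extendability of $\lang^{(1)}$, which is not explicitly stated as a hypothesis of the proposition but is guaranteed by the primitivity of Gauss coding sequences established earlier in this section.
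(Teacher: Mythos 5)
Your proposal is correct and follows essentially the same route as the paper: both rest on the canonical decomposition of Proposition \ref{prop:eee-ante}, a case-by-case verification of the tables for $\alpha_a^L$ and $\alpha_{b,k_0}^R$ over the possible boundary words $a$ and $b$, and the same collapse of the disjunction in the subcase $a=1^{k_0}3$, $c=2$ using the forbidden word $12$ together with left extendability of $\lang^{(1)}$ (which the paper also invokes, silently). One small correction: $(b,y)=(1^{k_0},3)$ is not a genuine right-side analogue of that difficulty, since there $wy$ ends in the complete block $1^{k_0}3=G_{k_0}(2)$ and de-substitutes to the single condition $\cdots v2\in\lang^{(1)}$; the only right-side disjunction arises from $b\in\set{1,\hdots,1^{k_0}}$ in Proposition \ref{prop:eee-ante}, and it is absorbed by the formula directly because both $d=2$ and $d=3$ map to the same right extension, so no extendability argument is needed on that side.
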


\begin{proof}[Proof of Proposition \ref{prop:extension}]
    We first prove the inclusion
    \begin{equation} \label{extension-pf-1}
        \set{\parens{\alpha_a^L(c), \alpha_{b,k_0}^R(d)}: (c,d) \in E(v), \, \alpha^L(c) \neq \emptyword, \, \alpha^R(c) \neq \emptyword} \subseteq E(w).
    \end{equation}
    We begin by noticing the following:
    \begin{eqnarray*}
        G_{k_0}(1v1) &=& 2 G_{k_0}(v) 2 \\
        G_{k_0}(1v2) &=& 2 G_{k_0}(v) 1^{k_0} 3 \\
        G_{k_0}(1v3) &=& 2 G_{k_0}(v) 1^{k_0+1} 3 \\
        G_{k_0}(2v1) &=& 1^{k_0} 3 G_{k_0}(v) 2 \\
        G_{k_0}(2v2) &=& 1^{k_0} 3 G_{k_0}(v)  1^{k_0} 3 \\
        G_{k_0}(2v3) &=& 1^{k_0} 3 G_{k_0}(v)  1^{k_0+1} 3 \\
        G_{k_0}(3v1) &=& 1^{k_0+1} 3 G_{k_0}(v) 2 \\
        G_{k_0}(3v2) &=& 1^{k_0+1} 3 G_{k_0}(v)  1^{k_0} 3 \\
        G_{k_0}(3v3) &=& 1^{k_0+1} 3 G_{k_0}(v)  1^{k_0+1} 3.
    \end{eqnarray*}
    It can be verified that in all cases where $ \alpha_a^L(c) \neq \emptyword $ and $ \alpha_{b,k_0}^R(d) \neq \emptyword $, $w$ is a subword of $G_{k_0}(cvd)$.
    Furthermore, in all cases except those where $ a = 1^{k_0}3 $ and $ c = 2 $, it can be verified that $ G_{k_0}(cvd) = fwg $ for nonempty words $f$ and $g$ where $\alpha_a^L(c)$ is the last character of $f$ and $\alpha_{b,k_0}^R(d)$ is the last character of $g$. This means that $ cvd \in \lang^{(1)} $ implies $ \parens{\alpha_a^L(c), \alpha_{b,k_0}^R(d)} \in \lang $.
    
    For the case where $ a = 1^{k_0}3 $ and $ c = 2 $, notice that since $12$ is forbidden, $ 2vd \in \lang^{(1)} $ implies $ 22vd \in \lang^{(1)} $ or $ 32vd \in \lang^{(1)} $. It can be verified by computations similar to those above that $ 31^{k_0}3G_{k_0}(v)G_{k_0}(d) $ is a subword of both $G_{k_0}(22vd)$ and $G_{k_0}(32vd)$, and hence that $ 2vd \in \lang^{(1)} $ implies $ \parens{3, \alpha_{b,k_0}^R(d)} \in E(w) $.
    This completes the proof of (\ref{extension-pf-1}).
    
    It can be proven that
    $$ E(w) \subseteq \set{\parens{\alpha_a^L(c), \alpha_{b,k_0}^R(d)}: (c,d) \in E(v), \, \alpha^L(c) \neq \emptyword, \, \alpha^R(c) \neq \emptyword} $$
    by going through all cases and applying Proposition \ref{prop:eee-ante}.
    To show how these computations work, we give two examples.
    Suppose that $ a = 1^{k_0}3 $, $ b = \emptyword $, and $ (3,2) \in E(w) $. We have $ 3w2 = 31^{k_0}3G_{k_0}(v)2 \in \lang $, so by applying Proposition \ref{prop:eee-ante} to $3w2$ (here the $v$ in Proposition \ref{prop:eee-ante} is actually $2v1$, with $a=3$ and $b=\emptyword$), we know that $ 22v1 \in \lang^{(1)} $ or $ 32v1 \in \lang^{(1)} $, so $ (2,1) \in E(v) $.
    Indeed, $ \alpha_a^L(2) = 3 $ and $ \alpha_{b,k_0}^R(1) = 2 $, so
    $$ (3,2) \in \set{\parens{\alpha_a^L(c), \alpha_{b,k_0}^R(d)}: (c,d) \in E(v), \, \alpha^L(c) \neq \emptyword, \, \alpha^R(c) \neq \emptyword}. $$
    For another example, suppose that $ k_0 \geq 1 $, $ a = \emptyword $, $ b = 1^{k_0} $, and $ (3,1) \in E(w) $. We have $ 3w1 = 3G_{k_0}(v)1^{k_0+1} \in \lang $, so by Proposition \ref{prop:eee-ante}, $ 2v3 \in E(v) $ or $ 3v3 \in E(v) $. Indeed, $ \alpha_a^L(2) = \alpha_a^L(3) = 3 $ and $ \alpha_{b,k_0}^R(3) = 1 $, so necessarily
    $$ (3,1) \in \set{\parens{\alpha_a^L(c), \alpha_{b,k_0}^R(d)}: (c,d) \in E(v), \, \alpha^L(c) \neq \emptyword, \, \alpha^R(c) \neq \emptyword}. $$
\end{proof}

The following is a direct consequence of Propositions \ref{prop:bispecial-a-b} and \ref{prop:extension}:

\begin{corollary} \label{cor:extension-descendent}
    If $ v \in \lang^{(1)} $ is the antecedent of $ w \in \lang $, then $ \abs{E^-(w)} \leq \abs{E^-(v)} $ and $ \abs{E^+(w)} \leq \abs{E^+(v)} $.  In particular, if $w$ is bispecial, then $v$ is bispecial.
\end{corollary}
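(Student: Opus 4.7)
My plan is to derive both inequalities by projecting the identity in Proposition \ref{prop:extension} onto each coordinate, with a short case-split beforehand to handle decompositions that fall outside the hypotheses of that proposition.

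First, write the antecedent decomposition $w = a\,G_{k_0}(v)\,b$ as in Proposition \ref{prop:eee-ante}. If $a \notin \set{\emptyword, 1^{k_0}3}$, then Proposition \ref{prop:bispecial-a-b} forces $\abs{E^L(w)} \leq 1$; since $v$ is a finite factor of the extendable language $\lang^{(1)}$ we have $\abs{E^L(v)} \geq 1$, so the bound $\abs{E^L(w)} \leq \abs{E^L(v)}$ is immediate. The symmetric observation using $b \notin \set{\emptyword, 1^{k_0}}$ handles the right inequality.

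In the remaining case, $a \in \set{\emptyword, 1^{k_0}3}$ and $b \in \set{\emptyword, 1^{k_0}}$, so Proposition \ref{prop:extension} applies and gives
\[ E(w) = \set{(\alpha_a^L(c), \alpha_{b,k_0}^R(d)) : (c,d) \in E(v),\ \alpha_a^L(c) \neq \emptyword,\ \alpha_{b,k_0}^R(d) \neq \emptyword}. \]
Projecting onto the first coordinate, and using extendability of $\lang$ to identify the projection of $E(w)$ with $E^L(w)$, I obtain $E^L(w) \subseteq \alpha_a^L(E^L(v)) \setminus \set{\emptyword}$. Because $\alpha_a^L$ is a function from $\set{1,2,3}$ into $\set{1,2,3} \cup \set{\emptyword}$, its image has cardinality at most that of its domain, giving $\abs{E^L(w)} \leq \abs{E^L(v)}$. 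The analogous projection onto the second coordinate, using $\alpha_{b,k_0}^R$, gives $\abs{E^R(w)} \leq \abs{E^R(v)}$.

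The bispecial consequence is then immediate: if $\abs{E^L(w)} \geq 2$ and $\abs{E^R(w)} \geq 2$, the two inequalities just proved force $\abs{E^L(v)} \geq 2$ and $\abs{E^R(v)} \geq 2$, so $v$ is bispecial. There is no real obstacle here beyond bookkeeping the case-split and noting that the coordinate projections of $E(w)$ recover $E^L(w)$ and $E^R(w)$ (which uses extendability of $\lang$); all substantive content has already been packed into Propositions \ref{prop:bispecial-a-b} and \ref{prop:extension}.
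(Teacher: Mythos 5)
Your proof is correct and follows the same route as the paper, which simply asserts the corollary as a direct consequence of Propositions \ref{prop:bispecial-a-b} and \ref{prop:extension}; you have merely made explicit the case split on $a$ and $b$ and the projection of $E(w)$ onto its coordinates (together with the extendability needed to identify $\pi_1(E(w))$ with $E^L(w)$), all of which is sound.
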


We also have the following, which allows us to narrow down our consideration of bispecial factors, setting the stage for an inductive characterization of them:

\begin{corollary} \label{cor:possible-ext-image}
    For any bispecial $ w \in \lang $ containing at least one $2$ or $3$, there exists $ a \in \set{\emptyword, 1^{k_0}3} $, a bispecial factor $ v \in \lang^{(1)} $, and $ b \in \set{0, 1^{k_0}} $ such that $ w = aG_{k_0}(v)b $.
\end{corollary}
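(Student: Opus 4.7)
The plan is to assemble this corollary directly from the three preceding results: Proposition \ref{prop:eee-ante}, Proposition \ref{prop:bispecial-a-b}, and Corollary \ref{cor:extension-descendent}. There is essentially no new content beyond bookkeeping.

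First I would invoke Proposition \ref{prop:eee-ante} on the given bispecial factor $w \in \lang$. Since $w$ contains at least one $2$ or $3$, the proposition guarantees a (unique, under its convention) decomposition
\[
w = a\, G_{k_0}(v)\, b, \qquad a \in \set{\emptyword, 3, 13, 1^2 3, \ldots, 1^{k_0}3}, \quad b \in \set{\emptyword, 1, 1^2, \ldots, 1^{k_0+1}},
\]
with antecedent $v$ of some word in $\lang^{(1)}$ (in the sense that the conclusions of Proposition \ref{prop:eee-ante} relate $w \in \lang$ to membership of $v$ or its one-letter extensions in $\lang^{(1)}$).

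Next I would use Proposition \ref{prop:bispecial-a-b} to restrict $a$ and $b$. Since $w$ is left special, that proposition forces $a \in \set{\emptyword, 1^{k_0}3}$; since $w$ is right special, the same proposition forces $b \in \set{\emptyword, 1^{k_0}}$. This is precisely the restriction claimed in the corollary.

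Finally, I would observe that Corollary \ref{cor:extension-descendent} applied to this decomposition states that $\abs{E^L(w)} \leq \abs{E^L(v)}$ and $\abs{E^R(w)} \leq \abs{E^R(v)}$, so bispeciality of $w$ transfers to bispeciality of $v$ in $\lang^{(1)}$. This gives the required bispecial antecedent and completes the argument.

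There is no real obstacle here, since each of the three ingredients is already in hand; the only subtlety is remembering that Proposition \ref{prop:eee-ante}'s decomposition requires $w$ to contain a $2$ or $3$, which is exactly the hypothesis of the corollary, so Proposition \ref{prop:eee-ante} applies without modification.
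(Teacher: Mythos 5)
Your proposal is correct and matches the paper's intended argument exactly: the paper treats this corollary as an immediate consequence of Proposition \ref{prop:eee-ante} (the decomposition $w = aG_{k_0}(v)b$), Proposition \ref{prop:bispecial-a-b} (restricting $a$ and $b$), and Corollary \ref{cor:extension-descendent} (bispeciality passing to the antecedent $v$), which is precisely the chain you assembled. The only cosmetic note is that the ``$0$'' in the stated set for $b$ is a typo for $\emptyword$, which you correctly read as such.
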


\subsection{The age of a bispecial word via the map $A$}

Corollary \ref{cor:possible-ext-image} allows us to make the following definition:
\begin{definition}
    We define the function
    $$ A: \set{\mbox{Bispecial words in $\lang$}} \setminus \set{1^j: 0 \leq j \leq k_0+1} \rightarrow \set{\mbox{Bispecial factors in $\lang^{(1)}$}} $$
    by $ A(w) = v $, where $v$ is the antecedent of $w$.
    We call $w$ the \keyterm{extended image} of $v$.
\end{definition}

We use the function $A$ to relate properties of a bispecial word $ w \in \lang $ to properties of the simpler antecedent factor $ A(w) \in \lang^{(1)} $ via an inductive process.
As part of this, we will need to be able to iterate the function $A$ until we arrive at a base case. In doing this, for a bispecial $ w \in \lang $, $A^m(w)$ will be a bispecial word in $\lang^{(m)}$, since $A$ maps a bispecial word in $\lang^{(k)}$ to its antecedent word in $\lang^{(k+1)}$.
The following allows us to perform such induction:

\begin{proposition} \label{bispecialage}
    For all $w$ in $\mbox{Bispecial} (\bfu)$, there exists $ m \in \NatZero $ so that with
    $$w_m := A^m(w)=A \circ A \circ \hdots \circ A(w) $$
    we have
    $$ w_m = 1^j $$
    for some $ j \in \NatZero $.
\end{proposition}

Note that if $w=1^j$, then its age is zero.
Thus the proposition is stating that the age of every bispecial word in $\lang$ exists and is finite.

% Vaughn - 7/29/22
% I have revised this proof
\begin{proof}
    We have the following sequence, letting $w= w_0,$
        
    \begin{eqnarray*}
    w_0   &=& a_0 G_{k_0}(w_1) b_0  \\ 
    w_1   &=& a_1 G_{k_1}(w_2) b_1  \\ 
    w_2   &=& a_2 G_{k_2}(w_3) b_2  \\ 
    &\vdots&
    \end{eqnarray*}
    which continues until some $w_m = 1^j$, where $j=0,1,2,\ldots.$  Our proposition is that this sequence must eventually stop.
    
    To prove this, it suffices to prove that for all $i$, if $ w_{i+2} \neq 1^j $, then $ \abs{w_i} > \abs{w_{i+2}} $.
    Indeed, $ \parens{G_{k_i} \circ G_{k_{i+1}}}(w_{i+2}) $ is a subword of $w_i$. We may compute the composition of two substitutions as
    $$ G_{k_i} \circ G_{k_{i+1}}: 1 \mapsto 1^{k_0}3, 2 \mapsto 2^{k_1} 1^{k_0+1} 3, 3 \mapsto 2^{k_1+1} 1^{k_0+1} 3. $$
    Therefore, $ \parens{G_{k_i} \circ G_{k_{i+1}}}(1) $ has length at least $1$, and $ \parens{G_{k_i} \circ G_{k_{i+1}}}(2) $ and $ \parens{G_{k_i} \circ G_{k_{i+1}}}(3) $ have length at least $2$, so if $w_{i+2}$ contains at least one occurrence of the character $2$ or $3$, then we have $ \abs{w_i} \geq \abs{\parens{G_{k_i} \circ G_{k_{i+1}}}(w_{i+2})} > \abs{w_{i+2}} $, and the proof is complete.
\end{proof}

The value of $m$ in Proposition \ref{bispecialage} must be unique since $A(w_m)$ is not defined when $ w_m = 1^j $ for some $j$. We call such $m$ the \keyterm{age} of a bispecial factor $w$.
Also, if $ v = A^k(w) $ for some $k$, then we say $w$ is a \keyterm{descendant} of $v$, or a \keyterm{descendant of order $k$}.

\subsection{All age zero bispecial words} \label{sec:eee-age-0}

We will soon recursively create a list of bispecial words $\set{w_m^+}$ and $\set{w_m^-}$, each of age $m$ that will contain all non-neutral bispecial words in $\lang$. This means in
particular that $A^m(w_m^+)$ and $A^m(w_m^-)$ have age $0$ as words in $\lang^{(m)}$ and $A^{m-1}(w_m^+)$ and $A^{m-1}(w_m^-)$ have age $1$ as words in $\lang^{(m-1)}.$

Given any bispecial $ w_m \in \lang $ of age $m$ such that $A^m(w_m) = 1^j$,  we will understand the extension set $E(w_m)$ (and thus the multiplicity $m(w_m)= \abs{E(w_m)} - \abs{E^-(w_m)} - \abs{E^+(w_m)} + 1$) if we can determine the extension sets of some of its lower-age antecedents. Since there are only finitely many factors of age zero, this sets the stage for an inductive argument.
Initially, we might hope to compute the multiplicity $m(w_m)$ of $w_m$ by knowing the multiplicity of $A^m(w_m) = 1^j$, but actually we will also need to know the extension set $A^{m-1}(w_m)$. Thus we need to  calculate the extension sets of both age $0$ and age $1$ factors before proceeding further.

In section \ref{sec:eee-length2}, we found the extension set $E(\emptyword).$
For the other possible age $0$ bispecial words, we have the following:

\begin{proposition} \label{age0}
The possible extension sets and the multiplicities for the word $1^j$ is given in the table
$$\begin{array}{c|c| c} 
&  E(1^j) & m(1^j)\\
\hline \hline
j=k_0>0 , k_1=0 & (1,3), (2,1), (3,1), (3,3)  &0 \\
j=k_0>0, k_1>0 &    (1,3), (2,1), (3,3) &    -1  \\
1\leq j < k_0&  (1,1), (1,3), (2,1), (3,1) & 0 \\

\end{array}$$
\end{proposition}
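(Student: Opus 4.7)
My plan is to determine $E(1^j)$ for $1 \leq j \leq k_0$ by checking, for each of the nine possible pairs $(a,b) \in \set{1,2,3}^2$, whether $a1^j b$ occurs as a factor of $G_{k_0}(v)$ for some $v \in \lang^{(1)}$. The crucial structural observation is that in any image $G_{k_0}(v)$, every maximal block of $1$s has length exactly $k_0$ (coming from $2 \mapsto 1^{k_0}3$) or $k_0+1$ (coming from $3 \mapsto 1^{k_0+1}3$), and every such maximal block is immediately followed by a $3$. Because $1^j$ contains no $2$ or $3$, Proposition \ref{prop:eee-ante} cannot be invoked directly; instead, each case reduces to whether a specific one- or two-letter word lies in $\lang^{(1)}$, and this is controlled by Proposition \ref{two words} applied to $\lang^{(1)}$ with the parameters $k_1, k_2$.

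Several pairs resolve on general grounds, independently of $j$. The pair $(1,2)$ is never in $E(1^j)$ because $12$ is forbidden in every $\lang^{(m)}$. The pair $(3,2)$ is never in $E(1^j)$ because a block of $1$s followed by a $2$ would have to terminate a maximal block, but maximal blocks are always followed by a $3$. Conversely, $(1,3)$ always lies in $E(1^j)$ since $1^{j+1}3$ is a suffix of $G_{k_0}(3)$, and $(2,1)$ always lies in $E(1^j)$ since $2 \cdot 1^{j+1}$ is a prefix of $G_{k_0}(13)$ and $13 \in \lang^{(1)}$. The remaining four pairs split cleanly along the boundary $j = k_0$ versus $j < k_0$.

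For $1 \leq j < k_0$: the pair $(1,1)$ is realized inside the initial block of $k_0+1$ ones in $G_{k_0}(3)$; each of $(2,2)$, $(2,3)$, $(3,3)$ sandwiches $1^j$ between characters in $\set{2,3}$, forcing the intervening block of $1$s to be maximal and hence of length $k_0$ or $k_0+1$, which contradicts $j < k_0$; the pair $(3,1)$ is realized via $G_{k_0}(32)$ since $32 \in \lang^{(1)}$ and $3 \cdot 1^{j+1}$ sits at the boundary between $G_{k_0}(3)$ and $G_{k_0}(2)$. For $j = k_0$: $(1,1)$ would require a block of $1$s of length $k_0+2$, exceeding the maximum; $(2,2)$ and $(2,3)$ both force the preimage to contain $12$, which is forbidden; $(3,3)$ reduces to $c \cdot 2 \in \lang^{(1)}$ for some $c \in \set{2,3}$, realized by $32$; and $(3,1)$ reduces to $c \cdot 3 \in \lang^{(1)}$ for some $c \in \set{2,3}$, which since $23$ is forbidden becomes the condition $33 \in \lang^{(1)}$, holding by Proposition \ref{two words} if and only if $k_1 = 0$.

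Collecting the valid pairs recovers the three rows of the table; together with $|E^L(1^j)| = 3$ and $|E^R(1^j)| = 2$ in every row, substituting into $m(w) = |E(w)| - |E^L(w)| - |E^R(w)| + 1$ yields the claimed multiplicities $0$, $-1$, $0$. The main obstacle is the sheer volume of bookkeeping: all nine cells of the extension diagram must be resolved, and one must carefully distinguish whether the block of $1$s appearing in the factor is a full maximal block or a proper prefix or suffix of one, since this distinction determines which preimages in $\lang^{(1)}$ can actually contribute.
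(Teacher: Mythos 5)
Your argument is correct and follows essentially the same route as the paper: a cell-by-cell verification of the extension diagram of $1^j$ using the fact that maximal blocks of $1$s in $G_{k_0}(v)$ have length $k_0$ or $k_0+1$ and are followed by $3$, combined with the allowable two-letter words of $\lang^{(1)}$ from Proposition \ref{two words}. Your witness $G_{k_0}(32)=1^{k_0+1}3\,1^{k_0}3$ for $(3,1)\in E(1^j)$ when $j<k_0$ is in fact cleaner than the paper's cited $G_{k_0}(21)$ (which equals $1^{k_0}32$ and appears to be a typo), and the multiplicity computations agree with the stated table.
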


The corresponding extension diagrams are
$$ \begin{tabular}{c|ccc}
       $\begin{array}{c} j=k_0>0, \\ k_1 = 0 \end{array}$     & 1 & 2 & 3 \\ \hline
            1 & & & $\times$ \\
            2 & $\times$ & & \\
            3 & $\times$ &  & $\times$
        \end{tabular}, \qquad \begin{tabular}{c|ccc}
             $\begin{array}{c} j=k_0>0, \\ k_1 > 0 \end{array}$& 1 & 2 & 3 \\ \hline
            1 & & & $\times$ \\
            2 & $\times$ &  & \\
            3 & & & $\times$
        \end{tabular}, \qquad \begin{tabular}{c|ccc}
           $ 1\leq j < k_0 $& 1 & 2 & 3 \\ \hline
                1 & $\times$ & & $\times$ \\
                2 & $\times$ & & \\
                3 & $\times$ &  &
        \end{tabular} $$

Note that $m(1^j)$ is nonzero (equaling $-1$ only when $j=k_0>0, k_1>0$), which is mirrored in the fact that $m(\emptyword)$ is nonzero (equaling now $1$)  only when $ k_1>0.$  Thus the two types of age $0$ bispecial words will have multiplicities of opposite sign.  We will eventually show that this happens for all ages.

\begin{proof}
    As to be expected, the proof is simply going through the various possibilities. 

    First consider the factor $1^{k_0}$ when $ k_0 > 0 $. We have the following:
    \begin{itemize}
        \item By Proposition \ref{two words}, $12$ is always forbidden, so $ (c,2) \notin E(1^{k_0}) $ for any $ c \in \set{1,2,3} $.
        \item Since $ 1^{k_0+2} \notin \lang $,  then $ (1,1) \notin E(1^{k_0}) $.
        \item Since $ 3 \in \lang^{(1)} $ and $ G_{k_0}(3) = 1^{k_0+1}3 \in \lang $,  we must have $ (1,3) \in E(1^{k_0}) $.
        \item By Proposition \ref{two words}, we know that $ 32 \in \lang^{(1)} $. Since $ G_{k_0}(32) = 1^{k_0+1}31^{k_0}3$, we get that $ (3,3) \in E(1^{k_0}).$
        \item By Proposition \ref{two words}, we know that $ 13 \in \lang^{(1)} $. Since $ G_{k_0}(13) = 21^{k_0+1}3$, we get that $ (2,1) \in E(1^{k_0}).$
        \item $ (2,3) \in E(1^{k_0}) $ means that $ 2 1^{k_0}3 \in \lang $, which by de-substitution requires $ 12 \in \lang^{(1)} $. As $12$ is always forbidden, this cannot happen, hence $(2,3) \notin E(1^{k_0})$
        \item If $ k_1 = 0 $, then $ 33 \in \lang^{(1)} $,  giving us that $ G_{k_0}(33) = 1^{k_0+1}31^{k_0+1}3 \in  \lang $, which  which in turn means $ (3,1) \in E(1^{k_0}) $. Conversely, if $ (3,1) \in E(1^{k_0}) $, then $ 31^{k_0}3 \in \lang $, which means that $23$ or $33$ must be in $\lang^{(1)}$. However, by Proposition \ref{two words}, neither  $ 23 $  nor  $ 33$  can be in $\lang^{(1)}$ when  $ k_1 = 0 $.  Thus  $ (3,1) \in E(1^{k_0}) $ if and only if $ k_1 = 0 $.
    \end{itemize}
    
    We have the following regarding the factor $1^j$ for $ 1 \leq j \leq k_0 - 1 $:
    \begin{itemize}
        \item By the same arguments as above, $ (c,2) \notin E(1^j) $ for any $c$, and $ (1,3) \in E(1^j) $.
        \item Since all $1$s in $w$ must occur in groups of $k_0$ or $k_0+1$, $ (2,3), (3,3) \notin E(1^j) $.
        \item Since $ j + 2 \leq k_0 + 1 $, $1^{j+2}$ is a subword of $ G_{k_0}(3) = 1^{k_0+1}3 $, giving us that  $ (1,1) \in E(1^j) $.
        \item Since $ 13 \in \lang^{(1)} $  by Proposition \ref{two words}, we have that $21^{j+1}$ is a subword of $ G_{k_0}(13) = 21^{k_0+1}3 $, giving us that $ (2,1) \in E(1^j) $.
        \item Since $ 21 \in \lang^{(1)} $  by Proposition \ref{two words}, we have that $31^{j+1}$ (as a subword of  $G_{k_0}(21)$) must be in $\lang$, giving us that $(3,1) \in E(1^j) $,
    \end{itemize}

\end{proof}

\subsection{All age one bispecial words} \label{sec:eee-age-1}

We now find all bispecial words of age $1$.

\begin{proposition}\label{eeeageonefactors}
    The following are the only possible bispecial words in $\lang$ of age $1$:
    \begin{itemize}
        \item $2^j$ for $ 1 \leq j \leq k_1 $, with left extension set $\set{2,3}$
        \item $ 1^{k_0}3 $, with left extension set $\set{1,3}$
        \item $ 1^{k_0}31^{k_0} $, if $ k_0 \geq 1 $, $ k_1 = 0 $, and $ k_2 \geq 1 $, with left extension set $\set{1,3}$
        \item $ 1^{k_0}32^{k_1} $, if $ k_1 \geq 1 $, with left extension set $\set{1,3}$
    \end{itemize}
    There are no additional bispecial words of age $1$. Furthermore:
    \begin{itemize}
        \item If $ k_1 \geq 1 $ and $ k_2 \geq 1 $, then $ m(1^{k_0}3) = 1 $, $ m(1^{k_0}32^{k_1}) = -1 $, and all other bispecial factors of age $1$ are neutral.
        \item If $ k_1 = 0 $ or $ k_2 = 0 $, then all bispecial factors of age $1$ are neutral.
    \end{itemize}
\end{proposition}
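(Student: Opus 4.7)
The plan is to combine Corollary~\ref{cor:possible-ext-image} with the explicit extension machinery of Propositions~\ref{prop:eee-ante} and~\ref{prop:extension}, and then run a small finite case analysis. By Corollary~\ref{cor:possible-ext-image}, every bispecial $w \in \lang$ containing a $2$ or $3$ may be written uniquely as $w = aG_{k_0}(v)b$ with $v$ bispecial in $\lang^{(1)}$, $a \in \{\emptyword,1^{k_0}3\}$, and $b \in \{\emptyword,1^{k_0}\}$. By definition $w$ has age $1$ if and only if $A(w)=v$ has age $0$ in $\lang^{(1)}$. Applying Proposition~\ref{age0} (and Section~\ref{sec:eee-length2}) to $\lang^{(1)}$, the age-zero bispecial words there are exactly $\emptyword$ together with $1^j$ for $1 \le j \le k_1$ (the latter list being empty when $k_1=0$). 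Since $w$ must contain a $2$ or $3$, the pairs $(v,a,b)=(\emptyword,\emptyword,\emptyword)$ and $(\emptyword,\emptyword,1^{k_0})$ are excluded.

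Using $G_{k_0}(1^j)=2^j$, the remaining candidate extended images are the six words $1^{k_0}3$, $1^{k_0}31^{k_0}$, $2^j$, $2^j1^{k_0}$, $1^{k_0}32^j$, and $1^{k_0}32^j1^{k_0}$, where $j$ ranges over $1,\dots,k_1$ in the last four, and where occurrences of $1^{k_0}$ on a boundary require $k_0 \ge 1$. For each candidate I would apply Proposition~\ref{prop:extension}, computing
\[
E(w) \;=\; \bigl\{(\alpha_a^L(c),\alpha_{b,k_0}^R(d)) : (c,d)\in E(v),\;\alpha_a^L(c)\ne\emptyword,\;\alpha_{b,k_0}^R(d)\ne\emptyword\bigr\},
\]
plugging in the explicit tables for $\alpha_a^L$ and $\alpha_{b,k_0}^R$ from Section~\ref{sec:eee-extensions} along with $E(\emptyword)$ from Proposition~\ref{two words} and $E(1^j)$ from Proposition~\ref{age0}, both read inside $\lang^{(1)}$ (so with the indices shifted to $k_1,k_2$). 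The computations split on the cases $k_0=0$ versus $k_0 \ge 1$ (which controls injectivity of $\alpha_{b,k_0}^R$) and on the regimes of $k_1,k_2$ (which control $E(v)$).

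The bookkeeping shows that $2^j 1^{k_0}$ and $1^{k_0}32^j 1^{k_0}$ always fail bispeciality (one of $|E^L|,|E^R|$ collapses to $1$), and that $1^{k_0}31^{k_0}$ is bispecial only when $k_1=0$ and $k_2 \ge 1$, yielding exactly the four families in the statement together with the asserted left extension sets $\{2,3\}$, $\{1,3\}$, $\{1,3\}$, $\{1,3\}$. Finally one computes $m(w)=|E(w)|-|E^L(w)|-|E^R(w)|+1$ in each surviving case; the only nonzero outcomes occur when $k_1 \ge 1$ and $k_2 \ge 1$, where the map $\alpha^L_{1^{k_0}3}\times\alpha^R_{\emptyword,k_0}$ preserves all four of the pairs $(2,1),(2,2),(3,1),(3,2)\in E(\emptyword)$ to give $m(1^{k_0}3)=+1$, while the map applied to $E(1^{k_1})=\{(1,3),(2,1),(3,3)\}$ produces only two surviving pairs for $w=1^{k_0}32^{k_1}$, yielding $m=-1$.

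The main obstacle is not conceptual but combinatorial: keeping the table of $(a,b,v)$ candidates against the four $(k_0=0\text{ vs.}\ge1)\times(k_2=0\text{ vs.}\ge1)$ regimes without dropping a case. The $k_0=0$ subcase is the trickiest because $\alpha_{\emptyword,0}^R$ is a bijection, which can make a candidate look more likely to be bispecial; the saving feature is that $\alpha_a^L$ with $a=1^{k_0}3$ kills the input $1$, so the arithmetic $|E(w)|-|E^L|-|E^R|+1$ still matches the values claimed. Once the six families are tabulated, the proposition follows by inspection.
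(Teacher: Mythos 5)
Your proposal is correct and follows essentially the same route as the paper: both generate the same six candidate families by applying the antecedent decomposition (Corollary \ref{cor:possible-ext-image}) to the age-zero bispecial words of $\lang^{(1)}$, and then settle bispeciality and multiplicity by a finite case analysis over the regimes of $k_0,k_1,k_2$. The only difference is one of execution --- you push every extension diagram through the transfer formula of Proposition \ref{prop:extension}, whereas the paper rules out the non-bispecial candidates by direct de-substitution arguments and verifies the surviving diagrams by hand in Proposition \ref{age one}; both are valid and produce the same tables.
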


\begin{proof}
    We know that any nonneutral bispecial word $w$ of age $1$ must be of the form
    $$w= a G_{k_0}(v)b$$
    where $a$ is either the empty word $\emptyword$ or $1^{k_0}3$, $b$ is either the empty word $\emptyword$ or $1^{k_0}$ and $v$ is a bispecial word of age $0.$ Also, as a bispecial word in $\lang^{(1)}$ of age $0$, $v$ must be either the empty word $\emptyword$ or equal to $1^j$ for $ 1 \leq j \leq k_0 $. Furthermore, $w$ must contain at least one $2$ or $3$, since otherwise $w$ would have age $0$. This limits the possible age $1$ bispecial words in $\lang^{(1)}$ to the following:
    \begin{itemize}
        \item $2^j$ for $ 1 \leq j \leq k_1 $
        \item $ 2^j 1^{k_0} $ for $ 1 \leq j \leq k_1 $ when $ k_0 \geq 1 $
        \item $ 1^{k_0}32^j $ for $ 0 \leq j \leq k_1 $
        \item $ 1^{k_0}32^j 1^{k_0} $ for $ 0 \leq j \leq k_1 $ when $ k_0 \geq 1 $
    \end{itemize}
    
    We first show that the words $ 2^j 1^{k_0} $ and $ 1^{k_0}32^j 1^{k_0} $ are not bispecial for $ 1 \leq j \leq k_1 $. Indeed, $12$ is always forbidden, so $ 2 \notin E^+(2^j 1^{k_0}) $. Also, $ 2^j 1^{k_0}3 \in \lang $ requires $ 1^j 2 \in \lang^{(1)} $, which cannot happen since $12$ is forbidden, so $ 3 \notin E^+(2^j 1^{k_0}) $. This means $ E^+(2^j 1^{k_0}) = \set{1} $, so $ 2^j 1^{k_0} $ is not right special. This implies $ 1^{k_0}3 2^j 1^{k_0} $ is also not right special.
    
    We now show that $ 1^{k_0}3 2^j $ is not bispecial when $ 1 \leq j < k_1 $. Indeed, $ 3 \notin E^+(1^{k_0}3 2^j) $ since $23$ is always forbidden. Also, $ 1^{k_0}32^j 1 \in \lang $ requires one of the words $ 21^j 2 $, $ 21^j 3 $, $ 31^j 2 $, or $ 31^j 3 $ to be in $\lang^{(1)}$, which cannot happen since all $1$s must occur in groups of $k_1$ or $k_1+1$. Therefore, $ E^+(1^{k_0}3 2^j) = \set{2} $, so this word is not right special.
    
    The only potential bispecial words of age $1$ that remain are $2^j$ for $ 1 \leq j \leq k_1 $, $1^{k_0}3$, $1^{k_0}31^{k_0}$, and $1^{k_0}32^{k_1}$.
    The computation of extension diagrams in the following proposition completes the proof.
\end{proof}

For ease of reading the following proposition, we will summarize at the end of this subsection its results in a table that   will record the size of the left, right, and total extension sets, and the corresponding multiplicities.

\begin{proposition} \label{age one} 
    We have the following extension diagrams for the words of age 1 remaining from the proof of Proposition \ref{eeeageonefactors}:
        \begin{itemize}
        \small
        \item $ k_1 \geq 1 $: \qquad
        \begin{tabular}{c|ccc}
                $2^j$ & 1 & 2 & 3 \\ \hline
                1 & & & \\
                2 & $\times$ & $\times$ & \\
                3 & & $\times$ &
            \end{tabular} for $ 1 \leq j \leq k_1 - 1$; \qquad
            \begin{tabular}{c|ccc}
                $2^{k_1}$ & 1 & 2 & 3 \\ \hline
                1 & & & \\
                2 & $\times$ & & \\
                3 & $\times$ & $\times$ &
            \end{tabular}
        \item $ k_0 = 0, k_1 = 0, k_2 = 0 $: \qquad
            \begin{tabular}{c|ccc}
                $3$ & 1 & 2 & 3 \\ \hline
                1 & $\times$ & $\times$ & $\times$ \\
                2 & & & \\
                3 & & $\times$ &
            \end{tabular}
        \item $ k_0 = 0, k_1 = 0, k_2 \geq 1 $: \qquad
            \begin{tabular}{c|ccc}
                $3$ & 1 & 2 & 3 \\ \hline
                1 & $\times$ & & $\times$ \\
                2 & & & \\
                3 & & $\times$ & $\times$
            \end{tabular}
        \item $ k_0 = 0, k_1 \geq 1, k_2 = 0 $: \qquad
            \begin{tabular}{c|ccc}
                $3$ & 1 & 2 & 3 \\ \hline
                1 & & $\times$ & $\times$ \\
                2 & & & \\
                3 & & $\times$ &
            \end{tabular}; \qquad
            \begin{tabular}{c|ccc}
                $32^{k_1}$ & 1 & 2 & 3 \\ \hline
                1 & $\times$ & $\times$ & \\
                2 & & & \\
                3 & & $\times$ &
            \end{tabular}
        \item $ k_0 = 0, k_1 \geq 1, k_2 \geq 1 $: \qquad
            \begin{tabular}{c|ccc}
                $3$ & 1 & 2 & 3 \\ \hline
                1 & & $\times$ & $\times$ \\
                2 & & & \\
                3 & & $\times$ & $\times$
            \end{tabular}; \qquad
            \begin{tabular}{c|ccc}
                $32^{k_1}$ & 1 & 2 & 3 \\ \hline
                1 & $\times$ & & \\
                2 & & & \\
                3 & & $\times$ &
            \end{tabular}
        \item $ k_0 \geq 1, k_1 = 0, k_2 = 0 $: \qquad
            \begin{tabular}{c|ccc}
                $1^{k_0}3$ & 1 & 2 & 3 \\ \hline
                1 & $\times$ & $\times$ & \\
                2 & & & \\
                3 & & $\times$ &
            \end{tabular}; \qquad
            \begin{tabular}{c|ccc}
                $1^{k_0}31^{k_0}$ & 1 & 2 & 3 \\ \hline
                1 & $\times$ & & $\times$ \\
                2 & & & \\
                3 & & &
            \end{tabular}
        \item $ k_0  \geq 1, k_1 = 0, k_2 \geq 1 $: \qquad
            \begin{tabular}{c|ccc}
                $1^{k_0}3$ & 1 & 2 & 3 \\ \hline
                1 & $\times$ & & \\
                2 & & & \\
                3 & $\times$ & $\times$ &
            \end{tabular}; \qquad
            \begin{tabular}{c|ccc}
                $1^{k_0}31^{k_0}$ & 1 & 2 & 3 \\ \hline
                1 & $\times$ & & $\times$ \\
                2 & & & \\
                3 & & & $\times$
            \end{tabular}
        \item $ k_0  \geq 1, k_1 \geq 1, k_2 = 0 $: \qquad
            \begin{tabular}{c|ccc}
                $1^{k_0}3$ & 1 & 2 & 3 \\ \hline
                1 & $\times$ & $\times$ & \\
                2 & & & \\
                3 & & $\times$ &
            \end{tabular}; \qquad
            \begin{tabular}{c|ccc}
                $1^{k_0}31^{k_0}$ & 1 & 2 & 3 \\ \hline
                1 & & & $\times$ \\
                2 & & & \\
                3 & & &
            \end{tabular}; \qquad
            \begin{tabular}{c|ccc}
                $1^{k_0}32^{k_1}$ & 1 & 2 & 3 \\ \hline
                1 & $\times$ & $\times$ & \\
                2 & & & \\
                3 & & $\times$ &
            \end{tabular}
        \item $ k_0  \geq 1, k_1 \geq 1, k_2 \geq 1 $: \qquad
            \begin{tabular}{c|ccc}
                $1^{k_0}3$ & 1 & 2 & 3 \\ \hline
                1 & $\times$ & $\times$ & \\
                2 & & & \\
                3 & $\times$ & $\times$ &
            \end{tabular}; \qquad
            \begin{tabular}{c|ccc}
                $1^{k_0}31^{k_0}$ & 1 & 2 & 3 \\ \hline
                1 & & & $\times$ \\
                2 & & & \\
                3 & & & $\times$
            \end{tabular}; \qquad
            \begin{tabular}{c|ccc}
                $1^{k_0}32^{k_1}$ & 1 & 2 & 3 \\ \hline
                1 & $\times$ & & \\
                2 & & & \\
                3 & & $\times$ &
            \end{tabular}
    \end{itemize}

\end{proposition}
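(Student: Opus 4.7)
The plan is to apply Proposition \ref{prop:extension} word by word, reading the downstairs extension set off the (already known) age-$0$ extension set one level up the de-substitution tower. First I would record the canonical decomposition $w = a\,G_{k_0}(v)\,b$ guaranteed by Proposition \ref{prop:eee-ante} for each candidate word in the list: $w = 2^j$ with $1\le j\le k_1$ has $(a,v,b) = (\emptyword, 1^j, \emptyword)$; $w = 1^{k_0}3$ has $(1^{k_0}3,\emptyword,\emptyword)$; $w = 1^{k_0}31^{k_0}$ has $(1^{k_0}3,\emptyword,1^{k_0})$; and $w = 1^{k_0}32^{k_1}$ has $(1^{k_0}3,1^{k_1},\emptyword)$. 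By Proposition \ref{prop:extension}, in each case
\[
E(w) \;=\; \{(\alpha_a^L(c),\,\alpha_{b,k_0}^R(d)) : (c,d)\in E_{\lang^{(1)}}(v),\ \alpha_a^L(c)\neq\emptyword,\ \alpha_{b,k_0}^R(d)\neq\emptyword\}.
\]

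Next I would pull in the upstairs data: $E_{\lang^{(1)}}(\emptyword)$ is the set of allowable two-letter words in $\lang^{(1)}$, given by Proposition \ref{two words} with $(k_0,k_1)$ replaced by $(k_1,k_2)$, and $E_{\lang^{(1)}}(1^j)$ is given by Proposition \ref{age0} with $(k_0,k_1)$ replaced by $(k_1,k_2)$. The eight $(k_0,k_1,k_2)$ sub-cases of the statement are forced by the splits of these two upstairs propositions on whether $k_1$ and $k_2$ vanish. In each sub-case I would take the relevant upstairs extension set, apply the action of $\alpha_a^L\times\alpha_{b,k_0}^R$ read off the tables in Section \ref{sec:eee-extensions}, discard pairs with an $\emptyword$ component, and fill in the downstairs extension diagram; comparison with the tables in the statement is then direct.

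A final count $m(w) = |E(w)| - |E^L(w)| - |E^R(w)| + 1$ finishes the last assertion. For instance, when $k_0,k_1,k_2\ge 1$, the upstairs data for $v = \emptyword$ is $E_{\lang^{(1)}}(\emptyword) = \{(1,1),(1,3),(2,1),(2,2),(3,1),(3,2)\}$; pushing through $\alpha_{1^{k_0}3}^L \times \alpha_{\emptyword,k_0}^R$ kills the two pairs with $c = 1$ and produces $E(1^{k_0}3) = \{(1,1),(1,2),(3,1),(3,2)\}$, so $m(1^{k_0}3) = 4-2-2+1 = 1$. Similarly the upstairs data $E_{\lang^{(1)}}(1^{k_1}) = \{(1,3),(2,1),(3,3)\}$ collapses to $E(1^{k_0}32^{k_1}) = \{(1,1),(3,2)\}$, giving $m(1^{k_0}32^{k_1}) = 2-2-2+1 = -1$. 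In every other diagram of the statement the analogous count gives $m(w) = 0$.

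The proof is not deep: the obstacle is purely bookkeeping, roughly three candidate words across eight $(k_0,k_1,k_2)$ sub-cases. The subtle points are the index shift between $\lang$ and $\lang^{(1)}$, and the asymmetric action of $\alpha_{1^{k_0}3}^L$ (which sends $1\mapsto\emptyword$) and of $\alpha_{1^{k_0},k_0}^R$ (which also sends $1\mapsto\emptyword$ when $k_0\ge 1$); these cull exactly one potential left or right extension from the upstairs set, and it is precisely this culling applied to the upstairs $\{(1,3),(2,1),(3,3)\}$ row of Proposition \ref{age0} that is responsible for the non-neutrality of $1^{k_0}3$ and $1^{k_0}32^{k_1}$ in the final sub-case.
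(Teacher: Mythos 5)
Your proposal is correct, and it reaches the same eighteen diagrams by a genuinely different route from the paper. The paper's own proof treats Proposition \ref{age one} as eighteen independent hand computations: for each candidate extension $xwy$ it de-substitutes explicitly through $\lang^{(1)}$, $\lang^{(2)}$, $\lang^{(3)}$ and decides membership using the forbidden two-letter words, showing only two representative cases in full. You instead feed the already-established age-$0$ data — $E_{\lang^{(1)}}(\emptyword)$ from Proposition \ref{two words} and $E_{\lang^{(1)}}(1^j)$ from Proposition \ref{age0}, both with indices shifted by one — through the transfer formula of Proposition \ref{prop:extension} via the canonical decompositions $(a,v,b)$, exactly as the paper itself does only for ages $\geq 2$ in Section \ref{sec:eee-ext-img}. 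This is legitimate: Proposition \ref{prop:extension} only requires $v \in \lang^{(1)}$ and that $w$ contain a $2$ or $3$, so nothing prevents taking $v = \emptyword$ or $v = 1^j$, and I have checked that your recipe reproduces the paper's tables in every sub-case (including the ones where two upstairs pairs collapse to a single downstairs pair, as for $1^{k_0}3$ when $k_0 \geq 1$, $k_1 = 0$, $k_2 \geq 1$). What your approach buys is uniformity and an explanation of the case structure — the split on $(k_1,k_2)$ is visibly inherited from the age-$0$ case split one level up, and the non-neutrality of $1^{k_0}3$ and $1^{k_0}32^{k_1}$ is traced to the culling of the $c=1$ row by $\alpha_{1^{k_0}3}^L$ — at the cost of leaning on the correctness of Proposition \ref{prop:extension} in a regime (age-$0$ antecedents) where the paper never actually exercises it. The paper's direct verification is more redundant but independently checks each diagram.
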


\begin{proof}
    There are 18 extension diagrams that must be checked.  Each is its own straightforward calculation.  To give a flavor, we will show how to get two of these extension sets, namely the third and the last of the above extension diagrams. 
    
    First, assume that $k_0=k_1=k_2=0$.  We have to find the extension set of $3$.
    From our knowledge of 2 letter words, we may exclude a left extension of $2$, so the list 
    $$131, 132, 133, 331, 332, 333$$
    are the only possible three letter words that give rise to the extension set of the middle $3$.  We have to  explicitly show that
    $131, 132, 133, 332$ do occur while $331$ and $333$ cannot occur.
    
    Since $ k_1 = k_2 = 0 $, $ 33 \in \lang^{(1)} $, and $131$ is a subword of $ G_{k_0}(33) = 1313 $, so $ 131 \in \lang $.
    
    Since $ 31 \in \lang^{(1)} $ and $ G_{k_0}(31) = 132 $, $ 132 \in \lang $.
    
    Since $ 32 \in \lang^{(1)} $ and $ G_{k_0}(32) = 133 $, $ 133 \in \lang $.
    
    If $331$ is in $\lang$, then it must be derived from the substitution of either $223$ or $323$, so $ 223 \in \lang^{(1)} $ or $ 323 \in \lang^{(1)} $, but $23$ is forbidden, so this is impossible.
    
    Similarly, if $333$ is in $\lang$, then either $ 222 \in \lang^{(1)} $ or $ 322 \in \lang^{(1)} $, but since $ k_2 = 0 $, $ 22 \notin \lang^{(1)} $ which yields a contradiction.
    
    All that is left is to show that $ 332 \in \lang $. We know that $ 13 \in \lang^{(3)} $ always, and we can compute
    $$G_{k_0}(G_{k_1}(G_{k_2}(13) ))=G_{k_0}(G_{k_1}(213) )=  G_{k_0}(3213) = 133213. $$
    As a subword of this, $ 332 \in \lang $.
    
    For our second case, let us assume that $k_0, k_1, k_2$ are all positive.  We find the extension set of 
    $1^{k_0} 3 2^{k_1}.$  From our knowledge of 2 letter words, we may exclude a right extension of $3$, so the list 
    $$    11^{k_0}32^{k_1} 1, 11^{k_0}32^{k_1} 2, 21^{k_0}32^{k_1} 1, 21^{k_0}32^{k_1} 2, 31^{k_0}32^{k_1} 1, 31^{k_0}32^{k_1} 2    $$
    are the only possible words that give rise to the extension set of the middle $1^{k_0}32^{k_1} $.  
    We must show that $  11^{k_0}32^{k_1} 1$ and $ 31^{k_0}32^{k_1} 2  $ are in $\lang$, while $ 11^{k_0}32^{k_1} 2,   21^{k_0}32^{k_1} 1, 21^{k_0}32^{k_1} 2, 31^{k_0}32^{k_1} 1  $ are not.
    
    The word $32$ is always present, and hence is in $\lang^{(2)}$,  We have that $11^{k_0}32^{k_1} 1$ is a subword of 
    $$G_{k_0}(G_{k_1}(32) )=G_{k_0}(  1^{k_1+1}3    1^{k_1} 3      )=  2^{k_1+1}1^{k_0+1}3  2^{k_1} 1^{k_0+1}3$$
    and hence is in $\lang$.
    
    Similarly, $ 3 \in \lang^{(3)} $, so as a subword of
    $$G_{k_0}(G_{k_1}(G_{k_2}(3) ))=G_{k_0}(G_{k_1}(  1^{k_2+1}3  ) )=  G_{k_0}(    2^{k_2+1}  1^{k_1+1}3               ) = ( 1^{k_0}3)^{k_2+1}  2^{k_1+1} 1^{k_0+1}3, $$
    $ 31^{k_0}32 \in \lang $.
    
    The only way that $11^{k_0}32^{k_1} 2$ can be in $\lang$ is for it to be derived from a substitution of $31^{k_1+1}3$ in $\lang^{(1)}$, which in turn must be derived from a substitution $G_{k_1}$ of $23$ or $33$ in $\lang^{(2)}$.  As $k_2>0$, both $23$ and $33$ are forbidden, so $ 11^{k_0}32^{k_1}2 \notin \lang $.
    
    The word $31^{k_0}32^{k_1} 1$ can be in $\lang$ only if one of the words $2 2 1^{k_1}2, 2 2 1^{k_1}3, 2 3 1^{k_1}2$  or $3 2 1^{k_1}3$ are in $\lang^{(1)}$.
    As $12$ is always forbidden, we now have to show that $2 2 1^{k_1}3$  and  $3 2 1^{k_1}3$  cannot be in $\lang^{(1)}$. Both contain $21^{k_1}3$, which would require a forbidden $12$ in $\lang^{(2)}$, and thus cannot be in $\lang^{(1)}$.
    
    Now consider $ 21^{k_0}32^{k_1} 1 $ and $ 21^{k_0}32^{k_1} 2$.  Both start with $2 1^{k_0}3 $, which must be derived from a substitution of a forbidden $12$, so neither $2 1^{k_0}3  1$ nor  $2 1^{k_0}3  2$ are in $\lang$.

As mentioned before the last proposition, here  we summarize the results from the extension diagrams in terms of tables that record the size of the left, right, and total extension sets.

    $$ \begin{array}{c|c|c|c|c}
        k_0, k_1, k_2     & |E(1^{k_0} 3)| & |E^-(1^{k_0} 3)|  & |E^+(1^{k_0} 3)|  & m(1^{k_0}3) \\ \hline
        k_0=0,k_1=0, k_2=0 &    4    &   2   &  3   &    0       \\
        \hline
         k_0=0,k_1=0, k_2>0 &   4     &  2    &  3   &     0      \\
        \hline
         k_0=0,k_1>0, k_2=0 &   3     &   2   &  2   &    0       \\
        \hline
         k_0=0,k_1>0, k_2>0 &      4  &   2   &  2   &    1       \\
        \hline
         k_0>0,k_1=0, k_2=0 &   3     &   2   &  2   &    0       \\
        \hline
         k_0>0,k_1=0, k_2>0 &   3     &    2  & 2    &     0      \\
        \hline
         k_0>0,k_1>0, k_2=0 &   3     &  2    &  2   &    0       \\
        \hline
         k_0>0,k_1>0, k_2>0 &    4    &   2   &  2   &     1      \\
        \hline
                    \end{array} $$
                    
    $$ \begin{array}{c|c|c|c|c}
        k_0, k_1, k_2     & |E(1^{k_0} 32^{k_1})| & |E^-(1^{k_0} 32^{k_1})|  & |E^+(1^{k_0} 32^{k_1})|  & m(1^{k_0}32^{k_1}) \\ \hline
                 k_0=0,k_1=0, k_2>0 &    3   &  2    &  2  &     0     \\
        \hline
         k_0=0,k_1>0, k_2>0 &   2     &   2   &  2  &     -1     \\
        \hline
         k_0>0,k_1=0, k_2>0 &   3    &  2    & 2    &    0       \\
        \hline
         k_0>0,k_1>0, k_2>0 &  2      &  2   &  2  &      -1    \\
        \hline
                    \end{array} $$

   $$ \begin{array}{c|c|c|c|c}
        k_0, k_1, k_2  & |E(2^j)| & |E^-(2^j)| & |E^+(2^j)| & m(2^j) \\ \hline 
        k_1 \ge 1 & 3 & 2 & 2 & 0 
    \end{array} , \qquad
    1 \le j \le k_1 - 1 
    $$

    $$\begin{array}{c|c|c|c|c}
         k_0, k_1, k_2 & |E(2^{k_1})| & |E^-(2^{k_1})| & |E^+(2^{k_1})| & m( 2^{k_1}) \\ \hline 
         k_1 \ge 1 & 3 & 2 & 2 & 0
    \end{array}$$

    $$
    \begin{array}{c|c|c|c|c}
         k_0, k_1, k_2 & |E(1^{k_0} 3 1^{k_0})| & |E^- (1^{k_0} 3 1^{k_0})| & |E^+ (1^{k_0} 3 1^{k_0})| & m(1^{k_0} 3 1^{k_0}) \\ \hline 
         k_0 \ge 1, k_1 = 0, k_2 = 0 & 2 & 1 & 2& 0 \\ \hline 
         k_0 \ge 1, k_1 =0, k_2 \ge 1 & 3 & 2 & 2 & 0 \\ \hline
         k_0 \ge 1, k_1 \ge 1, k_2 = 0 & 1 & 1 & 1 & 0 \\ \hline
         k_0 \ge 1, k_1 \ge 1, k_2 \ge 1 & 2 & 2 & 1 & 0 
    \end{array}
    $$
\end{proof}

\subsection{Characterizing Extended Images of Bispecial Words} \label{sec:eee-ext-img}

First to recap the notation and the goals.  We have an $S$-adic
word ${\bf u}$ described via a Gauss coding sequence $\{ G_{k_0}, G_{k_1}, G_{k_2}, \ldots   \}$.  Our language $\lang$ is all subwords of ${\bf u}$.   We need to understand all of the bispecial factors in ${\bf u}$.  We do this in part by looking at the bispecial factors in $\lang^{1}$, which is the language of the Gauss coding sequence of $\{ G_{k_1}, G_{k_2}, G_{k_3}, \ldots   \}$, etc. 

In this subsection, we aim to characterize all possible extended images $ w \in \lang $ of a bispecial word $ v \in \lang^{(1)} $ based on its extension set $E(v)$ and the value of $k_0$.

\begin{proposition} \label{prop:ext-img-list}
    Suppose $ v \in \lang^{(1)} $ has age at least $1$. Then, for all extended images $ w = aG_{k_0}(v)b $:
    \begin{itemize}
 \item If $ E^-(v) = \set{1,3} $, then $ a = \emptyword $ and $ E^-(w) = \set{2,3} $.
        \item If $ E^-(v) = \set{2,3} $, then $ a = 1^{k_0}3 $ and $ E^-(w) = \set{1,3} $.
    \end{itemize}
    Furthermore:
    \begin{itemize}
        \item If $ E^+(v) = \set{1,2} $ or $\set{1,3}$, then $ w = aG_{k_0}(v) $ is the unique extended image.
        \item If $ E^+(v) = \set{2,3} $, then $ w = aG_{k_0}(v)1^{k_0} $ is the unique extended image.
        \item If $ E^+(v) = \set{1,2,3} $ and $ k_0 = 0 $, then $ w = aG_{k_0}(v) $ is the unique extended image and $ E^+(w) = \set{1,2,3} $.
        \item If $ E^+(v) = \set{1,2,3} $ and $ k_0 \geq 1 $, then there are two extended images: $ w_+ = aG_{k_0}(v) $ with $ E^+(w_+) = \set{1,2} $, and $ w_- = aG_{k_0}(v)1^{k_0} $ with $ E^+(w_-) = \set{1,3} $.
    \end{itemize}
\end{proposition}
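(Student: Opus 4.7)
The plan is to combine Proposition~\ref{prop:bispecial-a-b} with Proposition~\ref{prop:extension} and carry out a finite case analysis over the allowed choices of $a$ and $b$. First, Proposition~\ref{prop:bispecial-a-b} restricts the possible forms of an extended image to $a \in \set{\emptyword, 1^{k_0}3}$ and $b \in \set{\emptyword, 1^{k_0}}$, leaving at most four candidate words $w = a G_{k_0}(v) b$ for each bispecial $v$. Next, by Proposition~\ref{prop:extension}, $E(w)$ is the coordinate-wise image of $E(v)$ under $\alpha_a^L \times \alpha_{b,k_0}^R$ after discarding pairs in which either coordinate maps to $\emptyword$. Projecting to each coordinate yields $E^L(w) \subseteq \alpha_a^L(E^L(v))$ and $E^R(w) \subseteq \alpha_{b,k_0}^R(E^R(v))$, and for $v$ bispecial these containments will in fact be equalities, since each left (resp.\ right) extension of $v$ can be paired with some right (resp.\ left) extension whose image under the other $\alpha$ does not vanish.

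The argument then reduces to reading off the claimed $E^L(w)$ and $E^R(w)$ from the explicit tables of $\alpha_a^L$ and $\alpha_{b,k_0}^R$ given just before the statement. For the left half: when $E^L(v) = \set{1,3}$, the map $\alpha_\emptyword^L$ sends it to $\set{2,3}$ (size two) while $\alpha_{1^{k_0}3}^L$ sends it to $\set{1}$ (size one), so only $a = \emptyword$ preserves left-specialness and it produces $E^L(w) = \set{2,3}$; the case $E^L(v) = \set{2,3}$ is symmetric, forcing $a = 1^{k_0}3$ and yielding $E^L(w) = \set{1,3}$. For the right half, a parallel check over the three versions of $\alpha_{b,k_0}^R$ pins down a unique $b$ in each of the cases $E^R(v) \in \set{\set{1,2}, \set{1,3}, \set{2,3}}$, as well as in the case $E^R(v) = \set{1,2,3}$ with $k_0 = 0$ (where the only possible $b$ is $\emptyword$, since $1^{k_0} = \emptyword$).

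The genuinely new phenomenon arises only in the case $E^R(v) = \set{1,2,3}$ with $k_0 \geq 1$: here $\alpha_{\emptyword,k_0}^R$ sends $\set{1,2,3}$ to $\set{1,2}$ while $\alpha_{1^{k_0},k_0}^R$ sends it to $\set{1,3}$, and both are size two. Thus both candidates remain bispecial, giving two distinct extended images $w_+$ and $w_-$ with the right extension sets described in the statement. This splitting is precisely what will later drive the $w_m^{\pm}$ pairing used to control complexity. The main obstacle will not be conceptual but bookkeeping: accurately tracking the pair of extension maps across every subcase and verifying that each conclusion ``$E^L(w) = \set{\dots}$'' or ``$E^R(w) = \set{\dots}$'' holds as equality rather than mere containment, which in turn relies on confirming that in each case one can choose a witness pair in $E(v)$ avoiding the $\emptyword$ filter.
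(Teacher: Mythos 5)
Your proposal takes essentially the same route as the paper: the paper's entire proof is the one-line remark that the claims are direct applications of Proposition~\ref{prop:extension}, and your finite case analysis over the tables of $\alpha_a^L$ and $\alpha_{b,k_0}^R$, combined with Proposition~\ref{prop:bispecial-a-b} to restrict $a$ and $b$, is exactly the intended verification. The one caveat you correctly flag---that $E^L(w)$ could be a proper subset of $\alpha_a^L\left(E^L(v)\right)$ if every witness pair $(c,d)\in E(v)$ for some $c$ has $\alpha_{b,k_0}^R(d)=\emptyword$---is a real gap in the statement itself rather than in your argument: the paper later concedes (in the remark following Proposition~\ref{prop:eee-non-unique}) that when $v$ has the third extension diagram of Proposition~\ref{prop:eee-3-right} the word $aG_{k_0}(v)1^{k_0}$ fails to be bispecial, so the ``two extended images'' clause does not literally hold in that edge case.
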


\begin{proof}
    These results are all direct applications of Proposition \ref{prop:extension}.
\end{proof}

\begin{lemma} \label{lem:eee-left-ext}
    Let $ w \in \lang $ be bispecial. If the age of $w$ is odd, then $ E^-(w) = \set{1,3} $, while if the age of $w$ is even and nonzero, then $ E^-(w) = \set{2,3} $.
\end{lemma}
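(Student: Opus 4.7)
The plan is to prove the lemma by induction on the age $m$ of $w$, with Proposition \ref{prop:ext-img-list} serving as the engine of the induction. Recall that the first two bullets of that proposition record a flip: if a bispecial antecedent $v \in \lang^{(1)}$ has $E^L(v) = \set{1,3}$ then any extended image $w$ has $E^L(w) = \set{2,3}$, and symmetrically if $E^L(v) = \set{2,3}$ then $E^L(w) = \set{1,3}$. This is exactly the parity toggle required by the lemma, so the content of the argument lies in setting up the base case and iterating.

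For the base case $m = 1$, the antecedent $v = A(w)$ has age $0$ in $\lang^{(1)}$, so $v$ is either $\emptyword$ or a power $1^j$ with $1 \le j \le k_1$. Proposition \ref{eeeageonefactors} enumerates the bispecial factors of $\lang$ of age $1$, and the detailed extension diagrams in Proposition \ref{age one} confirm that the relevant factors $1^{k_0}3$, $1^{k_0}31^{k_0}$, and $1^{k_0}32^{k_1}$ all have left extension set equal to $\set{1,3}$ in every subcase of the parameters $(k_0, k_1, k_2)$. Since $1$ is odd, this is consistent with the lemma.

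For the inductive step, assume the lemma for all bispecial factors of $\lang^{(1)}$ of age $m - 1 \ge 1$, and let $w \in \lang$ be bispecial of age $m \ge 2$. Set $v = A(w)$, a bispecial factor of $\lang^{(1)}$ of age $m - 1$. By the inductive hypothesis (applied inside $\lang^{(1)}$), $E^L(v) = \set{1,3}$ if $m-1$ is odd and $E^L(v) = \set{2,3}$ if $m-1$ is even. Either way $|E^L(v)| = 2$, which is exactly the hypothesis of the first two bullets of Proposition \ref{prop:ext-img-list}; applying the flip there converts the parity from $m - 1$ to $m$ and yields the required description of $E^L(w)$.

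The main obstacle is really only the base case: one must sift through the list in Proposition \ref{eeeageonefactors} and confirm, via the subcase analysis in Proposition \ref{age one}, that each such age $1$ bispecial factor has $E^L = \set{1,3}$. The inductive step, by contrast, is a clean application of the flip already recorded in Proposition \ref{prop:ext-img-list} and involves no further computation.
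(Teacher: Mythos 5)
Your strategy --- verify the age-$1$ base case from the explicit extension diagrams and then iterate the left-extension flip recorded in the first two bullets of Proposition \ref{prop:ext-img-list} --- is exactly the argument the paper gives, and your inductive step is a correct application of that flip. The gap is in the base case, and it is hiding behind the word ``relevant'' in your second paragraph. Proposition \ref{eeeageonefactors} lists a fourth family of age-$1$ bispecial words, namely $2^j$ for $1 \leq j \leq k_1$ (present whenever $k_1 \geq 1$; already $w=2$ is bispecial then, since $22,32,21 \in \lang$ by Proposition \ref{two words}), and it states explicitly that these have left extension set $\set{2,3}$, not $\set{1,3}$; the diagrams in Proposition \ref{age one} confirm this. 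Since $1$ is odd, these words violate the lemma as literally stated, and running your induction from them produces a chain of bispecial descendants (they have $\abs{E^R}=2$, hence unique bispecial extended images at every level) whose left extension sets carry the opposite parity at every age. So your base case covers only the descendants of $1^{k_0}3$, $1^{k_0}31^{k_0}$, and $1^{k_0}32^{k_1}$, and no care in the inductive step can repair the statement on the $2^j$ branch.

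To be fair, the paper's own one-line proof has the identical defect: it asserts that $E^L(w)=\set{1,3}$ for all bispecial words of age $1$, contradicting the first bullet of its own Proposition \ref{eeeageonefactors}. What actually survives, and what the rest of the paper uses, is the weaker claim that every bispecial word of age at least $1$ has exactly two left extensions, equal to $\set{1,3}$ or $\set{2,3}$ (so that $\alpha_a^L$ restricted to $E^L(v)$ is a bijection onto $E^L(w)$), together with the fact that the stated parity does hold on the descendants of $1^{k_0}3$ and $1^{k_0}32^{k_1}$, which by Corollary \ref{cor:eee-originate} are the only words that can be non-neutral. If you either restrict the lemma to those descendants or weaken its conclusion to ``$E^L(w)$ is one of $\set{1,3}$, $\set{2,3}$,'' your proof goes through verbatim; as written, the base case is incomplete and the statement being proved is false for the $2^j$ family.
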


\begin{proof}
    By the results of Subsection \ref{sec:eee-age-1}, $ E^-(w) = \set{1,3} $ for all bispecial words of age $1$. The lemma can be proven inductively by applying Proposition \ref{prop:ext-img-list}, using this as a base case.
\end{proof}

Owing to Lemma \ref{lem:eee-left-ext} and Proposition \ref{bispecialage}, we know all the bispecial factors of age $\ge 1$ have left extension set $\set{1,3}$ or left extension set $\set{2,3}$, so we can restrict our attention to these.

We proceed with our goal of characterizing all non-neutral bispecial $ w \in \lang $ and the values of $m(w)$.
We begin with the case where the extended image is unique.

\begin{proposition}
    Suppose that $ v \in \lang^{(1)} $ is bispecial with age at least $1$. If $ \abs{E^+(v)} = 2 $ or $ k_0 = 0 $, then there is only one extended image $ w = aG_{k_0}(v)b $ of $v$ and the function $\alpha_{b,k_0}^R$ is a bijection from $E^+(v)$ to $E^+(w)$. Consequently, $ \abs{E^+(v)} = \abs{E^+(w)} $ and $ m(v) = m(w) $.
\end{proposition}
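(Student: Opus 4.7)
The plan is to deduce this proposition as a direct corollary of Propositions \ref{prop:ext-img-list} and \ref{prop:extension} combined with Lemma \ref{lem:eee-left-ext}, the main work being a careful bookkeeping of the coordinate maps $\alpha_a^L$ and $\alpha_{b,k_0}^R$ on the relevant restricted domains.

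First I would dispose of the uniqueness claim. Since $v$ has age at least $1$, the hypothesis $\abs{E^R(v)} = 2$ places $E^R(v)$ in one of $\set{1,2}$, $\set{1,3}$, $\set{2,3}$, and the second part of Proposition \ref{prop:ext-img-list} pins down a single choice of $b$ in each case (either $\emptyword$ or $1^{k_0}$), and hence a unique extended image $w$. The remaining hypothesis $k_0 = 0$, if $\abs{E^R(v)} = 3$, falls under the third bullet of that proposition, which also gives a unique extended image with $b = \emptyword$.

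Next I would show that $\alpha_{b,k_0}^R$ is injective on $E^R(v)$ in every surviving case. This is a table check using the definition of $\alpha_{b,k_0}^R$: when $E^R(v) = \set{1,2}$ or $\set{1,3}$ we have $b = \emptyword$ (and $k_0$ arbitrary), and the restriction is one-to-one since it avoids the collision $2,3 \mapsto 1$; when $E^R(v) = \set{2,3}$ we have $b = 1^{k_0}$, which sends $2 \mapsto 3$ and $3 \mapsto 1$; when $E^R(v) = \set{1,2,3}$ and $k_0 = 0$, the map $\alpha_{\emptyword,0}^R$ is a bijection of $\set{1,2,3}$. In particular no value lands on $\emptyword$ in any of these situations. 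A symmetric check for the left side is already supplied by Lemma \ref{lem:eee-left-ext}, which forces $E^L(v) \in \set{\set{1,3},\set{2,3}}$, and the first part of Proposition \ref{prop:ext-img-list} then pairs $\set{1,3}$ with $a = \emptyword$ and $\set{2,3}$ with $a = 1^{k_0}3$; in both pairings the restriction of $\alpha_a^L$ to $E^L(v)$ avoids its collision or $\emptyword$-value and so is injective.

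Having these two injectivities, I would invoke the explicit formula
\[
    E(w) = \set{\parens{\alpha_a^L(c), \alpha_{b,k_0}^R(d)} : (c,d) \in E(v)}
\]
from Proposition \ref{prop:extension}, noting that no entry gets thrown out because neither coordinate map vanishes on its restricted domain. The map $E(v) \to E(w)$ is therefore a bijection, which yields $\abs{E(w)} = \abs{E(v)}$. Projecting onto each coordinate gives bijections $E^L(v) \to E^L(w)$ and $E^R(v) \to E^R(w)$, in particular $\abs{E^R(v)} = \abs{E^R(w)}$ and $\abs{E^L(v)} = \abs{E^L(w)}$. Substituting into the definition $m(w) = \abs{E(w)} - \abs{E^L(w)} - \abs{E^R(w)} + 1$ yields $m(w) = m(v)$, completing the argument.

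The main obstacle, as usual in this section, is not conceptual but combinatorial: I have to be disciplined about enumerating the admissible pairs $(E^L(v), a)$ and $(E^R(v), b)$ allowed by Lemma \ref{lem:eee-left-ext} and Proposition \ref{prop:ext-img-list}, and then check on each such restricted domain that the coordinate maps are injective and non-$\emptyword$-valued. The only place where the argument could fail is the case $k_0 \geq 1$ with $E^R(v) = \set{2,3}$ and $b = \emptyword$, but Proposition \ref{prop:ext-img-list} forces $b = 1^{k_0}$ there, and the case $k_0 \geq 1$ with $E^R(v) = \set{1,2,3}$, where the hypothesis of the proposition excludes us from the lone problematic branch handled separately in the next subsection.
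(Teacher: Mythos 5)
Your proposal is correct and follows essentially the same route as the paper: use Lemma \ref{lem:eee-left-ext} and Proposition \ref{prop:ext-img-list} to pin down the admissible pairs $(E^L(v),a)$ and $(E^R(v),b)$, check that $\alpha_a^L$ and $\alpha_{b,k_0}^R$ are injective and $\emptyword$-free on those restricted domains, and then read off $\abs{E(w)}=\abs{E(v)}$, $\abs{E^L(w)}=\abs{E^L(v)}$, $\abs{E^R(w)}=\abs{E^R(v)}$, and $m(w)=m(v)$ from Proposition \ref{prop:extension}. The paper compresses the case check into one sentence; you have simply made the same bookkeeping explicit.
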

For foreshadowing purposes, we will see  in  subsection \ref{Construction of a sequence containing all non-neutral bispecial words} that this allows us to define various $A^{-1}(w)$.

\begin{proof}
    First notice that Lemma \ref{lem:eee-left-ext} and Proposition \ref{prop:ext-img-list} guarantee that either $ E^-(v) = \set{1,3} $ or $ E^-(v) = \set{2,3} $ and that $\alpha_a^L$ is a bijection from $E^-(v)$ to $E^-(w)$.
    It can then be verified by applying Proposition \ref{prop:extension} on all cases that if $ \abs{E^+(v)} = 2 $ or $ k_0 = 0 $, only one choice of $b$ makes $w$ right special, and for this $b$, $\alpha_{b,k_0}^R$ is injective when restricted to $E^R(v)$, meaning it is a bijection from $E^+(v)$ to $E^+(w)$.
    Hence, $E(w)$ can be derived from $E(v)$ by applying a permutation to both the left extensions and the right extensions. Since permutations preserve $\abs{E}$, $\abs{E^-}$, and $\abs{E^+}$, this means $ \abs{E^+(v)} = \abs{E^+(w)} $ and $ m(v) = m(w) $.
\end{proof}

In the case where $ \abs{E^+(v)} = 2 $ for some bispecial $ v \in \lang^{(k)} $, for any $k$, the above results imply that $v$ has a unique descendant $ w \in \lang $ of order $k$ and that $ m(v) = m(w) $. Therefore, no further investigation is required on bispecial words with only two right extensions.
In fact, based on the results of Subsection \ref{sec:eee-age-1}, we may conclude the following:

\begin{corollary} \label{cor:eee-originate}
    Every non-neutral bispecial word with age at least $1$ is a descendant of $1^{k_0}3$ or $1^{k_0}32^{k_1}$.
\end{corollary}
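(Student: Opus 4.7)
The plan is to iterate the antecedent map $A$ starting from a non-neutral bispecial word $w = w_0$ of age $m \ge 1$, obtaining $w_j := A^j(w) \in \lang^{(j)}$ with $w_{m-1}$ of age $1$ in $\lang^{(m-1)}$. By Proposition \ref{eeeageonefactors} applied inside $\lang^{(m-1)}$, the only non-neutral age-$1$ bispecial words are $1^{k_{m-1}}3$ and $1^{k_{m-1}}32^{k_m}$, so it suffices to show that non-neutrality propagates from $w_0$ all the way down to $w_{m-1}$.

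The preceding proposition immediately gives the inductive step whenever $|E^R(w_{j+1})| = 2$ or $k_j = 0$: in that ``good'' case $w_j$ is the unique extended image of $w_{j+1}$ and $m(w_j) = m(w_{j+1})$. A short observation bounds how often the complementary ``bad'' situation can arise. By Proposition \ref{prop:extension}, $E^R(w_j) = \alpha_{b,k_j}^R(E^R(w_{j+1}))$ with any $\emptyword$-valued preimages removed, so $|E^R|$ is non-decreasing as $j$ increases; combined with Proposition \ref{prop:ext-img-list}, the bad case $|E^R(w_{j+1})| = 3$ with $k_j \ge 1$ forces $|E^R(w_j)| = 2$, after which every higher step in the chain is automatically good. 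Hence there is at most one bad transition in the whole chain $w_{m-1} \to w_{m-2} \to \cdots \to w_0$.

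The only real work item is therefore to verify that this single possible bad transition does not convert a neutral antecedent into a non-neutral extended image. This is a bounded case check: enumerate the realizable extension sets $E(v)$ for bispecial $v$ with $|E^L(v)| = 2$ and $|E^R(v)| = 3$ occurring in some $\lang^{(j+1)}$ (the shape of $E(v)$ is heavily restricted because $v$ itself must be a descendant of an age-$1$ bispecial, whose extension data has already been classified in Section \ref{sec:eee-age-1}), and then compute $E(w_+)$ and $E(w_-)$ via Proposition \ref{prop:extension} using the $\alpha$-tables of Section \ref{sec:eee-extensions}. The many-to-one collapses induced by $\alpha_{\emptyword, k_j}^R$ and $\alpha_{1^{k_j}, k_j}^R$ identify right extensions in a balanced way, and for each realizable $E(v)$ with $m(v) = 0$ one checks directly that $|E(w_\pm)| = 3$, hence $m(w_\pm) = |E(w_\pm)| - |E^L(w_\pm)| - |E^R(w_\pm)| + 1 = 0$.

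Combining the good-case preservation of multiplicity with the bad-case verification yields $m(w_{m-1}) \ne 0$, so $w_{m-1} \in \{\,1^{k_{m-1}}3,\; 1^{k_{m-1}}32^{k_m}\,\}$, which is exactly the conclusion of the corollary. The main obstacle is the bad-case bookkeeping, but the realizability constraints inherited from the classification of age-$1$ bispecials cut the enumeration down to a short finite check rather than an open-ended search over arbitrary $3 \times 3$ extension tables.
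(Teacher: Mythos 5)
Your overall strategy --- trace the chain $w_0, w_1 = A(w_0), \ldots, w_{m-1}$ down to the age-$1$ ancestor and argue that non-neutrality is preserved along the way --- breaks at exactly the ``bad'' transition you isolate. The verification you propose in your third paragraph cannot succeed, because the splitting case $\abs{E^R(w_{j+1})} = 3$, $k_j \geq 1$ \emph{does} convert a neutral antecedent into non-neutral extended images. Every extension diagram listed in Proposition \ref{prop:eee-3-right} has $\abs{E(v)} = 4$, so every bispecial word of age $\geq 1$ with three right extensions is neutral; yet for the fifth diagram, $E(v) = \set{(i,1),(i,2),(3,1),(3,3)}$, a direct application of Proposition \ref{prop:extension} gives $\abs{E(w_+)} = 4$ and $\abs{E(w_-)} = 2$, i.e.\ $m(w_+) = 1$ and $m(w_-) = -1$. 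This is precisely the content of Proposition \ref{prop:eee-non-unique}, and it is the mechanism by which non-neutral bispecial words of higher age arise when the low-age bispecial words are all neutral (e.g.\ when $k_0 = k_1 = 0$). So your ``bounded case check'' would turn up a counterexample rather than a confirmation, the inference $m(w_0) \neq 0 \Rightarrow m(w_{m-1}) \neq 0$ is false (in the scenario above $w_{m-1}$ is the neutral word $3$), and the propagation argument collapses.

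The corollary is nonetheless true, and your argument can be repaired with one extra observation covering the bad case: by the proof of Proposition \ref{prop:eee-3-right}, any bispecial word with three right extensions has as its age-$1$ ancestor the word $3 = 1^{k_{m-1}}3$ with $k_{m-1} = 0$, which is already one of the two permitted ancestors; so whenever a bad transition occurs in the chain, the conclusion holds for that reason rather than because $w_{m-1}$ is non-neutral. The paper's own proof sidesteps the issue entirely by arguing in the opposite direction: the age-$1$ bispecial words \emph{other} than $1^{k_0}3$ and $1^{k_0}32^{k_1}$ (namely $2^j$ and $1^{k_0}31^{k_0}$) all have exactly two right extensions and $m = 0$, so by the proposition immediately preceding the corollary each has a unique descendant of every order with the same multiplicity, and hence all of their descendants are neutral.
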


\begin{proof}
    Proposition \ref{eeeageonefactors} gives an exclusive list of the possible bispecial words of age $1$. However, by Proposition \ref{age one}, for all of the words $w$ in this list other than $1^{k_0}3$ and $1^{k_0}32^{k_1}$, $ \abs{E^-(w)} = \abs{E^-(w)} = 2 $ and $ m(w) = 0 $ whenever $w$ is bispecial.
\end{proof}

There is only one case where a bispecial word $ v \in \lang^{(1)} $ can have two extended images $w_+$ and $w_-$: that where $ \abs{E^+(v)} = 3 $ and $ k_0 \geq 1 $. In this case, to determine the values $m(w_+)$ and $m(w_-)$ for the two extended images, it is not sufficient to know the value of $m(v)$, so we need the entire extension set $E(v)$ to apply Proposition \ref{prop:extension}.
This requires characterizing the possible extension sets of all bispecial factors with three right extensions.

\begin{proposition} \label{prop:eee-3-right}
    Let $ w \in \lang $ be bispecial with age at least $1$ and $ \abs{E^+(w)} = 3 $. The extension set of $w$ takes one of the following six forms, for $ i \in \set{1,2} $:
    \begin{equation*}
        \begin{array}{c|ccc}
            & 1 & 2 & 3 \\ \hline
            i & \times & \times & \times \\
            3 & & \times &
        \end{array} \qquad \begin{array}{c|ccc}
            & 1 & 2 & 3 \\ \hline
            i & \times & \times & \times \\
            3 & & & \times
        \end{array} \qquad \begin{array}{c|ccc}
            & 1 & 2 & 3 \\ \hline
            i & \times & \times & \times \\
            3 & \times & &
        \end{array}
    \end{equation*}
    \begin{equation*}
        \begin{array}{c|ccc}
            & 1 & 2 & 3 \\ \hline
            i & \times & & \times \\
            3 & & \times & \times
        \end{array} \qquad \begin{array}{c|ccc}
            & 1 & 2 & 3 \\ \hline
            i & \times & \times & \\
            3 & \times & & \times
        \end{array} \qquad \begin{array}{c|ccc}
            & 1 & 2 & 3 \\ \hline
            i & & \times & \times \\
            3 & \times & \times &
        \end{array}
    \end{equation*}
\end{proposition}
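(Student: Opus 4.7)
I would prove this by induction on the age of $w$, applying the inductive hypothesis to the antecedent $v = A(w) \in \lang^{(1)}$, which has age one less with respect to the shifted language $\lang^{(1)}$ and so satisfies the same statement. Because the six forms are parameterized by $i \in \set{1,2}$ and by Lemma \ref{lem:eee-left-ext} the letter $i$ is determined by the parity of the age via $E^L(w)$, the inductive step must flip the parity of $i$ each time.

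For the base case (age $1$), I would consult the explicit extension diagrams in Proposition \ref{age one}. Going through the eight subcases of $(k_0, k_1, k_2)$ listed there, the only age-$1$ bispecial words with $\abs{E^R(w)} = 3$ are the words $1^{k_0}3 = 3$ arising when $k_0 = 0$. Their extension sets coincide with form $1$ (when $k_1 = k_2 = 0$) and with form $4$ (when $k_1 = 0$ and $k_2 \geq 1$), both with $i = 1$; all other age-$1$ bispecial words have $\abs{E^R} \leq 2$, which can be read off directly from the summary tables at the end of Proposition \ref{age one}.

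For the inductive step, let $w$ have age $m \geq 2$ with $\abs{E^R(w)} = 3$ and put $v = A(w)$. Inspecting the three possible right-extension functions $\alpha_{b,k_0}^R$ tabulated in Subsection \ref{sec:eee-extensions}, only in the case $k_0 = 0$, $b = \emptyword$ is the image of $\alpha^R$ all of $\set{1,2,3}$; in the remaining cases the image lies in $\set{1,2}$ or $\set{1,3}$. So I must have $k_0 = 0$, $b = \emptyword$, and $\abs{E^R(v)} = 3$, which by the inductive hypothesis forces $E(v)$ to be one of the six forms. Proposition \ref{prop:ext-img-list} then determines $a$ from $E^L(v)$ (with $a = \emptyword$ when $E^L(v) = \set{1,3}$ and $a = 1^{k_0}3$ when $E^L(v) = \set{2,3}$), and Lemma \ref{lem:eee-left-ext} fixes the flipped parity of the new $i$. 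Apply Proposition \ref{prop:extension} to push each input form through $\alpha_a^L \times \alpha_{\emptyword,0}^R$, where $\alpha^R$ is the cyclic permutation $1 \mapsto 2 \mapsto 3 \mapsto 1$, and verify in each case that the image is again one of the six listed forms.

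The main obstacle is the combinatorial bookkeeping in the inductive step: twelve cases (the six input forms times the two possible parities of $i$) must be pushed through $\alpha_a^L \times \alpha^R$ and matched against the target list, though by the symmetry between the two parities one really only checks six. Each individual case is an elementary calculation applying two fixed permutations on $\set{1,2,3}$. A subtle point to watch is the special case in the proof of Proposition \ref{prop:extension} where $a = 1^{k_0}3$ and the input left-extension letter is $c = 2$: there, the usual argument fails and one must invoke the alternative route through $22vd$ or $32vd$ in $\lang^{(1)}$. This needs to be checked uniformly across those input forms whose rows include the letter $2$. Well-foundedness of the induction is guaranteed by Proposition \ref{bispecialage}, which ensures that iterating $A$ reduces any bispecial word to one of the form $1^j$ in finitely many steps, at which point the base case applies.
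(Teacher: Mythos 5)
Your proposal follows the paper's proof essentially step for step: the base case is read off from Proposition \ref{age one} (the word $3$ with $k_0=k_1=0$, giving the first and fourth diagrams), the inductive step uses Proposition \ref{prop:ext-img-list} to force $k_0=0$, $b=\emptyword$ and $\abs{E^R(v)}=3$, and then Proposition \ref{prop:extension} pushes the diagram forward with the columns cycled by $(123)$. The only cosmetic difference is that the paper generates the six forms as the orbit of the first and fourth diagrams under column cycling, whereas you propose to verify that the list is closed under the map; these are equivalent formulations.

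One concrete warning about the closure check you describe: carried out with the literal row labels, it does not close up, and the paper's own proof elides the same point. When $E^L(v)=\set{2,3}$ (even age), the map $\alpha^L_{1^{k_0}3}$ sends $2\mapsto 3$ and $3\mapsto 1$, so the row labelled $i$ in $v$'s diagram lands in the row labelled $3$ of $w$'s diagram and vice versa --- the two rows exchange roles rather than merely ``changing the value of $i$'' as the paper asserts. For instance, with the all-zero coding sequence one computes $E(13)=\set{(2,1),(2,2),(2,3),(3,3)}$ (the second form with $i=2$), and then $E(3213)=\set{(1,1),(3,1),(3,2),(3,3)}$, which is the third listed form with its two rows interchanged and is not literally among the six printed diagrams. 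So if you run the twelve-case verification you propose, half of the cases will produce row-swapped versions of the listed forms. The fix is either to enlarge the statement to include the row-swapped diagrams (or state the forms up to interchanging the two rows), or to observe that this does not matter downstream: the quantities $m(w_+)$ and $m(w_-)$ extracted in Proposition \ref{prop:eee-non-unique} depend on $E(v)$ only through data that is invariant under swapping its two rows, since $\alpha^L_a$ acts injectively on the left extensions.
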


\begin{proof}
    By the results of Subsection \ref{sec:eee-age-1}, the only possible age $1$ bispecial word with three right extensions is $3$, and only when $ k_0 = k_1 = 0 $. In this case, the extension diagram of $3$ is either the first or the fourth in this proposition, depending on the value of $k_2$.
    
    Suppose that $w$ has age at least $2$. By Proposition \ref{prop:ext-img-list}, the only way we can have $ \abs{E^+(w)} = 3 $ is if $ k_0 = 0 $ and $ \abs{E^+(v)} = 3 $ for the antecedent $v$. In this case, by Proposition \ref{prop:extension}, the extension diagram of $w$ can be derived from that of $v$ by applying the permutation $(123)$ to the columns and changing the value of $i$.
    By induction, if $w$ has age $m$, then the extension diagram of $w$ can be derived from that of the age $1$ bispecial word $ A^{m-1}(w) \in \lang^{(m-1)} $ by applying the permutation $(123)^{m-1}$ to the columns of that of $A^{m-1}(w)$ and possibly changing the value of $i$.
    Hence, the set of all possible such extension diagrams can be generated by cycling through the permutation $(123)$ on the columns of the first and fourth extension diagrams. This produces exactly the six possible extension diagrams given in the proposition.
\end{proof}

In the case where the extended image of a bispecial word is not unique, both extended images only have two right extensions, so the following is all that remains to characterize all non-neutral bispecial words $w$ and the values of $m(w)$:

\begin{proposition} \label{prop:eee-non-unique}
    Suppose that $ v \in \lang^{(1)} $ is bispecial with $ \abs{E^+(v)} = 3 $ and $ k_0 \geq 0 $. Let $ w_+ = aG_{k_0}(v) $ and $ w_- = aG_{k_0}(v)1^{k_0} $ be the two possible bispecial extended images. One of the following is true:
    \begin{itemize}
        \item $ m(w_+) = m(w_-) = 0 $
        \item $ m(w_+) = 1 $ and $ m(w_-) = -1 $
    \end{itemize}
\end{proposition}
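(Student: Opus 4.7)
The plan is to reduce the claim to a finite case analysis on the shape of the extension diagram of $v$. First, I note that since both $w_+$ and $w_-$ are bispecial extended images by hypothesis, the combination of Lemma \ref{lem:eee-left-ext} and Proposition \ref{prop:ext-img-list} gives $|E^L(w_\pm)|=|E^R(w_\pm)|=2$, so $m(w_\pm)=|E(w_\pm)|-3\in\{-1,0,1\}$. (The hypothesis ``$k_0=0$'' in the proposition statement should read $k_0\ge 1$, since by Proposition \ref{prop:ext-img-list} only the $k_0\ge 1$ regime produces two extended images; I proceed under this corrected hypothesis.) My task then reduces to computing $|E(w_\pm)|$ directly from $E(v)$.

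Next, I would apply Proposition \ref{prop:eee-3-right} to enumerate the six possible extension diagrams for $v$. Writing $E^L(v)=\{c_1,c_2\}$ and $R_{c_j}=\{d:(c_j,d)\in E(v)\}$, the six unordered pairs $\{R_{c_1},R_{c_2}\}$ are $\{\{1,2,3\},\{2\}\}$, $\{\{1,2,3\},\{3\}\}$, $\{\{1,2,3\},\{1\}\}$, $\{\{1,3\},\{2,3\}\}$, $\{\{1,2\},\{1,3\}\}$, and $\{\{2,3\},\{1,2\}\}$. By Proposition \ref{prop:extension} and the fact that the left extension map $\alpha^L_a$ is injective on $E^L(v)$, I have $|E(w_\pm)|=|\alpha^R_\pm(R_{c_1})|+|\alpha^R_\pm(R_{c_2})|$, where the relevant right extension maps, read from the table with $k_0\ge 1$, are $\alpha^R_+\colon 1\mapsto 2,\,2\mapsto 1,\,3\mapsto 1$ (for $w_+$ with $b=\emptyword$) and $\alpha^R_-\colon 1\mapsto\emptyword,\,2\mapsto 3,\,3\mapsto 1$ (for $w_-$ with $b=1^{k_0}$).

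Finally, I would carry out the six case computations. For the four cases $\{\{1,2,3\},\{2\}\}$, $\{\{1,2,3\},\{3\}\}$, $\{\{1,3\},\{2,3\}\}$, and $\{\{2,3\},\{1,2\}\}$, a direct check gives $|E(w_+)|=|E(w_-)|=3$, hence $m(w_\pm)=0$. For the case $\{\{1,2,3\},\{1\}\}$, one has $R_{c_2}\cap\{2,3\}=\emptyset$, so $\alpha^R_-$ annihilates the entire $c_2$-row, leaving $w_-$ with only one left extension; this contradicts the hypothesis that $w_-$ is bispecial, so this case does not arise. The only remaining possibility is $\{R_{c_1},R_{c_2}\}=\{\{1,2\},\{1,3\}\}$, where $\alpha^R_+$ sends both $R_{c_j}$ to $\{1,2\}$ (giving $|E(w_+)|=4$ and $m(w_+)=+1$), while $\alpha^R_-$ sends the parts $R_{c_j}\cap\{2,3\}=\{2\}$ and $\{3\}$ to $\{3\}$ and $\{1\}$ respectively (giving $|E(w_-)|=2$ and $m(w_-)=-1$). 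The only real obstacle is bookkeeping: verifying the arithmetic across all six cases and correctly handling the one case that the bispeciality hypothesis rules out.
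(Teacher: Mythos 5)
Your proof is correct and follows essentially the same route as the paper: enumerate the six extension diagrams of Proposition \ref{prop:eee-3-right} and push each through Proposition \ref{prop:extension}, finding that only the fifth diagram yields $m(w_+)=1$ and $m(w_-)=-1$ (the paper's proof is precisely this, stated as a two-line sketch, and your explicit case computations all check out). Your correction of the hypothesis from $k_0=0$ to $k_0\geq 1$ is also right: with $k_0=0$ the two images coincide, and as the paper states just before the proposition, the non-unique case arises only when $\abs{E^R(v)}=3$ and $k_0\geq 1$.
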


\begin{proof}
    The extension diagram of $v$ must take one of the six forms in Proposition \ref{prop:eee-3-right}. The statement can be proven by applying Proposition \ref{prop:extension} to determine $E(w_+)$ and $E(w_-)$ in each case.
    Specifically, we have $ m(w_+) = 1 $ and $ m(w_-) = -1 $ if $v$ has the fifth extension diagram in Proposition \ref{prop:eee-3-right}, and $ m(w_+) = m(w_-) = 0 $ otherwise.
\end{proof}

Note that it is not always true that $v$ has two bispecial extended images when $ \abs{E^R(v)} = 3 $ and $ k_0 = 0 $. Particularly, if $v$ has the third extension diagram in Proposition \ref{prop:eee-3-right}, then $ aG_{k_0}(v)1^{k_0} $ is not bispecial. However, Proposition \ref{prop:eee-non-unique} is still valid.

\subsection{Construction of a sequence containing all non-neutral bispecial words}\label{Construction of a sequence containing all non-neutral bispecial words}

\begin{proposition}
    There is a sequence  
    $$w_0^+, w_0^-, w_1^+, w_1^-, \ldots , w_m^+, w_m^-, \ldots .$$
    of words in $\lang$ containing all non-neutral bispecial words such that:
    \begin{enumerate}
        \item For all $m$,
        $$ 0 \leq m(w_m^+) = -m(w_m^-) \leq 1 $$
        \item $$ \abs{w_0^+} \leq \abs{w_0^-} < \abs{w_1^+} \leq \abs{w_1^-} < \abs{w_2^+} \leq \abs{w_2^-} < \cdots$$
    \end{enumerate}
\end{proposition}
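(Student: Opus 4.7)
The plan is to construct the sequence inductively, organizing all non-neutral bispecials by age and pairing them with opposite multiplicities. I would first handle ages $0$ and $1$ as base cases: by Propositions \ref{age0} and \ref{eeeageonefactors}, the non-neutral age-$0$ bispecials are $\emptyword$ (with $m = 1$, when $k_1 \geq 1$) and $1^{k_0}$ (with $m = -1$, when $k_0, k_1 \geq 1$), and the non-neutral age-$1$ bispecials are $1^{k_0}3$ and $1^{k_0}32^{k_1}$ with $m$-values $1$ and $-1$ (both non-neutral precisely when $k_1, k_2 \geq 1$). Setting $w_0^\pm$ and $w_1^\pm$ to these pairs (using neutral placeholders when the parameter regime makes some of them neutral) verifies the multiplicity conditions at $m = 0, 1$ and gives the length chain $0 \leq k_0 < k_0 + 1 \leq k_0 + 1 + k_1$.

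For pairs at higher ages I would induct via the antecedent map $A$, exploiting the structural fact that every bispecial of age at least $1$ with $|E^R| = 3$ is neutral. This follows from Proposition \ref{prop:eee-3-right}: each of the six possible extension diagrams gives $|E| = 4$, $|E^L| = 2$, $|E^R| = 3$, and hence $m = 0$. Consequently, any non-neutral bispecial $v \in \lang^{(1)}$ of age $m - 1$ must satisfy $|E^R(v)| = 2$, and by Proposition \ref{prop:ext-img-list} has a unique extended image $w \in \lang$ of age $m$ with $m(w) = m(v)$. Applying the inductive hypothesis to the $S(e,e,e)$-adic language $\lang^{(1)}$ at age $m - 1$ then gives a pairing of non-neutrals there, which lifts to a pairing of non-neutral bispecials at age $m$ in $\lang$.

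The second source of non-neutrals at age $m$ is the split case of Proposition \ref{prop:eee-non-unique}: a neutral bispecial $v \in \lang^{(1)}$ with $|E^R(v)| = 3$ and $k_0 \geq 1$ can yield two extended images $w_+ = aG_{k_0}(v)$ and $w_- = aG_{k_0}(v)1^{k_0}$ with multiplicities $(0,0)$ or $(1,-1)$, forming an intrinsic pair with $|w_+| < |w_-|$. Combining both sources yields the full pairing of non-neutrals. To order pairs into the sequence, I would use that $G_{k_i} \circ G_{k_{i+1}}$ strictly lengthens any word containing a $2$ or $3$ (as in the proof of Proposition \ref{bispecialage}), so pairs at greater age are strictly longer than their ancestors and the pairs can be totally ordered by length. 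Ties between pairs of equal length from different origins can be resolved by re-pairing or by inserting neutral bispecial filler pairs, which the proposition explicitly permits.

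The main obstacle will be in the split case: one must enumerate the neutral bispecials with $|E^R| = 3$ at each age level in each shifted language $\lang^{(m)}$ that actually produce non-neutral $(1,-1)$ splits, confirm that their extended images in $\lang$ fit consistently with the inherited pairs, and verify that the combined collection admits a linear ordering satisfying $|w_m^-| < |w_{m+1}^+|$. Handling the subtle interaction between inherited and split sources, and ensuring strict length inequalities between consecutive pairs, will require the most care.
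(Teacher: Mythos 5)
Your overall strategy --- induction on age, the same two base cases, unique extended images preserving $m(\cdot)$ when the antecedent has two right extensions, and the split mechanism of Proposition \ref{prop:eee-non-unique} producing intrinsic $(1,-1)$ or $(0,0)$ pairs --- is the same skeleton as the paper's construction, and your observation that every bispecial word of age at least $1$ with three right extensions is neutral is correct and is used there too. However, there are two genuine gaps.

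First, you never establish that at each age there is at most \emph{one} pair of non-neutral bispecial words, and your proposed remedies for ``ties between pairs of equal length from different origins'' would not work: re-pairing two $(+1,-1)$ pairs can violate $m(w_m^+)=-m(w_m^-)$, and interleaving two pairs whose lengths overlap can violate the chain of inequalities in (2). The paper closes this with Corollary \ref{cor:eee-originate}: every non-neutral bispecial word of age $n\geq 1$ is a descendant of one of the two specific age-one words $1^{k_{n-1}}3$ and $1^{k_{n-1}}32^{k_n}$ of $\lang^{(n-1)}$, and these two seeds have $\abs{E^R}=3$ only when they coincide, so the ``inherited'' source and the ``split'' source are never both active at the same age and one gets exactly one (possibly degenerate) pair per age. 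Your enumeration of which neutral $\abs{E^R}=3$ words actually split is not an optional refinement; without this counting fact both the pairing in (1) and the ordering in (2) can fail.

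Second, your argument for the length inequalities only compares a word with its own ancestors (via the fact that $G_{k_i}\circ G_{k_{i+1}}$ strictly lengthens any word containing a $2$ or $3$), but the inequality $\abs{w_m^-}<\abs{w_{m+1}^+}$ compares words from \emph{different} descendant chains: $w_m^-(\lang)$ and $w_{m+1}^+(\lang)$ are extended images of $w_{m-1}^-\parens{\lang^{(1)}}$ and $w_m^+\parens{\lang^{(1)}}$ respectively, and neither is an ancestor of the other. Since $\abs{G_{k_0}(v)}$ depends on the letter counts of $v$ and not only on $\abs{v}$, a strict length inequality at one level does not automatically survive substitution, and the boundary words $a,b$ differ between the two images. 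The paper handles this in Proposition \ref{eeeseqsbounding} by inducting on the full abelianization vectors $R_m^{\pm}$ with a componentwise partial order, using the extension diagrams to pin down $a_m^{\pm}$ and $b_m^{\pm}$ in the worst cases. Your proposal contains no mechanism that would deliver this cross-chain comparison.
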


In this section we will define the sequence and prove the first part.
In the process, we will also show the following:
\begin{itemize}
    \item If $ w_m^+ = w_m^- $, then $ \abs{E^+(w_m^+)} = \abs{E^+(w_m^-)} = 3 $.
    \item If $ w_m^+ \neq w_m^- $, then $ \abs{E^+(w_m^+)} = \abs{E^+(w_m^-)} = 2 $.
\end{itemize}
We will prove the increase in the lengths of the elements of the sequence in the next section.

The various $w_k^+$ and $w_k^-$ are subwords of the initial language $\lang$.  There are analogous subwords  in each  of the languages $\lang^{(k)}$.  We will write these subwords as 
 $w_k^+(\lang^{(k)})$ and $w_k^-(\lang^{(k)})$ when it is important to specify the language.

Start by setting 
$$w_0^+ = \emptyword, \qquad w_0^-= 1^{k_0}.$$
These are the same when $k_0=0.$
Set
$$w_1^+ = 1^{k_0}3, \qquad w_1^-=1^{k_0}32^{k_1}.$$
These are equal precisely when  $k_1 = 0.$
We know that $ \set{w_0^+, w_0^-, w_1^+, w_1^-} $ includes all non-neutral bispecial words in $\lang$ with age $0$ or $1$, and by Corollary \ref{cor:eee-originate}, all non-neutral bispecial words with age $n$ are descendents of $w_1^+\parens{\lang^{(n-1)}}$ or $w_1^-\parens{\lang^{(n-1)}}$.

Now assume that we know how to construct $w_0^+, w_0^-, w_1^+, w_1^-, \ldots , w_m^+, w_m^-$ and that $ w_m^+ \neq w_m^- $ implies $ \abs{E^+(w_m^+)} = \abs{E^-(w_m^-)} = 2 $.  We must show how to construct the next two terms $w_{m+1}^+, w_{m+1}^-$.

We first look at the corresponding sequence for $\lang^{(1)}$,
$$ w_0^+\parens{\lang^{(1)}}, w_0^-\parens{\lang^{(1)}}, w_1^+\parens{\lang^{(1)}}, w_1^-\parens{\lang^{(1)}}, \hdots, w_m^+\parens{\lang^{(1)}}, w_m^-\parens{\lang^{(1)}}, \hdots. $$
We will now find $w_{m+1}^+, w_{m+1}^-$ from knowing $w_m^+\parens{\lang^{(1)}}, w_m^-\parens{\lang^{(1)}}.$

There are the following cases.
If $w_m^+\parens{\lang^{(1)}} \neq w_m^-\parens{\lang^{(1)}},$ then we know that $\abs{E^+(w_m^+\parens{\lang^{(1)}}} = \abs{E^+(w_m^-\parens{\lang^{(1)}}} = 2$, which means that both $A^{-1}(w_m^+)$  and $A^{-1}(w_m^-)$ are unique. Thus we set
$$w_{m+1}^+ = A^{-1}(w_m^+), \; w_{m+1}^- = A^{-1}(w_m^-).$$

Next suppose $ w_m^+\parens{\lang^{(1)}} = w_m^-\parens{\lang^{(1)}} $. If $A^{-1}(w_m^+\parens{\lang^{(1)}})$ is unique, then we again  define $w_{m+1}^+ = A^{-1}(w_m^+)$  and $w_{m+1}^- = A^{-1}(w_m^-)$. If $A^{-1}(w_m^+\parens{\lang^{(1)}})$ is not unique, then by the results of Subsection \ref{sec:eee-ext-img}, we know $ E^R\parens{w_m^+\parens{\lang^{(1)}}} = \{1,2,3\} $, $k_0=0$, and $A^{-1}\parens{w_m^+\parens{\lang^{(1)}}}$ has two elements, differing only by a suffix of $1^{k_0}.$
We set $w_{m+1}^+$ and $ w_{m+1}^-$ to be these two elements, with 
$$w_{m+1}^-= w_{m+1}^+ 1^{k_0}.$$
All our work so far has shown that in all cases, for all $m$,
$$0 \leq m(w_m^+) = -m(w_m^-) \leq 1.$$

Given any bispecial $ w \in \lang $, we know that by repeatedly applying the map $A$ to $w$, we will eventually get to $\emptyword$ or some $1^k$.  All of the above work produces all possible non-neutral descendents of $\emptyword$ and of $1^k$. In particular, $w$ must be in this sequence.

\subsection{ Showing $|w_m^+| \leq |w_m^-| < |w_{m+1} ^+| $}\label{Bounds on $w_m$}

This section has a different feel than the earlier sections. Our goal is to prove the following:

\begin{proposition} \label{eeeseqsbounding}
    We have 
    $$ \abs{w_0^+(\lang)} \leq \abs{w_0^-(\lang)} < \abs{w_1^+(\lang)} \leq \abs{w_1^-(\lang)} < \abs{w_2^+(\lang)} \leq \abs{w_2^-(\lang)} < \cdots. $$
\end{proposition}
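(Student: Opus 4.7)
The plan is to prove both inequalities simultaneously by strong induction on $m$, formulating them for every $S(e,e,e)$-adic language so that the inductive hypothesis may be invoked both at $\lang$ and at the shifted language $\lang^{(1)}$ that appears in the recursive construction of $w_m^\pm$. The base cases $m=0,1$ follow from the explicit formulas $w_0^+ = \emptyword$, $w_0^- = 1^{k_0}$, $w_1^+ = 1^{k_0}3$, $w_1^- = 1^{k_0}32^{k_1}$, which give the lengths $0,k_0,k_0+1,k_0+1+k_1$. The first inductive transition $|w_1^-| < |w_2^+|$ drops out because $w_2^+(\lang)$ contains $G_{k_0}(1^{k_1}3) = 2^{k_1}1^{k_0+1}3$ as a factor, which has length $k_0+k_1+2 > k_0+k_1+1$.

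For the inductive step on the non-strict inequality $|w_m^+| \leq |w_m^-|$, I would handle the three construction cases separately. Cases A1 and A2 are immediate since they give $w_m^- \in \{w_m^+,\, w_m^+\cdot 1^{k_0}\}$. In Case B, where $w_m^\pm = a\,G_{k_0}(w_{m-1}^\pm(\lang^{(1)}))\,b^\pm$, the prefix $a$ is common to both words by Lemma~\ref{lem:eee-left-ext} (bispecial factors of a fixed age share their left extension set), so I would combine the inductive hypothesis in $\lang^{(1)}$ with a case analysis on the possible right extension sets of $w_{m-1}^\pm(\lang^{(1)})$ enumerated in Proposition~\ref{prop:eee-3-right} to verify that $|G_{k_0}(w_{m-1}^+(\lang^{(1)}))| + |b^+| \leq |G_{k_0}(w_{m-1}^-(\lang^{(1)}))| + |b^-|$, from which the claim follows.

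For the strict inequality $|w_m^-(\lang)| < |w_{m+1}^+(\lang)|$ at $m \geq 2$, both words have the substituted form, so I would write
\[ |w_{m+1}^+| - |w_m^-| = |G_{k_0}(w_m^+(\lang^{(1)}))| - |G_{k_0}(w_{m-1}^-(\lang^{(1)}))| + \Delta, \]
where $\Delta$ is a bounded prefix/suffix correction with $|\Delta|\leq 2k_0+1$, whose sign is controlled by the parity of $m$ through the $\{\emptyword,1^{k_0}3\}$ dichotomy for the $a$'s. The inductive hypothesis in $\lang^{(1)}$ supplies $|w_m^+(\lang^{(1)})| > |w_{m-1}^-(\lang^{(1)})|$, and the explicit formula $|G_{k_0}(v)| = |v|_1 + (k_0+1)|v|_2 + (k_0+2)|v|_3$ shows that the substitution amplifies length by a factor proportional to the count of $2$s and $3$s. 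The hard part will be that a bare length gap of $1$ is not automatically enough to dominate $\Delta$, so I expect the induction has to be strengthened to track the abelianization counts $|w_m^\pm(\lang^{(1)})|_2$ and $|w_m^\pm(\lang^{(1)})|_3$: each application of $G_{k_j}$ in the recursive construction converts every $2$ and $3$ of the antecedent into a fresh $3$, so higher-age extended images gain strictly more $3$s than their lower-age counterparts, and this excess of $3$s produces enough extra length after one more application of $G_{k_0}$ to absorb $\Delta$. Packaging this abelianization bookkeeping cleanly, rather than unfolding the parity of $m$ together with the three possibilities $\{1,2\},\{1,3\},\{2,3\}$ for the right extension sets into a sprawling case analysis, is the main obstacle.
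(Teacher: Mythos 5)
Your plan is correct and follows essentially the same route as the paper: a strong induction over all $S(e,e,e)$-adic languages, base cases from the explicit words $w_0^\pm, w_1^\pm, w_2^+$, cancellation of the prefix terms via the parity dichotomy for $a_m^\pm$, and a strengthened inductive hypothesis tracking the counts of $2$s and $3$s to absorb the bounded correction $\Delta$. The paper packages exactly this abelianization bookkeeping by conjugating $\bfl(w_m^\pm)$ with an upper-triangular unipotent matrix to obtain vectors $R_m^\pm = (\abs{w}, \abs{w}_2+\abs{w}_3, \abs{w}_3)$ and running the induction in the componentwise partial order, which is the clean formulation you were looking for.
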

We will do this through using linear algebra on the abelianizations of these finite words, as well as the results of the extension diagrams.

Before giving the proof, which is at the end of this section after the proof of Lemma \ref{biprefix}, 
 we need to  set some notation. As defined in subsection \ref{sec:abelianization}, for a factor $w$, we write the ``abelianization" of $w$ as
$$ \ell(w) = \begin{bmatrix} \abs{w}_1 \\ \abs{w}_2 \\ \abs{w}_3 \end{bmatrix}, $$
where $\abs{w}_i$ is the number of $i$'s in $w$.

Now we define $a_m^+, a_m^-, b_m^+, b_m^-$ so that
\begin{equation*}
    w_m^+(\lang) = a_m^+ w_{m-1}^+(\lang^{(1)}) b_m^+, \qquad w_m^-(\lang) = a_m^- w_{m-1}^-(\lang^{(1)}) b_m^-.
\end{equation*} 
The incidence matrix of the substitution $G_{k_0}$ is
\begin{equation}
    M_{k_0} := \begin{bmatrix} 0 & k_0 & 1 + k_0 \\ 1 & 0 & 0 \\ 0 & 1 & 1 \end{bmatrix}.
\end{equation}
Therefore, we have
\begin{equation}
    \bfl(w_m^+(\lang)) = M_{k_0} \bfl(w_{m-1}^+(\lang^{(1)})) + \bfl(a_m^+) + \bfl(b_m^+).
\end{equation}
Since $a_m^+$ and $b_m^+$ may take only one of two possible values, by defining
\begin{equation} \label{eq:eeeABpdef}
    A_m^+(\lang) := \begin{cases} 1, & a_m^+ = 1^{k_0} 3 \\ 0, & a_m^+ = \emptyword, \end{cases} \qquad
    B_m^+(\lang) := \begin{cases} 1, & b_m^+ = 1^{k_0} \\ 0, & b_m^+ = \emptyword, \end{cases}
\end{equation}
we have
\begin{equation} \label{eee-lenrecurp}
    \bfl(w_m^+(\lang)) = M_{k_0} \bfl(w_{m-1}^+(\lang^{(1)})) + A_m^+(\lang) \begin{bmatrix} k_0 \\ 0 \\ 1 \end{bmatrix} + B_m^+(\lang) \begin{bmatrix} k_0 \\ 0 \\ 0 \end{bmatrix}.
\end{equation}
We may similarly define
\begin{equation} \label{eq:eeeABndef}
    A_m^-(\lang) := \begin{cases} 1, & a_m^- = 1^{k_0} 3 \\ 0, & a_m^- = \emptyword, \end{cases} \qquad
    B_m^-(\lang) := \begin{cases} 1, & b_m^- = 1^{k_0} \\ 0, & b_m^- = \emptyword. \end{cases}
\end{equation}
Then,
\begin{equation} \label{eee-lenrecurn}
    \bfl(w_m^-(\lang)) =MG_{k_0} \bfl(w_{m-1}^-(\lang^{(1)})) + A_m^-(\lang) \begin{bmatrix} k_0 \\ 0 \\ 1 \end{bmatrix} + B_m^-(\lang) \begin{bmatrix} k_0 \\ 0 \\ 0 \end{bmatrix}.
\end{equation}
We use the relations (\ref{eee-lenrecurp}) and (\ref{eee-lenrecurn}) to prove Proposition \ref{eeeseqsbounding}.
Our previous work immediately allows us to relate $A_m^+, A_m^-$ to $A_{m+1}^+, A_{m+1}^-$:
\begin{lemma}
    \label{biprefix}
    For all $ m \geq 1 $,
    \begin{equation} \label{aevenodd}
        a_m^+ = a_m^- = \begin{cases} \emptyword, & m \text{ is even}, \\ 1^{k_0} 3, & m \text{ is odd}. \end{cases}
    \end{equation}
\end{lemma}

\begin{proof}

If the age $m$ is even, then by 
    Lemma \ref{lem:eee-left-ext} we have that 
    $$E^{-1} = \{2,3\}$$
    which by Proposition \ref{prop:ext-img-list} gives us that 
    $$a_m^+ = a_m^- = \emptyword.$$
    Similarly, if $m$ is odd,  by 
    Lemma \ref{lem:eee-left-ext} we have that 
    $E^{-1} = \{1,3\}$
    which by Proposition \ref{prop:ext-img-list} gives us that 
    $$a_m^+ = a_m^- = 1^{k_0} 3.$$
\end{proof}

\begin{proof}[Proof of Proposition \ref{eeeseqsbounding}]
    We define
    \begin{equation}
        R_m^+(\lang) := \begin{bmatrix} 1 & 1 & 1 \\ 0 & 1 & 1 \\ 0 & 0 & 1 \end{bmatrix} \bfl(v_m^+(\lang)), \qquad R_m^-(\lang) := \begin{bmatrix} 1 & 1 & 1 \\ 0 & 1 & 1 \\ 0 & 0 & 1 \end{bmatrix} \bfl(v_m^-(\lang)).
    \end{equation}
    Then, $ \abs{w_m^+} $ and $ \abs{w_m^-} $ are the first components of $R_m^+$ and $R_m^-$, respectively. We use the partial ordering as follows:
    \begin{align*} (x_1,x_2,x_3) \leq (y_1,y_2,y_3) \iff  x_i \leq y_i \text{ for all } i \in \set{1,2,3}. \end{align*}
    By applying this conjugacy to (\ref{eee-lenrecurp}) and (\ref{eee-lenrecurn}), we have
    \begin{equation} \label{eee-Rlenineq1}
        R_{m+1}^+(\lang) = \begin{bmatrix} 1 & k_0 & 1 \\ 1 & 0 & 0 \\ 0 & 1 & 0 \end{bmatrix} R_m^+(\lang^{(1)}) + \begin{bmatrix} k_0 + 1 \\ 1 \\ 1 \end{bmatrix} A_{m+1}^+(\lang) + \begin{bmatrix} k_0 \\ 0 \\ 0 \end{bmatrix} B_{m+1}^+(\lang)
    \end{equation}
    and
    \begin{equation} \label{eee-Rlenineq2}
        R_{m+1}^-(\lang) = \begin{bmatrix} 1 & k_0 & 1 \\ 1 & 0 & 0 \\ 0 & 1 & 0 \end{bmatrix} R_m^-(\lang^{(1)}) + \begin{bmatrix} k_0 + 1 \\ 1 \\ 1 \end{bmatrix} A_{m+1}^-(\lang) + \begin{bmatrix} k_0 \\ 0 \\ 0 \end{bmatrix} B_{m+1}^-(\lang).
    \end{equation}

    We complete the proof in two parts by proving that for all  languages $\lang$ and all $ m \in \NatZero $,
    \begin{enumerate}[(a)]
        \item $ R_m^-(\lang) \geq R_m^+(\lang) $,
        \item $ R_{m+1}^+(\lang) > R_m^-(\lang) $, with strict inequality in the first entry.
    \end{enumerate}
    This will imply the respective inequalities on $\abs{w_m^+}$ and $\abs{w_m^-}$.
    
    To prove (a), first notice that by Lemma \ref{biprefix}, $ A_m^+ = A_m^- = 1 $ for all odd $m$, and $ A_m^+ = A_m^- = 0 $ for all even $m$.
    Therefore, in subtracting (\ref{eee-Rlenineq1}) from (\ref{eee-Rlenineq2}), the terms involving $A_{m+1}^-$ and $A_{m+1}^+$ cancel, so considering all possible values for $B_{m+1}^+$ and $B_{m+1}^-$, we have
    \begin{equation} \label{eee-Rlenineq3}
        R_{m+1}^-(\lang) - R_{m+1}^+(\lang) \geq \begin{bmatrix} 1 & k_0 & 1 \\ 1 & 0 & 0 \\ 0 & 1 & 0 \end{bmatrix} \parens{R_m^-(\lang^{(1)}) - R_m^+(\lang^{(1)})} - \begin{bmatrix} k_0 \\ 0 \\ 0 \end{bmatrix},
    \end{equation}
    with equality in the worst case scenario where $ B_{m+1}^- = 0 $ and $ B_{m+1}^+ = 1 $.
    To prove (a), we show by induction on $m$ that for all $ m \in \NatZero $ and all  sequences, one of the following holds:
    \begin{enumerate}[(i)]
        \item $ w_m^+(\lang) = w_m^-(\lang) $.
        \item $ R_m^-(\lang) - R_m^+(\lang) = \begin{bmatrix} 1 \\ 0 \\ 0 \end{bmatrix} $, $ E^R(w_m^+(\lang)) = \set{1,2} $, and $ E^R(w_m^-(\lang)) = \set{1,3} $.
        \item $ R_m^-(\lang) - R_m^+(\lang) \geq \begin{bmatrix} 1 \\ 1 \\ 0 \end{bmatrix} $.
    \end{enumerate}
    Trivially, by considering the extension diagrams of $w_0^+$, $w_0^-$, $w_1^+$, and $w_1^-$, either (i) or (ii) holds for $m=0$, and either (i) or (iii) holds for $m=1$.

    Suppose one of the above three statements is true for $ w_m^+(\lang^{(1)}) $ and $ w_m^-(\lang^{(1)}) $.
    First consider the case where (i) holds. If $A^{-1}(w_m^+(\lang^{(1)}))$ is unique, then (i) holds again for the age $m+1$ bispecial words in $\lang$. Otherwise, $ b_{m+1}^+ = \emptyword $ and $ b_{m+1}^- = 1^{k_0} $, so by applying the relevant case in Proposition \ref{prop:ext-img-list}, (ii) holds for $m+1$.
    If (ii) holds for $m$, then by investigating the cases in Proposition \ref{prop:ext-img-list}, we must have $ b_{m+1}^+ = \emptyword $, so $ B_{m+1}^+ = 0 $, and instead of (\ref{eee-Rlenineq3}), we have the stronger condition
    $$ R_{m+1}^-(\lang) - R_{m+1}^+(\lang) \geq \begin{bmatrix} 1 & k_0 & 1 \\ 1 & 0 & 0 \\ 0 & 1 & 0 \end{bmatrix} \parens{R_m^-(\lang^{(1)}) - R_m^+(\lang^{(1)})} \geq \begin{bmatrix} 1 & k_0 & 1 \\ 1 & 0 & 0 \\ 0 & 1 & 0 \end{bmatrix} \begin{bmatrix} 1 \\ 0 \\ 0 \end{bmatrix} = \begin{bmatrix} 1 \\ 1 \\ 0 \end{bmatrix}, $$
    so (iii) holds for $m+1$.
    If (iii) holds for $m$, then applying (\ref{eee-Rlenineq3}), we have
    $$ R_{m+1}^-(\lang) - R_{m+1}^+(\lang) \geq \begin{bmatrix} 1 & k_0 & 1 \\ 1 & 0 & 0 \\ 0 & 1 & 0 \end{bmatrix} \begin{bmatrix} 1 \\ 1 \\ 0 \end{bmatrix} - \begin{bmatrix} k_0 \\ 0 \\ 0 \end{bmatrix} = \begin{bmatrix} 1 \\ 1 \\ 1 \end{bmatrix}, $$
    so (iii) holds again for $m+1$. This completes the proof of (a).
    
    To prove (b), we first compute directly the first few terms of the sequences $w_m^+(\lang)$ and $w_m^-(\lang)$:
    \begin{itemize}
        \item $ w_0^-(\lang) = 1^{k_0} $
        \item $ w_1^+(\lang) = 1^{k_0}3 $
        \item $ w_1^-(\lang) = 1^{k_0}32^{k_1} $
        \item $ w_2^+(\lang) = 2^{k_1} 1^{k_0+1} 3 $
    \end{itemize}
    This proves (b) for $m=0$ and $m=1$.
    To complete the proof, we show that for $ m \geq 1 $ and all languages $\lang$,
    \begin{equation} \label{eq:eee-Rlenineq4}
        R_{2m+1}^+(\lang) - R_{2m}^-(\lang) \geq \begin{bmatrix} 2 \\ 2 \\ 1 \end{bmatrix}, \qquad R_{2m}^+(\lang) - R_{2m-1}^-(\lang) \geq \begin{bmatrix} 1 \\ 0 \\ 0 \end{bmatrix}.
    \end{equation}
    In a similar manner as to how the expression (\ref{eee-Rlenineq3}) is derived, we have
    \begin{equation} \label{eee-Rlenineq5}
        R_{m+2}^+(\lang) - R_{m+1}^-(\lang) \geq \begin{bmatrix} 1 & k_0 & 1 \\ 1 & 0 & 0 \\ 0 & 1 & 0 \end{bmatrix} \left(R_{m+1}^+(\lang^{(1)}) - R_m^-(\lang^{(1)})\right) + \left(A_{m+2}^+(\lang) - A_{m+1}^+(\lang)\right)\begin{bmatrix} k_0 + 1 \\ 1 \\ 1 \end{bmatrix} - \begin{bmatrix} k_0 \\ 0 \\ 0 \end{bmatrix}
    \end{equation}
    for $ m \geq 1 $, with equality in the worst case scenario where $ B_{m+1}^+ = 0 $ and $ B_m^- = 1 $.
    By Proposition \ref{biprefix}, $ A_m^+ = A_m^- $ are both equal to $0$ when $m$ is even and both equal to $1$ when $m$ is odd. Therefore,
    \begin{equation} \label{eee-abelbcond1}
        R_{2m+2}^+(\lang) - R_{2m+1}^-(\lang) \geq \begin{bmatrix} 1 & k_0 & 1 \\ 1 & 0 & 0 \\ 0 & 1 & 0 \end{bmatrix} \parens{R_{2m+1}^+(\lang^{(1)}) - R_{2m}^-(\lang^{(1)})} - \begin{bmatrix} 2k_0 + 1 \\ 1 \\ 1 \end{bmatrix}
    \end{equation}
    and
    \begin{equation} \label{eee-abelbcond2}
        R_{2m+1}^+(\lang) - R_{2m}^-(\lang) \geq \begin{bmatrix} 1 & k_0 & 1 \\ 1 & 0 & 0 \\ 0 & 1 & 0 \end{bmatrix} \parens{R_{2m}^+(\lang^{(1)}) - R_{2m-1}^-(\lang^{(1)})} + \begin{bmatrix} 1 \\ 1 \\ 1 \end{bmatrix}.
    \end{equation}
    The above values of $w_2^+(\lang)$ and $w_1^-(\lang)$ imply
    $$ \bfl(w_2^+(\lang)) - \bfl(w_1^-(\lang)) = \begin{bmatrix} 1 \\ 0 \\ 0 \end{bmatrix}, \qquad R_2^+(\lang) - R_1^-(\lang) = \begin{bmatrix} 1 \\ 0 \\ 0 \end{bmatrix}, $$
    which satisfies the second inequality of (\ref{eq:eee-Rlenineq4}).
    With this as a base case, both inequalities of (\ref{eq:eee-Rlenineq4}) can be proven by induction through alternating applications of (\ref{eee-abelbcond1}) and (\ref{eee-abelbcond2}).
\end{proof}

\subsection{Proving Theorem \ref{Main theorem for $eee$}, the $3n$ bound }\label{proof of main theorem}

We are finally ready to prove that the factor complexity $p_{\mathcal{L}}(n)$ is bounded above by $3n$ (Theorem \ref{Main theorem for $eee$}), the whole goal of this long section.  (As mentioned at the beginning of Section \ref{sec:e-e-e}, the lower bound of $p_{\mathcal{L}}(n)\geq 2n+1$ 
 stems from quite general principles, as shown by R. Tijdeman \cite{Tijdeman}.)

We need one more lemma:
\begin{lemma}
We have 
$$ \sum_{\substack{w \in \lang \\ \abs{w} \leq n-1 \\ w \text{ non-neutral}}} m(w) \in \set{0,1} $$
    
\end{lemma}

\begin{proof}
    Our sequence $w_0^+, w_0^-, w_1^+, w_1^-, \ldots , w_n^+, w_n^-, \ldots $ was created precisely to prove this. The terms in the sequence have non-decreasing lengths, and each time some $w_n^+$ has multiplicity $ m(w_n^+) \neq 0 $, we know not only that  $m(w_n^+)=1$ but also that for the very next term $w_n^-$ we have $m(w_n^-)=-1$.
This means that the above sum will always alternate between $0$ and $1$, giving us our result.
\end{proof}

Now for the grand finale:

\begin{proof}[Proof of Theorem \ref{Main theorem for $eee$}]
 Refer back to Corollary  \ref{cor:bi-complex}, which states that for any language  
    $\lang$,
    \begin{equation} 
        p_\lang(n+1) - p_\lang(n) = \abs{\lang_1} - 1 + \sum_{\substack{w \in \lang \\ \abs{w} \leq n-1}} m(w).
    \end{equation}
    In particular,
    \begin{equation} \label{bi-complex-2}
        p_\lang(n+1) - p_\lang(n) \leq \abs{\lang_1} - 1 + \sum_{\substack{w \in \lang \\ \abs{w} \leq n-1 \\ w \text{ bispecial}}} m(w),
    \end{equation}
    where equality holds if $\lang$ is extendable.

    For our $(eee)$ language, this means that 
    $$p_{\mathcal{L}}(n+1) \leq p_{\mathcal{L}}(n) + |\lang_1| - 1 + \sum_{\substack{w \in \lang \\ \abs{w} \leq n-1}} m(w).$$
    By the above lemma, this means that for all $n$ we have
    $$p_{\mathcal{L}}(n+1) \leq p_{\mathcal{L}} + |\lang_1|.$$
    In our language, we know that all three subwords of length one, namely $1, 2, 3,$ can appear. Thus 
    $$|\lang_1| = 3,$$
    giving us 
$$p_{\mathcal{L}}(n+1) \leq p_{\mathcal{L}}(n) +3.$$
We now simply induct over $n$. As $p_{\mathcal{L}}(0) =0$ , the base case simply is stating that $p_{\mathcal{L}}(1) \leq 3$, which of course we know to be true since $p_{\mathcal{L}}(1) =|\lang_1| = 3. $ 
Then if $p_{\mathcal{L}}(n) \leq 3n,$ we have 
$$p_{\mathcal{L}}(n+1) \leq p_{\mathcal{L}}(n) + 3 \leq 3n + 3 = 3(n+1),$$
as desired.

\end{proof}

\section{Equivalence Classes of TRIP Maps} \label{More general TRIP maps}

The work in finding the complexity bounds for the $T(e,e,e)$ TRIP map was detailed and technical. It is hard for us to imagine that such work would be worthwhile to imitate for the other 215 TRIP maps. However, following the procedure in Section 8.2 of \cite{Garrity-Mccdonald}, certain TRIP maps are equivalent via transformations which preserve the complexities of the associated $S$-adic sequences, reducing the 216 TRIP maps to 21 equivalence classes.

\subsection{Reduction to 36 Maps through Conjugacy} \label{sec:36}

Each permutation $ \rho \in S^3 $ is a bijection on $\A$, so given an infinite sequence $ \bfu = u_0 u_1 u_2 \hdots $ in $\A$, we may apply $\rho$ as a substitution to yield
\begin{align*}
    \rho(\bfu) := \rho(u_0) \rho(u_1) \rho(u_2) \cdots,
\end{align*}
and similarly for finite words.
For any language $\mathcal{L}$, we can define the language $ \rho(\mathcal{L}) := \set{\rho(w): w \in \mathcal{L}} $, whose complexity will be identical to that of $\mathcal{L}$.
This motivates the following proposition:

\begin{proposition}
    If $\mathcal{L}$ is an $S(\rho,\tau_0,\tau_1)$-adic language with coding sequence $ \set{S_{i_n}(\rho,\tau_0,\tau_1)} $, then $\rho(\mathcal{L})$ is $ S(e, \tau_0 \rho, \tau_1 \rho) $-adic with coding sequence $ \set{S_{i_n}(e, \tau_0 \rho, \tau_1 \rho)} $.
\end{proposition}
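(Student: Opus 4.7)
The plan is to reduce everything to the defining formula $S_i(\sigma,\tau_0,\tau_1) = \sigma \circ S_i(e,e,e) \circ \tau_i$ and to exploit the fact that $\rho$, regarded as a length-one substitution, is an invertible bijection on $\A^n$ that commutes with the factor relation.

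First I would set $\sigma_n := S_{i_n}(\rho,\tau_0,\tau_1)$ and $\sigma'_n := S_{i_n}(e,\tau_0\rho,\tau_1\rho)$ and unfold the definitions:
\[
\sigma_n = \rho \circ S_{i_n}(e,e,e) \circ \tau_{i_n}, \qquad \sigma'_n = S_{i_n}(e,e,e) \circ \tau_{i_n} \circ \rho.
\]
A short telescoping computation — inserting $\rho^{-1} \circ \rho = \mathrm{id}$ between successive factors and regrouping — then yields the key conjugacy
\[
\sigma_0 \circ \sigma_1 \circ \cdots \circ \sigma_{n-1} \;=\; \rho \circ (\sigma'_0 \circ \sigma'_1 \circ \cdots \circ \sigma'_{n-1}) \circ \rho^{-1}.
\]
This identity is the central computational step; everything else is formal consequence.

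Next I would apply this identity letter by letter. For each $c \in \A$ and each $n$, one gets $(\sigma_0 \circ \cdots \circ \sigma_{n-1})(c) = \rho\bigl((\sigma'_0 \circ \cdots \circ \sigma'_{n-1})(\rho^{-1}(c))\bigr)$, and as $c$ ranges over $\A$ so does $\rho^{-1}(c)$. Hence the generating family of $\lang$ from Definition \ref{def:lang-coding-seq} is obtained by applying $\rho$ to the generating family of $\lang'$. Because $\rho$ is a length-preserving permutation of $\A$, a word $v$ is a factor of $w$ if and only if $\rho(v)$ is a factor of $\rho(w)$; in particular $\rho$ commutes with passage to factor closures. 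Transporting this through Definition \ref{def:lang-coding-seq} gives the claimed identification of $\rho(\lang)$ with the $S(e,\tau_0\rho,\tau_1\rho)$-adic language of the coding sequence $\{S_{i_n}(e,\tau_0\rho,\tau_1\rho)\}$.

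The only real obstacle is bookkeeping. The direction in which $\rho$ appears in $\sigma_n$ (on the left) versus in $\sigma'_n$ (on the right), combined with the convention that function composition evaluates the rightmost factor first, has to be tracked carefully to make sure the $\rho^{-1}$'s cancel correctly in the telescoping and that the conjugation lands on the correct side. Once the conjugacy identity is written down correctly, no further substantive argument is needed — the proof is essentially a one-line symbolic manipulation followed by the standard observation that a single permutation on the alphabet transports $S$-adic languages to $S$-adic languages without changing any length-based invariant, in particular leaving the complexity function unchanged.
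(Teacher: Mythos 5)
The paper states this proposition without proof, so there is nothing to compare against line by line; your strategy — unfold $S_i(\sigma,\tau_0,\tau_1)=\sigma\circ S_i(e,e,e)\circ\tau_i$, insert $\rho^{-1}\circ\rho$ between consecutive factors, and telescope to a conjugacy — is exactly the argument the proposition is meant to have, and your central identity
\[
\sigma_0\circ\cdots\circ\sigma_{n-1}\;=\;\rho\circ(\sigma'_0\circ\cdots\circ\sigma'_{n-1})\circ\rho^{-1}
\]
is correct under the paper's conventions (permutation matrices multiply in the same order as substitutions compose, since abelianization is multiplicative).

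The gap is in the very last inference. From your identity, the generating family $\set{(\sigma_0\circ\cdots\circ\sigma_{n-1})(c)}$ of $\lang$ equals $\rho$ applied to the generating family of $\lang':=\lang\parens{\set{\sigma'_n}}$, which gives $\lang=\rho(\lang')$, i.e.\ $\lang'=\rho^{-1}(\lang)$ — \emph{not} $\lang'=\rho(\lang)$. These coincide only when $\rho$ is an involution; for $\rho=(123)$ they genuinely differ (e.g.\ with $\tau_0=\tau_1=e$ and coding sequence all $S_0$, the factor $22$ lies in $\lang'=\rho^{-1}(\lang)$ but $12$ is forbidden in $\lang$, so one can check $\rho(\lang)\neq\lang'$). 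So your computation, carried through honestly, proves the statement with $\rho^{-1}(\lang)$ in place of $\rho(\lang)$ (equivalently, that $\rho(\lang)$ is $S(e,\tau_0\rho^{-1},\tau_1\rho^{-1})$-adic); the step where you pass from ``$\lang$ is $\rho$ of $\lang'$'' to ``$\rho(\lang)$ is $\lang'$'' does not follow. This appears to be an orientation slip in the proposition as printed rather than a flaw in your method — and it is harmless downstream, since $\rho\mapsto\rho^{-1}$ permutes $S_3$, so the reduction from $216$ to $36$ classes and the preservation of complexity are unaffected — but you should either prove the corrected statement or explicitly reconcile the direction rather than asserting the claimed form from an identity that yields its inverse.
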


This reduces the number of classes of $S$-adic languages to consider from 216 to 36.

\subsection{Further Reduction to 21 Maps through Twinning} \label{sec:21}

In this section, we show how under certain conditions the 36 distinct classes of $S$-adic sequences can be reduced further through twinning, or interchanging the roles of the $S_0$ and $S_1$ substitutions as well as the $F_0$ and $F_1$ Farey matrices. As mentioned earlier, ``twinning'' was originally described in \cite{Garrity-Mccdonald}.

The dynamics of the TRIP maps and the substitutions will in many ways remain unchanged, and factors of the $S$-adic sequences will be related to factors of their twins by the reversal operation:

\begin{definition}
   For any finite word $ v = v_0 v_1 v_2 \cdots v_{n-1} $, we define the \keyterm{reversal} of $v$ to be $ \rev(v) = v_{n-1} \cdots v_2 v_1 v_0 $. Similarly, we define the reversal of a substitution $\sigma$, $\rev(\sigma)$, by $ \rev(\sigma)(i) = \rev(\sigma(i)) $, and the reversal of a language $\mathcal{L}$ by $ \rev(\mathcal{L}) := \set{R(w): w \in \mathcal{L}} $.
\end{definition}

Clearly, the complexities of $\mathcal{L}$ and $R(\mathcal{L})$ are identical.

The \keyterm{twin} TRIP map to $T(\sigma,\tau_0,\tau_1)$ is the map $ T(\sigma (13), (12) \tau_1, (12) \tau_0) $.
This TRIP map has the property that if $\set{i_n}_{n=0}^\infty$ is the Farey coding sequence of a point $ x \in \triangle $ for the map $T(\sigma,\tau_0,\tau_1)$, then $ \set{1 - i_n}_{n=0}^\infty $ is the coding sequence of $x$ for the twin map $ T(\sigma (13), (12) \tau_1, (12) \tau_0) $.
The associated substitutions have the property that $ S_0(\sigma,\tau_0,\tau_1) = R\parens{S_1(\sigma (13), (12) \tau_1, (12) \tau_0)} $ and $ S_1(\sigma,\tau_0,\tau_1) = R\parens{S_0(\sigma (13), (12) \tau_0, (12) \tau_1)} $.
This gives us the following:

\begin{proposition} \label{prop:twin}
    Let $\mathcal{L}$ be an $S(\sigma,\tau_0,\tau_1)$-adic language with coding sequence $ \set{S_{i_n}(\sigma,\tau_0,\tau_1)} $. Then, $R(\mathcal{L})$ is an $ S(\sigma (13), (12) \tau_1, (12) \tau_0) $-adic language with coding sequence $ \set{S_{1-i_n}(\sigma (13), (12) \tau_1, (12) \tau_0)} $.
\end{proposition}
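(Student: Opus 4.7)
The plan is to prove the proposition by reducing the language-level claim to two more elementary identities: a compatibility between reversal and composition of substitutions, and a pair of substitution-level ``twin'' identities of the form stated just before the proposition. Once these are established, the proposition follows because an $S$-adic language is determined by factors of finite products of the coding substitutions applied to single letters, and reversal sends factors to factors in the obvious way.

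First, I would verify the commutation lemma $R(\sigma \circ \tau) = R(\sigma) \circ R(\tau)$ for any two substitutions on $\A$. Evaluating on a letter $i$ with $\tau(i) = b_1 b_2 \cdots b_k$, the left side is $R(\sigma(b_1)\sigma(b_2)\cdots\sigma(b_k)) = R(\sigma(b_k)) \cdots R(\sigma(b_1))$, and the right side is $R(\sigma)(b_k b_{k-1} \cdots b_1) = R(\sigma(b_k)) \cdots R(\sigma(b_1))$, so the two agree. Iterating, $R$ commutes with any finite composition of substitutions when viewed as a map from letters to words.

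Next, I would verify the key substitution identities
\[
S_i(\sigma,\tau_0,\tau_1) \;=\; R\bigl(S_{1-i}(\sigma(13),\,(12)\tau_1,\,(12)\tau_0)\bigr), \qquad i \in \set{0,1}.
\]
By the definition $S_j(\rho,\mu_0,\mu_1) = \rho \circ S_j(e,e,e) \circ \mu_j$, the commutation lemma, and the fact that a permutation regarded as a substitution is fixed by $R$ (since it maps letters to single letters), this reduces to the base identities $(13) \circ R(S_{1-i}(e,e,e)) \circ (12) = S_i(e,e,e)$. Both are checked by evaluating each side on the three letters $1,2,3$ using the explicit formulas for $S_0(e,e,e)$ and $S_1(e,e,e)$ from Section \ref{sec:sub-cont-frac}; the calculation is routine, and the permutations $(13)$ and $(12)$ are precisely what make it work.

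Finally, I would lift to the language level. By Definition \ref{def:lang-coding-seq}, $\lang$ is the set of factors of $(S_{i_0}(\sigma,\tau_0,\tau_1) \circ \cdots \circ S_{i_{n-1}}(\sigma,\tau_0,\tau_1))(a)$ over all $n \in \N$ and $a \in \A$. Since $R$ is an involution that sends factors to factors, $R(\lang)$ is the set of factors of the reversals of these words. By the commutation lemma applied iteratively and then the key identity applied factor by factor, such a reversal equals
\[
\bigl(S_{1-i_0}(\sigma(13),(12)\tau_1,(12)\tau_0) \circ \cdots \circ S_{1-i_{n-1}}(\sigma(13),(12)\tau_1,(12)\tau_0)\bigr)(a),
\]
so $R(\lang)$ is exactly the $S(\sigma(13),(12)\tau_1,(12)\tau_0)$-adic language with coding sequence $\set{S_{1-i_n}(\sigma(13),(12)\tau_1,(12)\tau_0)}$. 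The only obstacle is the bookkeeping: keeping straight the swap between $\tau_0$ and $\tau_1$ in the twin map, and being careful that the conjugating permutations $(13)$ and $(12)$ appear on the correct sides of the base-case identity. There is no deeper conceptual difficulty.
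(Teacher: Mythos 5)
Your proposal is correct and takes essentially the same route as the paper: the paper simply asserts the substitution-level identities $S_i(\sigma,\tau_0,\tau_1)=R\bigl(S_{1-i}(\sigma(13),(12)\tau_1,(12)\tau_0)\bigr)$ and lets the proposition follow, while you supply the verification (via the base identities $(13)\circ R(S_{1-i}(e,e,e))\circ(12)=S_i(e,e,e)$ and the fact that $R(\sigma\circ\tau)=R(\sigma)\circ R(\tau)$) together with the routine lift to the language of the coding sequence. Nothing is missing.
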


Of the 36 TRIP maps $T(e,\tau_0,\tau_1)$, six have twins that are their own conjugates. Therefore, Proposition \ref{prop:twin} allows us to reduce the number of classes of $S$-adic languages from 36 to 21. We use the following representatives of the equivalence classes:
\begin{multicols}{5}
    \begin{itemize}
        \item $(e,e,e)$
        \item $(e,12,e)$
        \item $(e,13,e)$
        \item $(e,23,e)$
        \item $(e,123,e)$
        \item $(e,132,e)^*$
        \item $(e,e,12)$
        \item $(e,12,12)$
        \item $(e,13,12)^*$
        \item $(e,23,12)$
        \item $(e,123,12)$
        \item $(e,e,13)$
        \item $(e,12,13)^*$
        \item $(e,23,13)$
        \item $(e,123,13)$
        \item $(e,e,23)$
        \item $(e,23,23)^*$
        \item $(e,123,23)$
        \item $(e,e,123)$
        \item $(e,123,123)^*$
        \item $(e,e,132)^*$
    \end{itemize}
\end{multicols}
The starred permutations represent classes with 6 maps, while the unstarred ones represent classes with 12 maps. We explicitly list all the conjugates and twins in the appendix.

%\bluecomment{I think we should remove the below remark, as it occurs in Section 4.3.}

%\begin{remark}
 %   If we are concerned with bounds for complexities of $S$-adic \textit{words} instead of for $S$-adic languages, it is no longer true that the complexity bounds for a TRIP map will be identical to those for its twin. This is due to the fact that it is not possible to take the reversal of an infinite word.
  %  For example, consider the $S(e,13,e)$-adic word $ \bfu = 1323232323\hdots $. The language $\mathcal{L}(\bfu)$ is right extendable but not left extendable, meaning the language $R\parens{\mathcal{L}(\bfu)}$ is left extendable but not right extendable. Therefore, an infinite word with language $R\parens{\mathcal{L}(\bfu)}$ cannot exist.
 %   This was our motivation for formulating our results in terms of languages defined directly from a coding sequence rather than languages of $S$-adic words.
%\end{remark}

\section{Statements on Complexity for All TRIP Maps} \label{Statements on Complexity for all TRIP maps}

The remainder of this paper is dedicated to exploring classes of $S$-adic sequences corresponding to other TRIP maps with the ultimate vision of characterizing all such classes with an upper bound on complexity of at most $3n$.
We provide examples of $S$-adic sequences with complexity larger than $3n$ for some $n$ to rule out many cases. We will see that only one case remains, which we conjecture to have upper bound $3n$. We provide some insight into possible strategies for proving this in Section \ref{sec:e-23-e}.

By the results of the previous section, it suffices to consider only 21 cases, explicitly listed in Subsection \ref{sec:21}.  While there are morally only 21 cases as far as factor complexity is concerned, each of the 216 TRIP maps has its own distinctness. Thus in the following theorems we will be listing the factor complexities (at least conjecturally) for all possible TRIP maps.

\begin{theorem}
    Let $ (\sigma,\tau_0,\tau_1) $ be one of the following:
    $$ \begin{array}{cccccc}
         (e,e,e)  &  (12,12,12)   &(13, 13, 13)    &  (23,23,23) & (123, 132, 132  &   (132, 123, 123)  \\ 
    
  (13, 12, 12)  & (132, e,e)     & (e, 132, 132)   &  (123, 123, 123)  &  (23, 13, 13) & (12, 23, 23)
\\
    \end{array} $$
    Then, for any $S(\sigma,\tau_0,\tau_1)$-adic language $\lang$ with coding sequence associated to a rationally independent point in $\triangle$, $ 2n+1 \leq p_\lang(n) \leq 3n $.
\end{theorem}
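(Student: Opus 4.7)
The plan is to deduce this statement as an immediate corollary of Theorem \ref{Main theorem for $eee$} by invoking the two equivalence relations developed in Section \ref{More general TRIP maps}. Conjugacy (Section \ref{sec:36}) and twinning (Proposition \ref{prop:twin}) both preserve the complexity function of the associated $S$-adic languages: the former because $\rho(\lang)$ is just a relabeling of letters in $\lang$, and the latter because $\rev(\lang)$ has exactly the same factors as $\lang$ read right-to-left. Consequently, once we identify which TRIP maps $(\sigma,\tau_0,\tau_1)$ lie in the equivalence class of $(e,e,e)$ under the combined action of these two operations, the complexity bounds $2n+1 \leq p_\lang(n) \leq 3n$ automatically transfer from $(e,e,e)$ to every member of that class.

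First I would compute the conjugacy class of $(e,e,e)$ explicitly. By Section \ref{sec:36}, for any $\rho \in S_3$ we have $T(\rho,\rho^{-1},\rho^{-1})$ equivalent to $T(e,e,e)$, which produces the six triples in the first line of the theorem's list. Then I would apply twinning to obtain the representative $((13),(12),(12))=(13,12,12)$, and conjugate it through all six $\rho \in S_3$ to obtain the remaining six triples of the second line; this is a mechanical calculation using $(\rho,\tau_0,\tau_1) \sim (e, \tau_0 \rho, \tau_1 \rho)$ together with the twin formula $T(\sigma,\tau_0,\tau_1) \sim T(\sigma(13),(12)\tau_1,(12)\tau_0)$. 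The resulting 12 triples comprise precisely the list in the theorem, so the equivalence class of $(e,e,e)$ coincides with this set.

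Second, I need to verify that the hypothesis of Theorem \ref{Main theorem for $eee$}, namely that the coding sequence corresponds to a rationally independent frequency vector, is preserved under both equivalences. Conjugation by $\rho \in S_3$ acts on letters of the alphabet by the permutation $\rho$, which permutes the three components of the frequency vector and therefore preserves rational independence of those components. Twinning reverses every word, which leaves the frequency vector of each finite factor (and thus of the whole language) unchanged. Hence if one representative of the equivalence class corresponds to a rationally independent point, then so do all twelve.

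The only step requiring any real attention is the group-theoretic bookkeeping in identifying the 12 triples, and even this reduces to computing a handful of products in $S_3$; there is no genuine mathematical obstacle. Given the heavy technical machinery already established in Section \ref{sec:e-e-e} for the $(e,e,e)$ case and the equivalence framework of Section \ref{More general TRIP maps}, the theorem follows in a few lines as a direct corollary.
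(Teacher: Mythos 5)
Your proposal is correct and follows essentially the same route as the paper: the paper's proof likewise observes that the first row consists of the conjugates of $(e,e,e)$ and the second row of their twins, and then invokes Theorem \ref{Main theorem for $eee$} together with the complexity-preserving equivalences of Section \ref{More general TRIP maps}. Your additional check that rational independence of the frequency vector is preserved under relabeling and reversal is a sensible (if brief) point that the paper leaves implicit.
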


\begin{proof}
    From Theorem \ref{Main theorem for $eee$}, we know the result is true for the $(e,e,e)$ TRIP map.
    The top row in the above are all of the TRIP maps conjugate to $(e,e,e)$, while the second row are their twins.
\end{proof}

Our second class of TRIP maps are all of the conjugates and twins of the TRIP map $T(e,23,23)$, which is the Cassaigne map.

\begin{theorem} \label{Cassaigne}
    Let $(\sigma,\tau_0,\tau_1)$ be one of the following:
    $$ \begin{array}{cccccc}
        (e, 23, 23)       & (  12  , 132   ,  132    )       & (   13 ,   123 , 123     )   & ( 23   ,  e  ,   e   )   & (  123  ,  12  ,  12    )     & (  132  ,  13  ,   13   )  
    \end{array} $$
    Then, the complexity function of any $S(\sigma,\tau_0,\tau_1)$-adic language $\lang$ with coding sequence associated to a rationally independent point in $\triangle$ satisfies $ p_\lang(n) = 2n+1 $.
\end{theorem}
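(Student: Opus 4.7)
The plan is to reduce the theorem to the known result of Cassaigne, Labb\'e, and Leroy \cite{Cassaigne}, which establishes that $S$-adic sequences produced by the Cassaigne algorithm have complexity exactly $2n+1$ almost everywhere. As already noted in Section \ref{TRIP Maps}, the Cassaigne map is precisely the TRIP map $T(e,23,23)$. Consequently, for any $S(e,23,23)$-adic language $\lang$ whose coding sequence is associated to a rationally independent point of $\triangle$, the equality $p_\lang(n) = 2n+1$ is immediate from \cite{Cassaigne}, giving the base case of the argument.

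What remains is to verify that the six TRIP maps appearing in the statement all lie in the same equivalence class as $(e,23,23)$ under the complexity-preserving operations of conjugacy (Subsection \ref{sec:36}) and twinning (Subsection \ref{sec:21}). Since $(e,23,23)$ appears as a starred representative in Subsection \ref{sec:21}, its class contains exactly six TRIP maps, which matches the cardinality of the list. Concretely, I would tabulate the six conjugates $(\rho, \tau_0\rho, \tau_1\rho)$ with $\tau_0=\tau_1=(23)$ and $\rho$ ranging over $S_3$, and check one by one that, after possibly also applying the twinning operation $(\sigma,\tau_0,\tau_1) \mapsto (\sigma(13),(12)\tau_1,(12)\tau_0)$ of Proposition \ref{prop:twin}, the resulting six triples are exactly those listed in the statement. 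By the propositions of Section \ref{More general TRIP maps}, for each listed triple there is an explicit complexity-preserving bijection (given either by applying a fixed permutation of the alphabet to every factor, or by reversing every factor) between its $S$-adic languages and the $S(e,23,23)$-adic languages.

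Finally, I would check that the hypothesis ``coding sequence associated to a rationally independent point in $\triangle$'' is invariant under these transformations: conjugation by $\rho$ merely permutes the coordinates of the associated point of $\triangle$, which preserves rational independence, and twinning corresponds to reversal of words, which leaves the letter frequencies (and hence the associated point of $\triangle$) unchanged. Putting these pieces together, the equality $p_\lang(n) = 2n+1$ for the $(e,23,23)$ class transports verbatim to each of the other five listed maps, yielding the stated result. The main (indeed only) potential obstacle is the bookkeeping of correctly pairing each listed triple with a conjugate-then-twin of $(e,23,23)$; no new combinatorial analysis of bispecial factors, of the kind carried out in Section \ref{sec:e-e-e}, is required, since the complete characterization of the Cassaigne complexity is imported black-box from \cite{Cassaigne}.
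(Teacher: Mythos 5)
Your proposal is correct and follows essentially the same route as the paper: the paper's proof simply cites Cassaigne, Labb\'e, and Leroy for the $(e,23,23)$ case and observes that the remaining five maps are its conjugates (the class being one of the starred, six-element classes). Your additional remarks on twinning and on the invariance of the rational-independence hypothesis are consistent elaborations of the same argument.
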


\begin{proof}
    In \cite{Cassaigne}, Cassaigne, Labb\'{e} and Leroy showed this for $(e,23,23)$ (although, as mentioned earlier, they did not use the rhetoric of TRIP maps). The other maps are the conjugates of $(e,23,23)$.  In this case, the operation of twinning will produce no more TRIP maps.
\end{proof}

We now turn to the maps that are called ``degenerate'' in \cite{Garrity-Mccdonald}, as they are essentially maps in $2$ dimensions (as described in Subsection \ref{R2TRIP Maps}) instead of $3$.
As we might expect, the $S$-adic languages corresponding to these maps are essentially reduced to the two-dimensional, Sturmian case. More precisely, we have the following:

\begin{theorem} \label{th:degenerate}
    Let $(\sigma, \tau_0, \tau_1)$ be $(e,12,e)$, $(e,12,13)$, $(e,132,e)$, or any of their conjugates or twins.
    Then, any $S(\sigma,\tau_0,\tau_1)$-adic language $\lang$ with a coding sequence associated to a rationally independent point in $\triangle$ is the union of a single character and the language of a Sturmian sequence over two characters.
\end{theorem}

This implies that the complexity of such $S$-adic languages $\lang$ is
$$ p_\lang(n) = \begin{cases} n+1, & n \neq 1 \\ 3, & n = 1. \end{cases} $$

\begin{proof}[Proof of Theorem \ref{th:degenerate}]

By direct calculation, we have that

    \begin{multicols}{2}
    \begin{itemize}
        \item $ S_0(e,12,e): \quad 1 \mapsto 3, \quad 2 \mapsto 2, \quad 3 \mapsto 13 $
        \item $ S_0(e,12,13): \quad 1 \mapsto 3, \quad 2 \mapsto 2, \quad 3 \mapsto 13 $
        \item $ S_0(e,132,e): \quad 1 \mapsto 13, \quad 2 \mapsto 2, \quad 3 \mapsto 3 $
        \item $ S_1(e,12,e): \quad 1 \mapsto 1, \quad 2 \mapsto 2, \quad 3 \mapsto 13 $
        \item $ S_1(e,12,13): \quad 1 \mapsto 13, \quad 2 \mapsto 2, \quad 3 \mapsto 1 $
        \item $ S_1(e,132,e): \quad 1 \mapsto 1, \quad 2 \mapsto 2, \quad 3 \mapsto 13 $
    \end{itemize}
    \end{multicols}
    All of these substitutions have $2$ as a fixed point and map $1$ and $3$ to words containing only $1$s and $3$s. Therefore, any de-substitution of a finite word $w$ containing the character $2$ with $ \abs{w} \geq 2 $ must also contain the character $2$ and have length at least $2$.  This implies that every word $ w \in \lang $ other than $ w = 2 $ consists only of $1$s and $3$s.

    All of the above pairs of substitutions, when restricted to $1$ and $3$, produce Sturmian sequences. This can be proven using the same methods commonly used to show the equivalence between Sturmian sequences and $S$-adic sequences where $S$ is the pair of traditional Sturmian substitutions,
    $$ \sigma_0: \quad 1 \mapsto 1, \quad 2 \mapsto 21, \qquad \text{and} \qquad \sigma_1: \quad 1 \mapsto 12, \quad 2 \mapsto 2 $$
    (see, for example, Section 1.2 of \cite{Thuswaldner} and Section 6.3 of \cite{Fogg}).
    Hence, $\mathcal{L}$ is the union of a single character, $\set{2}$, and the language of Sturmian words over the alphabet $\set{1,3}$.

\end{proof}

    \begin{comment}
        We conjecture that the above pairs of substitutions, when restricted to $1$s and $3$s, generate Sturmian sequences. For further evidence, Section \ref{sec:e-13-e} proves that for the $(e,13,e)$ sequence, this complexity bound is obtained for points lying in regions of the triangle that exhibit the ``hidden $\R^2$ behavior" of the $(e,13,e)$ map. By this we mean that on these regions the Gauss map is again essentially reduced to the two dimensional case. For more explanation of hidden $\R^2$ maps, see Section \ref{sec:e-13-e}.
    \end{comment}

We have analyzed computationally the complexities of $S$-adic sequences for other TRIP maps. Based on our computational results, we have the following:

\begin{theorem} \label{th:large-complexity}
    For any TRIP map $T(\sigma,\tau_0,\tau_1)$ other than $T(e,e,e)$, $T(e,23,23)$, $T(e,23,e)$, $T(e,13,e)$, $T(e,12,e)$, $T(e,12,13)$, $T(e,e,123)$, $T(e,132,e)$, or any of their conjugates or twins, there is an $S(\sigma,\tau_0,\tau_1)$-adic language $\lang$ with $ p_\lang(n) > 3n $ for some $ n \geq 1 $.
\end{theorem}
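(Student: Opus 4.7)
The plan is to prove this via explicit counterexamples. Since Theorem \ref{th:large-complexity} is a negative statement asserting the existence of an $S$-adic language of large complexity, it suffices, for each equivalence class of TRIP maps not on the excluded list, to exhibit a single coding sequence whose associated language $\mathcal{L}$ has $p_{\mathcal{L}}(n) > 3n$ for some specific $n$. By the conjugacy reduction of Subsection \ref{sec:36} and the twinning reduction of Subsection \ref{sec:21}, it suffices to check one representative from each of the 21 equivalence classes given in Subsection \ref{sec:21}. Removing the classes covered by the earlier theorems (the $(e,e,e)$ and $(e,23,23)$ classes, the six degenerate classes handled in Theorem \ref{th:degenerate}, and the two candidate classes $(e,13,e)$ and $(e,23,e)$ treated later in Sections \ref{sec:e-13-e} and \ref{sec:e-23-e}), I am left with roughly a dozen representatives such as $(e,e,12)$, $(e,12,12)$, $(e,23,12)$, $(e,123,12)$, $(e,e,13)$, $(e,23,13)$, $(e,123,13)$, $(e,e,23)$, $(e,123,23)$, $(e,123,e)$, $(e,e,123)$, $(e,123,123)$, and $(e,e,132)$.

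For each such representative $T(\sigma,\tau_0,\tau_1)$, I would first pick a short periodic Farey coding sequence $(i_0, i_1, i_2, \ldots)$ and form the corresponding sequence of substitutions $S_{i_0}, S_{i_1}, \ldots$ on $\{1,2,3\}$. To make sure the resulting coding sequence corresponds to an actual point with rationally independent frequency vector as required by the theorem, I would choose the period so that the product of Farey matrices has all-positive entries (guaranteeing primitivity and, by Lemma 1.5.10 of \cite{Thuswaldner}, nested cylinders converging to a single point) and then perturb slightly with an additional symbol if necessary to break rational dependencies of the Perron eigenvector. From this coding sequence I produce $\mathcal{L}(\sigma)$ as in Definition \ref{def:lang-coding-seq} by enumerating factors of $(\sigma_0 \circ \cdots \circ \sigma_{m-1})(c)$ for increasing $m$ and all $c \in \A$, and count $p_{\mathcal{L}}(n)$ directly for $n$ up to the level at which the count stabilizes (which it must once $m$ is large enough that every factor of length $n$ has appeared).

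The heart of each case is to exhibit a specific $n$ at which $p_{\mathcal{L}}(n) > 3n$. Rather than computing the full complexity function, I would target small $n$ (typically between 5 and 20) and pick the coding sequence so that many bispecial factors of high multiplicity appear early; Corollary \ref{cor:bi-complex} then provides a clean way to certify the excess. In practice this step is carried out by a small computer enumeration: for each of the 13 or so cases, iterate the substitutions to a depth sufficient that the factor set of length $n$ is determined, then tabulate factors and verify $p_{\mathcal{L}}(n) > 3n$ at the chosen $n$. To keep the proof self-contained, I would record in a table, for each representative, the chosen coding sequence, the value of $n$, the count $p_{\mathcal{L}}(n)$, and a short list of factors that witness the count.

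The main obstacle is purely bookkeeping: the number of cases is large and each requires an independent search for a coding sequence that produces sufficiently wild complexity growth, together with a check that the coding sequence is genuinely admissible (primitive and rationally independent). There is no single structural argument that covers all of them; the difficulty is organizational rather than conceptual. One simplification I expect to exploit is that several of the remaining TRIP maps share substitution structure up to relabeling, so a counterexample for one often transports to another by the same conjugation argument used in Subsection \ref{sec:36}, cutting the actual number of independent computations down to something closer to half a dozen.
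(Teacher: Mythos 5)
Your proposal follows essentially the same route as the paper: after the reduction to 21 equivalence classes, the paper disposes of each of the 14 remaining representatives by exhibiting (via computer search, recorded in a table) a finite coding sequence $i_0 i_1 \cdots i_{m-1}$ and an $n$ with more than $3n$ factors of length $n$. The one thing worth noting is that the paper's certification is lighter than what you plan: since $w := \parens{\sigma_{i_0} \circ \cdots \circ \sigma_{i_{m-1}}}(1)$ lies in $\lang$ for \emph{any} coding sequence extending that prefix, it suffices to count factors of this single finite word to get the lower bound $p_\lang(n) \geq p_w(n) > 3n$ --- the theorem imposes no rational-independence or primitivity hypothesis on the witness language, so the admissibility checks and perturbation of the Perron eigenvector you describe are unnecessary.
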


\begin{proof}
    Let $\sigma_0$ and $\sigma_1$ be the Farey substitutions associated with the TRIP map $T(e,\tau_0,\tau_1)$.
    For any finite sequence $ (i_0, i_1, \hdots, i_{m-1}) $, we have
    $$ w := \parens{\sigma_{i_0} \circ \sigma_{i_1} \circ \hdots \circ \sigma_{i_{m-1}}}(1) \in \lang. $$
    Therefore, to show the existence of an $S(e,\tau_0,\tau_1)$-adic language with complexity greater than $3n$ for some $n$, it suffices to show that there is some coding sequence $ \parens{\sigma_{i_0}, \sigma_{i_1}, \hdots, \sigma_{i_{m-1}}} $ and some $n$ such that the number of distinct factors of length $n$ of $w$, $p_w(n)$, is strictly greater than $3n$.
    Via a computer search, we found the following 14 cases:
    $$ \begin{array}{c|c|c|c}
        (e,\tau_0,\tau_1) & i_0 i_1 i_2 \hdots i_{m-1} & n & p_{\mathcal{L}}(n) \\ \hline
        (e, 123, e) & 000001010 & 2 & 7 \\
        (e, e, 12) & 01010101010000 & 4 & 13 \\
        (e, 12, 12) & 1110110100 & 3 & 10 \\
        (e, 13, 12) & 0010101010 & 7 & 22 \\
        (e, 23, 12) & 01011100100 & 3 & 10 \\
        (e, 123, 12) & 000000101 & 2 & 7 \\
        (e, e, 13) & 010110111 & 3 & 10 \\
        (e, 23, 13) & 11010110101 & 4 & 13 \\
        (e, 123, 13) & 00000011 & 2 & 7 \\
        (e, e, 23) & 101010100010 & 3 & 10 \\
        (e, 123, 23) & 00000000000 & 2 & 7 \\
        (e, e, 123) & 11110010 & 2 & 7 \\
        (e, 123, 123) & 000001101 & 2 & 7 \\
        (e, e, 132) & 0110011001001 & 6 & 19 \\
    \end{array} $$
    
    As discussed in Section \ref{More general TRIP maps}, the problem of determining complexities for all classes of $S$-adic sequences corresponding to TRIP maps can be reduced to just $21$ cases. The cases given in the theorem are precisely those excluded from the above list of counter-examples.
\end{proof}

By our above theorems, we have completely characterized the TRIP maps where the complexity of the corresponding $S$-adic sequences are bounded above by $3n$, with the exception of the maps $(e,23,e)$ and $(e,13,e)$ and their twins and conjugates.

In Section \ref{sec:e-13-e}, we turn to a class of TRIP maps exhibiting somewhat surprising dynamical properties, which we call ``hidden $\R^2$ behavior.''  These maps have regions where the dynamics mirror that of two dimensional continued fractions and regions where the dynamics do not.  We will prove the following:

\begin{theorem} \label{conj:e-13-e}
    For the $S(e,13,e)$-adic language $\lang$ and any of its five conjugates or six twins, one of the following is true:
    \begin{enumerate}[(a)]
        \item $ p_\lang(n) = 2n+1 $.
        \item $ p_\lang(n) = \min\set{2n+1, n+c} $ for some $ c \in \N $.
        \item $ p_\lang(n) = \min\set{2n+1, n+c_1, c_2} $ for some $ c_1,c_2 \in \N $.
    \end{enumerate}
\end{theorem}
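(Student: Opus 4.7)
The plan is to prove the trichotomy by combining the bispecial-factor machinery of Section \ref{sec:pftech} with the hidden $\R^2$ dynamics of $T(e,13,e)$ identified in Subsection \ref{sec:e-13-e-dyn}. First I would record the substitutions $S_0: 1 \mapsto 13,\ 2 \mapsto 3,\ 3 \mapsto 2$ and $S_1: 1 \mapsto 1,\ 2 \mapsto 2,\ 3 \mapsto 13$, together with their Gauss composites $G_k = S_1^k \circ S_0$, which satisfy $G_k: 1 \mapsto 1^{k+1}3,\ 2 \mapsto 1^k 3,\ 3 \mapsto 2$. The key structural feature is that $G_k(3)$ is the single letter $2$: a $2$ in $\lang$ can only arise as the image of a $3$ in $\lang^{(1)}$ under the last-applied Gauss substitution, severely restricting antecedent structure relative to the $(e,e,e)$ case.

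Next I would develop an antecedent framework analogous to Section \ref{sec:eee-ante}: for every $w \in \lang$ of sufficient length, identify a unique decomposition $w = a\, G_{k_0}(v)\, b$ with $v \in \lang^{(1)}$ and $a, b$ drawn from a short list determined by the $(e,13,e)$ substitution data, then relate the extension sets of $w$ to those of $v$ via analogs of Propositions \ref{prop:eee-ante} and \ref{prop:extension}. With this framework in place, I would prove a universal upper bound $p_\lang(n) \le 2n+1$ by mirroring the strategy of Section \ref{sec:e-e-e}: construct a canonical sequence of bispecial factors covering every non-neutral bispecial in $\lang$, verify lengths are strictly increasing, and apply Corollary \ref{cor:bi-complex} to show that the cumulative bispecial multiplicity sum stays non-positive, so that $p_\lang(n+1) - p_\lang(n) \le 2$ and telescoping gives $p_\lang(n) \le 2n+1$. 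The reason the bound tightens to $2n+1$ rather than $3n$ is that the collapse $G_k(3) = 2$ shrinks the extension sets of bispecial antecedents, causing the positive and negative multiplicity contributions to cancel pairwise without the residual $+1$ that drove the $(e,e,e)$ count.

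The trichotomy then comes from a case split on the structure of the coding sequence. In case (a), the Farey orbit stays off the hidden $\R^2$ invariant subset of $\triangle$ and the language contains enough genuine left-special factors at every length to achieve equality $p_\lang(n) = 2n+1$. In case (b), the orbit eventually enters the hidden $\R^2$ region, a positively invariant subset of $\triangle$ on which $T(e,13,e)$ conjugates to an $\R^2$ TRIP map on two of the three letters; past some threshold the de-substituted words lie in a two-letter Sturmian language, contributing only $n+1$ factors, while a bounded prefix transient contributes an extra constant, giving $p_\lang(n) = n+c$ for large $n$ and hence the $\min$ form. Case (c) arises when the tail of the coding sequence is of a form under which the induced substitution becomes rank-one on the relevant sub-alphabet, rendering the language finite and adding the constant cap $c_2$.

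The main obstacle is making the hidden $\R^2$ region precise and showing that the coding sequences entering it are exactly those producing cases (b) or (c); this requires verifying positive invariance of the region under $T(e,13,e)$ and translating the resulting restriction on Farey sequences into the Sturmian sub-alphabet conclusion. A secondary obstacle is the bispecial bookkeeping: because the parity of the number of $S_0$ substitutions applied swaps the roles of letters $2$ and $3$, the recursive construction of the sequence $w_m^+, w_m^-$ and the verification that their multiplicities cancel pairwise require careful enumeration over parity classes of the coding sequence, though the casework should be lighter than in Section \ref{sec:e-e-e} because the one-letter image of $G_k(3)$ kills several branches outright.
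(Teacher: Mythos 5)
There is a genuine gap, and your route diverges from the paper's in a way that matters. The paper does \emph{not} run the bispecial machinery of Section \ref{sec:e-e-e} here; it explicitly switches to first differences via Proposition \ref{prop:l-r-complex} and classifies the \emph{right special} factors directly. The key structural facts (Lemmas \ref{lem:e-13-e-complete} and \ref{lem:e-13-e-cases}) are that every right special factor ending in $1$ or $2$ is a suffix of a single finite or left-infinite word $v_\infty$, every right special factor ending in $3$ is a suffix of a single word $w_\infty$, each has exactly two right extensions, and $v_\infty$ (resp.\ $w_\infty$) is finite exactly when the Gauss coding sequence satisfies $k_{2m}=0$ (resp.\ $k_{2m+1}=0$) for all but finitely many $m$. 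This gives \emph{exactly} $2$, $1$, or $0$ right special factors of each length $n$ according to whether $n$ is below the lengths of both, one, or neither of $v_\infty,w_\infty$, and the three formulas drop out of Proposition \ref{prop:l-r-complex} with no further work. Your plan delivers at best the upper bound $p_\lang(n)\le 2n+1$; it contains no mechanism for proving the exact equalities $\min\set{2n+1,n+c}$ and $\min\set{2n+1,n+c_1,c_2}$, which require knowing precisely when and by how much the count of special factors of length $n$ decreases.

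Two concrete problems. First, your cancellation mechanism is inconsistent with the statement you are proving: if the multiplicities of the non-neutral bispecial words ``cancel pairwise'' as in the $(e,e,e)$ argument, then the cumulative sum in Corollary \ref{cor:bi-complex} returns to $0$ infinitely often and $p_\lang(n+1)-p_\lang(n)$ returns to $2$ infinitely often; but in cases (b) and (c) that difference is eventually $1$ or $0$, which forces the cumulative multiplicity sum to settle permanently at $-1$ or $-2$. So the bookkeeping cannot mirror Section \ref{sec:e-e-e}; you would need uncompensated negative bispecial words, and you give no way to locate them or control their lengths. Second, your trichotomy is keyed to the orbit of a point in $\Tri$ entering the hidden $\R^2$ region, but the theorem is a statement about coding sequences, and by Proposition \ref{prop:e-13-e-R2} the point is not determined by its Gauss sequence here (whole line segments share one sequence); the correct dichotomy is the purely combinatorial conditions (I$'$)/(II$'$) on $\set{k_m}$. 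Relatedly, the claim that past a threshold the language becomes a two-letter Sturmian language plus a bounded transient is the behavior of the \emph{degenerate} maps of Theorem \ref{th:degenerate}, not of $T(e,13,e)$: here the letters $2$ and $3$ are exchanged at every de-substitution step and the language remains genuinely three-letter, which is exactly why the paper tracks the two parity classes of right special factors separately.
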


We also pose the following conjecture:

\begin{conjecture} \label{conj:e-23-e}
    The complexity function $p_\lang$ for  $S(e,23,e)$-adic language $\lang$ (and any of its five conjugates or six twins) satisfies the bound $ p_\lang(n) \leq 3n $.
\end{conjecture}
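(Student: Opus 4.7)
The plan is to mirror the bispecial-factor analysis of Section~\ref{sec:e-e-e}, adapted to the substitutions $S_0: 1 \mapsto 13,\; 2 \mapsto 3,\; 3 \mapsto 2$ and $S_1: 1 \mapsto 1,\; 2 \mapsto 2,\; 3 \mapsto 13$, and then to classify the coding sequences into the three listed regimes according to whether the orbit of the associated point in $\triangle$ eventually reaches the hidden-$\R^2$ positively invariant subregion described in Section~\ref{sec:e-13-e-dyn}.

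The first stage would establish the universal upper bound $p_\lang(n) \leq 2n+1$. Because $S_0(c)$ and $S_1(c)$ each end in a distinguishable terminator (either a $2$, a $3$, or a $3$ preceded by a block of $1$'s), every factor $w \in \lang$ that contains at least one $2$ or $3$ admits a unique decomposition $w = a \, S_{i_0}(v) \, b$ with $v \in \lang^{(1)}$ and $a,b$ drawn from a short list of possible prefixes/suffixes, exactly as in Proposition~\ref{prop:eee-ante}. The extension-transfer result analogous to Proposition~\ref{prop:extension} then shows that every bispecial factor has multiplicity in $\{-1,0,1\}$ and that non-neutral bispecial factors occur in length-ordered $(+1,-1)$ pairs, so Corollary~\ref{cor:bi-complex} yields $p_\lang(n+1) - p_\lang(n) \leq 2$ and therefore $p_\lang(n) \leq 2n+1$.

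The second stage classifies when this bound is strict. The structural observation is that $S_0$ acts as the transposition $(2\;3)$ on $\{2,3\}$ and introduces new $1$'s only via $1 \mapsto 13$, so once a de-substituted language $\lang^{(m)}$ loses a letter, further applications of $S_0$ cannot reintroduce it. I would split the coding sequences $(S_{i_m})$ into three regimes. If the abelianization products $\ell(S_{i_m}) \cdots \ell(S_{i_{n-1}})$ are strictly positive on infinitely many windows, the bispecial $(+1,-1)$ pairs occur at every scale and we get $p_\lang(n) = 2n+1$ for all $n$ --- case (a). If no such window exists past some stage but the coding sequence still contains infinitely many of both $S_0$ and $S_1$, then the orbit is eventually trapped in the hidden-$\R^2$ region; beyond some de-substitution depth the language is generated by compositions preserving a two-letter sublanguage, giving a Sturmian-like structure on $\{1,3\}$ augmented by finitely many factors containing $2$, hence $p_\lang(n) = n+c$ past a threshold and $p_\lang(n) = \min\{2n+1, n+c\}$ overall --- case (b). If the coding sequence is eventually $(S_0, S_0, \ldots)$, the word is eventually periodic on $\{2,3\}$ and $p_\lang$ is bounded by a constant $c_2$ past some length, yielding case (c).

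The principal obstacle is sharpening the dichotomy between (a) and (b): I must show that the failure of infinitely many positive abelianization windows already forces the language to collapse to the two-letter form of (b), with no intermediate regime. I would do this by analyzing the maximal proper subcone of $\R^3_{\geq 0}$ invariant under the non-primitive matrix $\ell(S_0)$ together with $\ell(S_1)$; this subcone should match the hidden-$\R^2$ subregion of $\triangle$ exactly, and compositions staying inside it eventually fix a two-letter sublanguage of $\lang$. The explicit constants $c, c_1, c_2$ would then be read off from the finitely many bispecial factors created before the coding sequence settles into the degenerate regime.
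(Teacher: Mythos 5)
Your proposal does not address the statement in question. The substitutions you write down, $S_0\colon 1 \mapsto 13,\ 2 \mapsto 3,\ 3 \mapsto 2$ and $S_1\colon 1 \mapsto 1,\ 2 \mapsto 2,\ 3 \mapsto 13$, are the substitutions of the TRIP map $T(e,13,e)$, not of $T(e,23,e)$ (whose substitutions are $S_0\colon 1 \mapsto 2,\ 2 \mapsto 13,\ 3 \mapsto 3$ and $S_1\colon 1 \mapsto 1,\ 2 \mapsto 2,\ 3 \mapsto 13$). The conclusion you aim for --- a trichotomy $p_\lang(n) = 2n+1$, $\min\{2n+1, n+c\}$, or $\min\{2n+1, n+c_1, c_2\}$ governed by whether the orbit enters a hidden-$\R^2$ region --- is precisely the content of Theorem \ref{th:e-13-e-complexity} about $(e,13,e)$. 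The hidden-$\R^2$ phenomenon is not expected to occur for $(e,23,e)$ at all: it is absent from the list in Conjecture \ref{conj:hidden-list}. So the entire architecture of your argument is built for a different map.

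Even setting aside the misidentification, two concrete steps would fail if transplanted to $(e,23,e)$. First, your claimed universal bound $p_\lang(n) \leq 2n+1$ is almost certainly false for this map: the conjectured bound is $3n$, and the computational experiments reported alongside the conjecture exhibit complexity increments $p_w(n) - p_w(n-1) = 3$. The mechanism is that for $(e,23,e)$ bispecial factors of arbitrary age can have three left extensions \emph{and} three right extensions simultaneously, so a single bispecial antecedent can spawn up to four bispecial extended images; the clean length-ordered $(+1,-1)$ pairing of non-neutral bispecial factors that drives the $(e,e,e)$ argument does not survive unchanged. Second, your unique-decomposition step $w = a\,S_{i_0}(v)\,b$ with a transfer of extension sets fails for a single Gauss substitution of $(e,23,e)$: the paper's Section \ref{sec:e-23-e} shows one must de-substitute by \emph{pairs} of Gauss substitutions $G_j \circ G_k$ (exploiting that the character $3$ occurs only as the terminal letter of a double substitution) before any analog of Proposition \ref{prop:extension} holds. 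Note finally that the statement is a conjecture which the paper itself does not prove; a correct submission would either supply the missing bispecial-factor bookkeeping for the double-Gauss setup or acknowledge that the argument is an outline.
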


We ran computational experiments generating and computing complexities of $S(e,23,e)$-adic sequences.
Specifically, we started with $ w = 1 $, then replaced $w$ with a random choice of either $S_0(w)$ or $S_1(w)$ until the length of $w$ was at least $10000$. We then truncated $w$ to $10000$ characters and computed its complexity function $p_w(n)$ for $ n \leq 500 $.
We computed $500$ such words $w$ and found that all of their complexities satisfied $ p_w(n) \leq p_w(n-1) + 3 $ for $ 2 \leq n \leq 500 $, consistent with Conjecture \ref{conj:e-23-e}. We suggest a strategy of proof for Conjecture \ref{conj:e-23-e} in Section \ref{sec:e-23-e}.

We include one final remark about the complexity of TRIP map sequences as defined in this paper.  Note that our original choice of abelianizations $S_0$ and $S_1$ in Section \ref{sec:sub-cont-frac} is arbitrary, as we could have chosen $ 3 \mapsto 31 $ instead of $ 3 \mapsto 13 $, but swapping $13$ with $31$ in both $S_0$ and $S_1$ will not affect the complexity by the symmetry noted in Section \ref{sec:36}. However, replacing $13$ with $31$ in one, but not both, of the substitutions $S_0$ and $S_1$ will yield different complexity results.

We now show that this does not produce low complexities for any TRIP maps except the degenerate cases in Theorem \ref{th:degenerate}.

\begin{theorem} \label{th:13-vs-31}
    Suppose we were to define the substitutions associated to the TRIP map $T(e,e,e)$ as
    \begin{itemize}
        \item $ S_0(e,e,e): \quad 1 \mapsto 2, \quad 2 \mapsto 3, \quad 3 \mapsto 13, $
        \item $ S_1(e,e,e): \quad 1 \mapsto 1, \quad 2 \mapsto 2, \quad 3 \mapsto 31, $
    \end{itemize}
    and modify the definition of each $S_i(\sigma,\tau_0,\tau_1)$ accordingly. Then, for all TRIP maps $T(\sigma,\tau_0,\tau_1)$ except $T(e,12,e)$, $T(e,12,13)$, $T(e,132,e)$, and their conjugates and twins, there is some $S(\sigma,\tau_0,\tau_1)$-language $\lang$ and some $n$ such that $ p_\lang(n) > 3n $.
\end{theorem}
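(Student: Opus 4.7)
The plan is to mimic the strategy used in the proof of Theorem \ref{th:large-complexity}: for each of the relevant TRIP maps not covered by the degenerate cases, exhibit an explicit finite coding sequence $(i_0, i_1, \ldots, i_{m-1})$ and an integer $n$ such that the word
$$w := (S_{i_0} \circ S_{i_1} \circ \cdots \circ S_{i_{m-1}})(1)$$
satisfies $p_w(n) > 3n$. Since $w$ lies in the language $\lang$ of any $S$-adic sequence extending this coding sequence, this suffices. The difference from Theorem \ref{th:large-complexity} is that we now must supply counterexamples also for the maps that previously had provable or conjectured bounds of at most $3n$, namely $T(e,e,e)$, $T(e,23,23)$, $T(e,23,e)$, $T(e,13,e)$, $T(e,123,132)$, and their conjugates/twins.

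First I would verify that the case-reduction of Section \ref{More general TRIP maps} still applies in modified form. The conjugation equivalence in Section \ref{sec:36} goes through verbatim: if $\rho \in S_3$ and we apply $\rho$ pointwise to each of $S_0, S_1$, the result corresponds to a TRIP map indexed by $(e, \tau_0\rho, \tau_1\rho)$, so the 216 maps again collapse to 36 classes indexed by $(e, \tau_0, \tau_1)$. The twinning equivalence of Section \ref{sec:21} must be reexamined because reversal sends $3 \mapsto 13$ to $3 \mapsto 31$: twinning now pairs the original-convention map $T(\sigma, \tau_0, \tau_1)$ with a modified-convention one, but within the modified convention it no longer identifies two of our 36 classes in general. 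This means we may need to check up to 36 classes rather than 21, which is a nuisance but not a genuine obstruction.

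Next I would confirm that the degenerate TRIP maps $T(e,12,e)$, $T(e,12,13)$, $T(e,132,e)$ (and their conjugates/twins in the modified sense) remain degenerate. Indeed, in each of these cases the substitutions $S_0$ and $S_1$, restricted to the invariant subalphabet $\{1,3\}$, still take the form of two-letter substitutions of Sturmian type (merely with $31$ in place of $13$ in one image), so the proof of Theorem \ref{th:degenerate} carries through mutatis mutandis; hence these languages still have complexity $n+1$ for $n \geq 2$ and cannot give counterexamples.

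For every remaining class, I would run a brute-force computer search over short binary coding sequences $(i_0, \ldots, i_{m-1})$ and small $n$, computing $p_w(n)$ for $w := (S_{i_0} \circ \cdots \circ S_{i_{m-1}})(1)$ under the modified substitutions, and record the first triple $((\sigma,\tau_0,\tau_1), (i_0 \cdots i_{m-1}), n)$ for which $p_w(n) > 3n$. This extends the table in the proof of Theorem \ref{th:large-complexity} to cover the previously-exempt maps. The main obstacle is genuinely computational rather than conceptual: the previously low-complexity maps such as $(e,e,e)$, $(e,23,23)$, and $(e,23,e)$ are precisely those whose original-convention dynamics produces a balanced growth of factors, and it is not a priori clear how quickly the asymmetric substitution $3 \mapsto 31$ breaks that balance; one must search long enough coding sequences to observe the blow-up. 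Assuming the search terminates with counterexamples for each class (as it does for the maps listed in Theorem \ref{th:large-complexity}), the theorem follows.
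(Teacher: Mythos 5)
Your proposal takes essentially the same route as the paper: the paper's proof likewise just exhibits, for each of the non-degenerate equivalence-class representatives, an explicit finite coding sequence $(i_0,\ldots,i_{m-1})$ and an $n$ with $p_w(n) > 3n$ for $w = (S_{i_0}\circ\cdots\circ S_{i_{m-1}})(1)$, found by computer search exactly as in Theorem \ref{th:large-complexity}. Your caution about twinning possibly failing under the asymmetric convention (so that up to 36 rather than 21 classes might need checking) is a fair point that the paper's proof does not discuss --- its table lists only the 18 usual representatives --- but the strategy is identical, and the only content you have not supplied is the table of explicit witnesses itself.
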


\begin{proof}
    Just as in the proof of Theorem \ref{th:large-complexity}, it suffices to find some coding sequence $ \sigma_{i_0} \circ \sigma_{i_1} \circ \hdots \circ \sigma_{i_{m-1}} $ and some $n$ for each TRIP map such that the number of distinct factors of length $n$ of the word
    $$ w := \parens{\sigma_{i_0} \circ \sigma_{i_1} \circ \hdots \circ \sigma_{i_{m-1}}}(1), $$
    $p_w(n)$, is strictly greater than $3n$.  Up to conjugacy and twinning, there are 21 possible cases. As the theorem explicitly rules out considering the  $T(e,12,e)$, $T(e,12,13)$ and  $T(e,132,e)$ cases, we simply have to check the following 18 cases:
    
    $$ \begin{array}{c|c|c|c}
        (e,\tau_0,\tau_1) & i_0 i_1 i_2 \hdots i_{m-1} & n & p_{\mathcal{L}}(n) \\ \hline
        (e, e, e) & 110110100 & 2 & 7 \\
        (e, 13, e) & 1001010 & 2 & 7 \\
        (e, 23, e) & 1011011000 & 2 & 7 \\
        (e, 123, e) & 000000010 & 2 & 7 \\
        (e, e, 12) & 01101000000 & 3 & 10 \\
        (e, 12, 12) & 01101000 & 2 & 7 \\
        (e, 13, 12) & 000110110 & 3 & 10 \\
        (e, 23, 12) & 0111010100 & 3 & 10 \\
        (e, 123, 12) & 000000010 & 2 & 7 \\
        (e, e, 13) & 1010001 & 2 & 7 \\
        (e, 23, 13) & 011110111 & 2 & 7 \\
        (e, 123, 13) & 00000001 & 2 & 7 \\
        (e, e, 23) & 01001000000 & 3 & 10 \\
        (e, 23, 23) & 010101000 & 2 & 7 \\
        (e, 123, 23) & 0000000100 & 2 & 7 \\
        (e, e, 123) & 0111110000 & 2 & 7 \\
        (e, 123, 123) & 000000011 & 2 & 7 \\
        (e, e, 132) & 010011111 & 3 & 10 \\
    \end{array} $$
\end{proof}

\section{Complexity for the $T(e,13,e)$ TRIP Map} \label{sec:e-13-e}

Before stating our results on the complexity of $S(e,13,e)$-adic sequences (Theorem \ref{th:e-13-e-complexity}), we first explore some interesting dynamical properties of the TRIP map $T(e,13,e)$. We generalize this to describe a phenomenon of some TRIP maps we call ``hidden $\R^2$ behavior.'' As we shall see in the remainder of this section, this hidden $\R^2$ property seems to have direct implications on the complexities of $S(e,13,e)$-adic languages.

\subsection{The Dynamics of the TRIP map $T(e,13,e)$ and Hidden $\R^2$ Behavior} \label{sec:e-13-e-dyn}

Partition the region $ \triangle = \set{(x,y,z): x \geq 0, \; y \geq 0, \; z \geq 0, \; x+y+z=1} $ into subtriangles
$$ \triangle_k^G = \set{(x,y,z) \in \triangle: k z \leq x < (k+1)z}. $$
Note that this partition excludes the set $ \set{(x,y,z) \in \triangle: z = 0} $, which has measure zero.

\begin{center}
\begin{tikzpicture}[scale=5]
\draw(0,0)--(1,0);
\draw(0,0)--(1/2,1/2);
\draw(1,0)--(1/2,1/2);
\draw[dashed] (1/4,1/4)--(1,0);
\draw[dashed] (1/6,1/6)--(1,0);
\draw[dashed] (1/8,1/8)--(1,0);
\draw[dashed] (1/10,1/10)--(1,0);
\draw[dashed] (1/12,1/12)--(1,0);

\node[] at (.55,.3){$\triangle_0^G$};
\node[] at (2/5,1/6){$\triangle_1^G$};

\node[below left]at(0,0){$(1,0,0)$};
\node[below right]at(1,0){$(0,1,0)$};

\node[above left] at (1/2,1/2){$(0,0,1)$};
\node[ left]at(1/4,1/4){$(1/2,0, 1/2)$};
\node[ left]at(1/6,1/6){$(2/3,0, 1/3)$};
\end{tikzpicture}
\end{center}Then, the Gauss map $T_G$ associated to $T(e,13,e)$ is
\begin{equation} \label{e-13-e-Gauss}
    \eval{T_G}_{\triangle_k^G} = T_k^G(x,y,z) := \parens{\frac{x-kz}{y+z}, \, \frac{(k+1)z-x}{y+z}, \, \frac{y}{y+z}}.
\end{equation}
The following conditions for a coding sequence $\parens{k_0,k_1,k_2,\hdots}$ will be important in this section:
\begin{itemize}
    \item[(I)] $ k_{2m} = 0 $ for all $ m \geq 0 $
    \item[(II)] $ k_{2m+1} = 0 $ for all $ m \geq 0 $
    \item[(I')] $ k_{2m} = 0 $ for all except finitely many $m$
    \item[(II')] $ k_{2m+1} = 0 $ for all except finitely many $m$
\end{itemize}

From the point of view of random sequences of integers, the above conditions are exceptional, as they require only a finite number of nonzero entries in either the subsequence $\set{k_{2m}}$ or the subsequence $\set{k_{2m+1}}$. However, the following implies that this is not the case from the point of view of the Lebesgue measure on $\triangle$:

\begin{proposition}
    Define the sets 
    \begin{equation*}
        A = \set{(x,y,z) \in \triangle: z \geq x+y }, \qquad B = \set{(x,y,z) \in \triangle: y \geq z}.
    \end{equation*}
    Conditions (I) and (II) are satisfied for all points in
    the sets  $A$ and $B$, respectively, and conditions (I') and (II') are satisfied for all points in $ \bigcup_{m=0}^\infty T_G^{-2m}(A) $ and $ \bigcup_{m=0}^\infty T_G^{-2m}(B) $, respectively.
\end{proposition}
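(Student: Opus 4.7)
The plan is to reduce each of the four assertions to two ingredients obtained by direct computation with the explicit formula (\ref{e-13-e-Gauss}): first, that the initial relevant Gauss index vanishes on the set in question, and second, that the set is invariant under $T_G^2$. Once these two ingredients are in place for each of $A$ and $B$, conditions (I) and (II) follow by induction, and conditions (I') and (II') follow by translating the argument along the orbit.

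For $A$: since $z \geq x+y \geq x$, I have $A \subseteq \triangle_0^G$, so $k_0 = 0$. Writing
\begin{equation*}
(x',y',z') := T_0^G(x,y,z) = \parens{\frac{x}{y+z},\; \frac{z-x}{y+z},\; \frac{y}{y+z}},
\end{equation*}
one computes $k_1 = \lfloor x'/z' \rfloor = \lfloor x/y \rfloor =: k$, and substituting into (\ref{e-13-e-Gauss}) a second time I would verify the simplification
\begin{equation*}
T_G^2(x,y,z) = \parens{\frac{x-ky}{z+y-x},\; \frac{(k+1)y-x}{z+y-x},\; \frac{z-x}{z+y-x}}.
\end{equation*}
From this formula, the third coordinate minus the sum of the first two equals $(z-x-y)/(z+y-x)$, which is non-negative exactly when $(x,y,z) \in A$. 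Hence $T_G^2(A) \subseteq A$, and in particular $T_G^2(x,y,z) \in A \subseteq \triangle_0^G$, so the leading index of the Gauss coding sequence of $T_G^2(x,y,z)$, which is $k_2$, equals $0$. Iterating yields $k_{2m} = 0$ for all $m$, i.e., condition (I).

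For $B$: here $k_0$ may be arbitrary, but the inequality $x - k_0 z < z \leq y$ combined with the formulas $x' = (x - k_0 z)/(y+z)$ and $z' = y/(y+z)$ gives $x' < z'$, forcing $k_1 = 0$. Applying $T_0^G$ to $(x',y',z')$ and simplifying, I would show that
\begin{equation*}
y'' - z'' = \frac{y - z}{(k_0+1)z - x + y},
\end{equation*}
which is non-negative precisely when $(x,y,z) \in B$; thus $T_G^2(B) \subseteq B$. Induction then gives $k_{2m+1} = 0$ for every $m$, which is condition (II).

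Conditions (I') and (II') follow by translating along the orbit: if $(x,y,z) \in T_G^{-2m_0}(A)$, then the Gauss coding sequence of $T_G^{2m_0}(x,y,z)$, which is $(k_{2m_0}, k_{2m_0+1}, k_{2m_0+2}, \ldots)$, satisfies condition (I) by the case already handled, so $k_{2m} = 0$ for every $m \geq m_0$; this is exactly condition (I'), and the argument from $B$ is identical for (II'). The only real obstacle is clerical: the algebraic simplification of $T_G^2$ in the $A$-case, where $k$ depends on $(x,y,z)$ through a floor function, must be done carefully so that the key identity $z'' - x'' - y'' = (z-x-y)/(z+y-x)$ drops out cleanly; apart from this, everything is routine.
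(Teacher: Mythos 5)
Your proof is correct and follows essentially the same route as the paper: a direct computation with the explicit Gauss map showing that $A$ and $B$ are invariant under $T_G^2$ and that the relevant iterates land in $\triangle_0^G$, followed by induction and a shift along the orbit for (I') and (II'). The only difference is organizational: the paper observes $T_G(A) = B$ and $T_G(B) \subseteq A$ and deduces the two-step invariance from this alternation, whereas you compute $T_G^2$ on each set directly; both of your key identities, $z'' - x'' - y'' = (z-x-y)/(z+y-x)$ on $A$ and $y'' - z'' = (y-z)/((k_0+1)z - x + y)$ on $B$, check out.
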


\begin{center}
    \begin{tikzpicture}[scale=5]
        \draw(0,0)--(1,0);
        \draw(0,0)--(.5,.866);
        \draw(1,0)--(.5,.866);
        \draw[dashed] (.25,.433) -- (.75,.433);
        \draw[dashed] (0,0) -- (.75,.433);
        
        \node[] at (.5,.5773) {$A$};
        \node[] at (.6,.15) {$B$};
        \node[] at (.32,.3) {$C$};
        
        \node[below left]at(0,0){$(1,0,0)$};
        \node[below right]at(1,0){$(0,1,0)$};
        
        \node[above left] at (.5,.866){$(0,0,1)$};
        \node[ right]at(.75,.433){$(0,1/2, 1/2)$};
        \node[ left]at(.25,.433){$(1/2, 0,1/2)$};
    \end{tikzpicture}
\end{center}

\begin{proof}
    We may compute directly from (\ref{e-13-e-Gauss}) that $ T_G(A) = B $ and $ T_G(B) \subseteq A $. From the above diagrams we can see that any  point in the set $A$ is in $\triangle_0^G$. Thus if we start with a point in $A$, its image under every second iteration of $T_G$ will also be in the set $A$, giving us $k_{2m} =0$.  The rest of the  proposition follows.
\end{proof}

Define
$$ C := \triangle \setminus (A \cup B) = \set{(x,y,z) \in \triangle: \, y < z, \, z < \frac{1}{2}}. $$
Then, $ T_G^{-1}(C) \subseteq C $, and the set of points which satisfy neither (I') nor (II') are contained in $ \bigcap_{m=0}^\infty T_G^{-m}(C) $.
We conjecture the following:

\begin{conjecture} \label{conj:e-13-e-full-measure}
    There is a subset of $\triangle$ of full Lebesgue measure on which condition (I') or condition (II') is satisfied.
\end{conjecture}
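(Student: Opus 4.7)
The plan is to show that the set $E := \bigcap_{N=0}^\infty T_G^{-N}(C)$ has Lebesgue measure zero in $\triangle$, which will immediately yield the conjecture. Indeed, $T_G(A) = B$ and $T_G(B) \subseteq A$ give $T_G(A \cup B) \subseteq A \cup B$, so any $x \in \triangle \setminus E$ has some iterate $T_G^N(x) \in A \cup B$, and the alternating dynamics then forces condition (I') or (II') for $x$ depending on the parity of $N$. By the observation already recorded in the excerpt, the set of points satisfying neither (I') nor (II') is exactly $E$.

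My primary approach is ergodic-theoretic. The Gauss map $T_G$ is piecewise projective on the countable partition $\{\triangle_k^G\}_{k \ge 0}$ and expanding on each branch. If $T_G$ admits an absolutely continuous invariant probability measure $\mu$, equivalent to Lebesgue, with respect to which $T_G$ is ergodic, then Birkhoff's ergodic theorem applied to $\mathbf{1}_{A \cup B}$, which has positive measure, forces Lebesgue-a.e.\ orbit to visit $A \cup B$ in finite time; combined with forward invariance of $A \cup B$, this gives that $E$ has Lebesgue measure zero. Existence of such a $\mu$ should be accessible via a Lasota--Yorke- or Saussol-style argument, paralleling the ergodicity results known for the closely related triangle map \cite{SchweigerF08, Berthe-Steiner-Thuswaldner}; the main hypotheses to check are bounded distortion on each branch and a uniform expansion estimate over the countable partition.

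A more hands-on alternative is to estimate directly the decreasing family $C_N := \bigcap_{n=0}^N T_G^{-n}(C)$. A short calculation using (\ref{e-13-e-Gauss}) shows that the $k=0$ branch $T_G|_{C \cap \triangle_0^G} : C \cap \triangle_0^G \to C$ is a projective bijection with Jacobian $1/(1-x)^3 \ge 1$ in the $(x,z)$-parametrization of $\triangle$, while for $k \ge 1$ the portion of $C \cap \triangle_k^G$ that maps into $C$ is precisely the slab $\{(x,y,z) \in C \cap \triangle_k^G : z < 1/(k+2)\}$. Summing these contributions over $k$ yields a strict inequality of Lebesgue measures $|C_1| < |C|$; iterating, with quantitative control on each pass, would give the geometric decay $|C_N| \to 0$.

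The main obstacle is the presence of indifferent fixed points on the boundary of $C$. A direct check shows that $(0, 1/2, 1/2)$ is a fixed point of the $k=0$ branch and that the Jacobian $1/(1-x)^3$ tends to $1$ along $x \to 0$ there, so the expansion is not uniformly bounded away from $1$; similarly the vertex $(1,0,0)$ is the accumulation point of the cells $\{\triangle_k^G\}_{k \ge 1}$ inside $C$. These neutral fixed points preclude any uniform ratio estimate $|C_{N+1}|/|C_N| \le \lambda < 1$ and give $E$ the character of a parabolic, intermittent attractor. Resolving this will likely require either an induced first-return map argument on a subset of $C$ bounded away from both degenerate vertices, combined with estimates on return-time tails in the spirit of Young towers or Manneville--Pomeau analyses, or a direct parabolic renormalization around each indifferent fixed point. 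Both strategies are well developed for one-dimensional maps with neutral fixed points, and I expect them to adapt to this two-dimensional setting.
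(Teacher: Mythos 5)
First, note that the paper does not prove this statement: it is left as Conjecture \ref{conj:e-13-e-full-measure} and is supported only by a numerical experiment on $10^9$ integer points, so there is no proof of record to compare yours against. Your reduction is nonetheless the right one and matches the paper's framing: since $T_G(A)=B$ and $T_G(B)\subseteq A$, the set of points satisfying neither (I') nor (II') is contained in $E=\bigcap_{N\ge 0}T_G^{-N}(C)$, and your parity bookkeeping correctly converts ``some iterate lands in $A\cup B$'' into (I') or (II'). So the conjecture would indeed follow from $|E|=0$.

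The gap is that neither of your two routes to $|E|=0$ is actually carried out, and you yourself identify the obstruction that defeats both in their stated form. The primary route presupposes an absolutely continuous invariant \emph{probability} measure equivalent to Lebesgue, plus ergodicity; but the branch $T_0^G$ has the parabolic fixed point $(0,1/2,1/2)$ on the boundary of $C$ with Jacobian tending to $1$, so the uniform-expansion hypothesis of a Lasota--Yorke or Saussol scheme fails outright, and by analogy with the Farey map the natural absolutely continuous invariant measure may well be infinite, in which case Birkhoff must be replaced by conservativity together with Hopf's ratio ergodic theorem --- none of which is established here (the ergodicity results you cite from \cite{SchweigerF08} and \cite{Berthe-Steiner-Thuswaldner} concern the $(e,e,e)$ triangle map, not $T(e,13,e)$). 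The secondary route requires quantitative decay $|C_N|\to 0$, but as you concede the parabolic points preclude any uniform ratio $|C_{N+1}|/|C_N|\le\lambda<1$, and the proposed remedies (inducing on a set bounded away from the degenerate vertices, Young towers, parabolic renormalization) are named rather than executed; the slab description of the part of $C\cap\triangle_k^G$ mapping into $C$ is also asserted without verification. In short, you have correctly located where the difficulty lives --- the intermittent behavior near $(0,1/2,1/2)$ and near the accumulation vertex of the cells $\triangle_k^G$ inside $C$ --- but the measure-zero (equivalently, conservativity/ergodicity) statement that everything hinges on is precisely the part that remains unproved, which is presumably why the authors left it as a conjecture.
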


We performed computational experiments on random points in $\triangle$, the results of which support this conjecture. Specifically, we took 1 billion random positive integer points $(x,y,z)$ for which $ x + y + z \leq 2^{64}-1 $ and iterated the unnormalized Gauss map
$$ (x,y,z) \mapsto \parens{x - kz, (k+1)z - x, y} $$
from each point until either $ y > z $ or one of the coordinate variables is zero.
In all 1 billion cases, the condition $ y > z $ was attained before one of the coordinate variables reached zero, which implies that the normalized point $ \parens{\frac{x}{x+y+z}, \frac{y}{x+y+z}, \frac{z}{x+y+z}} $ lies in the interior of some triangular region $W$ on which condition (I') or (II') is satisfied. The verticies of $W$ must be rational points with a common denominator less than $x+y+z$.

We now show that uniqueness of the point in $\triangle$ for a given Gauss sequence, as described in Proposition \ref{freqvector}, does not hold for this TRIP map when condition (I') or (II') is satisfied.

\begin{proposition} \label{prop:e-13-e-R2}
    Define the map $ F: (0,1) \rightarrow [0,1) $ by
    $$ F(\gamma) = \ceil{\frac{1}{\gamma}} - \frac{1}{\gamma}, $$
    or equivalently, on each interval $ I_k := \clop{\frac{1}{k+2}, \frac{1}{k+1}} $, $ F|_{I_k} = F_k $, where
    $$ F_k(\gamma) = k + 2 - \gamma. $$
    If $ F^m(\gamma) \in I_{k_m} $, then we define the $F$-\keyterm{coding sequence} of $ \gamma \in (0,1) $ to be $ \set{k_0,k_1,k_2,\hdots} $.

    Define the projection $ \pi_A: A \setminus \set{(0,0,1)} \rightarrow [0,1] $ by
    \begin{equation} \label{pi-A-def}
        \pi_A(x,y,z) = \frac{y}{x+y}.
    \end{equation}
    Then, for any $ p \in A \setminus \set{(0,0,1)} $, the $(e,13,e)$ Gauss sequence of $p$ is $ \set{0, k_0, 0, k_1, 0, k_2, \hdots} $, where $\set{k_0,k_1,k_2,\hdots}$ is the $F$-coding sequence of $\pi_A(p)$.

    Similarly, if we define the projection $ \pi_B: B \setminus \set{(0,1,0)} \rightarrow [0,1] $ by
    \begin{equation} \label{pi-B-def}
        \pi_{B}(x,y,z) = \frac{z}{x+z},
    \end{equation}
    then the $(e,13,e)$ Gauss sequence of any $ p \in B \setminus \set{(0,1,0)} $ is $ \set{k_0, 0, k_1, 0, k_2, 0, \hdots} $, where $\set{k_0,k_1,k_2,\hdots}$ is the $F$-coding sequence of $\pi_B(p)$.
\end{proposition}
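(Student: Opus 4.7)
The plan is to show that the projections $\pi_A$ and $\pi_B$ intertwine the three-dimensional Gauss map $T_G$ with the one-dimensional map $F$ via
\begin{equation*}
    \pi_B \circ T_G = \pi_A \text{ on } A, \qquad \pi_A \circ T_G = F \circ \pi_B \text{ on } B.
\end{equation*}
Once these two identities are in hand, the Gauss sequence of $p$ can be read off from which of $A$ or $B$ contains each iterate $T_G^t(p)$, together with the $F$-coding of $\pi_A(p)$ or $\pi_B(p)$.

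First I would determine the Gauss digits from the partition. For any $p = (x,y,z) \in A$ we have $z \geq x + y \geq x$, so $p \in \triangle_0^G$ and the Gauss digit at $p$ is $0$. For $q = (x',y',z') \in B$, setting $\gamma := \pi_B(q) = z'/(x'+z')$ gives $x'/z' = 1/\gamma - 1$, and the condition $\gamma \in I_k = [1/(k+2), 1/(k+1))$ rearranges to $x'/z' \in (k, k+1]$, which (outside the null set where $1/\gamma$ is an integer) places $q$ in $\triangle_k^G$, so the Gauss digit at $q$ agrees with the $F$-digit of $\gamma$. The two intertwining identities then follow by direct substitution into (\ref{e-13-e-Gauss}): for $p \in A$, $T_G(p) = T_0^G(p) = \parens{\frac{x}{y+z}, \frac{z-x}{y+z}, \frac{y}{y+z}}$, and cancelling the common denominator gives $\pi_B(T_G(p)) = y/(x+y) = \pi_A(p)$; for $q \in B$ with $\gamma \in I_k$, applying $T_k^G$ and simplifying the ratio defining $\pi_A$ yields
\begin{equation*}
    \pi_A(T_G(q)) = \frac{(k+1)z' - x'}{z'} = (k+1) - (1/\gamma - 1) = k + 2 - 1/\gamma = F(\gamma).
\end{equation*}
Containment $T_G(A) \subseteq B$ and $T_G(B) \subseteq A$ has already been noted in the preceding proposition.

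Combining these identities, iteration on $p \in A$ produces an orbit alternating between $A$ (contributing digit $0$) and $B$ (contributing the next $F$-digit of $\pi_A(p)$), with $\pi_A(T_G^{2m}(p)) = F^m(\pi_A(p))$, so the Gauss sequence is exactly $(0, k_0, 0, k_1, 0, k_2, \ldots)$ as claimed; the argument starting from $p \in B$ is symmetric and yields $(k_0, 0, k_1, 0, \ldots)$. The main obstacle is not the algebra but the measure-zero bookkeeping at points where $1/\gamma \in \N$ (where the endpoint convention for $I_k$ may disagree with the strict inequality defining $\triangle_k^G$) and at the excluded vertices $(0,0,1)$ and $(0,1,0)$ where $\pi_A$ or $\pi_B$ degenerates; this is handled by restricting to the generic set where both the Gauss sequence and the $F$-coding are unambiguously defined.
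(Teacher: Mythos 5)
Your proposal is correct and follows essentially the same route as the paper: both rest on the two intertwining identities $\pi_B \circ T_0^G = \pi_A$ on $A$ and $\pi_A \circ T_k^G = F_k \circ \pi_B$ on $\triangle_k^G \cap B$, chained together with $T_G(A) \subseteq B$ and $T_G(B) \subseteq A$ to read off the alternating Gauss sequence. Your explicit check that the $F$-digit of $\pi_B(q)$ matches the index $k$ of the subtriangle $\triangle_k^G$ containing $q$ (up to the measure-zero endpoint mismatch) is a detail the paper leaves implicit, but the argument is the same.
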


\begin{proof}[Proof of Proposition \ref{prop:e-13-e-R2}]
    We prove the statement for $ p \in B $, as the proof for $ p \in A $ is similar.
    By direct computation, $ \pi_A = \pi_{B} \circ T_0^G $ and $ F_k \circ \eval{\pi_{B}}_{\triangle_k^G} = \pi_A \circ T_k^G $.
    Therefore, $ F_k \circ \pi_{B} = \pi_{B} \circ T_0^G \circ T_k^G $ on $\triangle_k^G$. Thus, $\pi_B$ maps the entire orbit of $T_G$ to the entire orbit of $F$, as shown in the commutative diagram in Figure 1. This reduces the problem of determining the Gauss sequence of a point $ p \in B $ under the TRIP map $T(e,13,e)$ to that of determining the coding sequence of $\pi_{B}(p)$ under $F$.
\end{proof}

This means that on the sets $A$ and $B$, the TRIP map $T(e,13,e)$ is essentially reduced to a  two-dimensional case, as discussed in Subsection \ref{R2TRIP Maps}. The $(e,13,e)$ TRIP map is among an entire class of TRIP maps that exhibit this property. We formalize the phenomenon in a definition:
\begin{definition} \label{hiddenr2def} A \textbf{hidden $\R^2$ TRIP map} is a TRIP map which  has factor complexity of   an $\R^2$ TRIP map (defined in Subsection \ref{R2TRIP Maps})  on a region of positive measure, but not on all of $\Tri$. 
\end{definition}
Note the distinction in the definition between hidden $\R^2$ behavior and the degenerate behavior described in the previous section. For some points on $\Tri$, hidden $\R^2$ maps have genuine higher-dimensional behavior, but on some regions, their dynamics are only as complicated as $\R^2$ TRIP maps, as shown in Figure 1. In the case of the $T(e,13,e)$ TRIP map, the set $C$ also contains regions from which $A$ or $B$ can be reached after finitely many iterations of $T(e,13,e)$, giving it a fractal structure.

\begin{figure}[ht] \label{fig:e-13-e-line-segments}
    \begin{center}
        \begin{tikzpicture}[scale=5]
            \draw(0,0)--(1,0);
            \draw(0,0)--(.5,.866);
            \draw(1,0)--(.5,.866);
            \draw[dashed] (.25,.433) -- (.75,.433);
            \draw[dashed] (0,0) -- (.75,.433);

            \draw (.3,.433) -- (.5,.866);
            \draw (.35,.433) -- (.5,.866);
            \draw (.4,.433) -- (.5,.866);
            \draw (.45,.433) -- (.5,.866);
            \draw (.5,.433) -- (.5,.866);
            \draw (.55,.433) -- (.5,.866);
            \draw (.6,.433) -- (.5,.866);
            \draw (.65,.433) -- (.5,.866);
            \draw (.7,.433) -- (.5,.866);

            \draw (.075,.0433) -- (1,0);
            \draw (.15,.0866) -- (1,0);
            \draw (.225,.1299) -- (1,0);
            \draw (.3,.1732) -- (1,0);
            \draw (.375,.2165) -- (1,0);
            \draw (.45,.2598) -- (1,0);
            \draw (.525,.3031) -- (1,0);
            \draw (.6,.3464) -- (1,0);
            \draw (.675,.3897) -- (1,0);
            
            \node[below left]at(0,0){$(1,0,0)$};
            \node[below right]at(1,0){$(0,1,0)$};
            \node[above left] at (.5,.866){$(0,0,1)$};
        \end{tikzpicture}
    \end{center}
    \caption{The sets $A$ and $B$ are partitioned into line segments, except for the fact that all line segments include the vertex $(0,0,1)$ or $(0,1,0)$. On each line segment, all points have the same $(e,13,e)$ Farey sequence.}
\end{figure}
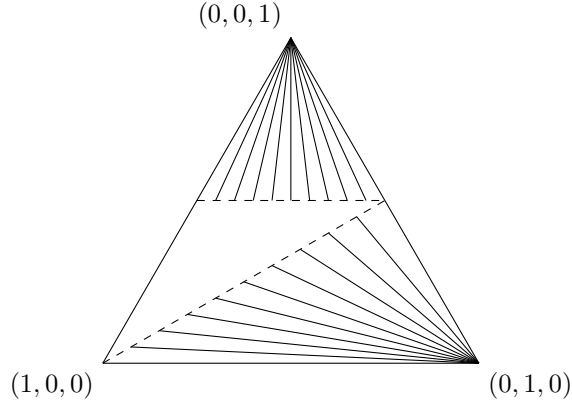

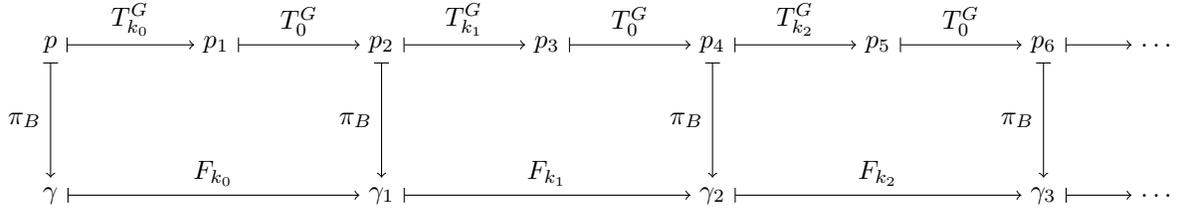
\begin{figure}[ht] \label{fig:e-13-e-commutative-diagram}
    \centering
    \begin{tikzpicture}
        \node(A1) at (0,2) {$p$};
        \node(A2) at (2.2,2) {$p_1$};
        \node(A3) at (4.4,2) {$p_2$};
        \node(A4) at (6.6,2) {$p_3$};
        \node(A5) at (8.8,2) {$p_4$};
        \node(A6) at (11,2) {$p_5$};
        \node(A7) at (13.2,2) {$p_6$};
        \node(A8) at (14.7,2) {$\hdots$};
        \node(B1) at (0,0) {$\gamma$};
        \node(B3) at (4.4,0) {$\gamma_1$};
        \node(B5) at (8.8,0) {$\gamma_2$};
        \node(B7) at (13.2,0) {$\gamma_3$};
        \node(B8) at (14.7,0) {$\hdots$};

        \path[|->] (A1) edge node[above]{$T_{k_0}^G$} (A2);
        \path[|->] (A2) edge node[above]{$T_0^G$} (A3);
        \path[|->] (A3) edge node[above]{$T_{k_1}^G$} (A4);
        \path[|->] (A4) edge node[above]{$T_0^G$} (A5);
        \path[|->] (A5) edge node[above]{$T_{k_2}^G$} (A6);
        \path[|->] (A6) edge node[above]{$T_0^G$} (A7);
        \path[|->] (A7) edge (A8);
        \path[|->] (B1) edge node[above]{$F_{k_0}$} (B3);
        \path[|->] (B3) edge node[above]{$F_{k_1}$} (B5);
        \path[|->] (B5) edge node[above]{$F_{k_2}$} (B7);
        \path[|->] (B7) edge (B8);
        \path[|->] (A1) edge node[left]{$\pi_B$} (B1);
        \path[|->] (A3) edge node[left]{$\pi_B$} (B3);
        \path[|->] (A5) edge node[left]{$\pi_B$} (B5);
        \path[|->] (A7) edge node[left]{$\pi_B$} (B7);
    \end{tikzpicture}
    \caption{The mapping of the Farey sequence of a point $ p \in B $ under $T(e,13,e)$ to the coding sequence of $\pi_B(p)$.}
\end{figure}

It would be good to classify all the TRIP maps with hidden-$\R^2$ behavior; based on preliminary work in \cite{Hidden} we make the following conjecture:
\begin{conjecture} \label{conj:hidden-list}
    The $\R^3$ TRIP maps that have hidden $\R^2$ structure are precisely the maps
    $$ \begin{array}{cccc}
        T(e,e,12)& T(e,13,123)& T(e,132,12) & T(e,13,12)\\
        T(e,12,12)& T(e,e,123)& T(e,e,13) & T(e,12,123)\\
        T(e,13,13)& T(e,13,132)& T(e,13,e) & T(e,123,12)\\
    \end{array} $$
    and all of their conjugates and twins. 
\end{conjecture}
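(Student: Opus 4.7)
The plan is to verify the conjecture by first reducing the 216 candidate maps to the 21 equivalence-class representatives of Section \ref{sec:21}. Hidden $\R^2$ behavior depends only on the geometry of the Farey partition and its images, and both conjugacy (permuting the alphabet on $\A$) and twinning (passing to $R(\lang)$ via Proposition \ref{prop:twin}) produce maps whose partitions of $\triangle$ differ only by an affine change of coordinates; in particular, a positive-measure region on which $T$ reduces to an $\R^2$ TRIP map exists for $T(\sigma,\tau_0,\tau_1)$ if and only if it exists for any conjugate or twin. So it suffices to check the representatives.

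For the positive direction, for each of the 12 listed maps (and its equivalence class), I would mimic the construction carried out for $T(e,13,e)$ in Proposition \ref{prop:e-13-e-R2}. Concretely, I would search for a pair of subregions $A, B \subseteq \triangle$ of positive Lebesgue measure together with affine projections $\pi_A, \pi_B$ to an interval such that the Gauss map satisfies $T_G(A) \subseteq B$ and $T_G(B) \subseteq A$, and such that the restrictions factor as $F_k \circ \pi_A = \pi_B \circ T_k^G$ on each $\triangle_k^G \cap A$ for some piecewise-affine map $F$ on the interval that coincides with the Gauss map of some $\R^2$ TRIP map from Subsection \ref{R2TRIP Maps}. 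The existence of $\pi_A, \pi_B$ is equivalent to the columns of $F_1^k(\sigma,\tau_0,\tau_1) F_0(\sigma,\tau_0,\tau_1)$ (and the analogous matrix with $A, B$ swapped) sharing a common linear relation independent of $k$, which is a finite algebraic condition on the Farey matrices. I would produce a table analogous to the commutative diagram in Figure 2 for each of the 12 candidates, reading off the projections from the fixed eigendirections of the relevant matrix products; this direction is in essence a bookkeeping exercise once the projections have been guessed.

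For the negative direction, I would argue that any hidden $\R^2$ region must carry an invariant foliation by line segments (as in Figure 1) whose slopes are fixed by the action of the Farey matrices on projective lines. Under this observation, the existence of a hidden $\R^2$ structure forces a rank-lowering relation among the matrices $F_{i_0}(\sigma,\tau_0,\tau_1) \cdots F_{i_{m-1}}(\sigma,\tau_0,\tau_1)$ for every admissible coding sequence supported on the region, which is again a closed algebraic condition. For each of the nine remaining representatives, I would exhibit an explicit cylinder $\triangle_{i_0 \cdots i_{m-1}}$ of positive measure for which no invariant projection exists, by computing the matrix product and checking that its three column directions are distinct and not collinear in projective space. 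Extending this from a single cylinder to all of $\triangle$ uses the fact that the Gauss partition refines to arbitrary precision combined with Poincar\'e recurrence for the absolutely continuous invariant measure (which exists for these maps and has been studied in the literature cited in Section \ref{TRIP Maps}).

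The main obstacle is the negative direction. The subtlety is that a hidden $\R^2$ region could in principle be a fractal positive-measure set not detected at any finite cylinder depth, so a purely combinatorial search at bounded depth does not immediately suffice. The key lemma I would need — and which I believe is the crux of the conjecture — is that hidden $\R^2$ regions must in fact be finite unions of convex sets with piecewise-linear boundaries whose slopes come from the eigendirections of finite products of Farey matrices. Once such a structural regularity result is in place, the verification on the 21 representatives becomes a finite algebraic check that can be completed by hand or by a short computer-algebra script. Absent such a regularity theorem, one is reduced to the exhaustive numerical experiments described in \cite{Hidden}, and the classification remains conjectural.
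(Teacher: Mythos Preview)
The statement you are attempting to prove is explicitly labeled a \emph{conjecture} in the paper, and the paper provides no proof. The surrounding text says only that ``based on preliminary work in \cite{Hidden} we make the following conjecture,'' so there is nothing to compare your proposal against.

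Your outline is a reasonable sketch of how one might attack the problem, and you correctly identify the crux: the negative direction requires ruling out hidden $\R^2$ regions that are not detected at any finite cylinder depth. The structural regularity lemma you propose --- that any hidden $\R^2$ region must be a finite union of convex polygons with slopes coming from eigendirections of finite Farey products --- is exactly the missing ingredient, and you are right that without it the argument does not close. Your own final sentence acknowledges this, so the proposal is better read as a research plan than as a proof. One additional point worth flagging: your claim that twinning preserves hidden $\R^2$ structure because ``the partitions of $\triangle$ differ only by an affine change of coordinates'' is not quite right as stated --- twinning swaps the roles of $F_0$ and $F_1$ and reverses words, which is not an affine conjugacy on $\triangle$ itself, so invariance under twinning would need its own argument.
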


It is reasonable to expect that in these regions where the dynamics of the TRIP map are much simpler the corresponding sequences have lower complexity. In fact, we confirm this in Theorem \ref{th:e-13-e-complexity} for the $(e,13,e)$ map.

\subsection{Factors of Length 2 and Other Basic Results}
We now focus solely on the $(e,13,e)$ case for the remainder of the section.
The substitutions for the TRIP map $T(e,13,e)$ are
\begin{itemize}
    \item $ S_0: \quad 1 \mapsto 13, \quad 2 \mapsto 3, \quad 3 \mapsto 2 $,
    \item $ S_1: \quad 1 \mapsto 1, \quad 2 \mapsto 2, \quad 3 \mapsto 13 $.
\end{itemize}
The Gauss substitutions are
\begin{equation}\label{Gfor  e13e}
    S_k^G: \quad 1 \mapsto 1^{k+1}3, \quad 2 \mapsto 1^k 3, \quad 3 \mapsto 2.
\end{equation}
We express the coding sequence as $ \set{S_{k_0}^G, S_{k_1}^G, S_{k_2}^G, \hdots} $ and let $\lang^{(m)}$ be the $m$-fold Gauss de-substitution of $\lang$, that is, the $S(e,13,e)$-adic language with coding sequence $ \set{S_{k_m}^G, S_{k_{m+1}}^G, S_{k_{m+2}}^G, \hdots} $.
We say that $\mathcal{L}$ satisfies conditions (I), (II), (I'), or (II') if and only if the sequence $\parens{k_0,k_1,k_2,\hdots}$ satisfies the corresponding conditions in Section \ref{sec:e-13-e-dyn}.

Notice that condition (I') or (II') hold for an $S(e,13,e)$-adic language $\mathcal{L}$ if and only if condition (I) or (II), respectively, holds for $\mathcal{L}^{(2m)}$ for some $m$. In particular, (I) implies (I') and (II) implies (II').
Also, $\mathcal{L}$ satisfies (I') or (II') if and only if $\mathcal{L}^{(1)}$ satisfies (II') or (I'), respectively.

We devote the remainder of this section to proving the following, from which Theorem \ref{conj:e-13-e} follows immediately:

\begin{theorem} \label{th:e-13-e-complexity}
    For any $S(e,13,e)$-adic language $\mathcal{L}$:
    \begin{itemize}
        \item If $\mathcal{L}$ satisfies both (I') and (II'), then $ p_\mathcal{L}(n) = \min\set{2n+1, n+c_1, c_2} $ for some $ c_1, c_2 \in \N $.
        \item If $\mathcal{L}$ satisfies (I') or (II'), but not both, then $ p_\mathcal{L}(n) = \min\set{2n+1, n+c} $ for some $ c \in \N $.
        \item If $\mathcal{L}$ satisfies neither (I') nor (II'), then $ p_\mathcal{L}(n) = 2n+1 $.
    \end{itemize}
\end{theorem}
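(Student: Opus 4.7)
The plan is to combine a bispecial-factor analysis analogous to Section \ref{sec:e-e-e} (yielding the universal upper bound $p_\lang(n)\le 2n+1$) with an explicit symbolic reduction of $\lang$ to a two-letter Sturmian subsystem whenever condition (I$'$) or (II$'$) holds (yielding the further bounds $n+c$ and $c_2$). The three regimes in the theorem correspond exactly to how deeply this collapse occurs, and the $\min$ in the statement reflects the fact that the Sturmian/constant bounds take over only past a threshold that depends on the first finitely many non-zero Gauss digits.

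For the universal $2n+1$ bound I would adapt Sections \ref{sec:eee-length2}--\ref{proof of main theorem} to the Gauss substitutions $S_k^G\colon 1\mapsto 1^{k+1}3,\ 2\mapsto 1^k3,\ 3\mapsto 2$ of (\ref{Gfor e13e}). Each image ends in $2$ or $3$, so every $w\in\lang$ containing a $2$ or $3$ admits a unique antecedent decomposition $w=aS_{k_0}^G(v)b$ with the extension-transport maps $\alpha_a^L,\alpha_{b,k_0}^R$ and the age function $A$ constructed exactly as in Propositions \ref{prop:extension} and \ref{bispecialage}. Building the paired bispecial sequence $\{w_m^\pm\}$ as in Subsection \ref{proof of main theorem} with $|m(w_m^\pm)|\le 1$ and $m(w_m^+)+m(w_m^-)=0$ gives $\sum m(w)\le 0$ at every length, hence by Corollary \ref{cor:bi-complex} $p_\lang(n+1)-p_\lang(n)\le 2$; together with $p_\lang(1)\le 3$ this proves $p_\lang(n)\le 2n+1$. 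For case (c) I would then show the equality: when neither (I$'$) nor (II$'$) holds, infinitely many digits are nonzero on both parities, which keeps the alphabet genuinely three-letter and forces the partial sum of multiplicities to be $\equiv 0$ cofinally, so $p_\lang(n+1)-p_\lang(n)=2$ for every $n\ge 1$ and $p_\lang(n)=2n+1$.

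For the stronger bound under (I) I would pass to the composed substitution $T_k:=S_0^G\circ S_k^G$, which a direct computation gives as
\begin{equation*}
    T_k\colon 1\mapsto(13)^{k+1}2,\quad 2\mapsto(13)^k2,\quad 3\mapsto 3.
\end{equation*}
Because $T_k$ fixes $3$ and sends both $1$ and $2$ into concatenations of the block $13$ followed by a $2$, the image of any word can be read unambiguously in the alphabet $\{\overline{1},2,3\}$ with $\overline{1}:=13$, and on the two-letter subalphabet $\{\overline{1},2\}$ it acts by $\overline{1}\mapsto\overline{1}^{k+1}2,\ 2\mapsto\overline{1}^k 2$, which is a classical Sturmian substitution of index $k$. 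This is the symbolic lift of the commutative diagram of Figure 2 and Proposition \ref{prop:e-13-e-R2}. The Sturmian core contributes only bispecial factors with $m(w)=0$, and only finitely many boundary bispecial words (arising where a standalone $3$-spacer meets a $(13)$-block) carry nonzero multiplicity; hence $p_\lang(n+1)-p_\lang(n)\le 1$ for $n$ large, giving $p_\lang(n)\le n+c$ for some explicit $c$. The twin argument of Section \ref{sec:21} handles (II). For (I$'$) or (II$'$) I iterate: choose $N$ past which the relevant subsequence vanishes; then $\lang^{(N)}$ satisfies (I) or (II), and pulling back through the first $N$ Gauss substitutions perturbs the complexity only by an additive constant, yielding $p_\lang(n)\le\min\{2n+1,\,n+c\}$, with equality forced by the genuineness of the Sturmian subsystem. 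When both (I$'$) and (II$'$) hold, all but finitely many Gauss digits vanish, so for large $N$ the language $\lang^{(N)}$ is generated by iterating $S_0^G$ alone; its fixed point from $1$ is the eventually periodic word $1(32)^{\omega}$, whose complexity is exactly $3$ for every $n\ge 1$, and pulling back through the first $N$ substitutions gives a constant bound $c_2$ on $p_\lang(n)$ for large $n$, producing the full formula $p_\lang(n)=\min\{2n+1,\,n+c_1,\,c_2\}$.

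The main obstacle will be Step 2: rigorously formalizing the ``Sturmian core with passive $3$-spacer'' decomposition as a controlled symbolic reduction of $\lang$, and bounding the contribution of the finitely many boundary bispecial factors where a spacer $3$ abuts a $(13)$-block. Extracting the explicit constants $c,c_1,c_2$ from the first finitely many non-zero Gauss digits is then delicate but mechanical. By contrast, the universal $2n+1$ bound and the dichotomy of conditions should follow relatively cleanly once the hidden $\R^2$ reduction of Subsection \ref{sec:e-13-e-dyn} is lifted from the dynamical level (Proposition \ref{prop:e-13-e-R2}) to the symbolic/substitutional level via the identity for $T_k$ displayed above.
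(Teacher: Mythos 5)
Your architecture diverges from the paper's and contains a structural inconsistency. You propose to import the paired bispecial sequence $\set{w_m^{\pm}}$ of Section \ref{sec:e-e-e}, with $m(w_m^+)=-m(w_m^-)$ and $\abs{m(w_m^{\pm})}\le 1$. But if infinitely many of these pairs were genuinely non-neutral, the partial sums $\sum_{\abs{w}\le n-1}m(w)$ would return to the same value infinitely often and $p_\lang(n)$ would stay within a bounded distance of $2n+1$ --- contradicting the very regimes $p_\lang(n)=n+c_1$ and $p_\lang(n)=c_2$ you are trying to establish. For this language the correct picture is different: every right special factor has exactly two right extensions ($E^R(w)=\set{1,3}$ if $w$ ends in $1$ or $2$, and $E^R(w)=\set{1,2}$ if $w$ ends in $3$), so every bispecial factor satisfies $m(w)\le 0$, and there are at most two non-neutral ones, each with $m(w)=-1$ and \emph{no} positive partner. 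Your mechanism for the $2n+1$ upper bound is therefore not just unproved but wrong as stated, and the exact equalities in all three cases (in particular $p_\lang(n)=2n+1$ in case (c)) are nowhere derived --- you assert that the multiplicity sums are ``forced'' to vanish cofinally without an argument.

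The paper's proof takes a cleaner route that you should compare against: it works entirely with \emph{right special} factors and the first-difference formula of Proposition \ref{prop:l-r-complex}, never with bispecial factors. It splits right special factors into Type 1 and Type 2 by their last letter, shows each type de-substitutes to the other (Propositions \ref{prop:e-13-e-ante} and \ref{prop:e-13-e-extimg}), and proves that all Type 1 (resp.\ Type 2) right special factors are suffixes of a single finite or left-infinite word $v_\infty$ (resp.\ $w_\infty$), whose finiteness is exactly equivalent to condition (I$'$) (resp.\ (II$'$)). Since each right special factor contributes exactly $1$ to $p_\lang(n+1)-p_\lang(n)$, the three cases of the theorem are then read off immediately from how many of $v_\infty,w_\infty$ are finite. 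Your ``Sturmian core with passive $3$-spacer'' reduction via $T_k=S_0^G\circ S_k^G$ is a sensible symbolic shadow of the hidden $\R^2$ dynamics of Proposition \ref{prop:e-13-e-R2} (and your computation of $T_k$ is correct, though $T_k(13)=(13)^{k+1}23$ carries a trailing spacer you would still have to manage), but you yourself flag its formalization as the main obstacle, and it is precisely the step the paper's $v_\infty$/$w_\infty$ construction replaces. As written, the proposal does not constitute a proof.
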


Just as in the previous subsections, we begin by examining the possible factors of length $2$.

\begin{proposition} \label{faclen2e13e}
    For any $S(e,13,e)$-adic language $\lang$:
    \begin{enumerate}[(a)]
        \item $ 13, 32 \in \lang $
        \item $ 12, 22, 33 \notin \lang $
        \item $ 11 \in \lang $ if and only if $ k_0 \geq 1 $
        \item $ 23 \in \lang $ if and only if $ k_0 = 0 $
        \item $ 21 \in \lang $ if and only if $\mathcal{L}$ does not satisfy condition (I).
        \item $ 31 \in \lang $ if and only if $\mathcal{L}$ does not satisfy condition (II).
    \end{enumerate}
\end{proposition}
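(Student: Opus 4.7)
The plan is to analyze, for each candidate length-two word $w$, whether $w$ arises as a factor of some $S_{k_0}^G(v)$ with $v \in \lang^{(1)}$, either from inside a single image $S_{k_0}^G(c)$ or straddling a boundary between two consecutive images. By (\ref{Gfor  e13e}), the images $S_k^G(1) = 1^{k+1}3$, $S_k^G(2) = 1^k 3$, $S_k^G(3) = 2$ all start with $1$ or $2$ (and $S_k^G(2)$ starts with $1$ only for $k \geq 1$) and end with $3$ or $2$; cataloguing the possible prefixes and suffixes produces every potential boundary pair.

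Parts (a)--(d) will follow from this enumeration alone. For (a), $13$ sits inside $S_{k_0}^G(1)$, so $13 \in \lang^{(m)}$ for every $m$, whence $S_{k_0}^G(13) = 1^{k_0+1}3 \cdot 2$ gives $32 \in \lang$. For (b), none of $12$, $22$, $33$ appears inside any image, and the only boundary suffix--prefix pairs are among $\{(3,1),(3,2),(2,1),(2,2)\}$, none of which produce the forbidden words. For (c), the only way to realize $11$ is inside $S_{k_0}^G(1) = 1^{k_0+1}3$, which contains $11$ iff $k_0 \geq 1$. For (d), a $2$-suffix forces $c = 3$ and a $3$-prefix forces $d = 2$ with $k_0 = 0$, so $23 \in \lang$ iff $k_0 = 0$ and $32 \in \lang^{(1)}$, the latter being automatic by (a).

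The heart of the argument is (e) and (f), which I handle simultaneously via the recursive equivalences
$$ 21 \in \lang^{(m)} \;\Longleftrightarrow\; k_m \geq 1 \ \text{or}\ 31 \in \lang^{(m+1)}, \qquad 31 \in \lang^{(m)} \;\Longleftrightarrow\; k_{m+1} \geq 1 \ \text{or}\ 21 \in \lang^{(m+1)}. $$
The first holds because $21$ can only arise at a boundary with suffix $2$ (hence $c = 3$) and prefix $1$: when $k_m \geq 1$ the pair $c=3,\ d=2$ succeeds because $32 \in \lang^{(m+1)}$ is automatic, while when $k_m = 0$ one needs $d = 1$, i.e.\ $31 \in \lang^{(m+1)}$. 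The second equivalence combines the boundary analysis (requiring $11$ or $21$ in $\lang^{(m+1)}$) with (c) applied one level down, which gives $11 \in \lang^{(m+1)} \Leftrightarrow k_{m+1} \geq 1$. Unrolling the first equivalence $N$ times yields $21 \in \lang \Leftrightarrow k_0 \geq 1\ \text{or}\ k_2 \geq 1\ \text{or}\ \cdots\ \text{or}\ k_{2N} \geq 1\ \text{or}\ 21 \in \lang^{(2N+2)}$; when (I) fails, some disjunct $k_{2N} \geq 1$ is true and we conclude $21 \in \lang$, and an analogous unrolling gives the forward direction of (f).

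The main obstacle is the converse: if (I) holds, then $21 \notin \lang$, and symmetrically for (II). The recursion alone only yields $21 \in \lang \Leftrightarrow 21 \in \lang^{(2N+2)}$ for every $N$, which does not directly contradict $21 \in \lang$. I resolve this by a minimal-depth argument, treating (e) and (f) as two halves of one statement. Suppose a counterexample exists: either $\lang$ satisfies (I) with $21 \in \lang$, or $\lang$ satisfies (II) with $31 \in \lang$. Choose one in which the witness appears as a factor of $(S_{k_0}^G \circ \cdots \circ S_{k_{n-1}}^G)(c)$ with $n$ minimal. In the first case $k_0 = 0$ forces $31$ to be a factor of the depth-$(n-1)$ composition in $\lang^{(1)}$, and $\lang^{(1)}$ satisfies (II); in the second case $k_1 = 0$ kills the $11$ option, so $21$ must be a factor of the depth-$(n-1)$ composition in $\lang^{(1)}$, and $\lang^{(1)}$ satisfies (I). Either way, we produce a counterexample of strictly smaller depth, contradicting minimality once $n$ drops below $2$, since neither $21$ nor $31$ can be a factor of a single character or of any image $1^{k+1}3$, $1^k 3$, or $2$.
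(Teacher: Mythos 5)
Your handling of (a), (c), (d), (e), and (f) is essentially the paper's argument: the two recursive equivalences for $21$ and $31$ are exactly the ones the paper derives, the forward directions via explicit witnesses ($21$ inside $S_{k_0}^G(32)$ when $k_0\geq 1$, etc.) are the same, and your minimal-depth descent for the converse is just a repackaging of the paper's induction on the length of the generating composition. Those parts are correct.

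The gap is in part (b), in the sentence claiming the only boundary suffix--prefix pairs are among $\set{(3,1),(3,2),(2,1),(2,2)}$, ``none of which produce the forbidden words.'' This fails twice. First, the pair $(2,2)$ literally spells the word $22$, which is one of the words you are trying to exclude, so the claim is self-contradictory as written. Second, the enumeration is incomplete: since $S_0^G(2)=3$ begins with $3$, the pairs $(3,3)$ and $(2,3)$ are also possible boundaries whenever $k_0=0$, and $(3,3)$ spells $33$. What the boundary analysis actually yields is a mutual recursion: $22\in\lang$ if and only if $33\in\lang^{(1)}$ (both neighboring images must be $S_{k_0}^G(3)=2$), and $33\in\lang$ if and only if $k_0=0$ and $12$ or $22$ lies in $\lang^{(1)}$, which reduces to $22\in\lang^{(1)}$ once $12$ is excluded. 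This recursion rules nothing out by itself; it must be closed off by a descent on the depth of the composition in which the word first appears --- an occurrence of $22$ at depth $n$ forces $33$ at depth $n-1$, hence $22$ at depth $n-2$, and no single image $1^{k+1}3$, $1^k3$, or $2$ contains either word. That is precisely the minimal-counterexample argument you use for (e) and (f), and it is the argument the paper gives for (b); you need to invoke it here rather than asserting that the boundary pairs are harmless.
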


\begin{proof}
    Statement (a) follows immediately from the fact that $13$ is a subword of $G_k(1)$ and $32$ is a subword of $G_k(13)$.

    To prove (b), first notice that the character $2$ only occurs in (\ref{Gfor  e13e}) as a substitution of $3$. Therefore, $ 12 \in \lang $ is only possible if $1$ is the last character of $S_k^G(c)$ for some character $c$, but it is not, so $ 12 \notin \lang $.

    To show that $ 22, 33 \notin \lang $, it suffices to prove the following by an inductive argument applying Definition \ref{def:lang-coding-seq}:
    \begin{enumerate}[(i)]
        \item For all $ c \in \set{1,2,3} $ and all nonnegative integers $k$, neither $22$ nor $33$ is a subword of $S_k^G(c)$.
        \item For all letters $ c $ in our alphabet, all $ m \geq 2 $, and all non-negative integers $ j_1,j_2,\hdots,j_m $, $22$ is a subword of $ \parens{S_{j_1}^G \circ \hdots \circ S_{j_m}^G}(c) $ if and only if $33$ is a subword of $ \parens{S_{j_1}^G \circ \hdots \circ S_{j_{m-1}}^G}(c) $.
        \item For all letters $ c $ in our alphabet, all $ m \geq 2 $, and all non-negative integers $ j_1,j_2,\hdots,j_m $, $33$ is a subword of $ \parens{S_{j_1}^G \circ \hdots \circ S_{j_m}^G}(c) $ if and only if $22$ is a subword of $ \parens{S_{j_1}^G \circ \hdots \circ S_{j_m}^G}(c) $.
    \end{enumerate}
    Statement (i) follows directly from (\ref{Gfor  e13e}).
    Statement (ii) follows from the fact that $2$ only appears in (\ref{Gfor  e13e}) as a substitution of $3$.
    Since $3$ only appears in (\ref{Gfor  e13e}) as the last character of a substitution of $1$ or $2$, and $3$ is only the first character of a substitution of $2$, $33$ is a subword of $ \parens{S_{j_1}^G \circ \hdots \circ S_{j_m}^G}(c) $ if and only if either $12$ or $22$ is a subword of $ \parens{S_{j_1}^G \circ \hdots \circ S_{j_{m-1}}^G}(c) $.
    However, we have already showed that $12$ is forbidden, so (iii) follows.
    This completes the proof of (b).

    To prove (c), notice that the last character of $S_k^G(c)$ for all $k$ and any single character $c$ is never $1$, so $ 11 \in \lang $ if and only if $11$ is a subword of $S_{k_0}^G(c)$ for some single character $c$. In particular, if $ k_0 \geq 1 $, then this is true for $ c = 1 $, but if $ k_0 = 0 $, then this is not true.

    To prove (d), notice that since $2$ only occurs in (\ref{Gfor  e13e}) as the substitution of $3$, a necessary condition for $ 23 \in \lang $ is that there is some single character $c$ such that $3$ is the first character of $G_{k_0}(c)$. If $ k_0 \geq 1 $, then this is not true, but if $ k_0 = 0 $, then $23$ is a subword of $S_{k_0}^G(32)$ and it has already been established that $ 32 \in \lang^{(1)} $, which implies $ 23 \in \lang $. This proves (d).

    Statements (e) and (f) can be derived by an inductive argument via the following:
    \begin{enumerate}[(i)]
        \item If $ k_0 \geq 1 $, then $ 21 \in \lang $.
        \item If $ k_1 \geq 1 $, then $ 31 \in \lang $.
        \item For all $ c \in \A $, $ m \geq 1 $, and nonnegative integers $ j_1,j_2,\hdots,j_m $, $21$ is a subword of $ \parens{S_0^G \circ S_{j_1}^G \circ \hdots \circ S_{j_m}^G}(c) $ if and only if $31$ is a subword of $ \parens{S_{j_1}^G \circ \hdots \circ S_{j_m}^G}(c) $.
        \item For all $ c \in \A $, $ m \geq 1 $, and nonnegative integers $ j_1,j_2,\hdots,j_m $, $31$ is a subword of $ \parens{S_0^G \circ S_{j_1}^G \circ \hdots \circ S_{j_m}^G}(c) $ if and only if either $11$ or $21$ is a subword of $ \parens{S_{j_1}^G \circ \hdots \circ S_{j_m}^G}(c) $.
    \end{enumerate}
    Statement (i) is a consequence of the fact that $21$ is a subword of $ S_{k_0}^G(32) $ when $ k_0 \geq 1 $.
    Statement (ii) is a consequence of the fact that $31$ is a subword of $ \parens{S_{k_0}^G \circ S_{k_1}^G}(32) $ when $ k_1 \geq 1 $. 

    To prove statements (iii) and (iv), notice that in the substitution $S_0^G$, $21$ can only arise from a substitution of $31$, and $31$ can only arise from a substitution of $21$.

    Now to prove the ``if'' version of (e) and (f). Suppose as inductive hypothesis that for all $ m \le M $, $21$ is a subword of $ \parens{S_{k_0} \circ \hdots \circ S_{k_m}}(c) $ if not all $k_i$, for $i \le m$ and $i$ even, are zero. Likewise for all $n \le M$, $31$ is a subword of $ \parens{S_{k_0} \circ \hdots \circ S_{k_n}}(c) $ if not all $k_i$, for $ i \le n $ and $i$ odd, are zero.

    Now suppose $ \parens{S_{k_0} \circ \hdots \circ S_{k_{M+1}}}(c) $ has some $i \le m+1$ with $k_i \neq 0$, with $i$ even or odd depending on whether we are assuming (I) or (II) holds. Pick the smallest $i$ such that this is the case. If $i <m+1$, then $ \parens{S_{k_{i+1}} \circ \hdots \circ S_{k_{m+1}}}(c) $ contains all characters $\set{1,2,3}$, so by the inductive assumption, $21$ and $31$ are subwords of $ \parens{S_{k_0} \circ \hdots \circ S_{k_{m+1}}}(c) $. If $i = m+1$, then by $(iii)$ and $(iv)$, 21 (respectively 31) will be a subword of $ \parens{S_{k_0} \circ \hdots \circ S_{k_{m+1}}}(c) $ if $m+1$ is even (respectively odd).

    Now, to prove the ``only if'' direction of (e) and (f), suppose for the sake of induction that for all $m,n < M$, $21$ is a subword of $S_{k_0} \hdots S_{k_m} (c)$ only if there is some $k_i>0$ with $i$ even, and $31$ is a subword of $ \parens{S_{k_0} \hdots S_{k_n}}(c) $ only if there is some $k_i >0$ with $i $ odd. Suppose $21$ is a subword of $ \parens{S_{k_0} \circ \hdots \circ S_{k_{M+1}}}(c) $ and all $k_i$, for $i$ even, are zero. This means that $31$ is a subword of $ \parens{S_{k_1} \circ \hdots \circ S_{k_{m+1}}}(c) $, which implies that there is some $k_i >0$ for $i +1 \ge 1$ odd, or $i $ even. This is a contradiction, and so completes the inductive step for $21$.
    The inductive step for $31$ is similar.
\end{proof}

In the next subsection, in order to  sketch the proof of  Theorem \ref{th:e-13-e-complexity}, we will need to characterize the right special factors of an $S(e,13,e)$-adic word and apply Proposition \ref{prop:l-r-complex}. This differs from the standard methods in \cite{ARP,Cassaigne} and the methods we use in Section \ref{sec:e-e-e} and \ref{sec:e-23-e}. We proceed to characterize notions of age, antecedent, and extended image of right special factors.

\subsection{Characterization of Right Special Factors}

\begin{definition} \label{def:e-13-e-right}
    We say that a nonempty right special factor $w$ of an $S(e,13,e)$-adic language $\mathcal{L}$ is:
    \begin{itemize}
        \item of \keyterm{Type 1} if the last character of $w$ is $1$ or $2$
        \item of \keyterm{Type 2} if the last character of $w$ is $3$
    \end{itemize}
\end{definition}

The following follows easily from the results of the previous subsection:

\begin{proposition}
    For any right special factor $w$ of an $S(e,13,e)$-adic language $\mathcal{L}$:
    \begin{itemize}
        \item If $w$ is of Type 1, then $ E^+(w) = \set{1,3} $.
        \item If $w$ is of Type 2, then $ E^+(w) = \set{1,2} $.
    \end{itemize}
\end{proposition}

Given any $ w \in \mathcal{L} $ containing at least one occurrence of the character $2$ or $3$, we may ``de-substitute'' $w$ into an element of $\mathcal{L}^{(1)}$ by inserting a break after every $2$ or $3$ using methods similar to those in Section \ref{sec:eee-length2}.
The following two statements can be proven using the same methods as those used in Sections \ref{sec:eee-ante} and \ref{sec:eee-extensions}:

\begin{proposition} \label{prop:e-13-e-ante}
    Let $w$ be a right special factor of $\lang$. Then, there is some $ c \in \set{\eps, 1, \hdots, 1^{k_0+1}} $ and $ v \in \lang^{(1)} $ such that:
    \begin{itemize}
        \item If $w$ is of Type 1, then $ cw = S_{k_0}^G(v) 1^{k_0} $ and $v$ is a right special factor of $\lang^{(1)}$ of Type 2.
        \item If $w$ is of Type 2, then $ cw = S_{k_0}^G(v) $ and $v$ is a right special factor of $\lang^{(1)}$ of Type 1.
    \end{itemize}
\end{proposition}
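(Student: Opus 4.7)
The proof relies on de-substituting $w$ using the structural properties of the Gauss substitutions $S_k^G$ recorded in (\ref{Gfor  e13e}). The key observation is that in any image $S_{k_0}^G(u)$, each character $1$ belongs to a contiguous run of exactly $k_0$ or $k_0 + 1$ ones ending in a $3$, and each character $2$ is the complete one-character image of $3$. Consequently, after inserting a break immediately after every occurrence of $2$ or $3$ in any word in $\lang$, we obtain a unique decomposition of $w$ into \emph{blocks}, each of which is $2$, $1^{k_0}3$, or $1^{k_0+1}3$ (the images of $3$, $2$, and $1$ respectively), with the caveat that the first block may be missing leading $1$s and the last block may be truncated on the right. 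The strategy is: prepend the unique minimal $c \in \set{\eps, 1, \hdots, 1^{k_0+1}}$ that completes the first block of $w$, then read off the antecedent $v$ from the complete-block decomposition of $cw$ (possibly after accounting for a trailing partial block via the appended $1^{k_0}$).

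For $w$ of Type 2 (ending in $3$), the terminal block of $cw$ is automatically complete, since a trailing $3$ can only occur as the final character of $1^{k_0}3$ or $1^{k_0+1}3$. Thus $cw$ decomposes fully into complete blocks, yielding $cw = S_{k_0}^G(v)$ for a unique $v \in \A^*$. Factorial closure of $\lang^{(1)}$ gives $v \in \lang^{(1)}$, and the last character of $v$ must be $1$ or $2$, so $v$ is of Type 1. For the right-specialness of $v$, the extension $w2 \in \lang$ forces $v3 \in \lang^{(1)}$ via $2 = S_{k_0}^G(3)$, while $w1 \in \lang$ forces either $v1 \in \lang^{(1)}$ or $v2 \in \lang^{(1)}$ (since both $S_{k_0}^G(1)$ and $S_{k_0}^G(2)$ begin with $1$), giving $\abs{E^R(v)} \geq 2$.

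For $w$ of Type 1 (ending in $1$ or $2$), a short case analysis using Proposition \ref{faclen2e13e} and the fact that $12$ and $22$ are forbidden in $\lang^{(1)}$ pins down the terminal form of $w$: if $w$ ends in $2$ then $k_0 = 0$ and the terminal block is the complete block $2$; if $w$ ends in $1$ then $k_0 \geq 1$, and right-specialness forces the trailing run of $1$s in $w$ to have length exactly $k_0$ with the preceding character (if any) being $2$ rather than $3$ (the latter would entail $w3 \notin \lang$). Writing $w = w' 1^{k_0}$ accordingly and applying the block decomposition to $cw'$ yields $cw = S_{k_0}^G(v) \cdot 1^{k_0}$ with $v$ ending in $3$, hence of Type 2. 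The trailing $1^{k_0}$ encodes the common prefix of the two possible continuation blocks $1^{k_0}3$ and $1^{k_0+1}3$, so the extension set $E^R(w) = \set{1,3}$ translates to $v$ being right-extendable by both $2$ (continuing into $1^{k_0}3$) and $1$ (continuing into $1^{k_0+1}3$), giving $\abs{E^R(v)} \geq 2$.

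The main obstacle is the case analysis in the Type 1 situation: showing that right-specialness of $w$ forces the particular terminal structure described above (ruling out, for example, $w = w'' 3 1^{k_0}$) and handling the degenerate boundary case $w = 1^{k_0}$ separately. Otherwise, the argument is a direct and mechanical unpacking of the substitution structure, parallel to the antecedent construction in Section \ref{sec:eee-ante}, simplified by the fact that we only need to track right extensions rather than full bispecial data.
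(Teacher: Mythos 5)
Your proof is correct and follows essentially the same route the paper takes: the paper itself only sketches this proposition by deferring to the de-substitution machinery of Sections \ref{sec:eee-ante} and \ref{sec:eee-extensions}, and your block decomposition into $2$, $1^{k_0}3$, and $1^{k_0+1}3$, together with the use of Proposition \ref{faclen2e13e} to force $k_0=0$ when $w$ ends in $2$ and to pin the trailing run of $1$s at length exactly $k_0$, is precisely that adaptation. The only point to phrase carefully is that $c$ should be chosen to match an actual occurrence of $w$ inside some $S_{k_0}^G(u)$ (rather than the ``minimal'' completion, which need not yield $v\in\lang^{(1)}$ when the first partial block could complete to either the image of $1$ or of $2$), and that the degenerate case $w=1^j$ with empty antecedent must be set aside, as you note.
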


\begin{proposition} \label{prop:e-13-e-extimg}
    Let $v$ be a right special factor of $\mathcal{L}^{(1)}$. If $v$ is of Type 1, then $ w = S_{k_0}^G(v) $ is a right special factor of $\mathcal{L}$ of Type 2, while if $v$ is of Type 2, then $ w = S_{k_0}^G(v) 1^{k_0} $ is a right special factor of $\mathcal{L}$ of Type 1.
\end{proposition}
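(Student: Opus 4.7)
The plan is a direct case check that plugs $v$ into the Gauss substitution formula (\ref{Gfor  e13e}) and reads off both the last character of $w$ and its right extensions, using the forbidden two-letter list from Proposition \ref{faclen2e13e} to pin down $E^R(w)$ exactly.

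First I would handle the Type 1 case. Since $v$ ends in $1$ or $2$, and both $S_{k_0}^G(1) = 1^{k_0+1}3$ and $S_{k_0}^G(2) = 1^{k_0}3$ terminate in $3$, the word $w = S_{k_0}^G(v)$ ends in $3$, giving Type 2. For right specialness, $E^R(v) = \{1,3\}$ in $\lang^{(1)}$ gives $v1, v3 \in \lang^{(1)}$, so applying $S_{k_0}^G$ produces $w \cdot 1^{k_0+1} 3 \in \lang$ and $w \cdot 2 \in \lang$; the former immediately yields $w \cdot 1 \in \lang$ (when $k_0 \geq 1$) or $w \cdot 1 \in \lang$ from the leading $1$ of $13$ (when $k_0 = 0$), and the latter gives $w \cdot 2 \in \lang$ directly. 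Finally, $33$ is forbidden by Proposition \ref{faclen2e13e}, so $E^R(w) = \{1,2\}$ exactly.

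Next I would handle the Type 2 case. Now $v$ ends in $3$, so $S_{k_0}^G(v)$ ends in $S_{k_0}^G(3) = 2$, and the appended suffix $1^{k_0}$ makes $w$ end in $1$ (when $k_0 \geq 1$) or in $2$ (when $k_0 = 0$); either way this is Type 1. For right specialness, $E^R(v) = \{1,2\}$ gives $v1, v2 \in \lang^{(1)}$ which substitute to $S_{k_0}^G(v) \cdot 1^{k_0+1} 3 = w \cdot 1 \cdot 3 \in \lang$ and $S_{k_0}^G(v) \cdot 1^{k_0} 3 = w \cdot 3 \in \lang$, hence $\{1,3\} \subseteq E^R(w)$. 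The forbidden words $12$ and $22$ rule out $w \cdot 2$ in either sub-case of the terminal character of $w$, so $E^R(w) = \{1,3\}$ exactly.

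The main obstacle, to the extent there is one, will be the case split on $k_0 = 0$ versus $k_0 \geq 1$ in the Type 2 argument, since the trailing block $1^{k_0}$ is empty in the former and this changes which forbidden two-letter word rules out the extension by $2$. But this is a mild bookkeeping issue rather than a structural one; no induction or reference to the backward de-substitution of Proposition \ref{prop:e-13-e-ante} is needed, because we are moving forward from $\lang^{(1)}$ to $\lang$ rather than backward.
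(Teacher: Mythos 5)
Your proposal is correct and matches the approach the paper intends: the paper gives no explicit proof here, only a pointer to the forward-substitution and forbidden-word computations of Sections \ref{sec:eee-ante} and \ref{sec:eee-extensions}, and your direct case check (substitute the two right extensions of $v$, read off $w\cdot 1$, $w\cdot 2$ or $w\cdot 1$, $w\cdot 3$, and exclude the third extension via the forbidden words $12$, $22$, $33$ of Proposition \ref{faclen2e13e}) is exactly that computation written out. You are also right that only the forward direction is needed, so no appeal to Proposition \ref{prop:e-13-e-ante} is required.
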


For any $S(e,13,e)$-adic language $\mathcal{L}$, we define sequences of right special factors $\set{v_m(\mathcal{L})}_{m=0}^\infty$ and $\set{w_m(\mathcal{L})}_{m=0}^\infty$ via the following:
\begin{itemize}
    \item $ v_0(\mathcal{L}) = \emptyword $
    \item $ w_0(\mathcal{L}) = \emptyword $
    \item For $ m \geq 1 $, $ v_m(\mathcal{L}) = S_{k_0}^G\parens{w_{m-1}\parens{\mathcal{L}^{(1)}}} 1^{k_0} $
    \item For $ m \geq 1 $, $ w_m(\mathcal{L}) = S_{k_0}^G\parens{v_m\parens{\mathcal{L}^{(1)}}} $
\end{itemize}
We often omit the parameter $\mathcal{L}$ when it is clear from context.

\begin{lemma} \label{lem:e-13-e-ordered}
    The sequences $\set{v_m(\mathcal{L})}$ and $\set{w_m(\mathcal{L})}$ satisfy the following properties:
    \begin{enumerate}
        \item If $ m \leq n $, then $v_m(\mathcal{L})$ is a suffix of $v_n(\mathcal{L})$.
        \item If $ m \leq n $, then $w_m(\mathcal{L})$ is a suffix of $w_n(\mathcal{L})$.
        \item For all $m$, $ v_m = \eps $ or $v_m$ is a right special factor of Type 1.
        \item For all $m$, $ w_m = \eps $ or $w_m$ is a right special factor of Type 2.
    \end{enumerate}
\end{lemma}

Let us first look at an example. If $k_0=1$ and $k_m=0$ for all $ m \geq 1 $, then
$$ v_0(\mathcal{L}^{(   n  ) }) = w_0(\mathcal{L}^{(   n  ) }) = \emptyword $$
for all $n$.
We may compute $ v_1(\mathcal{L}) = S_{k_0}^G (w_{0}(\mathcal{L}^{(1)}) 1^{k_0} = S_{1}^G (\emptyword)) 1^{1} = 1 $ and $ w_1(\mathcal{L}) = S_{1}^G (v_{0}(\mathcal{L}^{(1)}) = \eps $.
Continuing, $ v_2(\mathcal{L}) = S_{k_0}^G (w_{1}(\mathcal{L}^{(2)}) 1^{k_0} = S_{1}^G (\emptyword)) 1^{1} = 1 $ and $ w_2(\mathcal{L}) = S_{1}^G (v_{1}(\mathcal{L}^{(2)}) = \eps $.
This pattern continues, giving us $ v_m(\mathcal{L}) = 1 $ and $ w_m(\mathcal{L}) = \eps $ for all $m$.

\begin{proof}[Proof of Lemma \ref{lem:e-13-e-ordered}]
    To prove statement (1), it suffices to show that $v_m$ is a suffix of $v_{m+1}$. This is trivially true for $m=0$. We may compute
    \begin{equation} \label{v-recur}
        v_{m+1}(\mathcal{L}) = \parens{S_{k_0}^G \circ S_{k_1}^G}\parens{v_m(\mathcal{L}^{(2)})} 1^{k_0},
    \end{equation}
    which implies that if $v_m$ is a suffix of $v_{m+1}$, then $v_{m+1}$ is a suffix of $v_{m+2}$. The statement (a) follows by induction.
    Similarly, (2) can be proven inductively using the recursive relation
    \begin{equation} \label{w-recur}
        w_{m+1}(\mathcal{L}) = \parens{S_{k_0}^G \circ S_{k_1}^G}\parens{w_m(\mathcal{L}^{(2)})} (1^{k_0+1} 3)^{k_1}
    \end{equation}
    Statements (3) and (4) follow inductively from Proposition \ref{prop:e-13-e-extimg}.
\end{proof}

Statements (1) and (2) of Lemma \ref{lem:e-13-e-ordered} allow us to define the limits
\begin{equation}
    v_\infty(\mathcal{L}) := \lim_{m \rightarrow \infty} v_m(\mathcal{L}), \qquad w_\infty (\mathcal{L}):= \lim_{m \rightarrow \infty} w_m(\mathcal{L})
\end{equation}
as either finite or left-infinite words, according to whether or not the respective sequences $\set{v_m}(\mathcal{L})$ and $\set{w_m}(\mathcal{L})$ are bounded in length.

 Theorem \ref{th:e-13-e-complexity} will then follow directly from Proposition \ref{prop:l-r-complex} and the following two statements:

\begin{lemma} \label{lem:e-13-e-complete}
    All Type 1 right special factors of an $S(e,13,e)$-adic language $\mathcal{L}$ are suffixes of $v_\infty(\mathcal{L})$, and all Type 2 right special factors are suffixes of $w_\infty(\mathcal{L})$.
\end{lemma}

\begin{lemma} \label{lem:e-13-e-cases}
   If an $S(e,13,e)$-adic language $\mathcal{L}$ satisfies condition (I') (respectively (II')), then $v_\infty$ (respectively $w_\infty$) is finite. Otherwise, $v_\infty$ (respectively $w_\infty$) is left-infinite.
   Furthermore, if $\mathcal{L}$ satisfies (I) or (II), then $ v_\infty = \eps $ or $ w_\infty = \eps $, respectively.
\end{lemma}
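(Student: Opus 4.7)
The plan is to prove the two halves (about $v_\infty$ and $w_\infty$) simultaneously by induction on the level of de-substitution, exploiting a swapping recursion imposed by the Gauss substitution $S_{k_0}^G$ in (\ref{Gfor  e13e}). The key observation is that in $\lang$ the character $2$ occurs only as the image of $3$, while the character $3$ occurs only as the final letter of $S_{k_0}^G(1)$ or $S_{k_0}^G(2)$. Thus factors of $\lang$ ending in $2$ pull back to factors of $\lang^{(1)}$ ending in $3$, and factors ending in $3$ pull back to factors of $\lang^{(1)}$ ending in $1$ or $2$; this interchanges Type~1 and Type~2 right special factors across one de-substitution and forces a recursion pairing $v_\infty(\lang)$ with $w_\infty(\lang^{(1)})$, and $w_\infty(\lang)$ with $v_\infty(\lang^{(1)})$.

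First, I would dispose of the ``furthermore'' clause as the base case. If $\lang$ satisfies (I), then in particular $k_0 = 0$, so by Proposition \ref{faclen2e13e}(b)--(e) we have $11, 12, 21, 22 \notin \lang$ while $13, 23 \in \lang$; hence $E^R(1) = E^R(2) = \set{3}$, so no factor ending in $1$ or $2$ is right special, forcing $v_\infty = \eps$. Symmetrically, if $\lang$ satisfies (II), Proposition \ref{faclen2e13e}(b), (f) give $31, 33 \notin \lang$ and $32 \in \lang$, so $E^R(3) = \set{2}$ and $w_\infty = \eps$.

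For the inductive step, I would develop an antecedent analysis analogous to that in Sections \ref{sec:eee-ante}--\ref{sec:eee-extensions}: every sufficiently long Type~1 right special factor $w \in \lang$ decomposes uniquely as $w = a \cdot S_{k_0}^G(v) \cdot 1^j$ with $v \in \lang^{(1)}$ a Type~2 right special factor, $0 \le j \le k_0$, and $a$ a prefix of bounded length; moreover, the right extension set of $w$ in $\lang$ is determined by that of $v$ in $\lang^{(1)}$, and $w$ is right special iff $v$ is. This yields the identity $v_\infty(\lang) = S_{k_0}^G\parens{w_\infty(\lang^{(1)})} \cdot 1^{k_0}$, up to a uniformly bounded correction, and the symmetric identity for $w_\infty(\lang)$ in terms of $v_\infty(\lang^{(1)})$. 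Consequently $v_\infty(\lang)$ is finite iff $w_\infty(\lang^{(1)})$ is, and $|v_\infty(\lang)| > |w_\infty(\lang^{(1)})|$ whenever $k_0 \ge 1$.

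Assembling the recursion: if (I$'$) holds for $\lang$, then for some $M$ the coding sequence of $\lang^{(2M)}$ has all even-indexed entries equal to zero, so $\lang^{(2M)}$ satisfies (I) and the base case gives $v_\infty(\lang^{(2M)}) = \eps$; unwinding through $2M$ alternating applications of the identity preserves finiteness, yielding $v_\infty(\lang)$ finite, and the case of (II$'$) is symmetric. Conversely, if (I$'$) fails, there are infinitely many even $m$ with $k_{2m} \ne 0$, and at each such level the recursion strictly lengthens $v_\infty$; the resulting nested chain of suffixes stabilizes to a unique left-infinite word, which must equal $v_\infty$. The main obstacle is the antecedent analysis for factors ending in a block $1^j$ with $0 < j \le k_0$: since the letter $1$ appears in the interior of both $S_{k_0}^G(1) = 1^{k_0+1}3$ and $S_{k_0}^G(2) = 1^{k_0}3$, such factors do not correspond cleanly to shorter factors of $\lang^{(1)}$, and one must verify case-by-case that all the Type~1 right special ones assemble into a single coherent suffix chain $v_\infty$ under successive de-substitutions.
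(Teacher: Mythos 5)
Your proposal is correct and takes essentially the same route as the paper: unwind the defining recursion for $v_m$ (equivalently (\ref{v-recur}) and its iterate (\ref{vm-vn})) to reduce condition (I$'$) to the base case (I), and observe that each level with $k_{2n} \geq 1$ adds at least $k_{2n}$ to the length, so failure of (I$'$) forces $\set{v_m}$ to be unbounded and $v_\infty$ left-infinite. The only variation is your base case, which deduces $v_\infty = \eps$ under (I) from the nonexistence of Type 1 right special factors (via Proposition \ref{faclen2e13e} and Lemma \ref{lem:e-13-e-ordered}(c)) rather than by direct induction on the recursion; this is valid, but note that the identity $v_\infty(\lang) = S_{k_0}^G\parens{w_\infty(\lang^{(1)})}1^{k_0}$ holds exactly by the definition of the sequences $\set{v_m}$ and $\set{w_m}$, so the antecedent analysis, the ``uniformly bounded correction,'' and the suffix-chain concerns you raise belong to Lemma \ref{lem:e-13-e-complete} and are not needed here.
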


\begin{proof}[Proof of Lemma \ref{lem:e-13-e-complete}]
    It suffices to prove the following for all $ m \geq 1 $:
    \begin{enumerate}
        \item All Type 1 right special factors $w$ of $\mathcal{L}$ for which $w1$ is a subword of $ \parens{S_{k_0}^G \circ S_{k_1}^G \circ \hdots \circ S_{k_{2m-1}}^G}(c) $ for some $ c \in \A $ are suffixes of $v_m$ for some $m$.
        \item All Type 2 right special factors $w$ of $\mathcal{L}$ for which $w1$ is a subword of $ \parens{S_{k_0}^G \circ S_{k_1}^G \circ \hdots \circ S_{k_{2m}}^G}(c) $ for some $ c \in \A $ are suffixes of $w_m$ for some $m$.
    \end{enumerate}

    First consider the case where $m=1$. The only possible Type 1 right special factors $w$ for which $w1$ is a subword of some $S_{k_0}^G(c)$ are $1^j$ for $ 1 \leq j \leq k_0 $. We may compute $ v_1 = 1^{k_0} $. Therefore, (1) holds for $m=1$.

    We complete the proof by induction. We first show that if (1) holds for some $m$, then (2) also holds for $m$.
    Suppose (a) holds for some $m$, and let $w$ be a Type 2 right special factor of $\mathcal{L}$ for which $w1$ is a subword of $ \parens{S_{k_0}^G \circ S_{k_1}^G \circ \hdots \circ S_{k_{2m}}^G}(c) $ for some $ c \in \A $.
    Let $v$ be the right special factor of $\mathcal{L}^{(1)}$ in Proposition \ref{prop:e-13-e-ante}, so that $w$ is a suffix of $S_{k_0}^G(v)$.
    We may ``de-substitute'' $w1$ into $v1$ or $v2$, but since the last character of $v$ must be either $1$ or $2$, $v2$ is forbidden by Proposition \ref{faclen2e13e}. Therefore, $v1$ must be a subword of $ \parens{S_{k_1}^G \circ S_{k_2}^G \circ \hdots \circ S_{k_{2m}}^G}(c) $.
    By the inductive hypothesis, $v$ is a suffix of $v_m(\mathcal{L}^{(1)})$, which implies that $S_{k_0}^G(v)$, and consequently $w$ is a suffix of $w_m(\mathcal{L})$.

    Now suppose that (2) holds for some $m$ and let $w$ be a Type 1 right special factor of $\mathcal{L}$ for which $w1$ is a subword of $ \parens{S_{k_0}^G \circ S_{k_1}^G \circ \hdots \circ S_{k_{2m+1}}^G}(c) $ for some $ c \in \A $.
    Let $v$ be the right special factor of $\mathcal{L}^{(1)}$ in Proposition \ref{prop:e-13-e-ante}, so that $w$ is a suffix of $ S_{k_0}^G(v) 1^{k_0} $.
    We assume that $ v \neq \eps $, so that $v$ is a Type 2 right special factor, as the only case where $ v = \eps $ is $ w = 1^j $ which we already considered.
    Since $w1$ must ``de-substitute'' into $v1$, we have that $v1$ is a subword of $ \parens{S_{k_1}^G \circ S_{k_2}^G \circ \hdots \circ S_{k_{2m+1}}^G}(c) $, so $w1$ is a subword of $ \parens{S_{k_0}^G \circ S_{k_1}^G \circ \hdots \circ S_{k_{2m+1}}^G}(c) $.
    Thus, (1) is true for $m+1$.
\end{proof}

\begin{proof}[Proof of Lemma \ref{lem:e-13-e-cases}]
    We prove the statement relating conditions (I) and (I') to the sequence $\set{v_m}$, as the case for the sequence $\set{w_m}$ is similar.

    One can easily show inductively by (\ref{v-recur}) that if $ k_{2n} = 0 $ for all $ 0 \leq n \leq m-1 $, then $ v_m = \eps $. Thus, if condition (I) is satisfied, then $ v_m = \eps $ for all $m$.
    Also, it can be shown by inductively applying (\ref{v-recur}) that
    \begin{equation} \label{vm-vn}
        v_m(\mathcal{L}) = \parens{S_{k_0}^G \circ S_{k_1}^G \circ \hdots \circ S_{k_{2m-1}}^G}\parens{v_{m-n}(\mathcal{L}^{(2n)})} v_n(\mathcal{L}).
    \end{equation}
    for all $ m \geq n $. Therefore, if condition (I') is satisfied, then the sequence $\set{v_m}$ is eventually constant.

    Now suppose that (I') is not satisfied.
    We may compute $ v_1(\mathcal{L}) = 1^{k_0} $, so $ \abs{v_1(\mathcal{L}^{2n})} = k_{2n} $. By (\ref{vm-vn}), this implies $ \abs{v_{n+1}} \geq \abs{v_n} + k_{2n} $. If (I') is not satisfied, then $ k_{2n} \geq 1 $ for infinitely many $n$, so the sequence $\set{v_m}$ is unbounded in length.
\end{proof}

\begin{proof}[Proof of Theorem \ref{th:e-13-e-complexity}]
    Let $\lang$ be an $S(e, 13,e)$-adic language. If $\lang$ satisfies both $(I')$ and $(II')$, by Lemma \ref{lem:e-13-e-cases}, $v_\infty$ and $w_\infty$ are finite. Since all right special factors are contained as suffixes of $v_\infty$ and $w_\infty$ by Lemma \ref{lem:e-13-e-complete}, there are two, one, or zero right special factors of length $n$ corresponding to whether $n$ is less than the length of both $v_\infty$ and $w_\infty$, just one of them, or neither. Using Proposition \ref{prop:l-r-complex} we see that $p_\lang(n+1) - p_\lang(n) = \sum_{w \in \lang_n} (|E^+(w)| - 1)$. Since $v_\infty$ and $w_\infty$ each have $2$ right special factors, this gives the result. 
    The other two cases follow nearly identically.
\end{proof}

\section{Complexity for the $T(e,23,e)$ TRIP Map} \label{sec:e-23-e}

In this section, we set up a possible approach to proving Conjecture \ref{conj:e-23-e} by a similar method as used in Section \ref{sec:e-e-e}.
However, we do not go through all of the details required to verify the result, as we expect the computations will be more involved than those in Section \ref{sec:e-e-e}.
The Farey substitutions for the $(e,23,e)$ TRIP map are
\begin{itemize}
    \item $ S_0(e,23,e): \qquad 1 \mapsto 2, \qquad 2 \mapsto 13, \qquad 3 \mapsto 3 $, 
    \item $ S_1(e,23,e): \qquad 1 \mapsto 1, \qquad 2 \mapsto 2, \qquad 3 \mapsto 13 $.
\end{itemize}
and the Gauss substitutions are
\begin{equation} \label{e-23-e-Gauss}
    G_k(e,23,e) = S_1^k(e,23,e) S_0(e,23,e): \qquad 1 \mapsto 2, \qquad 2 \mapsto 1^{k+1} 3, \qquad 3 \mapsto 1^k 3.
\end{equation}
In this section, we denote these substitutions simply by $S_0$, $S_1$, and $G_k$, respectively.

A natural place to start is to derive an analog of Proposition \ref{prop:extension}, which, in the $(e,e,e)$ case, relates the extension set of a word $w$ with the extension set of its antecedent $v$. However, it turns out that it is impossible to relate the factors of an $S(e,23,e)$-adic language $\lang$ and their extensions to those of a de-substitution of $\lang$ by a single Gauss substitution, but we can relate them to those of a de-substitution of $\lang$ by two Gauss substitutions. Therefore, we consider the coding sequence of $\lang$ to consist of substitutions of the form
\begin{equation} \label{e-23-e-two-Gauss}
    G_j \circ G_k: \qquad 1 \mapsto 1^{j+1}3, \qquad 2 \mapsto 2^{k+1} 1^j 3, \qquad 3 \mapsto 2^k 1^j 3.
\end{equation}
We write the coding sequence of an $S(e,23,e)$-adic language $\lang$ as $ \set{\sigma_0, \sigma_1, \sigma_2, \ldots} $, where $ \sigma_m = G_{j_m} \circ G_{k_m} $.
We define the $\lang^{(m)}$ to be the language with coding sequence $ \set{\sigma_m, \sigma_{m+1}, \sigma_{m+2}, \ldots} $ so that $\lang$ is the factorial closure of $ (\sigma_0 \circ \sigma_1 \circ \ldots \circ \sigma_{m-1})(\lang^{(m)}) $.

Notice that the last character of $(G_j \circ G_k)(c)$ for $ c \in \set{1,2,3} $ is always $3$, and all preceding characters, if they exist, are $1$s or $2$s.
Using methods similar to those in Subsection \ref{sec:eee-length2}, the following can be proven:

\begin{proposition} \label{prop:e-23-e-ante}
    For any $ w \in \lang $ containing at least one $3$, there is a unique $ v \in \lang^{(1)} $ and $ a,b \in \A^* $ such that $ w = a \sigma_0(v) b $, $a$ is a nonempty suffix of $\sigma_0(c)$ for some $ c \in \set{1,2,3} $, and $b$ is a proper prefix of $\sigma_0(d)$ for some $ d \in \set{1,2,3} $.
\end{proposition}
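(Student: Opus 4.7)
The plan is to parse $w$ via the positions of the letter $3$. The key observation, read off directly from (\ref{e-23-e-two-Gauss}), is that each $\sigma_0(c)$ for $c \in \set{1,2,3}$ ends in $3$ and contains no other $3$. Consequently, inside any image $\sigma_0(V)$ with $V \in \lang^{(1)}$, the occurrences of $3$ mark exactly the boundaries between consecutive blocks $\sigma_0(v_i)$. This is the direct analog of the ``break after every $2$ or $3$'' strategy used in Proposition \ref{prop:eee-ante}, and it is in fact simpler here because the terminal character of every block is the same.

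For existence, I would take $a$ to be the prefix of $w$ ending at the first $3$ of $w$, and $b$ to be the (possibly empty) suffix of $w$ beginning immediately after the last $3$. By definition of $\lang$, $w$ is a factor of $(\sigma_0 \circ \sigma_1 \circ \cdots \circ \sigma_{N-1})(c) = \sigma_0(V)$ for some $N$ and $c$, where $V := (\sigma_1 \circ \cdots \circ \sigma_{N-1})(c) \in \lang^{(1)}$. The observation above forces the portion of $w$ strictly between its first and last $3$'s to consist of complete blocks $\sigma_0(v_i)$ for consecutive letters of some factor $v$ of $V$, so $w = a\,\sigma_0(v)\,b$. Factorial closure of $\lang^{(1)}$ gives $v \in \lang^{(1)}$, and the required suffix/prefix conditions are automatic: $a$ is a suffix of $\sigma_0(c')$ for the letter $c'$ of $V$ preceding $v$, while $b$ is a prefix of $\sigma_0(d')$ for the letter $d'$ following $v$, and contains no $3$, hence is proper.

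For uniqueness, suppose $w = a\,\sigma_0(v)\,b = \tilde a\,\sigma_0(\tilde v)\,\tilde b$ are two decompositions of the required form. Then $a$ and $\tilde a$ are nonempty suffixes of $\sigma_0$-images, so they end in $3$, while $b$ and $\tilde b$ are proper prefixes and contain no $3$. These constraints pin $a$ and $\tilde a$ to the prefix of $w$ ending at its first $3$ and $b, \tilde b$ to the suffix starting after its last $3$, so $a = \tilde a$, $b = \tilde b$, and $\sigma_0(v) = \sigma_0(\tilde v)$. I then recover $v$ from $\sigma_0(v)$ by splitting at each internal $3$ and identifying each block with exactly one of $\sigma_0(1) = 1^{j_0+1}3$, $\sigma_0(2) = 2^{k_0+1}1^{j_0}3$, or $\sigma_0(3) = 2^{k_0}1^{j_0}3$.

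The only real obstacle is confirming that these three candidate blocks are pairwise distinct for every choice of $j_0, k_0 \geq 0$, which is a short case check: $\sigma_0(1)$ is the unique block with no $2$, while $\sigma_0(2)$ has exactly one more $2$ than $\sigma_0(3)$ on the same $1^{j_0}3$ tail; even the degenerate case $j_0 = k_0 = 0$, where the images become $13$, $23$, and $3$, is unambiguous. Altogether the argument should be shorter than its $(e,e,e)$ counterpart precisely because $\sigma_0 = G_{j_0} \circ G_{k_0}$ absorbs two Gauss steps, concentrating all the terminal-character information onto the single letter $3$.
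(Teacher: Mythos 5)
Your proof is correct and follows essentially the same route the paper indicates: the paper's (sketched) argument rests on the observation that each $\sigma_0(c)$ ends in $3$ and contains no other $3$, then defers to the break-insertion method of Subsection \ref{sec:eee-length2}, which is exactly what you carry out. One small inaccuracy in your case check: when $k_0=0$ the block $\sigma_0(3)=1^{j_0}3$ also contains no $2$, so $\sigma_0(1)$ is not always ``the unique block with no $2$''; the three blocks are nonetheless pairwise distinct (the $1$-runs of $\sigma_0(1)$ and $\sigma_0(3)$ have different lengths), so your uniqueness argument is unaffected.
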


This means that $a$ must take one of the following forms:
\begin{itemize}
    \item $ a = 1^p 3 $ for some $ 0 \leq p \leq j_0 + 1 $
    \item $ a = 2^p 1^{j_0} 3 $ for some $ 1 \leq p \leq k_0 + 1 $
\end{itemize}
Furthermore, $b$ must take one of the following forms:
\begin{itemize}
    \item $ b = 1^p $ for some $ 0 \leq p \leq j_0 + 1 $
    \item $ b = 2^p $ for some $ 1 \leq p \leq k_0 + 1 $
    \item $ b = 2^{k_0} 1^p $ for some $ 1 \leq p \leq j_0 $
    \item $ b = 2^{k_0+1} 1^p $ for some $ 1 \leq p \leq j_0 $
\end{itemize}
Note that unlike in Section \ref{sec:e-e-e}, we never take $ a = \emptyword $. For all $w$, $a$ will simply be the prefix of $w$ consisting of all characters before and including the first occurrence of the character $3$, while $b$ will be the suffix consisting of all characters after the last occurrence of $3$.

By considering all possibilities for $a$ and $b$ and different cases based on whether or not $j_0$ and $k_0$ are zero, we may formulate an analog of Proposition \ref{prop:eee-ante} giving conditions for when $w$ is contained in an $S(e,23,e)$-adic language $\lang$.
We have the following analog of Proposition \ref{prop:bispecial-a-b}:

\begin{proposition} \label{prop:e-23-e-bispecial-a-b}
    Let $ w \in \lang $ contain at least one occurrence of the character $3$, and $a$ and $b$ be as in Proposition \ref{prop:e-23-e-ante}. If $w$ is left special, then $ a \in \set{1^{j_0}3, 2^{k_0}1^{j_0}3} $, and if $w$ is right special, then $ b \in \set{\emptyword, 1^{j_0}, 2^{k_0}} $.
\end{proposition}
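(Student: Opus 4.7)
The strategy is to prove the contrapositive: for every value of $a$ in the enumeration following Proposition \ref{prop:e-23-e-ante} that does not lie in $\set{1^{j_0}3, 2^{k_0}1^{j_0}3}$, the claim is that $\abs{E^L(w)} \le 1$, and symmetrically for $b$. This mirrors the local analysis of extension sets carried out in Section \ref{sec:e-e-e} for the $(e,e,e)$-case, with $\sigma_0 = G_{j_0} \circ G_{k_0}$ playing the role of a single Gauss substitution.

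The argument rests on a handful of structural properties of the output of $\sigma_0$ readable off of (\ref{e-23-e-two-Gauss}): every $\sigma_0$-image ends in the character $3$; every maximal run of $1$s has length exactly $j_0$ (when preceded by a run of $2$s) or $j_0+1$ (at the start of a $\sigma_0(1)$-block); every maximal run of $2$s has length exactly $k_0$ (in $\sigma_0(3)$) or $k_0+1$ (in $\sigma_0(2)$); and the two-letter patterns $12$ and $23$ never occur. Consequently, each position inside an image of $\sigma_0$ has a left neighbor determined by the local run structure, except at the very start of a block, where the neighbor must be the terminal $3$ of the preceding block.

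The disallowed values of $a$ are then dispatched one at a time using these observations. If $a = 1^p 3$ with $0 \le p < j_0$, then extending $w$ on the left by $2$ would create $21^p 3$, which forces $p = j_0$, and extending by $3$ would create $3 \cdot 1^p 3$, which forces $p \in \set{j_0, j_0+1}$; only extending by $1$ survives. If $a = 1^{j_0+1} 3 = \sigma_0(1)$, then left-extending by $1$ or $2$ is ruled out by the maximal-run constraints, so only $3$ is possible (corresponding to the terminal character of the preceding block). The symmetric cases $a = 2^p 1^{j_0} 3$ for $1 \le p < k_0$ and $a = 2^{k_0+1} 1^{j_0} 3 = \sigma_0(2)$ are handled by the same template, admitting only $2$ and only $3$ respectively. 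In every disallowed case $\abs{E^L(w)} \le 1$, and $w$ fails to be left special.

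The right half is handled analogously but is slightly more intricate, since the possible values of $b$ fall into four families: $1^p$ with $0 \le p \le j_0+1$, $2^p$ with $1 \le p \le k_0+1$, $2^{k_0}1^p$ with $1 \le p \le j_0$, and $2^{k_0+1}1^p$ with $1 \le p \le j_0$. For each value outside $\set{\emptyword, 1^{j_0}, 2^{k_0}}$, the same run-length dictionary pins the right extension to a single character: either one continues the current run up to its maximal length, or one closes the block with a $3$. The main obstacle is not any individual case but the combinatorial bookkeeping, especially in the degenerate regimes $j_0 = 0$ or $k_0 = 0$, where distinct entries of the enumeration collapse (for instance $\sigma_0(3) = 1^{j_0}3$ coincides with a proper suffix of $\sigma_0(1)$ when $k_0 = 0$), and one must check that the allowed sets in the statement are interpreted correctly in light of those coincidences. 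Once these cases are verified, Proposition \ref{prop:e-23-e-bispecial-a-b} follows immediately.
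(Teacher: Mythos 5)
The paper itself states this proposition without proof (Section \ref{sec:e-23-e} is explicitly only an outline), but your contrapositive argument via the run-length structure of $\sigma_0$-blocks is correct and is precisely the template the paper uses to prove the analogous Proposition \ref{prop:bispecial-a-b} in the $(e,e,e)$ case, so this is the intended argument. One small slip in your list of structural properties: the pattern $23$ \emph{does} occur when $j_0=0$ (it sits inside $\sigma_0(2)=2^{k_0+1}3$), but since your case-by-case analysis of the disallowed $a$ and $b$ never actually invokes the forbiddenness of $23$, nothing in the proof breaks.
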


We then have an analog of Proposition \ref{prop:extension}:

\begin{proposition} \label{prop:e-23-e-extension}
    Define the left extension function $ \alpha_{a,k_0}^L: \set{1,2,3} \rightarrow \set{1,2,3,\emptyword} $ and right extension function $ \alpha_{b,j_0,k_0}^R: \set{1,2,3} \rightarrow \set{1,2,3,\emptyword} $ as follows:
    $$ \begin{array}{c|c|c|c}
        \alpha_{a,k_0}^L & \makecell{k_0 = 0 \\ a = 1^{j_0}3} & \makecell{k_0 \geq 1 \\ a = 1^{j_0}3} & \makecell{k_0 \geq 1 \\ a = 2^{k_0}1^{j_0}3} \\ \hline
        1 & 1 & 1 & \emptyword \\
        2 & 2 & 2 & 2 \\
        3 & 3 & 2 & 3
    \end{array} $$
    $$ \begin{array}{c|c|c|c|c|c|c|c}
        \alpha_{b,j_0,k_0}^R & \makecell{j_0 = 0 \\ k_0 = 0 \\ b = \eps} & \makecell{j_0 = 0 \\ k_0 \geq 1 \\ b = \eps} & \makecell{j_0 = 0 \\ k_0 \geq 1 \\ b = 2^{k_0}} & \makecell{j_0 \geq 1 \\ k_0 = 0 \\ b = \eps} & \makecell{j_0 \geq 1 \\ k_0 = 0 \\ b = 1^{j_0}} & \makecell{j_0 \geq 1 \\ k_0 \geq 1 \\ b = \eps} & \makecell{j_0 \geq 1 \\ k_0 \geq 1 \\ b = 2^{k_0}} \\ \hline
        1 & 1 & 1 & \emptyword & 1 & 1 & 1 & \emptyword \\
        2 & 2 & 2 & 2 & 2 & \emptyword & 2 & 2 \\
        3 & 3 & 2 & 3 & 1 & 3 & 2 & 1
    \end{array} $$
    Then, for any $ v \in \lang^{(1)} $, $ a \in \set{1^{j_0}3, 2^{k_0}1^{j_0}3} $, and $ b \in \set{\eps, 1^{j_0}, 2^{k_0}} $, the extension set of $ w = a\sigma_0(v)b $ in $\lang$ is given by
    $$ E(w) = \set{\parens{\alpha_{a,k_0}^L(c), \alpha_{b,j_0,k_0}^R(d)}: (c,d) \in E(v), \, \alpha^L(c) \neq \emptyword, \, \alpha^R(c) \neq \emptyword}. $$
\end{proposition}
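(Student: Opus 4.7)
The plan is to imitate the proof of Proposition \ref{prop:extension} from the $(e,e,e)$ case by splitting the claim into two set inclusions and handling each by direct case analysis on the combinations of $a$, $b$, $j_0$, and $k_0$. For the forward inclusion, I would tabulate the doubled substitutions $\sigma_0(cvd) = \sigma_0(c)\sigma_0(v)\sigma_0(d)$ for each $(c,d) \in \set{1,2,3}^2$ using (\ref{e-23-e-two-Gauss}), and locate the candidate factor $(\alpha_{a,k_0}^L(c))\,a\,\sigma_0(v)\,b\,(\alpha_{b,j_0,k_0}^R(d))$ as a subword of $\sigma_0(cvd)$. In most entries of the tables, $\alpha^L(c)$ is just the character of $\sigma_0(c)$ immediately preceding the final $a$-block, and $\alpha^R(d)$ is the character of $\sigma_0(d)$ immediately following the initial $b$-block, so the subword is visible at once. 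The subtle entries are those where two distinct values of $c$ collapse to the same $\alpha^L(c)$ (for instance $c \in \set{2,3}$ when $a = 1^{j_0}3$ and $k_0 \geq 1$, both mapping to $2$); in such cases I would record both preimages as producing the same boundary character, and verify that no other $c$ does. As in the $(e,e,e)$ proof, the one step that requires a small detour is when the required boundary character cannot be produced from $cvd$ alone; then I would invoke the analog of Proposition \ref{two words} (characterizing the allowed two-letter words in $\lang^{(1)}$) to enlarge $cvd$ to some $c'cvd \in \lang^{(1)}$ whose substitution supplies the missing character.

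For the reverse inclusion, I would take $(c', d') \in E(w)$, apply Proposition \ref{prop:e-23-e-ante} to $c'wd'$ to obtain a unique $\hat v \in \lang^{(1)}$ and decomposition $c'wd' = \hat a\, \sigma_0(\hat v)\, \hat b$, and read off the first and last characters of $\hat v$. These must coincide with characters $c$ and $d$ such that $\alpha_{a,k_0}^L(c) = c'$ and $\alpha_{b,j_0,k_0}^R(d) = d'$, giving $(c,d) \in E(v)$ as required. Entries of the tables where $\alpha^L(c) = \eps$ or $\alpha^R(d) = \eps$ encode that no such de-substitution exists, and are verified by checking that the prefix or suffix pattern needed to produce that boundary character does not occur in any $\sigma_0(d)$. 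For example, with $b = 1^{j_0}$, $j_0 \geq 1$, $k_0 = 0$, the entry $\alpha^R(2) = \eps$ reflects that $1^{j_0}2$ is not a prefix of any $\sigma_0(d)$, so $w\cdot 2 \notin \lang$.

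The main obstacle is the sheer volume of case work: two choices of $a$ and up to four choices of $b$, further split by $j_0 \in \set{0, \geq 1}$ and $k_0 \in \set{0, \geq 1}$, each yielding its own column of $\alpha^L$ or $\alpha^R$ and a $3\times 3$ extension diagram to check against (\ref{e-23-e-two-Gauss}). The use of the doubled Gauss substitution $\sigma_0 = G_{j_0} \circ G_{k_0}$ rather than a single $G_k$ is essential here: because $G_k(1) = 2$ in the $(e,23,e)$ map, not every $G_k(c)$ ends in $3$, which would block the unique-decomposition argument underlying Proposition \ref{prop:e-23-e-ante}; passing to $\sigma_0 = G_{j_0} \circ G_{k_0}$ restores the uniform ending in $3$ across all three characters and makes the present proposition a well-posed analog of Proposition \ref{prop:extension}.
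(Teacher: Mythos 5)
The paper states this proposition without proof — Section \ref{sec:e-23-e} is explicitly a sketch, and the result is merely asserted as the analog of Proposition \ref{prop:extension} — and your proposal correctly reconstructs the intended argument by mirroring the paper's proof of that earlier proposition: the forward inclusion by writing out $\sigma_0(cvd)$ and locating the extended word as a subword (with the extendability detour when $a$ or $b$ exhausts all of $\sigma_0(c)$ or $\sigma_0(d)$), and the reverse inclusion by applying Proposition \ref{prop:e-23-e-ante} to $c'wd'$. Your closing observation about why the doubled substitution $G_{j_0}\circ G_{k_0}$ is needed (so every image ends in $3$, making the decomposition unique) also matches the paper's stated motivation, so this is essentially the same approach.
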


By a similar procedure as that in Section \ref{sec:e-e-e}, we expect that there is a function $A$ mapping bispecial words in $\lang$ containing at least one occurrence of the character $3$ to its antecedent bispecial word in $\lang^{(1)}$. We then define the \keyterm{age} of a bispecial word $w$ to be the non-negative integer $m$ such that $A^m(w)$ is defined and does not contain the character $3$.

Just as in Section \ref{sec:e-e-e}, we may formulate a list of all possible bispecial factors of age $0$ and $1$ and their extension sets, then use Proposition \ref{prop:e-23-e-extension} to characterize all possible bispecial factors of all ages and their extension sets.
It remains true that in the case where a bispecial $ v \in \lang^{(1)} $ has a unique bispecial extended image $ w \in \lang $, $E(w)$ can be derived by applying a left permutation and a right permutation to $E(v)$.
However, unlike for the $(e,e,e)$ TRIP map, it is possible for bispecial factors of any age to have three left extensions and three right extensions, so there are more situations to consider where the extended image is not unique and a more detailed analysis is required. In particular, the following situations can occur:
\begin{itemize}
    \item If $ \abs{E^-(v)} = \abs{E^+(v)} = 3 $ and $ k_0 \geq 1 $, then $v$ has up to four bispecial extended images $w$, each with $ \abs{E^-(w)} = \abs{E^+(w)} = 2 $.
    \item If $ \abs{E^-(v)} = \abs{E^+(v)} = 3 $, $ j_0 \geq 1 $, and $ k_0 = 0 $, then $v$ has up to two bispecial extended images $w$, each with $ \abs{E^-(w)} = 3 $ and $ \abs{E^+(w)} = 2 $.
    \item If $ \abs{E^-(v)} = 3 $, $ \abs{E^+(v)} = 2 $, and $ k_0 \geq 1 $, then $v$ has up to two bispecial extended images $w$, each with $ \abs{E^-(w)} = \abs{E^+(w)} = 2 $.
\end{itemize}

To solve Conjecture \ref{conj:e-23-e}, it remains to determine bispecial words in $\lang$ and their extension sets from bispecial words in $\lang^{(1)}$ and their extension sets, characterize all non-neutral bispecial words in $\lang$, and control their lengths.

\begin{remark}
    Just as for Proposition \ref{prop:extension}, we need very delicate properties of the substitutions (\ref{e-23-e-two-Gauss}) and the set of factors that are possible in order for Proposition \ref{prop:e-23-e-extension} to be possible. In this case, we rely heavily on the fact that the character $3$ always occurs as the last character and nowhere else in a double Gauss substitution of a single character.
    
    If particular, an analog of Proposition \ref{prop:extension} for de-substitutions under a single Gauss substitution does not hold. To see this, consider the case where $ j_m = 0 $ and $ k_m \geq 1 $ for all $m$. Then, the above results imply that any bispecial $ w \in \lang $ containing the character $3$ must have exactly two left extensions and two right extensions. However, the factorial closure $\lang'$ of $G_1(\lang)$ will have a coding sequence $ j_m \geq 0 $ and $ k_m = 0 $ for all $m$, so $\lang'$ contains bispecial factors of any age with three left extensions. This cannot happen if a statement analogous to (\ref{e-e-e-extension}) holds.
\end{remark}

\section{Conclusion and Future Directions} \label{sec:conclusion}

Factor complexity is of course only one of many ways to understand $S$-adic sequences. Others are seen, for example, in the recent survey by Thuswaldner \cite{Thuswaldner}. The extent to which other properties, for example, balance, hold for TRIP maps is mostly unknown. There are many different multi-dimensional continued fraction algorithms that are not in the family of 216 TRIP maps. As shown in \cite{SMALLTRIP}, almost all of them are combination triangle partition maps, and hence various combinations of the maps considered in this paper.  How to take known complexity bounds  for, say, two multidimensional continued fraction algorithms and then determine the complexity bounds for some combination of the two  would be important to know.  Possibly the recent techniques developed by Berth\'{e}, Steiner and Thuswaldner \cite{Berthe-Steiner-Thuswaldner} could be used.

\section{Appendix: Conjugates and Twins}

In Section \ref{More general TRIP maps}, we had the notion of equivalence via either conjugacy or twinning:

\begin{enumerate}
\item Conjugacy
$$ (e, \tau_0, \tau_1) \sim_C  (\rho, \tau_0 \rho^{-1}, \tau_1 \rho^{-1})$$

\item Twinning
$$(e , \tau_0, \tau_1) \sim_{\mathcal{T}} ( (13), (12) \tau_1, (12) \tau_0) $$
\end{enumerate}

While all 216 maps are different, as far as factor complexity is concerned, as mentioned earlier, two TRIP maps that are conjugate or twins will have the same factor complexity.  This allowed us to reduce our work to simply 21 cases.

We will now set up 21 cases.  In the first position of the first  row will be one of the above 21 cases.  The rest of the first row will be all the conjugacies of the first term.  In the second row will be the corresponding twins.  We should the 216 distinct terms.  Note that for six of the cases (the ones with a $^*$, the twins do not give us any new TRIP maps.

\begin{enumerate}

\item  
$$  \begin{array}{cccccc}
  (e,e,e)  &  (12,12,12)   &(13, 13, 13)    &  (23,23,23) & (123, 132, 132  &   (132, 123, 123)  \\ 
    
  (13, 12, 12)  & (132, e,e)     & (e, 132, 132)   &  (123, 123, 123)  &  (23, 13, 13) & (12, 23, 23)

 \end{array}$$

\item
$$\begin{array}{cccccc}
(e, 12, e) & (12, e, 12)       & (13, 132, 13) & (23, 123, 23)     & (123, 13  , 132     ) & (132, 23 , 123   ) \\
(13, 12, e) &  ( 132, e, 12) & ( e, 132, 13)  & (123 ,  123, 23   )          & ( 23  , 13  , 132 )              & ( 12  ,  23  ,   123   )

\end{array} $$

\item
$$  \begin{array}{cccccc}
 (e, 13, e)   &  (12, 123, 12 )   &  (13, e, 13)  &  (23, 132, 23 )  & (123, 23, 132)   &  (132, 12, 123)   \\ 
    
  (13, 12, 132)  &   (132, e, 23)  &  (e, 132, 12)  & (123, 123, 13)  & (23, 13, 123)   & (12, 23, e)

 \end{array}$$
 
\item
 $$  \begin{array}{cccccc}
  (e, 23, e)  &   (12, 132, 12) &  (13, 123, 13)  & (23, e, 23)   & (123, 12, 132)  &   (132, 13, 123)  \\ 
    
   (13,12, 123)      &     (132,e, 13)                   &         (e,132, 23)                 &     (123,123,12)              &             (23,13, e)              & (12,23, 132)

 \end{array}$$
 
\item $$  \begin{array}{cccccc}
          (e,123,e)         &   (12,  13,12) & (13,23,13)   & (23,12,23)   &  (123,e,132) &   (132,132,123)   \\ 
    
            (13,12,23)     &         (132,e,132)               &        (  e, 132, 123)                &       (123, 123, e)            &         (23,13,12)                  & (12,23,13)
    
 \end{array}$$
 
 \item
 
 $$  \begin{array}{cccccc}
    (e,132,e)^*    &   (12,23,12)  &  (13,12,13)  &   (23,13,23)  &  (123,123,132) &  (132,e,123)   \\ 
    
  (13,12,13)  &  (132,e,123)   & (e,132,e)   & (123,123,132)  &   (23,123,132)  & (12,23,12)

 \end{array}$$
 
 \item
 
 $$  \begin{array}{cccccc}
  (e,e,12)  &  (12,12,e)   &   (13,13,132) &  (23,23,123) & (123,132,13)  &  (132,123,23)   \\ 
    
  (13,e,12)  &   (132,12,e)  & (e,13,132)   &  (123,23,123) &  (23,132,13) & (12,123,23)

 \end{array}$$
 
 \item
 
 $$  \begin{array}{cccccc}
  (e,12,12)  &  (12,e,e)   &  (13,132,132)  & (23,123,123)   &  (123,13,13) & (132, 23,23)     \\ 
    
   (13,e,e) &   (132,12,12)  &  (e,13,13)  &  (123,23,23) & (23,132,132)  & (12,123,123)

 \end{array}$$
 
 \item
 
 $$  \begin{array}{cccccc}
  (e,13,12)^*  &  (12,123,e)   & (13,e,132)   &  (23,132,123) &  (123,23,13) &  (132,12,23)   \\ 
    
   (13,e,132) &  (132,12,23)   & (e,13,12)   &   (123,23,13)& (23,132,123)  & (12,123,e)

 \end{array}$$
 
 \item  
 
 $$  \begin{array}{cccccc}
 (e, 23, 12)   & (12,132,e)    &   (13,123,132) &  (23,e,123) & (123,12,13)  &  (132, 13, 23)   \\ 
    
   (13, e, 123) &   (132,12,13)  &  (e,13,23)  &  (123,23,12) &  (23,132,e) & (12,123,132)

 \end{array}$$

 \item $$ \begin{array}{cccccc}
   (e, 123, 12) &  (12,13,e)   &   (13,23,132) &  (23,12,123) & (123,e,13)  &  (132,132,23)   \\ 
    
    (13,e,23)&   (132,12,132)  & (e,13,123)   &  (123,23,e) & (23,132,12)  & (12,123,13)

 \end{array}$$
 
 \item
 
 $$  \begin{array}{cccccc}
  (e,e,13)  &  (12,12,123)   &  (13,13,e)  & (23,23,132)  & (123,132,23)  &  (132,123,12)   \\ 
    
   (13,132,12) & (132,23,e)    &  (e,12,132)  & (123,13,123)  & (23,123,13)  & (12,e,23)

 \end{array}$$
 
 \item
 $$  \begin{array}{cccccc}
 (e,12,13)^*   & (12,e,123)    & (13,132,e)   &  (23,123,132) &  (123,13,23) &  (132,23,12)   \\ 
    
   (13,132,e)& (132,23,12)    & (e,12,13)   & (123,13,23)  &  (23,123,132) & (12,e,123)

 \end{array}$$
 
 \item
 $$  \begin{array}{cccccc}
 (e,23,13)   & (12,132,123)    &  (13,123,e)  &(23,e,132)   &  (123,12,23) & (132,13,12)    \\ 
    
   (13,132,123) &  (132,23,13)   &  (e,12,23)  &  (123,13,12) &  (23,123,e) & (12,e,132)

 \end{array}$$
 
 \item
 $$  \begin{array}{cccccc}
 (e,123,13)   & (12,13,123)    &  (13,23,e)  & (23,12,132)  & (123,e,23)  & (132,132,12)    \\ 
    
  (13,132,23)  &   (132,23,132)  &  (e,12,123)  &  (123,13,e) & (23,123,12)  & (12,e,13)

 \end{array}$$
 
 \item
 $$  \begin{array}{cccccc}
   (e,e,23) & (12,12,132)    & (13,13,123)   &  (23,23,e) & (123,132,12)  &  (132,123,13)   \\ 
    
    (13,123,12)&    (132,13,e) & (e,23,132)   &  (123,12,123) &  (23,e,13) & (12,132,23)

 \end{array}$$
 
\item $$\begin{array}{cccccc}
(e, 23, 23)^*       & (  12  , 132   ,  132    )       & (   13 ,   123 , 123     )   & ( 23   ,  e  ,   e   )   & (  123  ,  12  ,  12    )     & (  132  ,  13  ,   13   )    \\
 (  13  ,  123  ,  123    )    & ( 132   , 13   ,  13    )       & (  e  ,  23  , 23     )   & (  123  ,  12  ,  12    )   & (  23  , e   ,  e    )     & (  12  ,  132  ,  132    )   \end{array} $$
 
 \item
 
 $$  \begin{array}{cccccc}
  (e,123,23)  &  (12,13,132)   &  (13,23,123)  & (23,12,e)  &  (123,e,12) & (132,132,13)    \\ 
    
    (13,123,23)&   (132,13,132)  &   (e,23,123) &  (123,12,e) & (23,e,12)  & (12,132,13)

 \end{array}$$
 
 \item
 
 $$  \begin{array}{cccccc}
  (e,e,123) &    (12,12,13) &   (13,13,23) & (23,23,12)  & (123,132, e)  &  (132,123,132)   \\ 
    
   (13,23,12) &  (132,132,e)   &  (e,123,132)  & (123,e,123)  &(23,12,13)  & (12,13,23)

 \end{array}$$
 
 \item
 $$  \begin{array}{cccccc}
 (e,123,123)^*   &  (12,13,13)   &  (13,23,23)  &  (23,12,12) &(123,e,e)   &  (132,132,132)   \\ 
    
   (13,23,23) &   (132,132,132)  & (e,123,123)   & (123,e,e)  & (23,12,12)   & (12,13,13)

 \end{array}$$
 
 \item
 $$  \begin{array}{cccccc}
  (e,e,132)^*  &  (12,12,23)   &  (13,13,12)  &  (23,23,13) & (123,132,123)  &(132,123,e) \\
   (13,13,12) &  (132,123,e)   & (e,e,132)   &  (123,132,123) &  (23,23,13) & (12,12,23)

 \end{array}$$

\end{enumerate}

\end{document}